\numberwithin{equation}{section}
\newtheorem{lemma}{Lemma}[section]
\newtheorem{prop}[lemma]{Proposition}
\newtheorem{thm}[lemma]{Theorem}
\newtheorem{cor}[lemma]{Corollary}
\theoremstyle{definition}
\newtheorem{example}[lemma]{Example}
\newtheorem{conjecture}[lemma]{Conjecture}
\theoremstyle{remark}
\def\R{\mathbb{R}}
\def\N{\mathbb{N}}
\def\t{\mathbf{t}}
\def\y{\mathbf{y}}
\def\D{\mathcal{D}}
\def\L{\mathcal{L}}
\def\a{\mathbf{a}}
\def\b{\mathbf{b}}
\def\c{\mathbf{c}}
\def\m{\mathfrak{m}}
\def\h{\mathfrak{h}}
\numberwithin{equation}{section} \numberwithin{table}{section}
\title{Overlapping iterated function systems from the perspective of Metric Number Theory}
\author{Simon Baker\\ \\
\emph{School of Mathematics,} \\ \emph{University of Birmingham,} \\ \emph{Birmingham,  B15 2TT, UK.} \\ Email: simonbaker412@gmail.com\\}
\date{\today}
\begin{document}
\maketitle

\begin{abstract}
In this paper we develop a new approach for studying overlapping iterated function systems. This approach is inspired by a famous result due to Khintchine from Diophantine approximation which shows that for a family of limsup sets, their Lebesgue measure is determined by the convergence or divergence of naturally occurring volume sums. For many parameterised families of overlapping iterated function systems, we prove that a typical member will exhibit similar Khintchine like behaviour. Families of iterated function systems that our results apply to include those arising from Bernoulli convolutions, the $\{0,1,3\}$ problem, and affine contractions with varying translation parameter. As a by-product of our analysis we obtain new proofs of some well known results due to Solomyak on the absolute continuity of Bernoulli convolutions, and when the attractor in the $\{0,1,3\}$ problem has positive Lebesgue measure. 

For each $t\in [0,1]$ we let $\Phi_t$ be the iterated function system given by $$\Phi_{t}:=\Big\{\phi_1(x)=\frac{x}{2},\phi_2(x)=\frac{x+1}{2},\phi_3(x)=\frac{x+t}{2},\phi_{4}(x)=\frac{x+1+t}{2}\Big\}.$$ We prove that either $\Phi_t$ contains an exact overlap, or we observe Khintchine like behaviour. Our analysis shows that by studying the metric properties of limsup sets, we can distinguish between the overlapping behaviour of iterated function systems in a way that is not available to us by simply studying properties of self-similar measures.  

Last of all, we introduce a property of an iterated function system that we call being consistently separated with respect to a measure. We prove that this property implies that the pushforward of the measure is absolutely continuous. We include several explicit examples of consistently separated iterated function systems.\\

\noindent \emph{Mathematics Subject Classification 2010}: 	11K60, 28A80, 37C45.\\

\noindent \emph{Key words and phrases}: Overlapping iterated function systems, Khintchine's theorem, self-similar measures.

\end{abstract}

\tableofcontents

\section{Introduction}
Attractors generated by iterated function systems are among the first fractal sets a mathematician encounters. The familiar middle third Cantor set and the Koch curve can both be realised as attractors for appropriate choices of iterated function system. Attractors generated by iterated function systems have the property that they are equal to several scaled down copies of themselves. When these copies are disjoint, or satisfy some weaker separation assumption, then much can be said about the attractor's metric and topological properties. However, when these copies overlap significantly the situation is much more complicated. Measuring how an iterated function system overlaps, and determining properties of the corresponding attractor, are two important problems that are occupying much current research (see for example \cite{Hochman,Hochman2,Shm2,Shm,ShmSol,Var,Varju2}). The purpose of this paper is to develop a new approach for measuring how an iterated function system overlaps. This approach is inspired by classical results from Diophantine approximation and metric number theory. One such result due to Khintchine demonstrates that for a class of limsup sets defined in terms of the rational numbers, their Lebesgue measure is determined by the convergence or divergence of naturally occurring volume sums (see \cite{Khit}). Importantly this result provides a quantitative description of how the rational numbers are distributed within $\mathbb{R}$.  

In this paper we study limsup sets that are defined using iterated function systems (for their definition see Section \ref{two families}). We are motivated by the following goals:
\begin{enumerate}
	\item We would like to determine whether it is the case that for a parameterised family of iterated function systems, a typical member will satisfy an appropriate analogue of Khintchine's theorem.
	\item We would like to answer the question: Does studying the metric properties of these limsup sets allow us to distinguish between the overlapping behaviour of iterated function systems in a way that was not previously available?
	\item We would like to understand how the metric properties of these limsup sets relates to traditional methods for measuring how an iterated function system overlaps, such as the dimension and absolute continuity of self-similar measures.
\end{enumerate} In this paper we make progress with each of these goals. Theorems \ref{1d thm}, \ref{translation thm} and  \ref{random thm} address the first goal. These results demonstrate that for many parametrised families of overlapping iterated function systems, it is the case that a typical member will satisfy an appropriate analogue of Khintchine's theorem. To help illustrate this point, and to motivate what follows, we include here a result which follows from Theorem \ref{1d thm}.
\begin{thm}
For Lebesgue almost every $\lambda\in(1/2,0.668),$ Lebesgue almost every $x\in [\frac{-1}{1-\lambda},\frac{1}{1-\lambda}]$ is contained in  $$\left\{x\in \R:\big|x-\sum_{i=1}^{m}d_i\lambda^{i-1}\big|\leq \frac{1}{2^m\cdot m}\textrm{ for infinitely many }(d_i)_{i=1}^{m}\in\bigcup_{n=1}^{\infty}\{-1,1\}^n\right\}.$$ 
\end{thm}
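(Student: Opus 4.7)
The plan is to derive the stated theorem as a direct corollary of Theorem~\ref{1d thm}. First, I would identify the implicit IFS and approximation function: with $\phi_1(x)=\lambda x+1$, $\phi_2(x)=\lambda x-1$, a routine induction gives $\phi_{a_1}\circ\cdots\circ\phi_{a_m}(0)=\sum_{i=1}^{m}\epsilon_{a_i}\lambda^{i-1}$ for $\epsilon_{a_i}\in\{-1,1\}$, and the attractor of $\Phi_\lambda=\{\phi_1,\phi_2\}$ is the interval $[-1/(1-\lambda),1/(1-\lambda)]$. Thus the limsup set in the statement is precisely the set of points in the attractor that are $\psi$-approximable by the orbit of $0$ under $\Phi_\lambda$, for the approximation function $\psi(m)=1/(2^m m)$.

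Next, I would verify the two hypotheses that Theorem~\ref{1d thm} needs: a divergence condition on $\psi$ and a transversality condition on the parameter range. The divergence condition is immediate, since $\sum_{m\geq 1}2^m\psi(m)=\sum_{m\geq 1}1/m=+\infty$. Transversality is the substantive input, and the constant $0.668$ is chosen as the classical Solomyak/Peres--Schlag threshold: on every compact subinterval of $(1/2,0.668)$ one has a uniform lower bound on $|\sum_k\delta_k\lambda^k|+|\tfrac{d}{d\lambda}\sum_k\delta_k\lambda^k|$ for power series with $\delta_k\in\{-1,0,1\}$ and $\delta_0\ne 0$. I would quote this estimate from the cited literature and check that it matches the abstract transversality hypothesis required by Theorem~\ref{1d thm} for two-map similarity IFS.

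Feeding both ingredients into Theorem~\ref{1d thm} then yields, for Lebesgue almost every $\lambda\in(1/2,0.668)$, that Lebesgue almost every $x\in[-1/(1-\lambda),1/(1-\lambda)]$ lies in the ball $B(\sum_{i=1}^{m}d_i\lambda^{i-1},1/(2^m m))$ for infinitely many $m\in\N$ and $(d_i)\in\{-1,1\}^m$, which is precisely the conclusion.

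The principal obstacle in any such argument is always the transversality step, which is what restricts the parameter range to $(1/2,0.668)$ rather than the full interval $(1/2,1)$. Everything else in the proof — in particular the Fubini/second-moment argument that upgrades the divergence of a sum of ball measures to almost sure incidence for almost every parameter — is already packaged inside Theorem~\ref{1d thm}, so at the level of this corollary no new technical work beyond quoting transversality is required.
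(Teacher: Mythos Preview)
Your approach is correct and matches the paper's: the statement is the case $z=0$ of Theorem~\ref{BC cor}, and the paper derives it (via the template of Corollary~\ref{example cor}) from statement~4 of Theorem~\ref{1d thm} in exactly the way you describe. One small correction on the packaging: Theorem~\ref{1d thm} does not take ``divergence plus transversality'' as separate user-supplied hypotheses---transversality is already absorbed into the conclusion interval $(1/l,\alpha(\mathcal{B}_\Gamma))$, so the only external input is the bound $\alpha(\mathcal{B}_{\{-1,0,1\}})>0.668$ from \cite{ShmSol2}, while the condition on $\Psi$ is membership in $\Upsilon_\m$ (i.e.\ $\Psi$ weakly decaying and equivalent to $(\m,h)$ for some $h\in H$), which for $h(n)=1/n$ is slightly stronger than mere divergence but is easily checked using $\sum_{n\le N}1/n\sim\log N$.
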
 Theorem \ref{precise result} allows us to answer the question stated in our second goal in the affirmative. See the discussion in Section \ref{new methods} for a more precise explanation.  Theorem \ref{Colette thm} addresses the third goal. It shows that if we are given some measure $\m$, and our iterated function system satisfies a strong version of Khintchine's theorem with respect to $\m$, then the pushforward of $\m$ must be absolutely continuous. Moreover, we demonstrate with several examples that this strong version of Khintchine's theorem is not equivalent to the absolute continuity of the pushforward measure.

In the rest of this introduction we provide some more background to this topic, and introduce the limsup sets that will be our main object of study.

\subsection{Attractors generated by iterated function systems}
We call a map $\phi:\mathbb{R}^d\to\mathbb{R}^d$ a contraction if there exists $r\in (0,1)$ such that $|\phi(x)-\phi(y)|\leq r|x-y|$ for all $x,y\in\mathbb{R}^d$. We call a finite set of contractions an iterated function system or IFS for short. A well known result due to Hutchinson \cite{Hut} states that given an IFS $\Phi=\{\phi_i\}_{i=1}^l,$ then there exists a unique, non-empty, compact set $X$ satisfying $$X=\bigcup_{i=1}^l\phi_i(X).$$ We call $X$ the attractor generated by $\Phi$. When an IFS satisfies $\phi_{i}(X)\cap \phi_{j}(X)=\emptyset$ for all $i\neq j$, or is such that there exists an open set $O\subset \mathbb{R}^d,$ for which $\phi_i(O)\subset O$ for all $i$ and $\phi_{i}(O)\cap \phi_{j}(O)=\emptyset$ for all $i\neq j,$ then many important properties of $X$ can be determined (see \cite{Fal1}). This latter property is referred to as the open set condition. Without these separation assumptions determining properties of the attractor can be significantly more complicated. 

The study of attractors generated by iterated function systems is classical within fractal geometry. One of the most important problems in this area is to determine the metric properties of attractors generated by overlapping iterated function systems. To understand the properties of an attractor $X,$ in both the overlapping case and non-overlapping case, it is useful to study measures supported on $X$. A particularly distinguished role is played by the measures described below, that are in a sense dynamically defined. 

Let $\pi:\{1,\ldots,l\}^{\mathbb{N}}\to X$ be given by $$\pi((a_j)_{j=1}^{\infty}):=\lim_{n\to\infty}(\phi_{a_1}\circ \cdots \circ \phi_{a_n})(\mathbf{0}).$$ The map $\pi$ is surjective and is also continuous when $\{1,\ldots,l\}^{\mathbb{N}}$ is equipped with the product topology. The sequence space $\{1,\ldots,l\}^{\mathbb{N}}$ comes with a natural left-shift map $\sigma:\{1,\ldots,l\}^{\mathbb{N}}\to\{1,\ldots,l\}^{\mathbb{N}}$ defined via the equation $\sigma((a_j)_{j=1}^{\infty})=(a_{j+1})_{j=1}^{\infty}$. Given a finite word $\a=a_1\cdots a_n,$ we associate its cylinder set
$$[\a]:=\{(b_j)\in\{1,\ldots,l\}^\N:b_1\cdots b_n=a_1\cdots a_n\}.$$
 We call a measure $\m$ on $\{1,\ldots,l\}^{\mathbb{N}}$ $\sigma$-invariant if $\m([\a])=\m(\sigma^{-1}([\a]))$ for all finite words $\a$. We call a probability measure $\m$ ergodic if $\sigma^{-1}(A)=A$ implies $\m(A)=0$ or $\m(A)=1$.  Given a measure $\m$ on $\{1,\ldots,l\}^{\mathbb{N}},$ we obtain the corresponding pushforward measure $\mu$ supported on $X$ using the map $\pi,$ i.e. $\mu=\m\circ \pi^{-1}$. 
 
 We define the dimension of a measure $\mu$ on $\mathbb{R}^d$ to be $$\dim \mu=\inf \{\dim_{H}(A):\mu(A)>0\}.$$ Note that for any pushforward measure $\mu$ we have $\dim \mu\leq \dim_{H}(X).$ The problem of determining $\dim_{H}(X)$ is often solved by finding a $\sigma$-invariant ergodic probability  measure whose pushforward has dimension equal to some known upper bound for $\dim_{H}(X)$. This approach is especially useful when the iterated function system is overlapping.

When studying attractors of iterated function systems, one of the guiding principles is that if there is no obvious mechanism preventing an attractor from satisfying a certain property, then one should expect this property to be satisfied. This principle is particularly prevalent in the many conjectures which state that under certain reasonable assumptions, the Hausdorff dimension of $X$, and the Hausdorff dimension of dynamically defined pushforward measures supported on $X$, should equal the value predicted by a certain formula. A particular example of this phenomenon is provided by self-similar sets and self-similar measures. We call a contraction $\phi$ a similarity if there exists $r\in(0,1)$ such that $|\phi(x)-\phi(y)|=r|x-y|$ for all $x,y\in \mathbb{R}^d$. If an IFS $\Phi$ consists of similarities then it is known that
\begin{equation}
\label{dimension upper}
\dim_{H}(X)\leq \min \{\dim_{S}(\Phi),d\},
\end{equation}where $\dim_{S}(\Phi)$ is the unique solution to $\sum_{i=1}^l r_i^s=1$. Given a probability vector $\textbf{p}=(p_1,\ldots,p_l),$ we let $\m_{\textbf{p}}$ denote the corresponding Bernoulli measure supported on $\{1,\ldots,l\}^{\mathbb{N}}.$ If an IFS consists of similarities, then we define the self-similar measure corresponding to $\textbf{p}$ to be  $\mu_{\textbf{p}}:=\m_{\textbf{p}}\circ \pi^{-1}.$ The measure $\mu_{\textbf{p}}$ can also be defined as the unique measure satisfying the equation $$\mu_{\textbf{p}}=\sum_{i=1}^l p_i \cdot \mu_{\textbf{p}}\circ \phi_{i}^{-1}.$$ For any self-similar measure $\mu_{\textbf{p}},$ we have the upper bound:
\begin{equation}
\label{expected dimensiona}
\dim \mu_{\textbf{p}}\leq \min\left\{\frac{\sum_{i=1}^l p_i\log p_i}{\sum_{i=1}^l p_i\log r_i},d\right\}.
\end{equation} For an appropriate choice of $\textbf{p},$ it can be shown that equality in \eqref{expected dimensiona} implies equality in \eqref{dimension upper}. An important conjecture states that if an IFS consisting of similarities avoids certain degenerate behaviour, then we should have equality in \eqref{expected dimensiona} for all $\textbf{p},$ and therefore equality in \eqref{dimension upper} (see \cite{Hochman,Hochman2}). In $\mathbb{R}$ this conjecture can be stated succinctly as: If an IFS does not contain an exact overlap, then we should have equality in \eqref{expected dimensiona} for all $\textbf{p}$. Recall that an IFS is said to contain an exact overlap if there exists two distinct words $\a=a_1\cdots a_n$ and $\b=b_1\cdots b_m$ such that $$\phi_{a_1}\circ \cdots \circ \phi_{a_n}=\phi_{b_1}\circ \cdots \circ \phi_{b_m}.$$ In \cite{Hochman} and \cite{Hochman2} significant progress was made towards this conjecture. In particular, in \cite{Hochman} it was shown that for an IFS consisting of similarities acting on $\mathbb{R},$ if strict inequality holds in \eqref{expected dimensiona} for some $\textbf{p},$ then
$$\lim_{n\to\infty}\frac{-\log \Delta_n}{n}=\infty,$$ where $$\Delta_n:=\min_{\a\neq \b\in \{1,\ldots,l\}^n}|(\phi_{a_1}\circ \cdots \circ \phi_{a_n})(0)-(\phi_{b_1}\circ \cdots \circ \phi_{b_n})(0)|.$$ Using this statement, it can be shown that if the parameters defining the IFS are algebraic, and there are no exact overlaps, then equality holds in \eqref{expected dimensiona} for all $\textbf{p},$ and therefore also in \eqref{dimension upper} . 

In addition to expecting equality to hold typically in \eqref{expected dimensiona}, it is expected that if $$\frac{\sum p_i\log p_i}{\sum p_i\log r_i}>d,$$ and the IFS avoids certain obstacles, then $\mu_{\textbf{p}}$ will be absolutely continuous with respect to $d$-dimensional Lebesgue measure. A standard technique for proving an attractor has positive $d$-dimensional Lebesgue measure is to show that there is an absolutely continuous pushforward measure. Note that by a recent result of Simon and V\'{a}g\'{o} \cite{SimVag}, it follows that the list of mechanisms leading to the failure of absolute continuity is strictly greater than the list of mechanisms leading to the failure of equality in \eqref{expected dimensiona}.

The usual methods for gauging how an iterated function system overlaps are to determine whether the Hausdorff dimension of the attractor satisfies a certain formula, to determine whether the dimension of pushforwards of dynamically-defined measures satisfy a certain formula, and to determine whether these measures are absolutely continuous with respect to the $d$-dimensional Lebesgue measure. If an IFS did not exhibit the expected behaviour, then this would be indicative of something degenerate within our IFS that was either preventing $X$ from being well spread out within $\mathbb{R}^d$, or was forcing mass from the pushforward measure into some small subregion of $\mathbb{R}^d$. This method for gauging how an iterated function system overlaps has its limitations. If each of the expected behaviours described above occurs for two distinct IFSs within a family, then we have no method for distinguishing their overlapping behaviour. The approach put forward in this paper shows how we can still make a distinction (see the discussion in Section \ref{new methods}). As previously stated this approach is inspired by results from Diophantine approximation and metric number theory. We now take the opportunity to briefly recall some background from this area.

\subsection{Diophantine approximation and metric number theory}
Given $\Psi:\mathbb{N}\to[0,\infty)$ we can define a limsup set defined in terms of neighbourhoods of rationals as follows. Let

$$J(\Psi):=\Big\{x\in\mathbb{R}:\Big|x-\frac{p}{q}\Big|\leq \Psi(q) \textrm{ for i.m. } (p,q)\in \mathbb{Z}\times\mathbb{N}\Big\}.$$ Here and throughout we use i.m. as a shorthand for infinitely many. If $x\in J(\Psi)$ we say that $x$ is $\Psi$-approximable. An immediate application of the Borel-Cantelli lemma implies that if $\sum_{q=1}^{\infty}q\cdot \Psi(q)<\infty,$ then $J(\Psi)$ has zero Lebesgue measure. The following theorem due to Khintchine shows that a partial converse to this statement holds. This theorem motivates much of the present work.

\begin{thm}[Khintchine \cite{Khit}]
\label{Khintchine}
If $\Psi:\mathbb{N}\to [0,\infty)$ is decreasing and $$\sum_{q=1}^{\infty}q\cdot \Psi(q)=\infty,$$ then Lebesgue almost every $x\in \mathbb{R}$ is $\Psi$-approximable.
\end{thm}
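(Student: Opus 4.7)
The plan is to follow the classical recipe: reduce to the unit interval, express the set of $\Psi$-approximable points as a limsup of explicit unions of intervals, apply a divergence Borel--Cantelli argument based on a quasi-independence estimate, and finish with a zero-one law. Since $J(\Psi)$ is invariant under translation by integers, it suffices to show that Lebesgue-almost every $x\in [0,1]$ lies in $J(\Psi)$. For each $q\in\mathbb{N}$ I would set
$$A_q := \bigcup_{p=0}^{q} \left[\frac{p}{q}-\Psi(q),\, \frac{p}{q}+\Psi(q)\right]\cap [0,1],$$
so that $J(\Psi)\cap [0,1]\supseteq \limsup_{q\to\infty}A_q$. The goal is reduced to showing $|\limsup_{q}A_q|=1$.

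A direct count gives $|A_q|\geq \min(1,q\Psi(q))$ up to an absolute constant (the small overlap inside $A_q$ caused by boundary effects is negligible). Excluding the trivial case in which $|A_q|=1$ for infinitely many $q$, the hypothesis $\sum_q q\Psi(q)=\infty$ then yields $\sum_q |A_q|=\infty$. The technical heart of the argument is a quasi-independence estimate of the form
$$|A_p\cap A_q|\leq C\,|A_p|\cdot |A_q|\qquad (p\neq q),$$
where $C$ is an absolute constant. This is obtained by counting pairs of numerators $(a,b)$ with $|a/p-b/q|\leq \Psi(p)+\Psi(q)$: since distinct rationals with denominators $p$ and $q$ are separated by at least $1/(pq)$, both the number of such pairs and the length of each resulting interval intersection can be estimated explicitly. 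Feeding this bound into the Chung--Erd\H{o}s (or Paley--Zygmund) inequality applied to the partial sums $S_N=\sum_{q\leq N}\mathbf{1}_{A_q}$ gives $|\limsup_{q}A_q|\geq 1/C>0$.

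To upgrade positive measure to full measure I would invoke a zero-one law. Viewing $J(\Psi)\cap[0,1]$ as a subset of $\mathbb{R}/\mathbb{Z}$, it is invariant under rotation by every rational; by the ergodicity of irrational rotations (or equivalently by Cassels's direct zero-one argument for such limsup sets), its Lebesgue measure must lie in $\{0,1\}$, so positivity forces the conclusion.

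The step I expect to be the main obstacle is the quasi-independence estimate, and this is precisely where the monotonicity hypothesis enters the proof: for $p<q$ one wants to replace $\Psi(p)+\Psi(q)$ by $2\Psi(p)$ in the key count, so that the cross terms in the second moment are comparable to the diagonal ones. Without monotonicity, constant-factor quasi-independence genuinely fails, which is exactly why the (unrestricted) Duffin--Schaeffer problem is a much deeper question and cannot be handled by this route.
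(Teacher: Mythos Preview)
The paper does not prove this theorem; it is stated with a citation to Khintchine's original work and used only as motivation. So there is no ``paper's own proof'' to compare against, and your proposal must be judged on its own terms.

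Your overall architecture (reduce to $[0,1]$, second-moment/Borel--Cantelli for positive measure, zero-one law for full measure) is the standard one and is sound. However, the pairwise quasi-independence estimate $|A_p\cap A_q|\le C\,|A_p|\,|A_q|$ that you claim is \emph{false} in general, and this is a genuine gap. The problem is the pairs $(a,b)$ with $a/p=b/q$, of which there are $\gcd(p,q)+1$ in $[0,1]$; your separation remark ``distinct rationals with denominators $p$ and $q$ are at least $1/(pq)$ apart'' says nothing about these coincident rationals. Each such coincidence contributes an interval of length $2\Psi(\max(p,q))$ to $A_p\cap A_q$, so for $p<q$ one has an unavoidable extra term of order $\gcd(p,q)\Psi(q)$. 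For a concrete failure take $\Psi(n)=1/(n\log n)$ and $q=2p$: then $|A_p|\,|A_q|\asymp(\log p)^{-2}$, while the $p+1$ common rationals already give $|A_p\cap A_q|\gtrsim (\log p)^{-1}$, so the ratio blows up.

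What actually works is the \emph{averaged} estimate: one shows
\[
|A_p\cap A_q|=O\bigl(|A_p|\,|A_q|+\gcd(p,q)\,\Psi(q)\bigr)\qquad(p<q),
\]
and then proves the arithmetic fact $\sum_{p<q\le N}\gcd(p,q)\Psi(q)=O\bigl(\sum_{q\le N}q\Psi(q)\log N\bigr)$ (or a dyadic-block variant), which after inserting into Kochen--Stone still yields positive measure. Monotonicity of $\Psi$ is used both to bound $\Psi(p)+\Psi(q)\le 2\Psi(p)$ in the pair count and to make the dyadic grouping work. Your sketch of the zero-one step is fine (invariance under all rational rotations plus Lebesgue density, or Cassels/Gallagher directly), but the heart of the argument---the overlap estimate---needs the correction above.
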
Results analogous to Khintchine's theorem are ubiquitous in Diophantine approximation and metric number theory. We refer the reader to \cite{BDV} for more examples. 

By an example of Duffin and Schaeffer, it can be seen that it is not possible to remove the decreasing assumption from Theorem \ref{Khintchine}. Indeed in \cite{DufSch} they constructed a $\Psi$ such that $\sum_{q=1}^{\infty}q\cdot \Psi(q)=\infty,$ yet $J(\Psi)$ has zero Lebesgue measure. This gave rise to a conjecture known as the Duffin-Schaeffer conjecture which was recently proved by Koukoulopoulos and Maynard in \cite{KouMay}.

\begin{thm}[\cite{KouMay}]
If $\Psi:\mathbb{N}\to [0,\infty)$ satisfies $$\sum_{q=1}^{\infty}\varphi(q)\cdot\Psi(q)=\infty,$$ then Lebesgue almost every $x\in \mathbb{R}$ is $\Psi$-approximable.
\end{thm}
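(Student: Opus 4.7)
The plan is to reduce to the standard Duffin--Schaeffer set-up and then execute the Koukoulopoulos--Maynard strategy. First, since $J(\Psi)$ is $\mathbb{Z}$-periodic it suffices to prove that almost every $x\in[0,1]$ is $\Psi$-approximable; since $J(\Psi)$ is a tail event in $q$, Gallagher's zero--one law reduces the task further to showing merely that $J(\Psi)$ has positive Lebesgue measure in $[0,1]$. After the standard truncation $\Psi(q)\leq 1/(2q)$, for each $q$ one sets
\[
E_q := \bigcup_{\substack{1\leq a\leq q\\ \gcd(a,q)=1}} \Big[\tfrac{a}{q}-\Psi(q),\tfrac{a}{q}+\Psi(q)\Big] \pmod 1,
\]
so that $\lambda(E_q)\asymp \varphi(q)\Psi(q)$ and $\limsup E_q\subseteq J(\Psi)$. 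The divergence hypothesis together with the Chung--Erd\H{o}s inequality then reduces everything to a second-moment estimate: it is enough to exhibit an absolute $C$ and arbitrarily large $N$ with
\[
\sum_{q,q'\leq N}\lambda(E_q\cap E_{q'}) \leq C\Big(\sum_{q\leq N}\lambda(E_q)\Big)^{2}.
\]

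The content of the theorem is therefore an overlap bound for the sets $E_q$. An elementary counting argument expresses $\lambda(E_q\cap E_{q'})$ in terms of pairs of coprime residues $a\bmod q$, $a'\bmod{q'}$ with $|a/q-a'/q'|$ small; after stratifying by $d=\gcd(q,q')$ this becomes an arithmetic sum whose dominant contribution is controlled by the divisor structure of $qq'$. The main obstacle, recognised already by Erd\H{o}s and Vaaler, is that certain anomalous pairs $(q,q')$ whose gcd concentrates totient mass can make the correlation inflate on a sparse set; the entire depth of Koukoulopoulos--Maynard lies in showing that such anomalous pairs cannot dominate the sum.

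The strategy I would follow is to encode the set of offending pairs as the edges of a weighted \emph{GCD graph} on the integers $[N/2,N]$, and then to run an iterative refinement on this graph. At each step one passes to a subgraph of comparable edge density on which the prime factorisations of the vertices behave more rigidly: the number of prime factors, the smooth/rough decomposition, and the typical size of $\gcd(q,q')$ are successively pinned down to dyadic ranges. The refinements are calibrated so that anatomy-of-integers inputs (divisor function bounds, Hal\'asz-type estimates for multiplicative functions, Selberg--Delange) provide quantitative savings matching each step, and after boundedly many iterations one reaches a subgraph on which a compression/edge-swapping argument forces the desired bound on the total edge weight. The hard part is engineering this iteration so that every parameter improves simultaneously and no anomalous structure survives. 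Once the overlap inequality is in hand, Chung--Erd\H{o}s gives $\lambda(\limsup E_q)>0$, Gallagher's law upgrades this to full measure, and the theorem follows upon recalling $\limsup E_q\subseteq J(\Psi)$ and extending from $[0,1]$ to $\R$ by periodicity.
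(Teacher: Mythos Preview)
The paper does not prove this theorem at all. It is stated in the introduction purely as background: the Duffin--Schaeffer conjecture is quoted as a recent result of Koukoulopoulos and Maynard \cite{KouMay}, with no proof or sketch given, simply to contextualise Khintchine's theorem and motivate the paper's own Khintchine-type questions for iterated function systems. So there is no ``paper's own proof'' to compare your proposal against.

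Your outline is a faithful high-level description of the Koukoulopoulos--Maynard argument (reduction to second moments via Chung--Erd\H{o}s and Gallagher, then control of anomalous overlaps through the GCD-graph iteration), but it is only a strategic sketch: the steps where you write ``the refinements are calibrated so that anatomy-of-integers inputs \ldots\ provide quantitative savings'' and ``every parameter improves simultaneously'' are precisely where the entire difficulty lives, and nothing in your proposal supplies those calibrations. As a pointer to the literature it is fine; as a proof it is not, and in any case the paper neither expects nor provides one.
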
 
Here $\varphi$ is the Euler totient function. 

By studying the Lebesgue measure of $J(\Psi)$ for those $\Psi$ satisfying $\sum_{q=1}^{\infty}q\cdot \Psi(q)=\infty,$ we obtain a quantitative description of how the rationals are distributed within the reals. The example of Duffin and Schaeffer demonstrates that there exists some interesting non-trivial interactions occurring between fractions of different denominator.  

\subsection{Two families of limsup sets}
\label{two families}
Before defining the limsup sets we study in this paper, it is necessary to introduce some notation. In what follows we let $$\D:=\{1,\ldots,l\},\quad\, \D^{*}:=\bigcup_{j=1}^{\infty}\{1,\ldots,l\}^j,\quad \, \D^{\mathbb{N}}:=\{1,\ldots,l\}^{\mathbb{N}}.$$ Given an IFS $\Phi=\{\phi_i\}_{i\in \D}$ and $\a=a_1\cdots a_n\in \D^{*},$ let $$\phi_\a:=\phi_{a_1}\circ \cdots \circ \phi_{a_n}.$$ Let $|\a|$ denote the length of $\a\in \D^*$. If $\Phi$ has attractor $X,$ then for each $\a\in \D^*$ let $$X_{\a}:=\phi_{\a}(X).$$

\subsubsection{The set $W_{\Phi}(z,\Psi)$}
Given an IFS $\Phi$, $\Psi:\D^*\to [0,\infty),$ and an arbitrary $z\in X,$ we let $$W_{\Phi}(z,\Psi):=\Big\{x\in \mathbb{R}^d: |x-\phi_\a(z)|\leq \Psi(\a) \textrm{ for i.m. }\a \in \D^*\Big\}.$$ Throughout this paper we will always have the underlying assumption that $\Psi$ satisfies $$\lim_{n\to\infty}\max_{\a\in \D^n}\Psi(\a)= 0.$$ This condition guarantees $$W_{\Phi}(z,\Psi)\subseteq X.$$ The study of the metric properties of $W_{\Phi}(z,\Psi)$ will be one of the main focuses of this paper. Proceeding via analogy with Khintchine's theorem, it is natural to wonder what metric properties of $W_{\Phi}(z,\Psi)$ are encoded in the volume sum:
 \begin{equation}
\label{con/div}
\sum_{n=1}^{\infty}\sum_{\a\in \D^n}\Psi(\a)^{\dim_{H}(X)}.
\end{equation}It is an almost immediate consequence of the definition of Hausdorff measure that if we have convergence in \eqref{con/div}, then $\mathcal{H}^{\dim_{H}(X)}(W_{\Phi}(z,\Psi))=0$ for all $z\in X$. Given the results mentioned in the previous section, it is reasonable to expect that divergence in \eqref{con/div} might imply some metric property of $W_{\Phi}(z,\Psi)$ which demonstrates that a typical element of $X$ is contained in $W_{\Phi}(z,\Psi)$. A classification of those $\Psi$ for which divergence in \eqref{con/div} implies a typical element of $X$ is contained in $W_{\Phi}(z,\Psi)$ would provide a quantitative description of how the images of $z$ are distributed within $X$. This in turn provides a description of how the underlying iterated function system overlaps. This idea provides us with a new tool for describing the overlapping behaviour of iterated function systems. We refer the reader to Section \ref{new methods} for further discussions which demonstrate the utility of this idea.

The question of whether divergence in \eqref{con/div} implies a typical element of $X$ is contained in $W_{\Phi}(z,\Psi)$ was studied previously by the author in \cite{Bak2,Bak,Bakerapprox2}. Related work appears in \cite{LSV,PerRev,PerRevA}. In \cite{Bak2} the following theorem was proved:
\begin{thm}\cite[Theorem 1.4]{Bak2}
	\label{conformal theorem}
If $\Phi$ is a conformal iterated function system and satisfies the open set condition, then for any $z\in X,$ if $\theta:\mathbb{N}\to [0,\infty)$ is a decreasing function and satisfies $$\sum_{n=1}^{\infty} \sum_{\a\in\D^{n}} (Diam(X_\a)\theta(n))^{\dim_{H}(X)}=\infty,$$ then $\mathcal{H}^{\dim_{H}(X)}$-almost every $x\in X$ is contained in $W_{\Phi}(z,Diam(X_\a)\theta(|\a|)).$
\end{thm}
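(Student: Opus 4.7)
The plan is to combine three ingredients: Ahlfors regularity of the $s$-dimensional Hausdorff measure on $X$ (where $s=\dim_H(X)$), bounded distortion for conformal maps, and a divergence Borel--Cantelli argument executed level-by-level. Set $s:=\dim_H(X)$ and normalise $\mu:=\mathcal{H}^s|_X/\mathcal{H}^s(X)$. Under the conformal plus OSC assumption it is classical (Mauldin--Urba\'nski, Patzschke) that $0<\mathcal{H}^s(X)<\infty$ and that $\mu$ is Ahlfors $s$-regular, so $\mu(B(x,r)\cap X)\asymp r^s$ uniformly for $x\in X$ and $0<r\le \mathrm{diam}(X)$. Bounded distortion yields $\mu(\phi_\a(E))\asymp \mathrm{Diam}(X_\a)^s\,\mu(E)$ for any Borel $E\subseteq X$, uniformly in $\a\in\D^*$, and in particular $\sum_{\a\in\D^n}\mathrm{Diam}(X_\a)^s\asymp 1$ for all $n$, so the divergence hypothesis is equivalent to $\sum_{n\geq 1}\theta(n)^s=\infty$.

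Set $r_\a:=\mathrm{Diam}(X_\a)\theta(|\a|)$ and $E_n:=\bigcup_{\a\in \D^n}B(\phi_\a(z),r_\a)$, so that $W_{\Phi}(z,\mathrm{Diam}(X_\a)\theta(|\a|))\cap X=\limsup_n(E_n\cap X)$. Ahlfors regularity and the divergence hypothesis immediately give $\sum_n\mu(E_n)=\infty$; the task is to promote this to $\mu(\limsup_n E_n)=1$. The strategy is a localisation: fix $k\in\N$ and $\c\in \D^k$, and for $n\ge k$ write $\a=\c\b$ with $\b\in \D^{n-k}$. Bounded distortion gives $\mathrm{Diam}(X_{\c\b})\asymp \mathrm{Diam}(X_\c)\mathrm{Diam}(X_\b)$ and $\phi_{\c\b}(z)=\phi_\c(\phi_\b(z))$, so the $\phi_\c^{-1}$-image of $E_n\cap X_\c$ is comparable to the union of balls $B(\phi_\b(z),\mathrm{Diam}(X_\b)\theta(n))$ in $X$. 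The conditional sum then becomes $\sum_{m\geq 0}\sum_{\b\in\D^m}(\mathrm{Diam}(X_\b)\theta(m+k))^s\asymp \sum_{j\geq k}\theta(j)^s=\infty$, uniformly in $\c$.

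At this point I would invoke a divergence Borel--Cantelli / ubiquity argument on $(X,\mu)$: combined with a quasi-independence estimate for pairs $(\a,\a')$, the Paley--Zygmund / Chung--Erd\H{o}s inequality yields
\[\mu\bigl(X_\c\cap\limsup_n E_n\bigr)\ge \kappa\,\mu(X_\c)\]
for a constant $\kappa>0$ independent of $\c$ and $k$. Since this holds for every cylinder of every generation, the Lebesgue density theorem for the Ahlfors regular measure $\mu$ forces $\mu(\limsup_n E_n)=1$, which is the required conclusion.

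The main obstacle is the quasi-independence estimate, i.e.\ controlling $\sum_{\a,\a'}\mu\bigl(B(\phi_\a(z),r_\a)\cap B(\phi_{\a'}(z),r_{\a'})\bigr)$ over pairs $\a\in\D^n$, $\a'\in\D^m$. The cleanest route is to split pairs according to their longest common prefix $\d$ and estimate each block via the distortion scaling $\mu(\phi_\d(E))\asymp\mathrm{Diam}(X_\d)^s\mu(E)$; after pulling back by $\phi_\d^{-1}$, the balls sit in essentially disjoint cylinders (this is exactly where OSC enters), so one reduces to same-level single-cylinder estimates that are handled by Ahlfors regularity. Putting these blockwise bounds together and comparing with $(\sum_n\mu(E_n))^2$ delivers the required quasi-independence and completes the proof.
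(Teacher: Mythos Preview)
Your outline is sound and is essentially the route taken in the original reference \cite{Bak2}: work directly in $X\subset\R^d$ with the Ahlfors regular measure $\mu=\mathcal{H}^s|_X$, set up the divergence Borel--Cantelli over levels $E_n=\bigcup_{\a\in\D^n}B(\phi_\a(z),r_\a)$, localise to cylinders $X_\c$, and close with a density argument. The present paper, however, does not prove Theorem~\ref{conformal theorem} this way. Instead it proves the more general Theorem~\ref{overlapping conformal theorem} (no OSC required) by lifting everything to the symbolic space $\D^{\N}$: each ball $B(\phi_\a(z),r_\a)$ is replaced by a cylinder $[\a_\theta]$ with $\a_\theta=\a z_1\cdots z_{N(\a,\theta)}$ chosen so that $X_{\a_\theta}\subset B(\phi_\a(z),r_\a)$ and $\mathrm{Diam}(X_{\a_\theta})\asymp r_\a$, and the entire Kochen--Stone argument is then run for the Gibbs measure $\m_\Phi$ on $\D^{\N}$. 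The payoff of the symbolic lift is that cylinders are either nested or disjoint, so the pair estimate $\m_\Phi([\a_\theta]\cap E_m)$ splits into exactly two clean cases (according as $m\le |\a_\theta|$ or $m>|\a_\theta|$), and OSC is never invoked; Theorem~\ref{conformal theorem} then drops out because $\mu_\Phi\sim\mathcal{H}^s|_X$ under OSC. Your geometric approach is more direct for the OSC case but does not extend to overlapping systems.

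One point to sharpen in your sketch: the phrase ``the balls sit in essentially disjoint cylinders'' is not quite the mechanism. After pulling back by $\phi_\d^{-1}$ the \emph{balls} may well overlap; what OSC buys you is that the sets $X_{\a'}$ for $\a'\in\D^m$ are $\mu$-disjoint, so by Ahlfors regularity $\sum_{\a':\,\phi_{\a'}(z)\in B}\mathrm{Diam}(X_{\a'})^s\lesssim \mu(B)$ for any ball $B$, and hence $\sum_{\a'}\mu\bigl(B_\a\cap B_{\a'}\bigr)\lesssim \theta(m)^s r_\a^s$. You will also need the monotonicity of $\theta$ (as the paper does) to control the ``near'' pairs where $m-n$ is small compared with $-\log\theta(n)$; without it the geometric series over common-prefix length does not close.
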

Note that for a conformal iterated function system it is known that the open set condition implies $0<\mathcal{H}^{\dim_{H}(X)}(X)<\infty$ (see \cite{PRSS}). For the definition of a conformal iterated function system see Section \ref{self-conformal section}. Note that an iterated function system consisting of similarities is automatically a conformal iterated function system. In \cite[Theorem 6.1]{Bak2} it was also shown that if $\Phi$ is a conformal iterated function system and contains an exact overlap, then there exist many natural choices of $\Psi$ such that we have divergence in \eqref{con/div}, yet $\dim_{H}(W_{\Phi}(z,\Psi))<\dim_{H}(X).$ As such an exact overlap effectively prevents any Khintchine like behaviour.

In \cite{Bak} and \cite{Bakerapprox2} the author studied the family of IFSs $\Phi_{\lambda}:=\{\lambda x,\lambda x +1\},$ where $\lambda\in(1/2,1)$. For each element of this family the corresponding attractor is $[0,\frac{1}{1-\lambda}].$  In \cite{Bak} the author proved that if the reciprocal of $\lambda$ belongs to a special class of algebraic integers known as Garsia numbers, then for a general class of $\Psi$, divergence in \eqref{con/div} implies that for all $z\in [0,\frac{1}{1-\lambda}],$ Lebesgue almost every $x\in [0,\frac{1}{1-\lambda}]$ is contained in $W_{\Phi_{\lambda}}(z,\Psi)$. For more on this result and Garsia numbers we refer the reader to Section \ref{Examples} where this result is recovered using a different argument. The main result of \cite{Bak2} provides strong evidence to suggest that for a general class of $\Psi$, for a typical $\lambda\in (1/2,1),$ we should expect that divergence in \eqref{con/div} implies that Lebesgue almost every $x\in [0,\frac{1}{1-\lambda}]$ is contained in $W_{\Phi_{\lambda}}(z,\Psi)$. A consequence of the main result of \cite{Bak2} is that for Lebesgue almost every $\lambda\in (1/2,0.668),$ for all $z\in [0,\frac{1}{1-\lambda}],$ Lebesgue almost every  $x\in [0,\frac{1}{1-\lambda}]$ is contained in $W_{\Phi_{\lambda}}(z,\frac{\log |\a|}{2^{|\a|}})$. Note that the results in \cite{Bak} and \cite{Bakerapprox2} are phrased for $z=0$ but can easily be adapted to the case of arbitrary $z\in[0,\frac{1}{1-\lambda}]$.

\subsubsection{The set $U_{\Phi}(z,\m,h)$}
\label{auxillary sets}
Instead of studying the sets $W_{\Phi}(z,\Psi)$ directly it is more profitable to study a related family of auxiliary sets. These sets are interesting in their own right and are defined in terms of a measure $\m$ supported on $\D^{\mathbb{N}}$. Our approach doesn't work for all $\m$ and we will require the following additional regularity assumption.

Given a probability measure $\m$ supported on $\D^{\mathbb{N}},$ we let
$$c_{\m}:={\textrm{ess}\inf}\inf_{k\in\mathbb{N}}\frac{\m([a_1\cdots a_{k+1}])}{\m([a_1\cdots a_k])}.$$ We say that $\m$ is slowly decaying if $c_{\m}>0$. If $\m$ is slowly decaying, then for $\m$-almost every $(a_j)\in \D^{\mathbb{N}},$ we have $$\frac{\m([a_1 \ldots a_{k+1}])}{\m([a_1\ldots a_k])}\geq c_{\m},$$ for all $k\in\mathbb{N}.$ Examples of slowly decaying measures include Bernoulli measures, and Gibbs measures for H\"{o}lder continuous potentials (see \cite{Bow}). In fact any measure with the quasi-Bernoulli property is slowly decaying. 

Given a slowly decaying probability measure $\m$, for each $n\in\mathbb{N}$ we let $$L_{\m,n}:=\{\a\in\D^*: \m([a_1\cdots a_{|\a|}])\leq c_{\m}^n<\m([a_1\cdots a_{|\a|-1}])\}$$ and $$R_{\m,n}:=\#L_{\m,n}.$$ The elements of $L_{\m,n}$ are disjoint and the union of their cylinders has full $\m$ measure. Importantly, by the slowly decaying property, the cylinders corresponding to elements of $L_{\m,n}$ have comparable measure up to a multiplicative constant. Note that when $\m$ is the uniform $(1/l,\ldots,1/l)$ Bernoulli measure the set $L_{\m,n}$ is simply $\D^n$. 

Given $z\in X$ and a slowly decaying probability measure $\m,$ we let $$Y_{\m,n}(z):=\{\phi_{\a}(z)\}_{\a\in L_{\m,n}}.$$ Obtaining information on how the elements of $Y_{\m,n}(z)$ are distributed within $X$ for different values of $n$ will occupy a large part of this paper.

Given a slowly decaying measure $\m,$ an IFS $\Phi$, $h:\mathbb{N}\to[0,\infty),$ and $z\in X,$ we can define a limsup set as follows. Let 
$$U_{\Phi}(z,\m,h):=\left\{x\in \mathbb{R}^d:x\in \bigcup_{\a\in L_{\m,n}}B\left(\phi_{\a}(z),(\m([\a])h(n))^{1/d}\right) \textrm{ for i.m. } n\in \N\right\}.$$ Here and throughout $B(x,r)$ denotes the closed Euclidean ball centred at $x$ with radius $r$.
Throughout this paper we will always assume that $\m$ is non-atomic and $h$ is a bounded function. These properties ensure $$U_{\Phi}(z,\m,h)\subseteq X.$$ In this paper we study the metric properties of the sets $U_{\Phi}(z,\m,h)$ for parameterised families of IFSs when the underlying attractor typically has positive $d$-dimensional Lebesgue measure. In which case, for the set $U_{\Phi}(z,\m,h),$ the appropriate volume sum that we expect to determine the Lebesgue measure of $U_{\Phi}(z,\m,h)$ is $$\sum_{n=1}^{\infty}h(n).$$ It can be shown using the Borel-Cantelli lemma that if $\sum_{n=1}^{\infty}h(n)<\infty,$ then $U_{\Phi}(z,\m,h)$ has zero Lebesgue measure. For us the interesting question is: When does $\sum_{n=1}^{\infty}h(n)=\infty$ imply that $U_{\Phi}(z,\m,h)$ has positive or full Lebesgue measure?

The sets $U_{\Phi}(z,\m,h)$ are easier to work with than the sets $W_{\Phi}(z,\Psi)$. In particular we can use properties of the measure $\m$ to aid with our analysis. As we will see, the sets $U_{\Phi}(z,\m,h)$ can be used to prove results for the sets $W_{\Phi}(z,\Psi),$ but only under the following additional assumption. Given a slowly decaying measure $\m$ and $h:\mathbb{N}\to[0,\infty),$ we say that $\Psi$ is equivalent to $(\m,h)$  if
$$\Psi(\a)\asymp \big(\m([\a])h(n)\big)^{1/d}$$
for each $\a\in L_{\m,n}$ for all $n\in \N$. Here and throughout, for two real valued functions $f$ and $g$ defined on some set $S$, we write $f\asymp g$ if there exists a positive constant $C$ such that $$C^{-1}\cdot g(x)\leq f(x)\leq Cg(x)$$ for all $x\in S$. As we will see, if $\Psi$ is equivalent to $(\m,h)$ and $U_{\Phi}(z,\m,h)$ has positive Lebesgue measure, then $W_{\Phi}(z,\Psi)$ will also have positive Lebesgue measure (see Lemma \ref{arbitrarily small}).

\section{Statement of results}

\label{statement of results}

Before stating our theorems we need to define the entropy of a measure $\m$ supported on $\D^{\mathbb{N}}$ and introduce a class of functions that are the natural setting for some of our results.

For any $\sigma$-invariant measure $\m$ supported on $\D^{\N},$ we define the entropy of $\m$ to be
	$$\mathfrak{h}(\m):=\lim_{n\to\infty}-\frac{1}{n}\sum_{\a\in\D^n}\m([\a])\log \m([\a]).$$ The entropy of a $\sigma$-invariant measure always exists. 

Given a set $B\subset \mathbb{N},$ we define the lower density of $B$ to be $$\underline{d}(B):=\liminf_{n\to\infty}\frac{\#\{1\leq j\leq n:j\in B\}}{n},$$ and the upper density of $B$ to be
$$\overline{d}(B):=\limsup_{n\to\infty}\frac{\#\{1\leq j\leq n:j\in B\}}{n}.$$ Given  $\epsilon>0,$ let
$$H_{\epsilon}^*:=\left\{h:\mathbb{N}\to[0,\infty):\sum_{n\in B}h(n)=\infty\,, \forall B\subseteq \N \textrm{ s.t. } \underline{d}(B)>1-\epsilon\right\}$$ and
 $$H_{\epsilon}:=\left\{h:\mathbb{N}\to[0,\infty):\sum_{n\in B}h(n)=\infty\,, \forall B\subseteq \N \textrm{ s.t. } \overline{d}(B)>1-\epsilon\right\}.$$ We also define
 \begin{equation}
 \label{H^* functions}
  H^*:=\bigcup_{\epsilon\in(0,1)}H^*_{\epsilon}
 \end{equation}and 
\begin{equation}
\label{H functions} H:=\bigcup_{\epsilon\in(0,1)}H_{\epsilon}.
\end{equation}For any $\epsilon>0$ we have $H_{\epsilon}\subset H_{\epsilon}^*.$ Therefore $H\subset H^*.$ It can be shown that $H^*$ contains all decreasing functions satisfying $\sum_{n=1}^{\infty}h(n)=\infty$. Most of the time we will be concerned with the class of functions $H$. The class $H^*$ will only appear in Theorem \ref{precise result}.

We say that a function $\Psi:\D^*\to [0,\infty)$ is weakly decaying if $$\inf_{\a\in \D^*}\min_{i\in\D}\frac{\Psi(i\a)}{\Psi(\a)}>0.$$ Given a measure $\m$ supported on $\D^{\N},$ we let $$\Upsilon_{\m}:=\left\{\Psi:\D^*\to[0,\infty): \Psi \textrm{ is weakly decaying and equivalent to }(\m,h) \textrm{ for some }h\in H\right\}.$$ As we will see, the weakly decaying property will allow us to obtain full measure statements.
\subsection{Parameterised families with variable contraction ratios}
Let $D:=\{d_1,\ldots, d_{l}\}$ be a finite set of real numbers. To each $\lambda\in(0,1),$ we associate the iterated function system $$\Phi_{\lambda,D}:=\left\{\phi_i(x)=\lambda x + d_i\right\}.$$ It is straightforward to check that the corresponding attractor for $\Phi_{\lambda,D}$ is $$X_{\lambda,D}:=\left\{\sum_{j=0}^{\infty}d_j\lambda^j: d_j\in D\right\},$$ and the projection map $\pi_{\lambda,D}:\D^{\mathbb{N}}\to X_{\lambda,D}$ takes the form $$\pi_{\lambda,D}((a_j)_{j=1}^{\infty})=\sum_{j=1}^{\infty}d_{a_{j}}\lambda^{j-1}.$$ To study this family of iterated function systems, it is useful to study the set $\Gamma:=D-D$ and the corresponding class of power series $$\mathcal{B}_{\Gamma}:=\Big\{g(x)=\sum_{j=0}^{\infty}g_jx^j:g_j\in \Gamma\Big\}.$$ To each $\mathcal{B}_{\Gamma}$ we associate the set $$\Lambda(\mathcal{B}_{\Gamma}):=\left\{\lambda\in(0,1):\exists g\in \mathcal{B}_{\Gamma}, g\not\equiv 0, g(\lambda)=g'(\lambda)=0\right\}.$$ In other words, $\Lambda(\mathcal{B}_{\Gamma})$ is the set of $\lambda\in(0,1)$ that can be realised as a double zero for a non-trivial function in $\mathcal{B}_{\Gamma}.$ We let $$\alpha(\mathcal{B}_{\Gamma})=\inf \Lambda(\mathcal{B}_{\Gamma}),$$ if $\Lambda(\mathcal{B}_{\Gamma})\neq \emptyset,$ and let $\alpha(\mathcal{B}_{\Gamma})=1$ otherwise. 

These families of iterated function systems were originally studied by Solomyak in \cite{Sol}. He was interested in the absolute continuity of self-similar measures. In particular, he was interested in the pushforward of the uniform $(1/l,\ldots,1/l)$ Bernoulli measure. We denote this measure by $\mu_{\lambda,D}.$ The main result of \cite{Sol} is the following theorem.
\begin{thm}
\label{Solomyak transverality theorem}
For Lebesgue almost every $\lambda\in(1/l,\alpha(\mathcal{B}_{\Gamma})),$ the measure  $\mu_{\lambda,D}$ is absolutely continuous and has a density in $L^{2}(\mathbb{R})$. 
\end{thm}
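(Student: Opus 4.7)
My plan is to establish the result via the classical energy/transversality method. Recall that a finite compactly supported measure $\mu$ on $\mathbb{R}$ is absolutely continuous with density in $L^{2}(\mathbb{R})$ provided the lower energy $\mathcal{E}(\mu):=\int_{\R}\liminf_{r\to 0^{+}}\frac{\mu(B(x,r))}{2r}\,d\mu(x)$ is finite. Fixing an arbitrary compact subinterval $I\subset(1/l,\alpha(\mathcal{B}_{\Gamma}))$, I would show
\begin{equation*}
\int_{I}\mathcal{E}(\mu_{\lambda,D})\,d\lambda<\infty,
\end{equation*}
which gives $\mathcal{E}(\mu_{\lambda,D})<\infty$, and hence the desired conclusion, for Lebesgue almost every $\lambda\in I$. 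Since $(1/l,\alpha(\mathcal{B}_{\Gamma}))$ is exhausted by countably many such intervals, this suffices.

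Applying Fatou's lemma, then Fubini, and pushing the measure $\mu_{\lambda,D}=\m\circ\pi_{\lambda,D}^{-1}$ (where $\m$ is the uniform Bernoulli measure) back to sequence space, the integral above is bounded by
\begin{equation*}
\liminf_{r\to 0^{+}}\frac{1}{2r}\int_{I}\int_{\D^{\N}}\int_{\D^{\N}}\mathbf{1}_{\{|\pi_{\lambda,D}(\a)-\pi_{\lambda,D}(\b)|\leq r\}}\,d\m(\a)\,d\m(\b)\,d\lambda.
\end{equation*}
I would partition the pairs $(\a,\b)$ according to the first index $k_{0}\geq 1$ where they disagree. Conditional on such a first disagreement, $\pi_{\lambda,D}(\a)-\pi_{\lambda,D}(\b)=\lambda^{k_{0}-1}\tilde{g}(\lambda)$, where $\tilde{g}(\lambda)=(d_{a_{k_{0}}}-d_{b_{k_{0}}})+\sum_{j>k_{0}}(d_{a_{j}}-d_{b_{j}})\lambda^{j-k_{0}}$ lies in $\mathcal{B}_{\Gamma}$ and, crucially, satisfies $|\tilde{g}(0)|\geq\gamma_{0}:=\min\{|d-d'|:d,d'\in D,\,d\neq d'\}>0$.

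The essential analytic ingredient, and the main obstacle, is a uniform transversality estimate: for any compact $I\subset(0,\alpha(\mathcal{B}_{\Gamma}))$ there exists $C=C(I,\gamma_{0})$ such that for every $g\in\mathcal{B}_{\Gamma}$ with $|g(0)|\geq\gamma_{0}$,
\begin{equation*}
\mathrm{Leb}\{\lambda\in I:|g(\lambda)|\leq s\}\leq Cs\quad\text{for all }s>0.
\end{equation*}
This is proved by a compactness/contradiction argument: the defining property of $\alpha(\mathcal{B}_{\Gamma})$ forbids the simultaneous vanishing of $g$ and $g'$ at any $\lambda_{0}\in I$, and the family $\{g\in\mathcal{B}_{\Gamma}:|g(0)|\geq\gamma_{0}\}$ is normal (hence precompact in the topology of uniform convergence on compacta in a disc strictly containing $I$), so a limiting argument would produce a nonzero element of $\mathcal{B}_{\Gamma}$ with a double zero in $I$, contradicting the choice of $I$. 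A standard mean value argument at each zero of $g$ in $I$ then upgrades the pointwise bound $|g'(\lambda_{0})|\geq\delta>0$ to the desired Lebesgue estimate.

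Granted this transversality bound, apply it with $s=r/\lambda^{k_{0}-1}$, using $\lambda\geq\lambda_{*}:=\inf I>1/l$. Since the $\m\otimes\m$-measure of the set of pairs first disagreeing at position $k_{0}$ is at most $l^{-(k_{0}-1)}$, the contribution from level $k_{0}$ is bounded by $Cr(l\lambda_{*})^{-(k_{0}-1)}$. Summing the resulting geometric series, which converges precisely because $l\lambda_{*}>1$, and dividing by $2r$ before taking the $\liminf$, yields $\int_{I}\mathcal{E}(\mu_{\lambda,D})\,d\lambda<\infty$, completing the argument. Thus the whole proof hinges on the transversality lemma; the range $\lambda>1/l$ arises organically from the geometric summation, while the upper cutoff $\alpha(\mathcal{B}_{\Gamma})$ arises from the transversality hypothesis.
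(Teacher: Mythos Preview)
Your proof is correct and is essentially the Peres--Solomyak differentiation-of-measures argument from \cite{PerSol}; the transversality estimate you isolate is exactly the content of Lemmas~\ref{delta lemma} and~\ref{zero intervals} in the paper (which in turn are quoted from \cite{Sol}), so there is no obstacle there.

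However, you should be aware that the paper does \emph{not} prove Theorem~\ref{Solomyak transverality theorem}. It is stated as background, attributed to Solomyak \cite{Sol}, and used to motivate the author's own results. What the paper does prove, as a by-product of its methods, is the weaker special case that $\mu_{\lambda}\ll\L$ for Lebesgue almost every $\lambda\in(1/2,1)$ (the unbiased Bernoulli convolution, $D=\{-1,1\}$), with no $L^{2}$ claim. That argument, in Section~\ref{applications}, is genuinely different from yours: rather than bounding an energy integral, the author shows via Lemma~\ref{density separation lemma} that for almost every $\lambda$ the level-$n$ images $\{\phi_{\a}(z)\}_{\a\in\D^{n}}$ contain an $s/2^{n}$-separated subset of cardinality $\asymp 2^{n}$ for infinitely many $n$, and then invokes Proposition~\ref{absolute continuity} together with the pure-type dichotomy (Lemma~\ref{equivalent measures}) to conclude absolute continuity. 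This route avoids Fourier analysis and differentiation of measures entirely, at the cost of losing the $L^{2}$ information and requiring the convolution/decomposition tricks of \cite{Sol,PerSol} to pass from $(1/2,0.668)$ to the full interval $(1/2,1)$. Your approach gives the full theorem directly and with the $L^{2}$ density, but is the classical one rather than the paper's novelty.
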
Using Theorem \ref{Solomyak transverality theorem}, Solomyak proved the well known result that for Lebesgue almost every $\lambda\in(1/2,1),$ the unbiased Bernoulli convolution is absolutely continuous and has a density in $L^{2}(\mathbb{R})$. As a by-product of our analysis, in Section \ref{applications} we give a short intuitive proof that for Lebesgue almost every $\lambda\in(1/2,1),$ the unbiased Bernoulli convolution is absolutely continuous. Instead of using the Fourier transform or by differentiating measures, as in \cite{Sol} and \cite{PerSol}, our proof makes use of the fact that self-similar measures are of pure type, i.e. they are either singular or absolutely continuous with respect to the Lebesgue measure. As a further by-product of our analysis, in Section \ref{applications} we recover another result of Solomyak from \cite{Sol}. We prove that for Lebesgue almost every $\lambda\in(1/3,2/5),$ the set $$C_{\lambda}:=\left\{\sum_{j=0}^{\infty}d_j\lambda^j: d_j\in \{0,1,3\}\right\}$$
has positive Lebesgue measure. Interestingly our proof of this statement does not rely on showing that there is an absolutely continuous measure supported on this set. Instead we study a subset of this set, and show that for Lebesgue almost every $\lambda\in(1/3,2/5),$ this set has positive Lebesgue measure.

For the families of iterated function systems introduced in this section, our main result is the following.
\begin{thm}
	\label{1d thm}
	Let $D$ be a finite set of real numbers. The following statements are true:
\begin{enumerate}	\item Let $\m$ be a slowly decaying $\sigma$-invariant ergodic probability measure with $\mathfrak{h}(\m)>0$ and $(a_j)\in \D^{\N}.$ For Lebesgue almost every $\lambda\in(e^{-\h(\m)},\alpha(\mathcal{B}_{\Gamma})),$ for any $h\in H$ the set $U_{\Phi_{\lambda,D}}(\sum_{j=1}^{\infty}d_{a_j}\lambda^{j-1},\m,h)$ has positive Lebesgue measure. 
\item Let $\m$ be the uniform $(1/l,\cdots, 1/l)$ Bernoulli measure. For Lebesgue almost every $\lambda\in(1/l,\alpha(\mathcal{B}_{\Gamma})),$ for any $z\in X_{\lambda,D}$ and $h\in H$, the set $U_{\Phi_{\lambda,D}}(z,\m,h)$ has positive Lebesgue measure.
\item Let $\m$ be a slowly decaying $\sigma$-invariant ergodic probability measure with $\mathfrak{h}(\m)>0$ and $(a_j)\in \D^{\N}.$ For Lebesgue almost every $\lambda\in(e^{-\h(\m)},\alpha(\mathcal{B}_{\Gamma})),$ for any $\Psi\in \Upsilon_{\m}$ Lebesgue almost every $x\in X_{\lambda,D}$ is contained in $W_{\Phi_{\lambda,D}}(\sum_{j=1}^{\infty}d_{a_j}\lambda^{j-1},\Psi).$ 
\item Let $\m$ be the uniform $(1/l,\cdots, 1/l)$ Bernoulli measure. For Lebesgue almost every $\lambda\in(1/l,\alpha(\mathcal{B}_{\Gamma})),$ for any $z\in X_{\lambda,D}$ and $\Psi\in \Upsilon_{\m},$ Lebesgue almost every $x\in X_{\lambda,D}$ is contained in $W_{\Phi_{\lambda,D}}(z,\Psi).$ 
	\end{enumerate}
\end{thm}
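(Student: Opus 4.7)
The plan is to deduce (3) and (4) from (1) and (2), and to prove (1) and (2) by combining Solomyak's transversality method with a second-moment Chung--Erd\H{o}s type Borel--Cantelli argument. For the reduction, each $\Psi \in \Upsilon_{\m}$ is equivalent to some $(\m,h)$ with $h\in H$ and is weakly decaying, so Lemma \ref{arbitrarily small} first converts positive Lebesgue measure of $U_{\Phi_{\lambda,D}}(z,\m,h)$ into positive Lebesgue measure of $W_{\Phi_{\lambda,D}}(z,\Psi)$. To promote positive measure to full measure, I would exploit that weak decay of $\Psi$ makes $W_{\Phi_{\lambda,D}}(z,\Psi)$ essentially invariant under every $\phi_{\b}$: if $W$ has a Lebesgue density point $x_0$, then pushing a tiny ball around $x_0$ around the attractor by suitable $\phi_{\b}$ covers almost all of $X_{\lambda,D}$, forcing $W$ to have full measure.

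For (1), fix a slowly decaying ergodic $\m$ with $\mathfrak{h}(\m)>0$, an $h\in H_{\epsilon}$, and $(a_j)\in\D^{\N}$; write $z_\lambda=\sum_{j\geq 1}d_{a_j}\lambda^{j-1}$ and set
\[
A_n(\lambda) := \bigcup_{\a \in L_{\m,n}} B\bigl(\phi_{\a}(z_\lambda),\,(\m([\a])h(n))^{1/d}\bigr),
\]
so that $U_{\Phi_{\lambda,D}}(z_\lambda,\m,h)=\limsup_n A_n(\lambda)$. The definition of $\alpha(\mathcal{B}_{\Gamma})$ in terms of double zeros of power series in $\mathcal{B}_{\Gamma}$ delivers, by Solomyak's transversality argument, for each compact interval $I=[\lambda_0,\lambda_1]\subset(e^{-\mathfrak{h}(\m)},\alpha(\mathcal{B}_{\Gamma}))$ a constant $C_I$ with
\[
\mathcal{L}\bigl\{\lambda\in I:|\phi_{\a}(z_\lambda)-\phi_{\b}(z_\lambda)|\leq r\bigr\}\leq C_I\,r
\]
for all distinct $\a,\b\in\D^{*}$ with $a_1\neq b_1$ and all $r>0$. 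Combined with Fubini and $\sum_{\a\in L_{\m,n}}\m([\a])\asymp 1$, this yields $\int_I \mathcal{L}(A_n(\lambda))\,d\lambda\gtrsim h(n)$ and a variance bound $\int_I \mathcal{L}(A_n(\lambda))^{2}\,d\lambda\leq K_I\,h(n)^{2}$; the diagonal contribution $\sum_\a \mathcal{L}(B_\a)^{2}$ is of order $h(n)c_{\m}^{n}$ and is negligible against the target $h(n)^{2}$. Chung--Erd\H{o}s then gives $\mathcal{L}(\limsup_n A_n(\lambda))>0$ for a.e. $\lambda\in I$, and the definition of $H_\epsilon$ is precisely what guarantees that the $n$ contributing to the limsup form a subset of $\N$ of upper density greater than $1-\epsilon$ along which $\sum h(n)=\infty$. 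Exhausting $(e^{-\mathfrak{h}(\m)},\alpha(\mathcal{B}_{\Gamma}))$ by such compact $I$ completes (1); (2) follows by the same mechanism with $L_{\m,n}=\D^{n}$, $c_{\m}=1/l$, together with the extra observation that the variance estimate can be made uniform in $z=\pi_{\lambda,D}((a_j))$ across all choices of $(a_j)$, yielding the ``for every $z$'' strengthening.

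The threshold $\lambda>e^{-\mathfrak{h}(\m)}$ enters through Shannon--McMillan--Breiman: for $\m$-a.e. $(a_j)$ one has $\m([a_1\cdots a_k])\approx e^{-k\mathfrak{h}(\m)}$, so a typical $\a\in L_{\m,n}$ has length $|\a|\approx n(-\log c_{\m})/\mathfrak{h}(\m)$, and $\lambda^{|\a|}$ dominates the ball radius $(c_{\m}^{n}h(n))^{1/d}$ exactly when $\lambda>e^{-\mathfrak{h}(\m)}$, which is what allows the covering estimates to close. The main obstacle I foresee is the second-moment estimate in the ergodic setting: because $L_{\m,n}$ is not a set of words of a common length, one must sum transversality estimates over pairs $(\a,\b)$ of potentially very different lengths and handle pairs with $a_1=b_1$ by inductive rescaling through $\phi_{a_1}$. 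The resolution is to truncate $L_{\m,n}$ to a Shannon--McMillan--Breiman typical subfamily on which the $|\a|$ lie in an $o(n)$-window, so that the slowly-decaying hypothesis yields uniform comparability of cylinder measures there; the discarded atypical portion has $\m$-mass tending to zero and its contribution to $A_n(\lambda)$ is absorbed using $c_{\m}>0$.
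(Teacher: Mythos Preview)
Your reduction of (3) and (4) to (1) and (2) is the paper's route: Lemma \ref{arbitrarily small} transfers positive measure from $U_{\Phi_{\lambda,D}}(z,\m,h)$ to $W_{\Phi_{\lambda,D}}(z,\Psi)$, and your density-point-plus-pushing argument is exactly Proposition \ref{full measure}. The SMB/Egorov truncation of $L_{\m,n}$ that you describe at the end is also what the paper does (it is denoted $\tilde{L}_{\m,n}$).

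For (1), however, the assembly of the moment argument has a genuine gap. Your ``variance bound'' $\int_I \mathcal{L}(A_n(\lambda))^2\,d\lambda\le K_I h(n)^2$ is trivially true because $\mathcal{L}(A_n(\lambda))\le \sum_{\a}\mathcal{L}(B_\a)\asymp h(n)$ pointwise, so it carries no information. More importantly, even a non-trivial single-$n$ bound of the form ``$\mathcal{L}(A_n(\lambda))\gtrsim h(n)$ for most $\lambda$'' does not feed into Chung--Erd\H{o}s in $x$-space: that lemma needs the \emph{cross terms} $\mathcal{L}(A_n(\lambda)\cap A_m(\lambda))$ for $n\neq m$ at a \emph{fixed} $\lambda$, and nothing in your outline supplies them. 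Integral bounds over $\lambda$ cannot substitute for pointwise control of these intersections.

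The paper decouples the two levels. Transversality is used only to bound, uniformly in $n$,
\[
\int_{e^{-\h(\m)}+\epsilon_1}^{\alpha(\mathcal{B}_\Gamma)-\epsilon_1} \frac{\#\bigl\{(\a,\a')\in \tilde{L}_{\m,n}^2:\a\neq\a',\ |\phi_\a(z_\lambda)-\phi_{\a'}(z_\lambda)|\le s/\tilde{R}_{\m,n}\bigr\}}{\tilde{R}_{\m,n}}\,d\lambda = O(s);
\]
the transversality estimate here carries a loss of $\lambda^{-|\a\wedge\a'|}$ from factoring out the common prefix, and the SMB truncation makes $\sum_k \m([a_1\cdots a_{k-1}])(e^{-\h(\m)}+\epsilon_1)^{-k}$ convergent, which is where the threshold $\lambda>e^{-\h(\m)}$ actually enters. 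Via Lemma \ref{integral bound} and a Fatou argument (Lemma \ref{density separation lemma}), this yields: for a.e.\ $\lambda$, for every $\epsilon>0$ there exist $c,s>0$ with
\[
\overline{d}\bigl(n:\ T(\{\phi_\a(z_\lambda)\}_{\a\in\tilde{L}_{\m,n}},\,s/\tilde{R}_{\m,n})\ge c\,\tilde{R}_{\m,n}\bigr)\ge 1-\epsilon.
\]
Only now, for such a fixed $\lambda$, is the generalised Borel--Cantelli lemma applied in $x$-space (Proposition \ref{fixed omega}): one replaces $A_n(\lambda)$ by the \emph{disjoint} union $E_n$ of balls centred at a maximal $s/\tilde{R}_{\m,n}$-separated subset, and the separation is precisely what lets a packing argument bound $\mathcal{L}(E_n\cap E_m)$. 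The role of $h\in H_\epsilon$ is then exactly what you say, but applied to this density set of separated levels.

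For (2), the uniformity in $z$ is simpler than a uniform variance estimate: with $\m$ uniform one has $L_{\m,n}=\D^n$, and since every map contracts by the same $\lambda$, the sets $\{\phi_\a(z)\}_{\a\in\D^n}$ and $\{\phi_\a(z')\}_{\a\in\D^n}$ are translates of one another. Hence the separation count $T$ is literally independent of $z$, so the a.e.\ $\lambda$ set obtained for one $z$ works for all $z$ simultaneously.
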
 
To aid with our exposition we will prove in Section \ref{applications} the following corollary to Theorem \ref{1d thm}.

\begin{cor}
	\label{example cor}
Let $D$ be a finite set of real numbers and $\m$ be a Bernoulli measure corresponding to the probability vector $(p_1,\ldots,p_l)$. Then for any $(a_j)\in \D^{\N}$, for Lebesgue almost every $\lambda\in(\prod_{i=1}^lp_i^{p_i},\alpha(\mathcal{B}_{\Gamma})),$ Lebesgue almost every $x\in X_{\lambda,D}$ is contained in the set $$\Big\{x\in X: \Big|x-\phi_{\a}\Big(\sum_{j=1}^{\infty}d_{a_j}\lambda^{j-1}\Big)\Big|\leq \frac{\prod_{j=1}^{|\a|}p_{a_j}}{|\a|} \textrm{ for i.m. }\a\in \D^{*}\Big\}.$$
\end{cor}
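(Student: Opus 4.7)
The plan is to deduce the corollary directly from part (3) of Theorem \ref{1d thm}, applied to the Bernoulli measure $\m$ and the function $\Psi:\D^{*}\to[0,\infty)$ defined by $\Psi(\a):=\prod_{j=1}^{|\a|}p_{a_{j}}/|\a|$. For this $\Psi$, the set $W_{\Phi_{\lambda,D}}\bigl(\sum_{j=1}^{\infty}d_{a_{j}}\lambda^{j-1},\Psi\bigr)$ coincides with the set displayed in the corollary, so it only remains to verify that $\m$ satisfies the hypotheses of Theorem \ref{1d thm}(3) and that $\Psi\in\Upsilon_{\m}$.

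The hypotheses on $\m$ are straightforward. A Bernoulli measure is $\sigma$-invariant and ergodic; I may assume every $p_{i}>0$, since otherwise $\prod_{i}p_{i}^{p_{i}}=1$ and the interval in the corollary is empty. Under this assumption, $c_{\m}=\min_{i}p_{i}>0$, so $\m$ is slowly decaying, and $\h(\m)=-\sum_{i}p_{i}\log p_{i}>0$, giving $e^{-\h(\m)}=\prod_{i}p_{i}^{p_{i}}$ in agreement with the left endpoint of the parameter interval in the corollary.

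For $\Psi\in\Upsilon_{\m}$, the weakly decaying property is immediate from the identity $\Psi(i\a)/\Psi(\a)=p_{i}|\a|/(|\a|+1)\geq (\min_{i}p_{i})/2$ for every $i\in\D$ and $\a\in\D^{*}$. For equivalence to $(\m,h)$ I take $h(n)=1/n$: since $d=1$ and $\Psi(\a)=\m([\a])/|\a|$, the condition $\Psi(\a)\asymp \m([\a])h(n)$ on $L_{\m,n}$ reduces to $|\a|\asymp n$ uniformly in $\a$ and $n$. The slowly decaying property provides $c_{\m}^{n+1}\leq \m([\a])\leq c_{\m}^{n}$ on $L_{\m,n}$, while $\m([\a])=\prod_{j=1}^{|\a|}p_{a_{j}}$ is trapped between $(\min_{i}p_{i})^{|\a|}$ and $(\max_{i}p_{i})^{|\a|}$; taking logarithms of these two estimates, and using $c_{\m}=\min_{i}p_{i}$, yields $|\a|\asymp n$ with constants depending only on $(p_{i})$.

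To finish the verification that $\Psi\in\Upsilon_{\m}$ I must show $h(n)=1/n\in H$. If $B\subseteq\N$ has $\overline{d}(B)=c>0$, one can choose indices $n_{i}\to\infty$ with $|B\cap[1,n_{i}]|\geq (c/2)n_{i}$ spaced so that $n_{i+1}\geq 4n_{i}/c$, which gives $|B\cap(n_{i},n_{i+1}]|\geq (c/4)n_{i+1}$ and hence $\sum_{n\in B\cap(n_{i},n_{i+1}]}1/n\geq c/4$; summing over $i$ forces $\sum_{n\in B}1/n=\infty$. Thus $1/n\in H_{\epsilon}$ for every $\epsilon\in(0,1)$, so $1/n\in H$, and Theorem \ref{1d thm}(3) then delivers the corollary. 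The only step that requires a little thought is the comparison $|\a|\asymp n$ on $L_{\m,n}$, which is a priori sensitive to the skewness of $(p_{i})$; the two-sided bounds on $\m([\a])$ above handle it cleanly.
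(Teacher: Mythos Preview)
Your proof is correct and follows essentially the same route as the paper: fix $h(n)=1/n$, verify $h\in H$, check that $\Psi(\a)=\m([\a])/|\a|$ is weakly decaying and equivalent to $(\m,h)$ via the estimate $|\a|\asymp n$ on $L_{\m,n}$, and then invoke Theorem~\ref{1d thm}(3). The only minor slip is the aside about the degenerate case: if some $p_i=0$ one does not get $\prod_i p_i^{p_i}=1$ (take e.g.\ $(1/2,1/2,0)$), but this edge case is immaterial to the main argument.
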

In Section \ref{applications} we will apply these results to obtain more explicit statements in the setting of Bernoulli convolutions and the $\{0,1,3\}$ problem.

Certain lower bounds for the transversality constant $\alpha(B_{\Gamma})$ are known. Let $D$ be a finite set of real numbers and assume $d_j\neq d_k$ for all $j\neq k,$ so $$b(D):=\max\left\{\left|\frac{d_j-d_l}{d_k-d_i}\right|:k\neq i\right\}<\infty.$$ The proposition stated below provides a summary of the lower bounds obtained separately in \cite{PerSol2}, \cite{PolSimon}, and \cite{ShmSol2}.
\begin{prop}
	\label{transversality constants}
Let $D$ be a finite set of real numbers and $b(D)$ be as above. Then the following statements are true:
\begin{itemize}
	\item If $b(D)=1$ then $\alpha(B_{\Gamma})>0.668.$
	\item If $b(D)=2$ then $\alpha(B_{\Gamma})=0.5.$
	\item $\alpha(B_{\Gamma})= (b(D)+1)^{-1}$ whenever $b(D)\geq 3+\sqrt{8}.$
	\item $\alpha(B_{\Gamma})\geq  (b(D)+1)^{-1}$ for all $D$.
\end{itemize}
\end{prop}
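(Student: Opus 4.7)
The proof of Proposition \ref{transversality constants} collects results from the three papers \cite{PerSol2}, \cite{PolSimon}, and \cite{ShmSol2}, all of which rest on the transversality method for double zeros of power series with bounded coefficients. My plan is first to observe that both $b(D)$ and $\alpha(\mathcal{B}_\Gamma)$ are invariant under rescaling $D$ by a positive constant, since a rescaling of $D$ multiplies every $g\in \mathcal{B}_\Gamma$ by the same constant and preserves its zero set. I would therefore normalize so that $\min_{\gamma \in \Gamma\setminus\{0\}}|\gamma| = 1$; in this normalization $\Gamma \subset [-b(D),b(D)]$ and $\{-1,1\}\subset \Gamma$, which puts each of the four bullet points into a common setting.

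I would prove the universal lower bound $\alpha(\mathcal{B}_\Gamma)\geq (b(D)+1)^{-1}$ by a direct geometric series estimate that does not even use the double-zero hypothesis. Suppose $g(x)=\sum_{j\geq 0}g_j x^j\in \mathcal{B}_\Gamma$ is not identically zero, and let $j_0=\min\{j:g_j\neq 0\}$. Then
\begin{equation*}
|g(\lambda)| \;\geq\; \lambda^{j_0}\Bigl(|g_{j_0}|-b(D)\sum_{j\geq 1}\lambda^j\Bigr) \;\geq\; \lambda^{j_0}\Bigl(1-\frac{b(D)\lambda}{1-\lambda}\Bigr),
\end{equation*}
which is strictly positive for every $\lambda<1/(b(D)+1)$. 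In particular no such $\lambda$ can be a (double) zero of any nontrivial $g\in \mathcal{B}_\Gamma$.

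For the matching equality when $b(D)\geq 3+\sqrt{8}$, the plan is to exhibit an explicit $g\in \mathcal{B}_\Gamma$ with $g(\lambda_0)=g'(\lambda_0)=0$ at $\lambda_0=1/(b(D)+1)$. Writing $g(x)=1-b(D)x+\sum_{j\geq 2}c_j x^j$, the two conditions at $\lambda_0$ convert into a two-term recurrence for the $c_j$, and the threshold $3+\sqrt{8}$ is precisely the smallest value of $b(D)$ for which the resulting coefficients can be forced to lie in $[-b(D),b(D)]$. This is the construction carried out in \cite{PolSimon}.

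The refined bounds $\alpha(\mathcal{B}_\Gamma)>0.668$ when $b(D)=1$ and $\alpha(\mathcal{B}_\Gamma)=0.5$ when $b(D)=2$ are the hardest part, since the naive triangle inequality loses too much and one must exploit cancellation between coefficients of opposite sign. This is the main obstacle to a self-contained proof. For these I would invoke the respective arguments in \cite{PerSol2} and \cite{ShmSol2}: the method of Peres and Solomyak reduces, via substitutions of the form $x\mapsto x^k$ and the so-called $(*)$-property comparing a candidate double-zero location to a finite list of initial coefficient patterns, the whole question on $(0,0.668)$ to a finite (in practice computer-assisted) verification; the Shmerkin--Solomyak argument adapts this machinery to the $\{-2,-1,0,1,2\}$ coefficient range, and the upper bound $\alpha(\mathcal{B}_\Gamma)\leq 0.5$ is supplied by a direct polynomial witness with a double zero at $1/2$.
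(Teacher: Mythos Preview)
The paper does not give its own proof of this proposition: it is stated as a summary of results from \cite{PerSol2}, \cite{PolSimon}, and \cite{ShmSol2}, with no argument supplied. Your proposal takes the same approach---citing exactly these three references---while additionally supplying a correct self-contained proof of the fourth bullet via the geometric-series estimate, and a sketch of the third. This is consistent with, and slightly more detailed than, what the paper does.

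One minor caution on your sketch for the third bullet: you phrase the condition as forcing the coefficients $c_j$ to lie in $[-b(D),b(D)]$, but $\mathcal{B}_\Gamma$ requires coefficients in the finite set $\Gamma=D-D$, not the full interval. The actual constructions in the literature use only coefficients from $\{0,\pm 1,\pm b(D)\}$ (which, after your normalization, are always in $\Gamma$), so the argument goes through, but your wording obscures this point. Since you are in any case deferring to \cite{PolSimon} for the details, this does not affect the correctness of your proposal.
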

\subsection{Parameterised families with variable translations}

Suppose $\{A_i\}_{i=1}^l$ is a collection of $d\times d$ non-singular matrices each satisfying $\|A_i\|<1.$ Here $\|\cdot\|$ denotes the operator norm induced by the Euclidean norm. Given a vector $\t=(t_1,\ldots, t_l)\in \mathbb{R}^{ld}$ we can define an IFS to be the set of contractions $$\Phi_{\t}:=\{\phi_{i}(x)=A_i x+ t_i\}_{i=1}^l.$$  Unlike in the previous section where we obtained a family of iterated function systems by varying the contraction ratio, here we obtain a family by varying the translation parameter $\t$. For each $\t\in\mathbb{R}^{ld}$ we denote the attractor by $X_\t,$ and the corresponding projection map from $\D^\N$ to $X_\t$ by $\pi_{\t}$. The attractor $X_\t$ is commonly referred to as a self-affine set.

This family of iterated function systems was introduced by Falconer in \cite{Fal}, and subsequently studied by Solomyak in \cite{Sol2}, and later by Jordan, Pollicott, and Simon in \cite{JoPoSi}. For this family an important result is the following.

\begin{thm}[Falconer \cite{Fal}, Solomyak \cite{Sol2}]
\label{FalSol}
Assume the $A_i$ satisfy the additional hypothesis that $\|A_i\|<1/2$ for all $1\leq i\leq l$. Then for Lebesgue almost every $\t\in\mathbb{R}^{ld}$ the attractor $X_\t$ satisfies:
$$\dim_{H}(X_\t)=\dim_{B}(X_\t)=\min\{\dim_{A}(A_1,\ldots,A_l),d\}.$$
\end{thm}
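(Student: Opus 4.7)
The plan is to prove the inequality $\dim_H(X_\t) \geq \min\{\dim_A(A_1,\ldots,A_l),d\}$ for Lebesgue almost every $\t \in \R^{ld}$, together with the matching upper bound $\dim_B(X_\t) \leq \min\{\dim_A(A_1,\ldots,A_l),d\}$ which holds for every $\t$. Since $\dim_H \leq \dim_B$ always, these two estimates together yield the theorem. The upper bound is a standard deterministic covering argument; the lower bound is the genuine content, and is precisely where the hypothesis $\|A_i\| < 1/2$ is used.

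For the upper bound, I would observe that for each $\a = a_1\cdots a_n \in \D^n$ the image $\phi_\a(X_\t)$ is contained in an ellipsoid with semi-axes proportional to the singular values $\alpha_1(A_\a) \geq \cdots \geq \alpha_d(A_\a)$ of $A_\a := A_{a_1}\cdots A_{a_n}$, with constant depending only on $\max_i|t_i|$. For any $s > \dim_A(A_1,\ldots,A_l)$ with $s \le d$, covering each such ellipsoid efficiently by balls (grouping the first $\lceil s\rceil$ semi-axes and resolving the rest) and summing over $\a \in \D^n$ produces a cover of $X_\t$ whose $s$-dimensional cost is bounded by the singular value sum $\sum_{\a\in\D^n}\phi^s(A_\a)$, where $\phi^s$ denotes the usual singular value function. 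By definition of the affinity dimension this sum tends to $0$ as $n\to\infty$, yielding $\dim_B(X_\t) \le s$, and hence the desired upper bound.

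For the lower bound I would use Falconer's potential-theoretic approach. Fix a compact cube $K\subset \R^{ld}$ and $s < \min\{\dim_A(A_1,\ldots,A_l),d\}$, and let $\m$ be a well-adapted probability measure on $\D^{\N}$ (for example a suitable Bernoulli measure, or in the general case a K\"aenm\"aki-type equilibrium measure) whose pushforward $\mu_\t := \m\circ \pi_\t^{-1}$ should carry the dimension. By Frostman's lemma it suffices to prove that the $s$-energy
\begin{equation*}
I_s(\mu_\t) \;=\; \iint \frac{d\m(\omega)\, d\m(\omega')}{|\pi_\t(\omega) - \pi_\t(\omega')|^s}
\end{equation*}
is finite for Lebesgue almost every $\t\in K$, and by Fubini it is enough to show $\int_K I_s(\mu_\t)\, d\t < \infty$. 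Interchanging the $\t$-integral with the double $\m$-integral reduces the problem to a pointwise-in-$(\omega,\omega')$ estimate of $\int_K |\pi_\t(\omega) - \pi_\t(\omega')|^{-s}\, d\t$ in terms of the singular values of $A_{\omega_1\cdots\omega_{k-1}}$, where $k=k(\omega,\omega')$ is the first index at which $\omega$ and $\omega'$ differ.

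The main obstacle, and the step where $\|A_i\|<1/2$ is essential, is precisely this pointwise transversality estimate. Expanding $\pi_\t$ as a series and factoring out the common prefix gives
\begin{equation*}
\pi_\t(\omega) - \pi_\t(\omega') \;=\; A_{\omega_1}\cdots A_{\omega_{k-1}}\Bigl[(t_{\omega_k} - t_{\omega'_k}) + E(\t,\omega,\omega')\Bigr],
\end{equation*}
where each partial derivative of the error $E$ with respect to any translation coordinate has norm bounded by $\sum_{j\ge 1}\|A\|^j < 1$ as soon as $\|A\|:=\max_i\|A_i\|<1/2$. Hence, after the change of variables $u := t_{\omega_k} - t_{\omega'_k}$, the map $\t \mapsto \pi_\t(\omega)-\pi_\t(\omega')$ is a bi-Lipschitz perturbation of the linear map $u \mapsto A_{\omega_1}\cdots A_{\omega_{k-1}} u$. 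This yields the transversality bound
\begin{equation*}
\L^{ld}\bigl\{\t\in K: |\pi_\t(\omega)-\pi_\t(\omega')| \le r\bigr\} \;\le\; C\,\frac{r^d}{\prod_{i=1}^d \alpha_i(A_{\omega_1\cdots\omega_{k-1}})}.
\end{equation*}
Feeding this into the layer-cake representation of $|\pi_\t(\omega)-\pi_\t(\omega')|^{-s}$ and summing over pairs $(\omega,\omega')$, the resulting series is dominated by $\sum_n \sum_{\a\in\D^n}\phi^s(A_\a)$, which converges for $s<\dim_A(A_1,\ldots,A_l)$. This gives $\dim_H(X_\t)\ge s$ for almost every $\t\in K$; letting $s$ approach the critical value and exhausting $\R^{ld}$ by cubes $K$ completes the proof.
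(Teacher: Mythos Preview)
The paper does not actually prove Theorem~\ref{FalSol}; it is stated as a background result from the literature, attributed to Falconer \cite{Fal} and Solomyak \cite{Sol2}, and is only discussed in the surrounding text (noting that Falconer proved it under $\|A_i\|<1/3$ and Solomyak relaxed this to $\|A_i\|<1/2$). So there is no ``paper's own proof'' to compare against.

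That said, your sketch is essentially the standard argument from those original references: the deterministic upper bound via the singular value function, and the almost-sure lower bound via a potential-theoretic/energy argument combined with the transversality estimate
\[
\L^{ld}\bigl\{\t\in K: |\pi_\t(\omega)-\pi_\t(\omega')| \le r\bigr\} \;\le\; C\,\frac{r^d}{\prod_{i=1}^d \alpha_i(A_{\omega_1\cdots\omega_{k-1}})},
\]
which is precisely the content of Lemma~\ref{translation transversality} (quoted in the paper from \cite{JoPoSi}). Your identification of where the hypothesis $\|A_i\|<1/2$ enters --- namely to ensure the Jacobian of the perturbation is dominated by the identity --- is correct. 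One small point: for the lower bound in the full generality of Falconer's theorem one does not need to invoke a K\"aenm\"aki measure; Falconer's original argument works directly with a natural measure built from the singular value function, and the energy integral converges for every $s<\dim_A$ without any equilibrium-state machinery. Otherwise your outline is accurate.
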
Here $\dim_{A}(A_1,\ldots,A_l)$ is a quantity known as the affinity dimension. For its definition see \cite{Fal}.
Theorem \ref{FalSol} was originally proved by Falconer in \cite{Fal} under the assumption $\|A_i\|<1/3$ for all $1\leq i\leq l$. This upper bound was improved to $1/2$ by Solomyak in \cite{Sol2}. The bound $1/2$ is known to be optimal (see \cite{Edg,SiSo}). An analogue of Theorem \ref{FalSol} for measures was obtained by Jordan, Pollicott, and Simon in \cite{JoPoSi}. A recent result of B\'{a}r\'{a}ny, Hochman, and Rapaport \cite{BaHoRa} significantly improves upon Theorem \ref{FalSol}. They proved that we have $\dim_{H}(X_\t)=\dim_{B}(X_\t)=\min\{\dim_{A}(A_1,\ldots,A_l),d\}$ under some very general assumptions on the $A_i$ and $\t$. In particular, their result gives rise to many explicit examples where equality is satisfied.

Given $\a=a_1\cdots a_n\in \D^*,$ we let $$A_{\a}:=A_{a_1}\circ \cdots \circ A_{a_{n}},$$ and $$1>\alpha_1(A_{\a})\geq \alpha_2(A_{\a})\geq \cdots \geq \alpha_d(A_{\a})>0$$ denote the singular values of $A_{\a}$. The singular values of a non-singular matrix $A$ are the positive square roots of the eigenvalues of $AA^{T}.$ Alternatively they are the lengths of the semiaxes of the ellipse $A(B(0,1)).$ Given a $\sigma$-invariant ergodic probability measure $\m,$ then there exist positive constants $\lambda_{1}(\m),\cdots, \lambda_{d}(\m),$ such that for $\m$-almost every $(a_j)\in \D^{\mathbb{N}}$ we have $$\lim_{n\to\infty}\frac{\log \alpha_{k}(A_{a_1\cdots a_n})}{n}=\lambda_{k}(\m),$$ for all $1\leq k\leq d$. We call the numbers $\lambda_{1}(\m),\cdots, \lambda_{d}(\m)$ the Lyapunov exponents of $\m$. The existence of Lyapunov exponents for $\sigma$-invariant ergodic measures $\m$ was established in \cite{JoPoSi}. 

The theorem stated below is our main result for this family of iterated function systems.

\begin{thm}
	\label{translation thm}
Suppose $\|A_i\|<1/2$ for all $1\leq i\leq l$. Then the following statements are true:
\begin{enumerate}
	\item Let $\m$ be a slowly decaying $\sigma$-invariant ergodic probability measure with $\h(\m)>-(\lambda_1(\m)+\cdots +\lambda_d(\m))$ and $(a_j)\in \D^{\N}.$ For Lebesgue almost every $\t\in\mathbb{R}^{ld}$, for any $h\in H$ the set $U_{\Phi_\t}(\pi_\t(a_j),\m,h)$ has positive Lebesgue measure. 
	\item Let $\m$ be the uniform $(1/l,\ldots,1/l)$ Bernoulli measure and suppose there exists $A$ such that $A_i=A$ for any $1\leq i\leq l $. 
	If $\log l> -(\lambda_1(\m)+\cdots +\lambda_d(\m)),$
	then for Lebesgue almost every $\t\in\mathbb{R}^{ld}$, for any $z\in X_{\t}$ and $h\in H$, the set $U_{\Phi_\t}(z,\m,h)$ has positive Lebesgue measure.
	\item Let $\m$ be a slowly decaying $\sigma$-invariant ergodic probability measure and $(a_j)\in \D^{\N}$. Suppose that $\h(\m)>-(\lambda_1(\m)+\cdots +\lambda_d(\m))$ and one of the following three properties are satisfied:
	\begin{itemize}
		\item Each $A_i$ is a similarity.
		\item $d=2$ and all the matrices $A_i$ are equal.
		\item All the matrices $A_i$ are simultaneously diagonalisable. 
	\end{itemize} Then for Lebesgue almost every $\t\in\mathbb{R}^{ld}$, for any $\Psi\in \Upsilon_{\m},$ Lebesgue almost every $x\in X_\t$ is contained in $W_{\Phi_\t}(\pi_\t(a_j),\Psi).$ 
    \item Let $\m$ be the uniform $(1/l,\ldots,1/l)$ Bernoulli measure and suppose there exists $A$ such that $A_i=A$ for any $1\leq i\leq l $. Suppose that $\log l>-(\lambda_1(\m)+\cdots +\lambda_d(\m))$ and one of the following three properties are satisfied:
    \begin{itemize}
    	\item $A$ is a similarity.
    	\item $d=2$.
    	\item The matrix $A$ is diagonalisable. 
    \end{itemize} Then for Lebesgue almost every $\t\in\mathbb{R}^{ld}$, for any $z\in X_\t$ and $\Psi\in \Upsilon_{\m}$, Lebesgue almost every $x\in X_\t$ is contained in $W_{\Phi_\t}(z,\Psi).$
	\end{enumerate}
\end{thm}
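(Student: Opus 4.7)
My plan is to prove all four parts of Theorem \ref{translation thm} in parallel, deriving parts 3 and 4 from parts 1 and 2. The engine for parts 1 and 2 is the self-affine transversality estimate that underpins Theorem \ref{FalSol}: for $\|A_i\|<1/2$, a bounded convex $K\subset\R^{ld}$, and any $\c\in\D^*$,
$$\L^{ld}\bigl(\{\t\in K:|\pi_\t(\omega)-\pi_\t(\tau)|\le r\}\bigr)\le\frac{Cr^d}{|\det A_\c|}$$
for every pair $\omega,\tau\in\D^{\N}$ sharing the common prefix $\c$. This gives a pointwise-in-$\t$ bound tailored to the Jacobian of $\phi_\c$.

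Writing $E_n(\t):=\bigcup_{\a\in L_{\m,n}}B(\pi_\t(\a\cdot(a_j)),(\m([\a])h(n))^{1/d})$ and choosing a Euclidean ball $B_0$ sitting inside $X_\t$ for every $\t\in K$, a Fubini computation together with translation invariance in $\t$ gives $\int_K\L^d(E_n(\t)\cap B_0)\,d\t\gtrsim h(n)$. The transversality bound then converts $\int_K\L^d(E_n(\t)\cap E_m(\t)\cap B_0)\,d\t$ into a sum over common prefixes $\c$ of pairs $(\a,\b)\in L_{\m,n}\times L_{\m,m}$ weighted by $\m([\a])\m([\b])/|\det A_\c|$. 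The Shannon--McMillan--Breiman theorem and Oseledets' theorem give $\m([a_1\cdots a_k])\asymp e^{-k\h(\m)}$ and $|\det A_{a_1\cdots a_k}|\asymp e^{k(\lambda_1(\m)+\cdots+\lambda_d(\m))}$ on a set of full $\m$-measure, so the hypothesis $\h(\m)>-(\lambda_1(\m)+\cdots+\lambda_d(\m))$ forces the $\m$-typical contribution to decay geometrically in $k$. Consequently the second moment is $O(h(n)h(m))$, and the Chung--Erd\H{o}s inequality applied to the divergent series $\sum_n h(n)$ yields positive Lebesgue measure of $\limsup E_n(\t)\cap B_0$ for $\L^{ld}$-almost every $\t\in K$. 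Exhausting $\R^{ld}$ by countably many such $K$ gives part 1; part 2 is the same argument with $\log l$ replacing $\h(\m)$, with $|\det A_\c|=|\det A|^{|\c|}$, and with the common-matrix hypothesis removing all dependence on $z$.

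Parts 3 and 4 follow by combining parts 1 and 2 with Lemma \ref{arbitrarily small}, which transfers positive Lebesgue measure for $U_{\Phi_\t}(z,\m,h)$ into positive Lebesgue measure for $W_{\Phi_\t}(z,\Psi)$ whenever $\Psi\in\Upsilon_\m$ is equivalent to $(\m,h)$. To upgrade positive measure to full measure I would use the weakly decaying property defining $\Upsilon_\m$: the bound $\inf_{\a,i}\Psi(i\a)/\Psi(\a)>0$ makes $W_{\Phi_\t}(z,\Psi)$ essentially invariant under each inverse branch $\phi_i^{-1}$, so Lebesgue-density points propagate through iterated branch preimages throughout $X_\t$ and force conullness. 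The structural hypotheses (all $A_i$ similarities, $d=2$, or simultaneously diagonalisable $A_i$) are needed only to apply Lemma \ref{arbitrarily small} cleanly: these are precisely the cases in which the singular values $\alpha_1(A_\a),\ldots,\alpha_d(A_\a)$ collapse to a single controllable scale, so the Euclidean balls defining $U_{\Phi_\t}$ are uniformly comparable to the ellipsoidal images $\phi_\a(B(z,\Psi(\a)))$ defining $W_{\Phi_\t}$.

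The hardest step will be the second moment bound for a general ergodic $\m$. The transversality estimate is sharp for a single pair, but to sum over all pairs of cylinders in $L_{\m,n}\times L_{\m,m}$ we must control both $\m([\a])$ and $|\det A_\a|$ simultaneously, and the Shannon--McMillan--Breiman and Oseledets approximations only hold $\m$-almost everywhere. Absorbing the $\m$-atypical prefixes into an exceptional set without destroying the divergence of $\sum h(n)$, and doing so uniformly in $n$, requires both the slowly decaying assumption on $\m$ and a small $\epsilon$-slack in the entropy hypothesis; this is the technical heart of the argument.
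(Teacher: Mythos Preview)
Your second-moment/Fubini approach has a real quantifier problem. The theorem asserts that for Lebesgue almost every $\t$, the set $U_{\Phi_\t}(\pi_\t(a_j),\m,h)$ has positive measure \emph{for every} $h\in H$. Your argument, as written, fixes $h$ first, computes $\int_K\L^d(E_n(\t))\,d\t$ and $\int_K\L^d(E_n(\t)\cap E_m(\t))\,d\t$, and then appeals to Chung--Erd\H{o}s. At best this yields, for each fixed $h$, a positive-measure set of $\t$ for which the conclusion holds (Chung--Erd\H{o}s applied to the product measure $\L^d\times\L^{ld}$ gives positive product measure, hence by Fubini only a positive-measure set of $\t$-slices). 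Even granting an upgrade to ``a.e.\ $\t$'' for fixed $h$, the exceptional set depends on $h$, and $H$ is uncountable. The paper avoids this by a two-stage argument: first, via the counting quantity $R(\t,s,n)$ and Lemmas \ref{integral bound} and \ref{density separation lemma}, it shows that a.e.\ $\t$ has the property that for every $\epsilon>0$ there exist $c,s>0$ with $\overline{d}(n:\t\in B(c,s,n))\ge 1-\epsilon$; this is a property of $\t$ alone. Then Proposition \ref{fixed omega} is applied for that fixed $\t$, and the definition of $H$ guarantees $\sum_{n:\t\in B(c,s,n)}h(n)=\infty$ for every $h\in H$. The Egorov truncation to $\tilde L_{\m,n}$ you allude to in your final paragraph is done in the paper, but it feeds into this separation-counting framework rather than directly into a second-moment bound on $\L^d(E_n\cap E_m)$.

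You have also misidentified the role of the structural hypotheses in parts 3 and 4. The sets $W_{\Phi_\t}(z,\Psi)$ are defined by Euclidean balls $|x-\phi_\a(z)|\le\Psi(\a)$, not by ellipsoidal images, so Lemma \ref{arbitrarily small} applies without any assumption on the singular values; moreover the singular values do \emph{not} collapse to a single scale in the diagonalisable or $d=2$ cases. The actual role of those hypotheses is Lemma \ref{equivalent measures}: they are the cases in which the pushforward $\mu$ of a Bernoulli measure is known to satisfy the dichotomy $\mu\sim\L|_{X_\t}$ or $\mu\perp\L$, rather than merely $\mu\ll\L$ or $\mu\perp\L$. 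The paper needs $\mu\sim\L|_{X_\t}$ to invoke part 2 of Proposition \ref{full measure} for the full-measure upgrade, since for genuinely non-conformal contractions the Lebesgue-density argument you sketch (propagating density points through $\phi_i$) fails: the images $\phi_\a(A)$ can be long thin ellipses whose Lebesgue measure is not comparable to $r^d$ at the relevant scale. The paper instead proves $\mu\ll\L$ via Proposition \ref{absolute continuity}, upgrades to $\mu\sim\L|_{X_\t}$ via Lemma \ref{equivalent measures} under the structural hypotheses, and then uses ergodicity of $\m$ (not Lebesgue density) to spread positive measure to full measure.
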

The following corollary follows immediately from Theorem \ref{translation thm}.
\begin{cor}
	\label{translation cor}
Suppose there exists $\lambda\in(0,1/2)$ and $O\in O(d)$ such that $A_i=\lambda\cdot O$ for all $1\leq i \leq l$. Then if $\frac{\log l}{-\log \lambda}>d,$ we have that for Lebesgue almost every $\t\in \R^{ld}$, for any $z\in X_{\t},$ Lebesgue almost every $x\in X_{\t}$ is contained in the set $$\left\{x\in\mathbb{R}^d:|x-\phi_{\a}(z)|\leq \left( \frac{l^{-|\a|}}{|\a|}\right)^{1/d}\textrm{ for i.m. }\a\in \D^{*}\right\}.$$
\end{cor}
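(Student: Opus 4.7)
The plan is to deduce Corollary \ref{translation cor} as a direct consequence of case (4) of Theorem \ref{translation thm}, applied to the uniform $(1/l,\ldots,1/l)$ Bernoulli measure $\m$ with a suitably chosen approximation function. Since every $A_i$ is the common similarity $A=\lambda O$ with $\lambda\in(0,1/2)$ and $O\in O(d)$, the structural hypotheses of case (4), namely $\|A_i\|<1/2$ and that $A$ is a similarity, are immediate.

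Next I would check the entropy--Lyapunov inequality. Because $A_{\a}=\lambda^{|\a|}O^{|\a|}$ and $O^{|\a|}$ is orthogonal, every singular value of $A_{\a}$ equals $\lambda^{|\a|}$, so $\lambda_1(\m)=\cdots=\lambda_d(\m)=\log\lambda$. Hence $-(\lambda_1(\m)+\cdots+\lambda_d(\m))=-d\log\lambda$, and the assumption $\log l/(-\log\lambda)>d$ translates to exactly $\log l>-(\lambda_1(\m)+\cdots+\lambda_d(\m))$, which is the required inequality since $\h(\m)=\log l$ for the uniform Bernoulli measure.

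For the approximation function, I would set $\Psi(\a):=(l^{-|\a|}/|\a|)^{1/d}$ and verify that $\Psi\in\Upsilon_{\m}$. Since $c_{\m}=1/l$ and $\m([\a])=l^{-|\a|}$, the level set $L_{\m,n}$ equals $\D^n$. Taking $h(n):=1/n$ gives $\m([\a])h(n)=l^{-n}/n$ for $\a\in L_{\m,n}$, so $\Psi(\a)^d=\m([\a])h(n)$ and thus $\Psi$ is equivalent to $(\m,h)$. The weak-decay property follows from
\[
\frac{\Psi(i\a)}{\Psi(\a)}=\left(\frac{n}{l(n+1)}\right)^{1/d}\longrightarrow l^{-1/d}>0.
\]
It remains to verify that $h\in H$: for any $B\subseteq\N$ with $\overline{d}(B)>0$ one has $\sum_{n\in B}1/n=\infty$ by a standard density argument, so $h\in H_{\epsilon}$ for every $\epsilon<1$, and in particular $h\in H$.

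With these checks in place, case (4) of Theorem \ref{translation thm} delivers the conclusion. There is no serious obstacle; the only mildly delicate bookkeeping is matching the $d$-th root coming from $d$-dimensional balls with the Bernoulli weights, and the choice $h(n)=1/n$ is precisely what absorbs the $|\a|$ factor so that $(\m([\a])h(n))^{1/d}$ agrees with the target radius $(l^{-|\a|}/|\a|)^{1/d}$.
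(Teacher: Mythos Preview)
Your proposal is correct and follows exactly the approach the paper intends: the paper states that the corollary ``follows immediately from Theorem \ref{translation thm}'' and that its proof ``is analogous to the proof of Corollary \ref{example cor} and is therefore omitted.'' Your verification of the hypotheses of statement 4 of Theorem \ref{translation thm}---computing the Lyapunov exponents via the orthogonality of $O$, identifying $L_{\m,n}=\D^n$ for the uniform measure, choosing $h(n)=1/n$, and checking $\Psi\in\Upsilon_{\m}$---mirrors the proof of Corollary \ref{example cor} precisely.
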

The assumption $\|A_i\|<1/2$ appearing in Theorem \ref{translation thm} is necessary as the example below shows.

\begin{example}
	\label{counterexample}
Consider the iterated function system $\Phi_{\lambda,t_1,t_2}=\{\lambda x +t_1, \lambda x +t_2\},$ where $\lambda\in(1/2,1)$ and $t_1,t_2\in\mathbb{R}$. Whenever $t_1\neq t_2$ we can apply a change of coordinates and identify this iterated function system with $\{\lambda x, \lambda x +1\}.$ For any $\epsilon>0,$ there exists $\lambda^*\in(1/2,1/2+\epsilon)$ such that $\{\lambda x, \lambda x +1\}$ contains an exact overlap. Using this fact and our change of coordinates, it can be shown that $U_{\Phi_{\lambda^*,t_1,t_2}}(\pi(a_j),\m,h)$ has zero Lebesgue measure when $\m$ is the $(1/2,1/2)$ Bernoulli measure and $h$ is any bounded function. 
\end{example} Even though Example \ref{counterexample} demonstrates the condition $\|A_i\|<1/2$ is essential, the author expects Theorem \ref{translation thm} to hold more generally. In this paper we prove a random version of Theorem \ref{translation thm} which supports this claim. This random version is based upon the randomly perturbed self-affine sets studied in \cite{JoPoSi}. Our setup is taken directly from \cite{JoPoSi}.

Fix a set of matrices $\{A_i\}_{i=1}^l$ each satisfying $\|A_i\|<1,$ and a vector $\t=(t_1,\ldots,t_l)\in\mathbb{R}^{ld}$. We obtain a randomly perturbed version of the IFS $\Phi=\{\phi_i(x)=A_i x +t_i\}$ in the following way. Suppose that $\eta$ is an absolutely continuous distribution with density supported on a disc $\mathbf{D}$. The distribution $\eta$ gives rise to a random perturbation of $\phi_{\a}$ via the equation 
$$\phi_{\a}^{y_\a}:=(\phi_{a_1}+y_{a_1})\circ (\phi_{a_2}+y_{a_1a_2})\circ \cdots \circ (\phi_{\a_{|\a|}}+y_{\a}),$$ where the coordinates of $$(y_{a_1},y_{a_1a_2},\ldots,y_{\a})\in \mathbf{D}\times \cdots \times \mathbf{D}$$ are i.i.d. with distribution $\eta$. For notational convenience we enumerate the errors using the natural numbers. Let $\rho:\D^*\to \mathbb{N}$ be an arbitrary bijection. We obtain a sequence of errors $\y=(y_k)_{k=1}^{\infty}\in \mathbf{D}^{\N}$ according to the rule $$y_k:=y_{\a}\textrm{ if }\rho(\a)=k.$$ Given $\y\in \mathbf{D}^{\mathbb{N}},$ we obtain a perturbed version of our original attractor defined via the equation $$X_{\y}:=\bigcap_{n=1}^{\infty}\bigcup_{\a\in \D^n}\phi_{\a}^{y_\a}(B),$$ where $B$ is some sufficiently large ball. We let $\pi_\y:\D^{\N}\to X_\y$ be the projection map given by $$\pi_\y(a_j):=\lim_{n\to\infty}\phi_{a_1\cdots a_n}^{y_{a_1\cdots a_n}}(\textbf{0}).$$ On $\mathbf{D}^{\mathbb{N}}$ we define the measure $$\mathbf{P}:=\eta \times \cdots \times \eta \times \cdots.$$ 

We may now define our limsup sets for these randomly perturbed attractors. Given $\y\in \mathbf{D}^{\mathbb{N}},$ $(a_j)\in \D^{\N},$ and $\Psi:\D^{*}\to [0,\infty),$ we define 
$$W_{\Phi,\y}((a_j),\Psi):=\left\{x\in \mathbb{R}^d: |x-\pi_\y(\a (a_j))|\leq \Psi(\a) \textrm{ for i.m. }\a\in \D^{*}\right\}.$$ Here $\a (a_j)$ denotes the element of $\D^{\N}$ obtained by concatenating the finite word $\a$ with the infinite sequence $(a_j).$ Given a slowly decaying measure $\m$, $\y\in \D^{\mathbb{N}},$ $(a_j)\in \D^{\N},$ and $h:\mathbb{N}\to [0,\infty),$ we let $$U_{\Phi,\y}((a_j),\m,h):=\left\{x\in \mathbb{R}^d: x\in \bigcup_{\a\in L_{\m,n}}B\left(\pi_\y(\a (a_j)),(\m([\a])h(n))^{1/d}\right)\textrm{ for i.m. }n\in \N\right\}.$$ 

 The sets $W_{\Phi,\y}((a_j),\Psi)$ and $U_{\Phi,\y}((a_j),\m,h)$ serve as our analogues of $W_{\Phi}(z,\Psi)$ and $U_{\Phi}(z,\m,h)$ in this random setting. Note here that we have defined our limsup sets in terms of neighbourhoods of $\pi_\y(\a (a_j))$ rather than $\phi_{\a}^{\y_{\a}}(\pi_{\y}(a_j)).$ In the deterministic setting considered above these quantities coincide. In the random setup it is not necessarily the case that  $\pi_\y(\a (a_j))=\phi_{\a}^{\y_{\a}}(\pi_{\y}(a_j)).$ The theorem stated below is the random analogue of Theorem \ref{translation thm}. It suggests that one should be able to replace the assumption $\|A_i\|<1/2$ with some other reasonable conditions. 

\begin{thm}
	\label{random thm}
Fix a set of matrices $\{A_i\}_{i=1}^l$ each satisfying $\|A_i\|<1$ and $\t\in \R^{ld}$. Then the following statements are true:
	\begin{enumerate}
			\item Let $\m$ be a slowly decaying $\sigma$-invariant ergodic probability measure with $\h(\m)>-(\lambda_1(\m)+\cdots +\lambda_d(\m))$ and $(a_j)\in \D^{\N}.$ For $\mathbf{P}$-almost every $\y\in\mathbf{D}^{\mathbb{N}},$ for any $h\in H,$ the set $U_{\Phi,\y}((a_j),\m,h)$ has positive Lebesgue measure.
			\item Let $\m$ be the uniform $(1/l,\ldots,1/l)$ Bernoulli measure and suppose there exists $A$ such that $A_i=A$ for all $1\leq i\leq l$. 
		   If $\log l>-(\lambda_1(\m)+\cdots+\lambda_d(\m))$, then for $\mathbf{P}$-almost every $\y\in\mathbf{D}^{\mathbb{N}}$, for any $(a_j)\in \D^{\mathbb{N}}$ and $h\in H,$ the set $U_{\Phi,\y}((a_j),\m,h)$ has positive Lebesgue measure.
		  
		\item Let $\m$ be a slowly decaying $\sigma$-invariant ergodic probability measure with $\h(\m)>-(\lambda_1(\m)+\cdots +\lambda_d(\m))$ and $(a_j)\in \D^{\N}.$ For $\mathbf{P}$-almost every $\y\in\mathbf{D}^{\mathbb{N}}$, for any $\Psi$ that equivalent to $(\m,h)$ for some $h\in H,$ the set $W_{\Phi,\y}((a_j),\Psi)$ has positive Lebesgue measure.
		\item  Let $\m$ be the uniform $(1/l,\ldots,1/l)$ Bernoulli measure and suppose there exists $A$ such that $A_i=A$ for all $1\leq i\leq l$. If $\log l>-(\lambda_1(\m)+\cdots+\lambda_d(\m))$, then for $\mathbf{P}$-almost every $\y\in\mathbf{D}^{\mathbb{N}}$, for any $(a_j)\in \D^{\mathbb{N}}$ and $\Psi$ that is equivalent to $(\m,h)$ for some $h\in H,$ the set $W_{\Phi,\y}((a_j),\Psi)$ has positive Lebesgue measure.
				\end{enumerate}
		\end{thm}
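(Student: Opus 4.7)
The plan is to establish all four parts by a second-moment argument, reducing (3)--(4) to (1)--(2) via Lemma \ref{arbitrarily small}. The key structural observation, which obviates any condition like $\|A_i\|<1/2$, is that the i.i.d.\ random perturbations supply transversality automatically. Writing out the random composition gives
\[
\pi_\y(\c(a_j)) \;=\; A_\c z_\c^\y \;+\; \sum_{k=1}^{|\c|} A_{c_1\cdots c_{k-1}}\bigl(t_{c_k}+y_{c_1\cdots c_k}\bigr),
\]
where $z_\c^\y$ depends only on the errors $\{y_{\c\b}:\b\in\D^*\}$. Since $L_{\m,n}$ is prefix-free, for distinct $\c,\c'\in L_{\m,n}$ with longest common prefix $\mathbf{v}$, conditioning on the errors indexed by prefixes of $\mathbf{v}$ makes $\pi_\y(\c(a_j))$ and $\pi_\y(\c'(a_j))$ independent, each affine in its own branching error with leading linear factor $A_\mathbf{v}$. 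Because $\eta$ has bounded density, one derives
\[
\mathbf{P}\bigl(|\pi_\y(\c(a_j))-\pi_\y(\c'(a_j))|\leq r \;\bigm|\; \mathcal{F}_\mathbf{v}\bigr) \;\lesssim\; \frac{r^d}{\prod_{i=1}^{d}\alpha_i(A_\mathbf{v})},
\]
together with an analogous one-point density bound for $\pi_\y(\c(a_j))$ itself.

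Next, fix a compact $K\subset\R^d$ containing every realisation $X_\y$ (such $K$ exists by $\|A_i\|<1$ and boundedness of $\mathbf{D}$), set $r_\c := (\m([\c])h(n))^{1/d}$, and introduce
\[
F_n^\y(x) \;:=\; \sum_{\c\in L_{\m,n}} \mathbb{1}_{B(\pi_\y(\c(a_j)),\,r_\c)}(x).
\]
Since $\sum_{\c\in L_{\m,n}}\m([\c])=1$, the first moment obeys $\mathbf{E}\!\int_K F_n^\y\,dx \asymp h(n)$. For the second moment I would stratify the sum over ordered pairs $(\c,\c')\in L_{\m,n}^2$ by the length $|\mathbf{v}|$ of their longest common prefix and apply the conditional density bound above; the entropy--Lyapunov hypothesis $\mathfrak{h}(\m)>-(\lambda_1(\m)+\cdots+\lambda_d(\m))$ is exactly what causes the resulting geometric series to telescope to $\mathbf{E}\!\int_K (F_n^\y)^2\,dx \lesssim h(n)^2 + h(n)$. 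Paley--Zygmund then yields $\mathbf{P}(\mathrm{Leb}(U_n^\y\cap K)\geq c\,(h(n)\wedge 1))\geq c'$, where $U_n^\y:=\{F_n^\y>0\}$ is the $n$-th level appearing in the definition of $U_{\Phi,\y}((a_j),\m,h)$.

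To pass from these per-$n$ positive-probability estimates to positive Lebesgue measure of the limsup for $\mathbf{P}$-a.e.\ $\y$, I would apply the same conditional-independence machinery to cross-level pairs $\c\in L_{\m,n}$, $\c'\in L_{\m,m}$ to secure quasi-independence across levels, then invoke a divergent Borel--Cantelli-type lemma for Lebesgue measure. The class-$H$ hypothesis is tailored to this step: even when a possibly sparse set $B_\y\subset\N$ of upper density at most $\epsilon$ must be excluded, $h\in H_\epsilon$ guarantees $\sum_{n\notin B_\y}h(n)=\infty$. Since modifying any finite collection of errors alters only finitely many $\pi_\y(\c(a_j))$, the event $\{\mathrm{Leb}(\limsup_n U_n^\y)>0\}$ is tail on $\mathbf{D}^\N$, and Kolmogorov's zero-one law upgrades positive probability to probability one, completing (1). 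Part (2) then follows by specialising to the uniform Bernoulli measure (where $L_{\m,n}=\D^n$ and $\mathfrak{h}(\m)=\log l$), with the uniform-in-$(a_j)$ quantifier handled by a separability argument exploiting the Bernoulli symmetry and the continuous dependence of $\pi_\y(\c(a_j))$ on $(a_j)$; parts (3) and (4) follow from (1) and (2) by Lemma \ref{arbitrarily small}.

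The chief obstacle will be the second-moment computation: securing the conditional density bound in a form depending only on $\mathbf{v}$, organising the pair sum by $|\mathbf{v}|$, and checking that the combinatorial count of pairs at each level of $|\mathbf{v}|$ is precisely cancelled by the inverse singular-value product $(\prod_i\alpha_i(A_\mathbf{v}))^{-1}$ right at the threshold $\mathfrak{h}(\m)>-\sum_i\lambda_i(\m)$. A secondary but nontrivial point is carrying the quasi-independence across levels so that the class-$H$ hypothesis, rather than the stronger $H^*$, is sufficient for the divergent Borel--Cantelli step.
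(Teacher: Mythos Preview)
Your approach shares the key transversality input with the paper (the conditional density bound you derive is essentially \cite[Lemma 6]{JoPoSi}, which the paper simply cites), but the downstream argument diverges from the paper's and contains two genuine gaps.

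First, the Kolmogorov zero--one law step fails. You assert that modifying finitely many errors alters only finitely many $\pi_\y(\c(a_j))$; this is false. Changing a single error $y_\b$ shifts $\pi_\y(\c(a_j))$ by $A_{b_1\cdots b_{|\b|-1}}$ applied to the change, for \emph{every} $\c$ having $\b$ as a prefix---infinitely many words at every level $n$. So $\{\mathrm{Leb}(\limsup_n U_n^\y)>0\}$ is not a tail event for the product filtration on $\mathbf{D}^\N$, and the zero--one upgrade from positive probability to probability one is unjustified.

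Second, the quantifier ``for any $h\in H$'' is not secured. Your Paley--Zygmund bound is for a fixed $h$; even granting a probability-one upgrade for each $h$, the exceptional $\y$-set depends on $h$, and $H$ is uncountable. The paper sidesteps both issues by not working with the $h$-dependent sum $F_n^\y$ at all: it follows the template of Theorem~\ref{translation thm}, using the transversality bound to control the pair count $\int \#R(\y,s,n)/\tilde R_{\m,n}\,d\mathbf{P}=\mathcal{O}(s^d)$, an $h$-independent quantity, and then invokes Lemma~\ref{integral bound} and Proposition~\ref{general prop}. The latter (via Lemma~\ref{density separation lemma}) produces, for $\mathbf{P}$-a.e.\ $\y$, an $h$-independent separation property---namely that $\y\in B(c,s,n)$ for a set of $n$ of upper density exceeding $1-\epsilon$---after which Proposition~\ref{fixed omega} delivers the conclusion for every $h\in H_\epsilon$ simultaneously. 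Your second-moment calculation over pairs, stratified by the common prefix and controlled by the entropy--Lyapunov hypothesis, is exactly the right computation; it just needs to be aimed at $R(\y,s,n)$ rather than at $F_n^\y$.
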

The reason we cannot obtain the full measure statements from Theorem \ref{translation thm} in our random setting is because of how $X_\y$ is defined. In particular, $X_\y$ cannot necessarily be expressed as finitely many scaled copies of itself like in the deterministic setting. The proof of statements $3$ and $4$ from Theorem \ref{translation thm} rely on the fact that the underlying attractor satisfies the equation $X=\cup_{i=1}^l\phi_i (X)$. 	

\subsection{A specific family of IFSs}
We now introduce a family of iterated function systems for which we can make very precise statements. To each $t\in[0,1]$ we associate the IFS:
$$\Phi_{t}=\left\{\phi_1(x)=\frac{x}{2},\, \phi_{2}(x)=\frac{x+1}{2},\, \phi_{3}(x)=\frac{x+t}{2},\, \phi_{4}(x)=\frac{x+1+t}{2}\right\}.$$ For each $\Phi_t$ the corresponding attractor is $[0,1+t]$. We denote the projection map from $\D^{\mathbb{N}}$ to $[0,1+t]$ by $\pi_t$. For this family of iterated function systems we will be able to replace the almost every statements appearing in Theorems \ref{1d thm} and \ref{translation thm} with something more precise. The reason we can make these stronger statements is because separation properties for $\Phi_t$ can be deduced from the continued fraction expansion of $t$. Recall that for any $t\in [0,1]\setminus \mathbb{Q},$ there exists a unique sequence $(\zeta_m)\in\mathbb{N}^{\mathbb{N}}$ such that 
$$ t=\cfrac{1}{\zeta_1+\cfrac{1}{\zeta_2 +\cfrac{1}
		{\zeta_3 + \cdots }}}.$$
We call the sequence $(\zeta_m)$ the continued fraction expansion of $t$. Given $t$ with continued fraction expansion $(\zeta_m)$, for each $m\in\mathbb{N}$ we let
$$ \frac{p_m}{q_m}:=\cfrac{1}{\zeta_1+\cfrac{1}{\zeta_2 +\cfrac{1}
		{\zeta_3 + \cdots \cfrac{1}
			{\zeta_m }}}}.$$ We call $p_m/q_m$ the $m$-th partial quotient of $t$. We say that $t$ is badly approximable if the integers appearing in the continued fraction expansion of $t$ can be bounded from above.

The main result of this section is the following.
\begin{thm}
	\label{precise result} 
Let $\m$ be the uniform $(1/4,1/4,1/4,1/4)$ Bernoulli measure. The following statements are true:
\begin{enumerate}
	\item If $t\in\mathbb{Q}$ then $\Phi_t$ contains an exact overlap, and for any $z\in[0,1+t]$ the set $U_{\Phi_t}(z,\m,1)$ has Hausdorff dimension strictly less than $1$.
	\item If $t\notin \mathbb{Q},$ then there exists $h:\mathbb{N}\to[0,\infty)$ depending upon the continued fraction expansion of $t,$ such that $\lim_{n\to\infty}h(n)=0,$ and for any $z\in[0,1+t],$ Lebesgue almost every $x\in[0,1+t]$ is contained in $U_{\Phi_t}(z,\m,h)$.
	\item If $t$ is badly approximable, then for any $z\in[0,1+t]$ and $h:\mathbb{N}\to [0,\infty)$ satisfying $\sum_{n=1}^{\infty}h(n)=\infty,$ Lebesgue almost every $x\in[0,1+t]$ is contained in $U_{\Phi_t}(z,\m,h).$
	\item If $t\notin\mathbb{Q}$ and is not badly approximable, then there exists $h:\mathbb{N}\to[0,\infty)$ satisfying $\sum_{n=1}^{\infty}h(n)=\infty,$ yet $U_{\Phi_t}(z,\m,h)$ has zero Lebesgue measure for any $z\in[0,1+t]$.
	\item Suppose $t\notin \mathbb{Q}$ is such that for any $\epsilon>0,$ there exists $L\in\mathbb{N}$ for which the following inequality holds for $M$ sufficiently large: $$\sum_{\stackrel{1\leq m \leq M}{\frac{q_{m+1}}{q_m}\geq L}}\log_{2}(\zeta_{m+1}+1) \leq \epsilon M.$$ Then for any $z\in[0,1+t]$ and $h\in H^*,$ Lebesgue almost every $x\in[0,1+t]$ is contained in $U_{\Phi_t}(z,\m,h)$. 
	\item Suppose $\mu$ is an ergodic invariant measure for the Gauss map and satisfies $$\sum_{m=1}^{\infty}\mu\Big(\left[\frac{1}{m+1},\frac{1}{m}\right]\Big)\log_2 (m +1)<\infty.$$ Then for $\mu$-almost every $t,$ for any $z\in[0,1+t]$ and $h\in H^*,$ Lebesgue almost every $x\in[0,1+t]$ is contained in $U_{\Phi_t}(z,\m,h).$ In particular, for Lebesgue almost every $t\in[0,1],$ for any $z\in[0,1+t]$ and $h\in H^*$, Lebesgue almost every $x\in[0,1+t]$ is contained in $U_{\Phi_t}(z,\m,h)$.
\end{enumerate}
\end{thm}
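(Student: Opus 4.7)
The plan is to deduce item 6 from item 5 by applying the Birkhoff ergodic theorem to the Gauss map $T$, exploiting the fact that the condition in item 5 is essentially a statement about the averages of a function of a single digit along the $T$-orbit of $t$. The starting point is the elementary comparison between consecutive denominators and continued fraction digits: from $q_{m+1}=\zeta_{m+1}q_m+q_{m-1}$ together with $q_{m-1}<q_m$, one has $\zeta_{m+1}\le q_{m+1}/q_m<\zeta_{m+1}+1$. Hence $\{m:q_{m+1}/q_m\ge L\}\subseteq\{m:\zeta_{m+1}\ge L-1\}$, and setting $f_K(t):=\log_2(\zeta_1(t)+1)\,\mathbf{1}_{\zeta_1(t)\ge K}$ and using $\zeta_{m+1}(t)=\zeta_1(T^mt)$ gives
$$\sum_{\substack{1\le m\le M\\ q_{m+1}/q_m\ge L}}\log_2(\zeta_{m+1}+1)\ \le\ \sum_{m=1}^{M}f_{L-1}(T^mt).$$

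The integrability hypothesis on $\mu$ reads $\int\log_2(\zeta_1+1)\,d\mu=\sum_{m\ge 1}\mu([\tfrac{1}{m+1},\tfrac1m])\log_2(m+1)<\infty$, so by dominated convergence $\int f_K\,d\mu\to 0$ as $K\to\infty$. Since $\mu$ is ergodic and $T$-invariant, for each $K\in\mathbb N$ the Birkhoff ergodic theorem yields a $\mu$-null set $N_K$ such that $M^{-1}\sum_{m=1}^{M}f_K(T^mt)\to\int f_K\,d\mu$ for every $t\notin N_K$ (the standard shift between summing over $0\le m\le M-1$ and $1\le m\le M$ is harmless since $f_K\in L^1(\mu)$ gives $f_K(T^Mt)/M\to 0$ almost surely by Borel--Cantelli). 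Put $N:=\bigcup_{K\in\mathbb N}N_K$. For any $t\notin N$ and any $\varepsilon>0$, choose $K$ with $\int f_K\,d\mu<\varepsilon/2$ and take $L=K+1$; then for $M$ large the displayed sum is at most $\varepsilon M$. This is exactly the condition on $t$ required by item 5, which therefore supplies the full Lebesgue-measure statement for every $t\notin N$, establishing the main claim.

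The ``in particular'' assertion is obtained by specializing to the Gauss measure $d\mu_G=(\log 2)^{-1}(1+t)^{-1}\,dt$, which is $T$-invariant, ergodic and equivalent to Lebesgue on $[0,1]$. The integrability hypothesis is verified directly: $\mu_G([\tfrac{1}{m+1},\tfrac{1}{m}])\asymp 1/m^2$, so $\sum_m\mu_G([\tfrac{1}{m+1},\tfrac1m])\log_2(m+1)<\infty$. The main part of the theorem then applies to $\mu_G$, and equivalence of $\mu_G$ with Lebesgue upgrades the conclusion to Lebesgue-a.e.\ $t\in[0,1]$. The only point that demands any care is the comparison $\zeta_{m+1}\le q_{m+1}/q_m<\zeta_{m+1}+1$ (needed to turn item 5's assumption into a genuine Birkhoff average of a function depending only on $\zeta_1\circ T^m$) together with the diagonal argument across countably many $K$ to produce a single exceptional null set; neither presents a serious obstacle.
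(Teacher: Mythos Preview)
Your proof of item 6 is correct and follows essentially the same approach as the paper: both reduce to item 5 by bounding the sum there by a Birkhoff average of an indicator-weighted $\log_2(\zeta_1+1)$ under the Gauss map, invoking the ergodic theorem together with the integrability hypothesis, and then specializing to the Gauss measure for the ``in particular'' clause. Your write-up is slightly more careful in taking the countable union of Birkhoff null sets over $K$, but the argument is the same.
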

We include the following corollary to emphasise the strong dichotomy that follows from statement $1$ and statement $2$ from Theorem \ref{precise result}. 

\begin{cor}
Either $t$ is such that $\Phi_t$ contains an exact overlap and for any $z\in[0,1+t]$ $$\dim_{H}\left(\left\{x\in[0,1+t]:|x-\phi_{\a}(z)|\leq \frac{1}{4^{|\a|}}\textrm{ for i.m. }\a\in\D^*\right\}\right)<1,$$ or for any $z\in[0,1+t],$ Lebesgue almost every $x\in [0,1+t]$ is contained in $$\left\{x\in[0,1+t]:|x-\phi_{\a}(z)|\leq \frac{1}{4^{|\a|}}\textrm{ for i.m. }\a\in\D^*\right\}.$$
\end{cor}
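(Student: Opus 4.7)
The corollary follows directly from parts $1$ and $2$ of Theorem \ref{precise result} once we identify the set it describes with an appropriate $U_{\Phi_t}(z,\m,h)$. My first step would be to note that for the uniform $(1/4,1/4,1/4,1/4)$ Bernoulli measure $\m$ we have $c_\m=1/4$, $\m([\a])=4^{-|\a|}$, and hence $L_{\m,n}=\D^n$ for every $n$. Taking the constant function $h\equiv 1$ and remembering that $d=1$, the definition of $U_{\Phi_t}(z,\m,1)$ collapses to
$$\Big\{x\in[0,1+t]:|x-\phi_{\a}(z)|\leq 4^{-|\a|}\textrm{ for i.m. }\a\in\D^*\Big\},$$
which is exactly the limsup set appearing in the corollary.

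The plan is then to split on whether $t$ is rational. If $t\in\mathbb{Q}$, part $1$ of Theorem \ref{precise result} simultaneously supplies both assertions of the first alternative: $\Phi_t$ contains an exact overlap, and for any $z\in[0,1+t]$ the set $U_{\Phi_t}(z,\m,1)$ has Hausdorff dimension strictly less than $1$. Via the identification above this is precisely the first clause of the corollary.

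If $t\notin\mathbb{Q}$, I would apply part $2$ of Theorem \ref{precise result} to obtain a function $h:\mathbb{N}\to[0,\infty)$ with $h(n)\to 0$ such that, for every $z\in[0,1+t]$, Lebesgue almost every $x\in[0,1+t]$ lies in $U_{\Phi_t}(z,\m,h)$. Since $h(n)\to 0$, there exists $N$ with $h(n)\leq 1$ for all $n\geq N$; then for every $n\geq N$ and every $\a\in\D^n$ we have $B(\phi_\a(z),4^{-n}h(n))\subseteq B(\phi_\a(z),4^{-n})$. Because the limsup condition defining $U_{\Phi_t}(z,\m,h)$ is unaffected by discarding finitely many levels, this ball inclusion yields $U_{\Phi_t}(z,\m,h)\subseteq U_{\Phi_t}(z,\m,1)$, and the full-measure conclusion transfers to the corollary's set.

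There is no serious obstacle here: the argument is essentially a dictionary translation between the notation of Theorem \ref{precise result} and the more concrete limsup set in the corollary. The only care needed is in verifying the monotonicity $U_{\Phi_t}(z,\m,h)\subseteq U_{\Phi_t}(z,\m,1)$, which as explained above follows from the ball containment for $n\geq N$ together with the insensitivity of the ``for infinitely many'' quantifier to a finite initial segment.
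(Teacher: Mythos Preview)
Your proposal is correct and follows essentially the same approach as the paper, which simply states that the corollary is the dichotomy obtained from statements~$1$ and~$2$ of Theorem~\ref{precise result}. Your identification of the corollary's limsup set with $U_{\Phi_t}(z,\m,1)$ via $L_{\m,n}=\D^n$ matches the remark the paper makes immediately after Theorem~\ref{precise result}, and your explicit justification of the inclusion $U_{\Phi_t}(z,\m,h)\subseteq U_{\Phi_t}(z,\m,1)$ when $h(n)\to 0$ is a small but appropriate detail that the paper leaves implicit.
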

Theorem \ref{precise result} is stated in terms of the auxiliary sets $U_{\Phi_t}(z,\m,h)$ rather than in terms of $W_{\Phi_t}(z,\Psi).$ Where here the underlying measure $\m$ is the uniform $(1/4,1/4,1/4,1/4)$ Bernoulli measure. Note however that if $\Psi:\D^*\to[0,\infty)$ is a function that only depends upon the length of the word $\a$, then $\Psi(\a)=h(|\a|)\m([\a])$ for some appropriate choice of $h$. Combining this observation with the fact $L_{\m,n}=\D^n$ for any $n\in \N$ for this choice of $\m$, it follows that $W_{\Phi_t}(z,\Psi)=U_{\Phi_t}(z,\m,h)$ for this choice of $h$. Therefore Theorem \ref{precise result} can be reinterpreted in terms of the sets $W_{\Phi_t}(z,\Psi)$ when $\Psi$ only depends upon the length of the word.

\subsubsection{New methods for distinguishing between the overlapping behaviour of IFSs}
\label{new methods}
In this section we explain how Theorem \ref{precise result} allows us to distinguish between iterated function systems in a way that is not available to us by simply studying properties of self-similar measures. We start this discussion by stating the following result that will follow from the proof of Theorem \ref{precise result}.
\begin{thm}
	\label{overlap or optimal}
Let $t\in[0,1].$ It is the case that either $\Phi_t$ contains an exact overlap, or for infinitely many $n\in\mathbb{N}$ we have $$|\phi_{\a}(z)-\phi_{\a'}(z)|\geq \frac{1}{8\cdot 4^n},$$ for any $z\in [0,1+t]$ for distinct $\a,\a'\in \D^n$.
\end{thm}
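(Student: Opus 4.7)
The plan is to exploit the very special algebraic form of $\Phi_t$: all four contractions share the ratio $1/2$, and their translation parts lie in the $\mathbb{Z}$-module $\mathbb{Z}+t\mathbb{Z}$. This lets me reduce the separation problem to a one-dimensional Diophantine question about $t$. First I would iterate to get $\phi_\a(z)=z/2^n+\sum_{j=1}^n c_{a_j}/2^j$ with $c_i\in\{0,1,t,1+t\}$, and write $c_i=\delta_i+\eta_i t$ for $(\delta_i,\eta_i)\in\{0,1\}^2$; the crucial point is that $i\mapsto(\delta_i,\eta_i)$ is a bijection of $\D$ onto $\{0,1\}^2$. Subtracting two such iterates then yields
$$\phi_\a(z)-\phi_{\a'}(z)=\frac{p+tq}{2^n},$$
where $p,q\in\mathbb{Z}$ satisfy $|p|,|q|\le 2^n-1$, and thanks to the bijectivity of $(\delta,\eta)$, $(p,q)=(0,0)$ iff $\a=\a'$. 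The target inequality $1/(8\cdot 4^n)$ translates to $|p+tq|\ge 1/(8\cdot 2^n)$, while an exact overlap (necessarily between words of equal length, since all contractions have the same ratio) corresponds to a nonzero $(p,q)$ with $|p|,|q|\le 2^n-1$ and $p+tq=0$.

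If $t=p_0/q_0$ is rational in lowest terms, then $p+tq=(pq_0+p_0q)/q_0$ is either $0$ or has absolute value at least $1/q_0$. Hence, assuming no exact overlap, $|p+tq|\ge 1/q_0\ge 1/(8\cdot 2^n)$ holds for every $n$ with $2^n\ge q_0/8$, and the conclusion follows for every sufficiently large $n$.

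For irrational $t$ no exact overlap can exist, since a nontrivial rational combination of $1$ and $t$ cannot vanish. I would then bring in continued fractions: letting $p_m/q_m$ denote the convergents of $t$, standard estimates give $\|q_m t\|>1/(q_m+q_{m+1})$ together with the best approximation property $\|qt\|\ge\|q_m t\|$ whenever $1\le q<q_{m+1}$. For each $n$ set $m(n)$ to be the largest index with $q_{m(n)}\le 2^n-1$. The case $q=0$ forces $|p|\ge 1$; and for $0<|q|\le 2^n-1<q_{m(n)+1}$ one has
$$|p+tq|\;\ge\;\|qt\|\;\ge\;\|q_{m(n)}t\|\;>\;\frac{1}{q_{m(n)}+q_{m(n)+1}}\;>\;\frac{1}{2\,q_{m(n)+1}}.$$
Since $q_m\to\infty$, the non-decreasing sequence $m(n)$ is unbounded, so $m(n+1)>m(n)$ happens for infinitely many $n$. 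For each such $n$ one has $q_{m(n)+1}\le q_{m(n+1)}\le 2^{n+1}-1<2\cdot 2^n$, which gives $|p+tq|>1/(4\cdot 2^n)>1/(8\cdot 2^n)$, i.e. exactly the required bound.

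The main work is the continued-fraction comparison between the scale $2^n$ and the denominators $(q_m)$; the key realisation is that the values of $n$ at which $m(n)$ increments are automatically the favourable ones, so no delicate choice of $n$ is needed. A minor detail to verify is that restricting to $|p|\le 2^n-1$ does not spoil the best-approximation bound, which is immediate since for $t\in[0,1]$ and $q\ge 0$ the optimal integer $p$ satisfies $|p|\le|q|\le 2^n-1$.
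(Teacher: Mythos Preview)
Your proof is correct and follows essentially the same route as the paper: the same algebraic reduction $\phi_\a(z)-\phi_{\a'}(z)=(p+tq)/2^n$ (the paper's Lemma~5.2), the same continued-fraction input \eqref{property1} and \eqref{property4}, and the same idea of selecting infinitely many $n$ for which $2^n$ sits just below some convergent denominator $q_m$ (your ``jump'' values of $m(n)$ are exactly those $n$ for which the paper's condition $q_m/4\le 2^n-1<q_m$ holds with $m=m(n)+1$). Two cosmetic remarks: your separate treatment of the rational case is vacuous, since your own characterisation already shows that rational $t$ forces an exact overlap; and the ``minor detail'' about restricting $|p|\le 2^n-1$ is unnecessary, because $|p+tq|\ge\|qt\|$ holds for every integer $p$.
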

Theorem \ref{overlap or optimal} effectively states that for this family of IFSs, we either have an exact overlap, or for infinitely many scales we exhibit the optimal level of separation. This level of separation can be seen to be optimal by the pigeonhole principle, which tells us that for any $z\in[0,1+t]$ and $n\in\mathbb{N},$ there must exist distinct $\a,\a'\in \D^n$ such that $$|\phi_{\a}(z)-\phi_{\a'}(z)|\leq \frac{1+t}{4^n-1}.$$  Because of the strong dichotomy demonstrated by Theorem \ref{overlap or optimal}, we believe that this family of IFSs will serve as a useful toy model for other problems.

For a probability vector $\textbf{p}=(p_1,p_2,p_3,p_4)$ we denote the corresponding self-similar measure for the IFS $\Phi_t$ by $\mu_{\textbf{p},t}$. It follows from Theorem \ref{overlap or optimal} and the work of Hochman \cite[Theorem 1.1.]{Hochman} that the following theorem holds.
\begin{thm}
	\label{Hoccor}
	Either $\Phi_t$ contains an exact overlap, or for any probability vector $\textbf{p}$ we have $$\dim \mu_{\textbf{p},t}= \min\left\{\frac{\sum_{i=1}^4 p_i\log p_i}{-\log 2},1\right\}.$$
\end{thm}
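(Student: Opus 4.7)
The plan is to deduce this directly from Theorem \ref{overlap or optimal} combined with the quoted consequence of Hochman's main theorem (the displayed statement following \eqref{expected dimensiona} in the introduction). If $\Phi_t$ contains an exact overlap then the first alternative holds and there is nothing further to prove. Otherwise, Theorem \ref{overlap or optimal} applied with $z = 0$ furnishes an infinite set of integers $n$ such that any two distinct words $\a, \b \in \D^n$ satisfy $|\phi_\a(0) - \phi_\b(0)| \geq \frac{1}{8 \cdot 4^n}$. Consequently the Hochman separation quantity $\Delta_n$ satisfies $\Delta_n \geq \frac{1}{8 \cdot 4^n}$ along this subsequence, whence
$$\liminf_{n \to \infty} \frac{-\log \Delta_n}{n} \leq 2 \log 2 < \infty.$$

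Next I would invoke Hochman's theorem in contrapositive form: since $(-\log \Delta_n)/n$ fails to tend to infinity, equality must hold in \eqref{expected dimensiona} for every probability vector $\textbf{p}$. Substituting $r_i = 1/2$ for each $i$ into \eqref{expected dimensiona} collapses the denominator $\sum p_i \log r_i$ to $-\log 2$, producing exactly the formula in the statement. The whole argument is essentially one observation, so I do not anticipate any substantive obstacle; the only point requiring care is to note that the multiplicative constant $1/8$ appearing in Theorem \ref{overlap or optimal} contributes only $O(1/n)$ to $(-\log \Delta_n)/n$ and is therefore harmless when passing to the limit. In particular, all of the genuine work has been done in proving Theorem \ref{overlap or optimal}, which itself is where the continued-fraction analysis underlying Theorem \ref{precise result} enters.
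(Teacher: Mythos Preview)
Your proposal is correct and matches the paper's approach exactly: the paper states that Theorem \ref{Hoccor} ``follows from Theorem \ref{overlap or optimal} and the work of Hochman \cite[Theorem 1.1.]{Hochman}'' without further elaboration, and your argument fills in precisely those details. The only thing worth noting is that the paper does not spell out the $\liminf$ computation, but your handling of it (including the remark that the constant $1/8$ is harmless) is accurate.
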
  The following theorem follows from the work of Shmerkin and Solomyak \cite[Theorem A]{ShmSol}. 

\begin{thm}
	\label{ShmSolcor}
For every $t\in[0,1]$ outside of a set of Hausdorff dimension $0$, we have that $\mu_{\textbf{p},t}$ is absolutely continuous whenever $$\frac{\sum_{i=1}^4 p_i\log p_i}{-\log 2}>1.$$
\end{thm}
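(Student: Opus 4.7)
The strategy is to recognise $\mu_{\mathbf{p},t}$ as a one-parameter family of linear projections of a single planar self-similar measure, and then to quote the main result of Shmerkin--Solomyak on the size of the exceptional set for absolute continuity in such families.

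First, I would decompose each symbol $a\in\D=\{1,2,3,4\}$ into a pair $(\epsilon(a),\delta(a))\in\{0,1\}^{2}$ via $d_{a}=\epsilon(a)+t\,\delta(a)$; concretely $1\leftrightarrow(0,0)$, $2\leftrightarrow(1,0)$, $3\leftrightarrow(0,1)$, $4\leftrightarrow(1,1)$. Iterating the contractions of $\Phi_t$ then yields, for any $(a_j)\in\D^{\N}$,
\begin{equation*}
\pi_t((a_j))=\sum_{j=1}^{\infty}\epsilon(a_j)\,2^{-j}+t\sum_{j=1}^{\infty}\delta(a_j)\,2^{-j}=T_t\bigl(P((a_j))\bigr),
\end{equation*}
where $P\colon\D^{\N}\to[0,1]^{2}$ is the coordinate map $(a_j)\mapsto\bigl(\sum_{j}\epsilon(a_j)2^{-j},\sum_{j}\delta(a_j)2^{-j}\bigr)$ and $T_t\colon\R^{2}\to\R$ is the linear functional $T_t(x,y):=x+ty$. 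Hence $\mu_{\mathbf{p},t}=(T_t)_{*}\nu_{\mathbf{p}}$ with $\nu_{\mathbf{p}}:=P_{*}\m_{\mathbf{p}}$.

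Next I would identify $\nu_{\mathbf{p}}$ as the planar self-similar measure generated by the IFS $\Psi=\{(x,y)\mapsto((x+\epsilon)/2,(y+\delta)/2)\}_{(\epsilon,\delta)\in\{0,1\}^{2}}$ weighted by $\mathbf{p}$. Since $\Psi$ consists of homotheties of ratio $1/2$ whose images are the four standard dyadic subsquares of $[0,1]^{2}$, the open set condition holds, so $\nu_{\mathbf{p}}$ is exact dimensional with $\dim\nu_{\mathbf{p}}=-(\sum_{i=1}^{4}p_{i}\log p_{i})/\log 2$. The maps $\{T_t\}_{t\in[0,1]}$ form a real-analytic, non-degenerate one-parameter family of linear projections from $\R^{2}$ to $\R$ (the kernel direction $(-t,1)$ varies non-trivially with $t$), and the entire family of self-similar measures $\{\mu_{\mathbf{p},t}\}_{t\in[0,1]}$ is realised as the corresponding family of projected images of the fixed measure $\nu_{\mathbf{p}}$.

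Having set this up, I would invoke Theorem~A of~\cite{ShmSol} directly applied to $\nu_{\mathbf{p}}$ and the family $\{T_t\}$. Their result guarantees that the set of parameters $t$ for which $(T_t)_{*}\nu_{\mathbf{p}}$ fails to be absolutely continuous has Hausdorff dimension zero whenever $\dim\nu_{\mathbf{p}}>1$, which in our notation is precisely the hypothesis $\frac{\sum_{i=1}^{4}p_i\log p_i}{-\log 2}>1$. The main obstacle, were one to avoid black-boxing~\cite{ShmSol}, would be to reprove its $L^{q}$-dimension input---in particular Shmerkin's inverse theorem for the $L^{q}$-entropy of convolutions together with the transversality of the family $\{T_t\}_{t\in[0,1]}$---but within the scope of the present paper the argument collapses to the reduction described above.
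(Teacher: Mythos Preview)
Your proposal is correct and lands on the same citation as the paper: both reduce to Theorem~A of Shmerkin--Solomyak \cite{ShmSol}, with the non-degeneracy hypothesis being easy to verify. The paper simply observes that $\{\Phi_t\}_{t\in[0,1]}$ is itself a real-analytic one-parameter family of homogeneous self-similar IFSs on $\R$ and applies Theorem~A directly to that family, omitting the check of non-degeneracy as ``straightforward''.

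Your route differs only in that you first pass through the planar picture, realising $\mu_{\mathbf p,t}$ as $(T_t)_*\nu_{\mathbf p}$ for a fixed OSC self-similar measure $\nu_{\mathbf p}$ on $[0,1]^2$, and then invoke \cite{ShmSol}. This detour is not needed---once you know $\Phi_t$ is a homogeneous analytic family on $\R$, Theorem~A applies without any reference to projections---but it is correct and has the virtue of making the non-degeneracy transparent (the direction of $\ker T_t$ genuinely varies with $t$). Note that after your reduction you are still applying Theorem~A to the one-dimensional family $\{\Phi_t\}$, since the projected measures $(T_t)_*\nu_{\mathbf p}$ are exactly the self-similar measures $\mu_{\mathbf p,t}$; so the two arguments really coincide at the point where \cite{ShmSol} is invoked.
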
To apply Theorem A from \cite{ShmSol} we have to check that a non-degeneracy condition is satisfied. Checking this condition holds is straightforward in our setting so we omit the details.

It is known that the set of badly approximable numbers has Hausdorff dimension $1$ and Lebesgue measure zero. Therefore, applying Theorem \ref{Hoccor} and Theorem \ref{ShmSolcor}, it follows that there exists a badly approximable number $t,$ and some $t'$ that is not badly approximable, such that for any probability vector $\textbf{p}$ we have $$\dim \mu_{\textbf{p},t}=\dim \mu_{\textbf{p},t'}= \min\left\{\frac{\sum_{i=1}^4 p_i\log p_i}{-\log 2},1\right\},$$ and whenever $$\frac{\sum_{i=1}^4 p_i\log p_i}{-\log 2}>1$$ the measures  $\mu_{\textbf{p},t}$ and $\mu_{\textbf{p},t'}$ are both absolutely continuous. As such, the overlapping behaviour of $\Phi_t$ and $\Phi_{t'}$ are indistinguishable from the perspective of self-similar measures. However, we see from statement $3$ and statement $4$ of Theorem \ref{precise result} that there exists $h:\mathbb{N}\to [0,\infty)$ such that $U_{\Phi_t}(z,\m,h)$ has full Lebesgue measure for all $z\in[0,1+t]$, and $U_{\Phi_{t'}}(z,\m,h)$ has zero Lebesgue measure for all $z\in[0,1+t']$. Therefore, we see that by studying the metric properties of limsup sets we can distinguish between the overlapping behaviour of $\Phi_t$ and $\Phi_{t'}$. Studying the metric properties of limsup sets detects some of the finer details of how an iterated function system overlaps.

\subsection{The CS property and absolute continuity.}
We saw in the previous section that by studying IFSs using ideas from metric number theory, one can distinguish between IFSs in a way that is not available to us by simply studying pushforwards of Bernoulli measures. It is natural to wonder how Khintchine like behaviour relates to these measures. In this paper we show that there is a connection between a strong type of Khintchine like behaviour and the absolute continuity of these measures.

Given an IFS $\Phi$ and a slowly decaying measure $\m$, we say that $\Phi$ is consistently separated with respect to $\m$, or $\Phi$ has the CS property with respect to $\m$, if there exists $z\in X$ such that for any $h:\mathbb{N}\to [0,\infty)$ satisfying $$\sum_{n=1}^{\infty}h(n)=\infty,$$ the set $U_{\Phi}(z,\m,h)$ has positive Lebesgue measure. Using this terminology we see that statement $3$ and statement $4$ from Theorem \ref{precise result} imply that an IFS $\Phi_t$ has the CS property with respect to the $(1/4,1/4,1/4,1/4)$ Bernoulli measure if and only if $t$ is badly approximable. The use of the terminology consistently separated will become clearer in Section \ref{Colette section} (see Theorem \ref{new colette}). We prove the following result.

\begin{thm}
	\label{Colette thm}
For a slowly decaying $\sigma$-invariant ergodic probability measure $\m,$ if $\Phi$ has the CS property with respect to $\m,$ then the pushforward of $\m$ is absolutely continuous.
\end{thm}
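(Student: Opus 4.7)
The plan is to argue by contradiction: supposing $\mu := \m \circ \pi^{-1}$ is not absolutely continuous, I will construct a function $h:\mathbb{N}\to[0,\infty)$ with $\sum_n h(n)=\infty$ yet $\mathcal{L}^d(U_\Phi(z,\m,h))=0$, directly contradicting the CS property. As a preliminary step I will invoke a pure-type dichotomy for $\mu$: for Bernoulli $\m$ this is the classical argument using the self-similarity equation $\mu = \sum_i p_i\,\phi_{i*}\mu$ together with the preservation of absolute continuity and singularity under bi-Lipschitz maps; for general $\sigma$-invariant ergodic $\m$ one shows that the Radon--Nikodym derivative $d\mu_{ac}/d\mu$ pulls back to a $\sigma$-invariant (hence $\m$-a.e.\ constant) function on $\D^{\mathbb{N}}$. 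In either case it suffices to rule out $\mu$ being purely singular, in which case there is a Borel set $E\subset\mathbb{R}^d$ with $\mathcal{L}^d(E)=0$ and $\mu(E)=1$.

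The main tool is the weak convergence of the empirical measures
\[
\nu_n := \sum_{\a\in L_{\m,n}} \m([\a])\,\delta_{\phi_\a(z)} \longrightarrow \mu.
\]
This is secured by the slow-decay condition: for $\a\in L_{\m,n}$ the inequality $c_\m^{|\a|}\leq \m([\a])\leq c_\m^n$ forces $|\a|\geq n$, so $\operatorname{diam}(\phi_\a(X))\to 0$ uniformly. Fix a decreasing sequence of open sets $V_k\supset E$ with $\mathcal{L}^d(\overline{V_k})<2/k$, and set $\epsilon_n^{(k)}:=\nu_n(V_k^c)$. Weak convergence combined with $\mu(V_k)=1$ yields $\limsup_n\epsilon_n^{(k)}\leq 1-\mu(V_k)=0$ for each $k$. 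Since $\epsilon_n^{(k)}$ is monotone increasing in $k$, I diagonalize: pick $n_1<n_2<\cdots$ with $\epsilon_{n_i}^{(i)}\leq 2^{-i}$. Then for every fixed $k$, the bound $\epsilon_{n_i}^{(k)}\leq \epsilon_{n_i}^{(i)}\leq 2^{-i}$ holds once $i\geq k$, giving $\sum_i \epsilon_{n_i}^{(k)}<\infty$.

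Set $S:=\{n_i\}$ and $h:=\mathbf{1}_S$, so $\sum_n h(n)=\infty$ and $h$ is bounded. Writing $F_n^h := \bigcup_{\a\in L_{\m,n}} B(\phi_\a(z),(\m([\a])h(n))^{1/d})$ and splitting the balls at level $n\in S$ according to whether $\phi_\a(z)\in V_k$, the first piece lies in $V_k^{(\rho_n)}$ with $\rho_n\leq c_\m^{n/d}\to 0$, while the second piece has Lebesgue measure at most $c_d\,\epsilon_n^{(k)}$. Consequently
\[
\mathcal{L}^d\bigl(\limsup_{n\to\infty} F_n^h\bigr) \leq \mathcal{L}^d(\overline{V_k}) + \lim_{N\to\infty}\sum_{n\in S,\,n\geq N} c_d\,\epsilon_n^{(k)} < \frac{2}{k}
\]
for every $k$, whence $\mathcal{L}^d(U_\Phi(z,\m,h))=0$, contradicting the CS property and completing the proof. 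The main obstacle is the initial pure-type reduction, as the soft measure-theoretic portion is powerless to eliminate a Lebesgue-absolutely-continuous component whose balls cover a positive Lebesgue measure set for every bounded $h$; once purity is in hand, the weak-convergence plus diagonal Borel--Cantelli argument above goes through routinely.
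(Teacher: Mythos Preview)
Your argument has a genuine gap at the choice of the sets $V_k$. You assert the existence of open $V_k\supset E$ with $\mathcal{L}^d(\overline{V_k})<2/k$, but this is impossible whenever $\mu$ has full support on $X$ --- the typical situation, e.g.\ for pushforwards of fully supported Bernoulli measures. Indeed, full support forces any Borel $E$ with $\mu(E)=1$ to be dense in $X$, so every open $V\supset E$ has $\overline{V}\supset X$ and hence $\mathcal{L}^d(\overline{V})\geq \mathcal{L}^d(X)>0$. The repair is to drop the containment $V_k\supset E$ and use inner regularity of $\mu$ instead: take compact $K_k\subset E$ with $\mu(K_k)>1-2^{-k}$, set $V_k=\{x:\operatorname{dist}(x,K_k)<\delta_k\}$ with $\delta_k$ small enough that the closed $2\delta_k$-neighbourhood of $K_k$ has $\mathcal{L}^d$-measure below $2^{-k}$, and then pick $n_k$ large enough that both $\nu_{n_k}(V_k^c)<2^{-k}$ (Portmanteau still applies, since $\mu(V_k^c)<2^{-k}$) and the ball radii at level $n_k$ are below $\delta_k$. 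One then obtains $\mathcal{L}^d(F_{n_k})\lesssim 2^{-k}$ directly, and Borel--Cantelli finishes without any diagonalisation.

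With this repair your route becomes a legitimate alternative to the paper's, which instead proceeds through the separation quantities $T(Y_{\m,n}(z),sR_{\m,n}^{-1/d})$: the paper shows (Proposition~\ref{absolute continuity}, itself relying on the same pure-type lemma you invoke) that if $\mu$ is not absolutely continuous then these ratios tend to zero for every $z$ and $s$, and then (Proposition~\ref{fail prop}) constructs the bad $h$ from levels where the images cluster. The paper's approach has the bonus of yielding the characterisation of the CS property in Theorem~\ref{new colette}, whereas your weak-convergence argument is more direct and sidesteps the separation machinery entirely.
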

We emphasise here that an IFS having the CS property with respect to $\m$ and the pushforward of $\m$ being absolutely continuous are not equivalent statements. There are many examples of $\m$ and $\Phi$ such that the pushforward of $\m$ is absolutely continuous, yet $\Phi$ does not have the CS property with respect to $\m.$ In particular, for the family of IFSs $\{\Phi_t\}$ studied in the previous section, it can be shown that the pushforward of the uniform $(1/4,1/4,1/4,1/4)$ Bernoulli measure is absolutely continuous for any $t\in[0,1]$. However as remarked above, $\Phi_t$ has the CS property with respect to this measure if and only if $t$ is badly approximable. We include several explicit examples of consistently separated iterated function systems in Section \ref{Examples}.

\subsection{Overlapping self-conformal sets}
\label{self-conformal section}
Theorems \ref{1d thm}, \ref{translation thm}, and \ref{precise result} are stated in terms of parameterised families of overlapping IFSs where one would expect that for a typical member of this family the corresponding attractor would have positive Lebesgue measure. In Theorem \ref{conformal theorem} the attractor can have arbitrary Hausdorff dimension, but we assume that the underlying IFS satisfies some separation hypothesis. None of these results cover the case when the IFS is overlapping and the attractor is not expected to have positive Lebesgue measure. The purpose of this section is to fill this gap for IFSs consisting of conformal mappings. We recall some background on this class of IFS below. 

Let $V\subset \mathbb{R}^d$ be an open set, a $C^1$ map $\phi:V\to\mathbb{R}^d$ is a conformal mapping if it preserves angles, or equivalently $\Phi$ is a conformal mapping if the differential $\phi'$ satisfies $|\phi'(x)y|=|\phi'(x)||y|$ for all $x\in V$ and $y\in\mathbb{R}^d$. We call an IFS $\Phi=\{\phi_i\}_{i=1}^l$ a conformal iterated function system on a compact set $Y\subset\mathbb{R}^d$ if each $\phi_i$ can be extended to an injective conformal contraction on some open connected neighbourhood $V$ that contains $Y,$ and $$0\leq \inf_{x\in V}|\phi_i'(x)|\leq \sup_{x\in V}|\phi_i'(x)|<1.$$ Throughout this paper we will assume that the differentials are H\"{o}lder continuous, i.e., there exists $\alpha>0$ and $c>0$ such that $$\big||\phi_i'(x)|-|\phi_i'(y)|\big|\leq c|x-y|^{\alpha}$$ for all $x,y\in V$. If our IFS is a conformal iterated function system on some compact set, then we call the corresponding attractor $X$ a self-conformal set. Self-conformal sets are a natural generalisation of self-similar sets. 

To any conformal IFS we associate the family of potentials $f_s:\D^{\mathbb{N}}\to \mathbb{R}$ given by $$f_s((a_j))=s\cdot\log |\phi_{a_1}'(\pi(\sigma(a_j)))|.$$ Where here $s\in(0,\infty)$. We define the topological pressure of $f_s$ to be $$P(f_s):=\sup\left\{\h(\m)+\int f_s d\m: \m \textrm{ is }\sigma\textrm{-invariant}\right\}.$$ For more on topological pressure and thermodynamic formalism we refer the reader to \cite{Bow} and \cite{Fal2}. It can be shown that for any conformal IFS, there exists a unique value of $s$ satisfying the equation $P(f_s)=0.$ We call this parameter the similarity dimension of $\Phi$ and denote it by $\dim_{S}(\Phi)$. When $\Phi$ is a conformal IFS and satisfies the open set condition, it is known that $\dim_{H}(X)=\dim_{B}(X)=\dim_{S}(\Phi)$. Importantly there exists a unique measure $\m_{\Phi}$ such that $$\h(\m_{\Phi})+\int f_{\dim_S(\Phi)}d\m_{\Phi}=0.$$
The pushforward of the measure $\m_{\Phi},$ which we denote by $\mu_{\Phi},$ is a particularly useful tool for determining metric properties of the attractor $X$. In particular, when $\Phi$ satisfies the open set condition it can be shown that $\mu_{\Phi}$ is equivalent to $\mathcal{H}^{\dim_{H}(X)}|_{X}$ (see \cite{PRSS}). Note that when $\Phi$ consists of similarities, i.e. $\Phi=\{\phi_i(x)=r_iO_i x +t_i\}_{i=1}^l$, then $\m_{\Phi}$ is simply the Bernoulli measure corresponding to the probability vector $(r_1^s,\ldots,r_l^s),$ where $s$ is the unique solution to the equation $\sum_{i=1}^lr_i^s=1.$ 

Our main result for conformal iterated function systems is the following theorem.

\begin{thm}
	\label{overlapping conformal theorem}
	If $\Phi$ is a conformal iterated function system, then for any $z\in X,$ if $\theta:\mathbb{N}\to [0,\infty)$ is a decreasing function and satisfies $$\sum_{n=1}^{\infty} \sum_{\a\in\D^{n}} (Diam(X_{\a})\theta(n))^{\dim_{S}(\Phi)}=\infty,$$ then $\mu_{\Phi}$-almost every $x\in X$ is an element of $W_{\Phi}(z,Diam(X_\a)\theta(|\a|)).$
\end{thm}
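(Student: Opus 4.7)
The plan is to lift the problem to the symbolic space $\D^{\mathbb{N}}$, reformulate it in terms of the Gibbs measure $\m_\Phi$, and reduce the conclusion to a dynamical Borel-Cantelli statement for shrinking cylinder targets under the shift $\sigma$.

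By the H\"older continuity of the differentials (hence bounded distortion) together with the Gibbs property of $\m_\Phi$ for the potential $f_{\dim_S(\Phi)}$, one has $\m_\Phi([\a])\asymp Diam(X_\a)^{\dim_S(\Phi)}$ uniformly in $\a\in\D^*$. Since $\sum_{\a\in\D^n}\m_\Phi([\a])=1$ for every $n$, the divergence hypothesis becomes $\sum_n\theta(n)^{\dim_S(\Phi)}=\infty$. Write $z=\pi((b_j))$ for some $(b_j)\in\D^{\mathbb{N}}$ and, for each $n$, let $k(n)$ be minimal with $Diam(X_{b_1\cdots b_{k(n)}})\leq\theta(n)Diam(X)$; put $A_n:=[b_1\cdots b_{k(n)}]$. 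Then $\m_\Phi(A_n)\asymp\theta(n)^{\dim_S(\Phi)}$, so $\sum_n\m_\Phi(A_n)=\infty$. Crucially, if $\sigma^n((a_j))\in A_n$ then both $\pi(\sigma^n((a_j)))$ and $z$ lie in $X_{b_1\cdots b_{k(n)}}$, so by bounded distortion
$$|\pi((a_j))-\phi_{a_1\cdots a_n}(z)|=|\phi_{a_1\cdots a_n}(\pi(\sigma^n((a_j))))-\phi_{a_1\cdots a_n}(z)|\leq C\,Diam(X_{a_1\cdots a_n})\,\theta(n)$$
for a constant $C$ depending only on the distortion. Replacing $\theta$ by $\theta/C$ (still decreasing with divergent $\dim_S(\Phi)$-sum), the theorem reduces to the claim that $\m_\Phi$-almost every $(a_j)$ satisfies $\sigma^n((a_j))\in A_n$ for infinitely many $n$.

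Set $B_n:=\sigma^{-n}(A_n)$, so $\m_\Phi(B_n)=\m_\Phi(A_n)$ by $\sigma$-invariance and $\sum_n\m_\Phi(B_n)=\infty$. The quasi-Bernoulli property $\m_\Phi([\a\c])\asymp\m_\Phi([\a])\m_\Phi([\c])$ of Gibbs measures gives
$$\m_\Phi(B_m\cap B_n)\leq C'\m_\Phi(B_m)\m_\Phi(B_n)\qquad\textrm{whenever }n-m\geq k(m).$$
For near-diagonal pairs with $n-m<k(m)$, the combined constraints of $B_m$ and $B_n$ determine a specific cylinder in the coordinates between positions $m+1$ and $n+k(n)$; its $\m_\Phi$-measure can be computed via the Gibbs estimate, and summing over such pairs yields a total of order $\sum_m\m_\Phi(B_m)$, which is of strictly lower order than $(\sum_n\m_\Phi(B_n))^2$. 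The Kochen-Stone inequality therefore produces $\m_\Phi(\limsup_n B_n)>0$. Since each $B_n$ depends only on coordinates $(a_{n+1},a_{n+2},\ldots)$, the event $\{B_n\textrm{ i.o.}\}$ lies in the tail $\sigma$-algebra, which is trivial for Gibbs measures on full shifts (from exponential decay of correlations), so in fact $\m_\Phi(\limsup_n B_n)=1$. Pushing forward via $\pi$ completes the proof.

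The main obstacle is the estimate for near-diagonal pairs $\sum_{m<n<m+k(m)}\m_\Phi(B_m\cap B_n)$: the quasi-Bernoulli bound fails when the target cylinders overlap, and one must explicitly identify the composite cylinder forced by the simultaneous constraints and apply the Gibbs estimate to it. The monotonicity of $k(n)$ arising from $\theta$ being decreasing is essential here to make this bookkeeping uniform, so that the near-diagonal contribution is absorbed into lower-order terms of the Kochen-Stone denominator. Once this is in place, the rest is a standard combination of Kochen-Stone with tail triviality.
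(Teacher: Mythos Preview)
Your argument is correct and its skeleton---lift to $\D^{\mathbb N}$, replace balls by cylinders $[\a\,b_1\cdots b_{k}]$ using bounded distortion and the Gibbs relation $\m_\Phi([\a])\asymp Diam(X_\a)^{\dim_S(\Phi)}$, then apply Kochen--Stone---is exactly the paper's strategy. Your near-diagonal estimate is also the same computation the paper carries out: when $m<n<m+k(m)$ the two constraints force a single word of length $(n-m)+k(n)$, and the quasi-Bernoulli property gives the extra factor $\m_\Phi([b_1\cdots b_{n-m}])=O(\gamma^{\,n-m})$, which is precisely equation~(\ref{level 1 bound}) in the paper.

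The genuine difference is in how positive measure is upgraded to full measure. The paper localises: it restricts the Kochen--Stone argument to words with a prescribed prefix $\c=c_1\cdots c_M$, obtains $\m_\Phi(\limsup E_n)\geq d\,\m_\Phi([\c])$ with $d$ independent of $M$, and then invokes a symbolic density theorem (citing Rigot) to conclude. You instead observe that $\limsup_n\sigma^{-n}(A_n)$ lies in the tail $\sigma$-algebra $\bigcap_N\sigma^{-N}\mathcal B$ and appeal to exactness of the Gibbs measure. Your route is shorter and avoids importing a density theorem for doubling measures on metric spaces; the paper's route, on the other hand, uses only the quasi-Bernoulli property and the density statement, so it makes no appeal to mixing-type properties of $\m_\Phi$ beyond what is already used for the Kochen--Stone bounds. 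One small remark: ``exponential decay of correlations'' is not quite the right name for what you need---the relevant fact is exactness of $(\sigma,\m_\Phi)$ for H\"older Gibbs measures (equivalently, triviality of $\bigcap_n\sigma^{-n}\mathcal B$), which is classical.
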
 As stated above, when $\Phi$ satisfies the open set condition then $\mu_{\Phi}$ is equivalent to $\mathcal{H}^{\dim_{H}(X)}|_{X},$ it follows therefore that Theorem \ref{overlapping conformal theorem} implies Theorem \ref{conformal theorem}. For our purposes the real value of Theorem \ref{overlapping conformal theorem} is demonstrated in the following corollary.

\begin{cor}
	\label{conformal cor}
Let $\Phi$ be a conformal iterated function system and suppose $\dim \mu_{\Phi}=\dim_{H}(X)$. Then for any $z\in X,$ if $\theta:\mathbb{N}\to [0,\infty)$ is a decreasing function and satisfies $$\sum_{n=1}^{\infty} \sum_{\a\in\D^{n}} (Diam(X_{\a})\theta(n))^{\dim_{S}(\Phi)}=\infty,$$ then $W_{\Phi}(z,Diam(X_\a)\theta(|\a|))$ has Hausdorff dimension equal to $\dim_{H}(X)$.
\end{cor}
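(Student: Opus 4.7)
The plan is to deduce this corollary as essentially a one-line consequence of Theorem \ref{overlapping conformal theorem} combined with the definition of the dimension of a measure. Since the hypotheses of Theorem \ref{overlapping conformal theorem} are exactly the hypotheses placed on $\theta$ here, we immediately obtain that $\mu_{\Phi}$-almost every $x \in X$ lies in $W_{\Phi}(z, \mathrm{Diam}(X_\a)\theta(|\a|))$. In particular, $\mu_{\Phi}(W_{\Phi}(z, \mathrm{Diam}(X_\a)\theta(|\a|))) = 1 > 0$.

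Next I would invoke the definition $\dim \mu_{\Phi} = \inf\{\dim_{H}(A) : \mu_{\Phi}(A) > 0\}$. Because the limsup set has positive $\mu_{\Phi}$-measure, its Hausdorff dimension is at least $\dim \mu_{\Phi}$. By the hypothesis $\dim \mu_{\Phi} = \dim_{H}(X)$, this gives
$$\dim_{H}\bigl(W_{\Phi}(z, \mathrm{Diam}(X_\a)\theta(|\a|))\bigr) \geq \dim_{H}(X).$$
The matching upper bound is immediate from the observation made in Section \ref{two families} that the decay of $\Psi$ forces $W_{\Phi}(z,\Psi) \subseteq X$, so $\dim_H(W_\Phi) \leq \dim_H(X)$. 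Combining the two bounds yields equality.

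There is essentially no obstacle here: the content of the corollary is entirely in Theorem \ref{overlapping conformal theorem}, and the present statement simply packages that full measure conclusion together with the standard lower bound on Hausdorff dimension coming from a measure of maximal dimension. The only minor point worth stressing in the write-up is that the hypothesis $\dim \mu_{\Phi} = \dim_{H}(X)$ is precisely what bridges the pushforward measure statement of Theorem \ref{overlapping conformal theorem} with a statement about the Hausdorff dimension of the attractor itself; without this assumption one would only obtain the lower bound $\dim \mu_{\Phi}$ rather than $\dim_{H}(X)$.
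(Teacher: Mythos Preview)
Your proposal is correct and is exactly the argument the paper has in mind: the corollary is stated without proof as an immediate consequence of Theorem \ref{overlapping conformal theorem} together with the definition of $\dim\mu_{\Phi}$, and the upper bound comes from the containment $W_{\Phi}(z,\Psi)\subseteq X$.
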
Corollary \ref{conformal cor} effectively reduces the problem of determining the Hausdorff dimension of $W_{\Phi}(z,Diam(X_\a)\theta(|\a|))$ to determining whether $\dim \mu_{\Phi}=\dim_{H}(X).$ Thankfully there are many results on the latter problem, and we can use these results together with Corollary \ref{conformal cor} to deduce further statements. We mention here only one such statement for the sake of brevity. The following statement follows by combining Theorem 1.1 from \cite{Hochman} and Corollary \ref{conformal cor}.

\begin{cor}
Assume $d=1$ and $\Phi$ consists solely of similarities. If $$\liminf_{n\to\infty}\frac{-\log \Delta_n}{n}<\infty,$$ where $$\Delta_n:=\min_{\a\neq \b\in \D^n}|\phi_{\a}(0)-\phi_{\b}(0)|,$$ then for any $z\in X,$ if $\theta:\mathbb{N}\to [0,\infty)$ is a decreasing function and satisfies $$\sum_{n=1}^{\infty} \sum_{\a\in\D^{n}} (Diam(X_{\a})\theta(n))^{\dim_{S}(\Phi)}=\infty,$$ then $W_{\Phi}(z,Diam(X_\a)\theta(|\a|))$ has Hausdorff dimension equal to $\dim_{H}(X)$.
\end{cor}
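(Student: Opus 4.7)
The plan is to combine Hochman's Theorem 1.1 from \cite{Hochman}, recalled in the introduction, with Corollary \ref{conformal cor}. Hochman's result asserts that for an IFS of similarities acting on $\R$, if strict inequality holds in \eqref{expected dimensiona} for some probability vector $\textbf{p}$, then $\lim_{n\to\infty}\frac{-\log \Delta_n}{n}=\infty$. Under the hypothesis $\liminf_{n\to\infty}\frac{-\log \Delta_n}{n}<\infty$ the full limit cannot equal $\infty$, so the contrapositive yields equality in \eqref{expected dimensiona} for every probability vector $\textbf{p}$.

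Next I would specialise to the distinguished measure $\mu_{\Phi}$ appearing in Corollary \ref{conformal cor}. Since $\Phi=\{\phi_i(x)=r_iO_ix+t_i\}_{i=1}^l$ consists of similarities, the paragraph preceding Theorem \ref{overlapping conformal theorem} identifies $\m_{\Phi}$ as the Bernoulli measure with probability vector $\textbf{p}=(r_1^s,\ldots,r_l^s)$, where $s=\dim_S(\Phi)$. For this choice one has $\sum_{i=1}^l p_i\log p_i = s\sum_{i=1}^l p_i\log r_i$, so the ratio on the right-hand side of \eqref{expected dimensiona} equals $s$. The previous step therefore gives $\dim \mu_{\Phi}=\min\{\dim_S(\Phi),1\}$.

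Combining this equality with the elementary bounds $\dim\mu_{\Phi}\leq \dim_H(X)\leq \min\{\dim_S(\Phi),1\}$ (the upper bound being \eqref{dimension upper}) forces $\dim \mu_{\Phi}=\dim_H(X)$. Corollary \ref{conformal cor} now applies directly: whenever the divergence assumption
$$\sum_{n=1}^{\infty}\sum_{\a\in\D^n}(Diam(X_\a)\theta(n))^{\dim_S(\Phi)}=\infty$$
holds, the set $W_{\Phi}(z,Diam(X_\a)\theta(|\a|))$ has Hausdorff dimension equal to $\dim_H(X)$ for every $z\in X$, which is the desired conclusion.

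The main work here is conceptual rather than computational: one must recognise that in the similarity case the measure $\mu_{\Phi}$ from Theorem \ref{overlapping conformal theorem} coincides with the self-similar measure weighted by $(r_1^s,\ldots,r_l^s)$, and that Hochman's theorem is being applied in its contrapositive form, using that $\liminf<\infty$ is enough to exclude the alternative $\lim=\infty$. Beyond this identification, no further obstacle is anticipated; the proof is essentially a two-line deduction chaining Hochman's theorem into Corollary \ref{conformal cor}.
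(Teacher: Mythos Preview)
Your proposal is correct and follows exactly the route the paper indicates: the corollary is stated as an immediate consequence of combining Hochman's Theorem~1.1 with Corollary~\ref{conformal cor}, and you have supplied precisely the intermediate identification $\dim\mu_\Phi=\dim_H(X)$ needed to chain them together.
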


\subsection{Structure of the paper}
The rest of the paper is arranged as follows. In Section \ref{Preliminaries} we prove some general results that will allow us to prove our main theorems. In Section \ref{applications} we prove Theorems \ref{1d thm}, \ref{translation thm}, and \ref{random thm}. In Section \ref{Specific family} we prove Theorem \ref{precise result}. Section \ref{Colette section} is then concerned with the proof of Theorem \ref{Colette thm}, and in Section \ref{conformal section} we prove Theorem \ref{overlapping conformal theorem}. In Section \ref{misc} we apply the mass transference principle of Beresnevich and Velani to show how one can use our earlier results to deduce results on the Hausdorff measure and Hausdorff dimension of certain $W_{\Phi}(z,\Psi)$ when $$\sum_{\a\in \D^*}\Psi(\a)^{\dim_{H}(X)}<\infty.$$ In Section \ref{Examples} we include some explicit examples to accompany our main theorems. We conclude with some general discussion and pose some open questions in Section \ref{Final discussion}.

\section{Preliminary results}
\label{Preliminaries}

\subsection{A general framework}
In this section we prove some useful preliminaries that will allow us to prove the main results of this paper. Throughout this section $\Omega$ will denote a metric space equipped with some finite Borel measure $\eta$, and $\tilde{X}$ will denote some compact subset of $\mathbb{R}^d$. For each $n\in\mathbb{N}$ we will assume that there exists a finite set of continuous functions $\{f_{l,n}:\Omega\to \tilde{X}\}_{l=1}^{R_n}.$ For each $\omega\in\Omega$ we let $$Y_n(\omega):=\{f_{l,n}(\omega)\}_{l=1}^{R_n}.$$Before stating our general result we need to introduce some notation. Given $r>0$ we say that $Y\subset\mathbb{R}^d$ is an $r$-separated set if $|z-z'|>r,$ $\forall z,z'\in Y$ such that $z\neq z'.$ Given a finite set $Y\subset \mathbb{R}^d$ and $r>0,$ we let $$T(Y,r):=\sup\{\# Y':Y'\subseteq Y \textrm{ and }Y'\textrm{ is an }r\textrm{-separated set}\}.$$ We call $Y'\subseteq Y$ a maximal $r$-separated subset if $Y'$ is $r$-separated and $\# Y'=T(Y,r).$ Clearly a maximal $r$-separated subset always exists. Given a finite set $Y$ and $r>0,$ we will denote by $S(Y,r)$ an arbitrary choice of maximal $r$-separated subset. 

 The proposition stated below is the main technical result of this section.

\begin{prop}
	\label{general prop}
Suppose the following properties are satisfied:
\begin{itemize}
\item There exists $\gamma>1$ such that $$R_n\asymp \gamma^n.$$
\item There exists $G:(0,\infty)\to (0,\infty)$ such that $\lim_{s\to 0}G(s)=0,$ and for all $n\in\mathbb{N}$ we have $$\eta(\Omega)-\int_{\Omega} \frac{T(Y_n(\omega),\frac{s}{R_{n}^{1/d}})}{R_{n}} d\eta(\omega)\leq G(s).$$
\end{itemize} Then for $\eta$-almost every $\omega\in\Omega,$ for any $h\in H$ the set $$\left\{x\in\mathbb{R}^d:x\in\bigcup_{l=1}^{R_{n}}B\left(f_{l,n}(\omega),\left(\frac{h(n)}{R_{n}}\right)^{1/d}\right)\textrm{ for i.m. } n\in \mathbb{N}\right\}$$ has positive Lebesgue measure. 
\end{prop}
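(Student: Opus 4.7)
The plan is a Chung--Erd\H{o}s / second-moment argument applied to suitable subsets $\tilde E_n(\omega)$ of a bounded neighbourhood $\tilde X^*$ of $\tilde X$, each $\tilde E_n(\omega)$ being a union of pairwise disjoint balls centred on a maximal separated subset of $Y_n(\omega)$. Fix $h\in H$, so $h\in H_\epsilon$ for some $\epsilon\in(0,1)$. A direct check shows that $\min(h,C)\in H_\epsilon$ whenever $h\in H_\epsilon$; since this replacement can only shrink the limsup set appearing in the conclusion, I am free to bound $h$ at any later stage. The goal is to produce a single full $\eta$-measure set $\Omega_0\subseteq\Omega$, independent of $h$, on which the conclusion holds for every $h\in H$ simultaneously.

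For $s>0$ let $A_{n,s}:=\{\omega:T(Y_n(\omega),s/R_n^{1/d})\geq(1-\epsilon/2)R_n\}$. A Markov-type computation on the hypothesis yields $\eta(A_{n,s}^c)\leq 2G(s)/\epsilon$. Averaging in $n$ via reverse Fatou and then applying Markov's inequality to the lim sup density of $\{n:\omega\notin A_{n,s}\}$ shows that
$$\Omega'_{s,\epsilon}:=\bigl\{\omega:\overline{d}(\{n:\omega\in A_{n,s}\})>1-\epsilon\bigr\}$$
satisfies $\eta(\Omega'_{s,\epsilon})\geq\eta(\Omega)-2G(s)/\epsilon^2$. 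Since $A_{n,s}$ grows as $s$ decreases, so does $\Omega'_{s,\epsilon}$; hence $\bigcup_{k\geq 1}\Omega'_{2^{-k},\epsilon}$ has full $\eta$-measure, and taking $\Omega_0:=\bigcap_{m\geq 1}\bigcup_{k\geq 1}\Omega'_{2^{-k},1/m}$ gives a set of full $\eta$-measure. For $\omega\in\Omega_0$ and $h\in H_{1/m}$, one chooses $s>0$ small enough that $\omega\in\Omega'_{s,1/m}$ and $h\leq(s/2)^d$ (after replacing $h$ by $\min(h,(s/2)^d)\in H_{1/m}$ if necessary). Then $B_s(\omega):=\{n:\omega\in A_{n,s}\}$ has $\overline{d}(B_s(\omega))>1-1/m$, so $\sum_{n\in B_s(\omega)}h(n)=\infty$.

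For such $\omega$ and $n\in B_s(\omega)$, fix a maximal $s/R_n^{1/d}$-separated subset $S_n(\omega)\subseteq Y_n(\omega)$ of size at least $(1-\epsilon/2)R_n$ and set
$$\tilde E_n(\omega):=\bigcup_{y\in S_n(\omega)}B\bigl(y,(h(n)/R_n)^{1/d}\bigr)\subseteq\tilde X^*.$$
The bound $h(n)\leq(s/2)^d$ makes these balls pairwise disjoint, so $\lambda(\tilde E_n(\omega))\geq c_1 h(n)$ for a positive constant $c_1$. The crucial technical input is the pairwise intersection estimate
$$\lambda\bigl(\tilde E_n(\omega)\cap\tilde E_m(\omega)\bigr)\leq\frac{C}{s^d}h(n)h(m)+C'\min(h(n),h(m))\gamma^{-|n-m|}\qquad(n\neq m),$$
derived from a packing argument: the $s/R_n^{1/d}$-separation forces $\lambda(B(x_0,\delta)\cap\tilde E_n(\omega))\leq C\delta^d h(n)/s^d$ for $\delta\geq s/R_n^{1/d}$, while only $O_s(1)$ balls of $\tilde E_n(\omega)$ can meet a smaller ball $B(x_0,\delta)$. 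Summation over $y\in S_m(\omega)$, split according to the size of $r_m:=(h(m)/R_m)^{1/d}$ relative to $s/R_n^{1/d}$, combined with $R_n\asymp\gamma^n$, then yields the stated bound.

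Applying the Chung--Erd\H{o}s inequality on the probability space $(\tilde X^*,\lambda/\lambda(\tilde X^*))$ to the events $\{\tilde E_n(\omega)\}_{n\in B_s(\omega)}$: with $S_N:=\sum_{n\in B_s(\omega),n\leq N}h(n)\to\infty$, the numerator is $\asymp S_N^2$ and the second-moment denominator is $\asymp S_N^2+S_N$ (the diagonal and the geometric cross-term both subdominant), yielding
$$\lambda\bigl(\limsup_n\tilde E_n(\omega)\bigr)\geq\frac{c_1^2 s^d}{C}>0.$$
Since $\tilde E_n(\omega)\subseteq\bigcup_{l=1}^{R_n}B(f_{l,n}(\omega),(h(n)/R_n)^{1/d})$, this establishes the conclusion of the proposition on $\Omega_0$. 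The principal obstacle is the pairwise intersection estimate above: it is the only place where the quantitative separation hypothesis is genuinely exploited, and it supplies the quasi-independence that makes Chung--Erd\H{o}s deliver a strictly positive lower bound.
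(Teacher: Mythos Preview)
Your proposal is correct and follows essentially the same route as the paper: first use the integral hypothesis together with Fatou/Markov to produce a full $\eta$-measure set on which, for suitable $c,s>0$, the set $\{n:T(Y_n(\omega),s/R_n^{1/d})\geq cR_n\}$ has upper density exceeding $1-\epsilon$ (this is the content of the paper's Lemma~\ref{density separation lemma}); then, for each such $\omega$, truncate $h$ so that the balls around a maximal separated subset of $Y_n(\omega)$ are disjoint, and run the Kochen--Stone/Chung--Erd\H{o}s second-moment argument (the paper's Proposition~\ref{fixed omega}). Your choice $c=1-\epsilon/2$ is a specific instance of the paper's more general $B(c,s,n)$, and your truncation $h\mapsto\min(h,(s/2)^d)$ plays the same role as the paper's replacement $g(n)=\min\{(h(n)/R_n)^{1/d},s/(3R_n^{1/d})\}$.

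One small technical remark: the pairwise intersection estimate you state, with the cross term $C'\min(h(n),h(m))\gamma^{-|n-m|}$, is not quite what the packing argument yields. Summing over $S_n$ and counting how many balls of $\tilde E_m$ meet each ball of $\tilde E_n$ gives, for $n<m$, the bound $C\bigl(h(n)h(m)/s^d+h(m)\gamma^{-(m-n)}\bigr)$, i.e.\ the cross term carries $h$ at the \emph{larger} index, which is exactly the paper's \eqref{intersection bound}. This makes no difference to the conclusion, since $\sum_{n<m\leq N}h(m)\gamma^{-(m-n)}\leq C\sum_{m\leq N}h(m)=O(S_N)$ just as your version would give, and the denominator is still $O(S_N^2+S_N)$.
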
Recall that the set of functions $H$ was defined in \eqref{H functions}. 

Given $c>0,s>0,$ and $n\in\mathbb{N},$ we let $$B(c,s,n):=\Big\{\omega\in\Omega:\frac{T(Y_n(\omega),\frac{s}{R_n^{1/d}})}{R_n}>c\Big\}.$$ The following lemma shows that under the hypothesis of Proposition \ref{general prop}, a typical $\omega\in\Omega$ is contained in $B(c,s,n)$ for a large set of $n$ for appropriate choices of $c$ and $s$. This lemma will play an important role in Section \ref{applications} when we recover some results of Solomyak on the absolute continuity of Bernoulli convolutions, and on the Lebesgue measure of the attractor in the $\{0,1,3\}$ problem.
\begin{lemma}
	\label{density separation lemma}
Assume there exists $G:(0,\infty)\to (0,\infty)$ such that $\lim_{s\to 0}G(s)=0,$ and for all $n\in\mathbb{N}$ we have $$\eta(\Omega)-\int_{\Omega} \frac{T(Y_n(\omega),\frac{s}{R_{n}^{1/d}})}{R_{n}} d\eta(\omega)\leq G(s).$$ Then 
$$\eta\left(\bigcap_{\epsilon>0}\bigcup_{c,s>0}\{\omega:\overline{d}(n:\omega\in B(c,s,n))\geq 1-\epsilon\}\right)=\eta(\Omega).$$


\end{lemma}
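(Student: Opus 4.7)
The plan is to translate the $L^1$-bound in the hypothesis into a pointwise statement, via a standard Markov plus first Borel-Cantelli argument, applied along a carefully chosen sequence $s_k\downarrow 0$.

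First I would introduce the auxiliary function
$$f_{s,n}(\omega):=1-\frac{T(Y_n(\omega),s/R_n^{1/d})}{R_n}\in[0,1],$$
so that the hypothesis reads $\int_\Omega f_{s,n}\,d\eta\leq G(s)$ for every $s>0$ and $n\in\mathbb{N}$ (and measurability of $\omega\mapsto T(Y_n(\omega),r)$ follows from continuity of the $f_{l,n}$). The key observation is that $\omega\notin B(c,s,n)$ exactly when $f_{s,n}(\omega)\geq 1-c$; for simplicity I fix $c=1/2$ and study the sets $A_{s,n}:=\{\omega:f_{s,n}(\omega)\geq 1/2\}$. A single Markov step yields $\eta(A_{s,n})\leq 2G(s)$ uniformly in $n$.

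Next I would control the lower density $\underline d(\{n:\omega\in A_{s,n}\})=\liminf_N\frac{1}{N}\sum_{n=1}^N\mathbf 1_{A_{s,n}}(\omega)$. By Fubini,
$$\int_\Omega\frac{1}{N}\sum_{n=1}^N\mathbf 1_{A_{s,n}}(\omega)\,d\eta(\omega)=\frac{1}{N}\sum_{n=1}^N\eta(A_{s,n})\leq 2G(s),$$
and Fatou's lemma then gives
$$\int_\Omega\underline d\bigl(\{n:f_{s,n}(\omega)\geq 1/2\}\bigr)\,d\eta(\omega)\leq 2G(s).$$

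Now I would choose a sequence $s_k\downarrow 0$ with $G(s_k)\leq 4^{-k}$, which is possible since $\lim_{s\to 0}G(s)=0$. A second Markov application produces
$$\eta\Bigl(\bigl\{\omega:\underline d(\{n:f_{s_k,n}(\omega)\geq 1/2\})>2^{-k}\bigr\}\Bigr)\leq 2^{1-k},$$
which is summable in $k$. By the first Borel-Cantelli lemma, for $\eta$-almost every $\omega$ there exists $K(\omega)$ such that $\underline d(\{n:f_{s_k,n}(\omega)\geq 1/2\})\leq 2^{-k}$ for every $k\geq K(\omega)$. Rewriting via the correspondence $A_{s,n}^c=B(1/2,s,n)$, this says $\overline d(\{n:\omega\in B(1/2,s_k,n)\})\geq 1-2^{-k}$ for every such $k$.

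Finally, given any $\epsilon>0$ and such an $\omega$, pick $k\geq K(\omega)$ with $2^{-k}<\epsilon$, so $\omega$ lies in $\{\omega':\overline d(n:\omega'\in B(1/2,s_k,n))\geq 1-\epsilon\}\subseteq\bigcup_{c,s>0}\{\omega':\overline d(n:\omega'\in B(c,s,n))\geq 1-\epsilon\}$. Intersecting over $\epsilon$ finishes the proof. The only real subtlety is arranging the tail decay rates $(s_k,2^{-k})$ compatibly so both the Markov bound is Borel-Cantelli-summable and the resulting $\underline d$-bound shrinks to $0$; once Fatou converts the uniform $L^1$-bound into an $L^1$-bound on the lower densities themselves, the rest is routine.
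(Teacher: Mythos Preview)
Your proof is correct, but it proceeds along a different route than the paper's. The paper fixes $\epsilon>0$, chooses $c,s$ so that $\eta(B(c,s,n))\geq\eta(\Omega)-\epsilon$ for all $n$ (via a single Markov-type bound on $T/R_n$), applies reverse Fatou to obtain $\int_\Omega\overline d(n:\omega\in B(c,s,n))\,d\eta\geq\eta(\Omega)-\epsilon$, and then uses a short contradiction argument with parameter $\sqrt\epsilon$ to conclude that $\eta(\{\overline d\geq 1-\sqrt\epsilon\})\geq\eta(\Omega)-\sqrt\epsilon$. Your approach instead applies Markov to $f_{s,n}$ first, then Fatou to bound $\int\underline d(\{n:\omega\in A_{s,n}\})\,d\eta$, and finally runs a second Markov plus Borel--Cantelli along a carefully chosen sequence $s_k\downarrow 0$. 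Your argument is slightly more constructive in that it produces a single full-measure set of $\omega$ for which the density bounds hold simultaneously for \emph{all} large $k$, whereas the paper's argument is more direct and avoids passing to a subsequence. Both are clean; the essential analytic content (Markov plus Fatou on averages) is the same.
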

\begin{proof}
Observe that $$0\leq \frac{T(Y_n(\omega),\frac{s}{R_{n}^{1/d}})}{R_n}\leq 1$$ for all $\omega\in\Omega$ and $n\in\mathbb{N}$. As a result of this inequality and our underling assumption, for any $c>0$, $s>0$, and $n\in N$, we have 
$$\eta(B(c,s,n))+c\cdot \eta(B(c,s,n)^c)\geq \int_{\Omega} \frac{T(Y_n(\omega),\frac{s}{R(n)^{1/d}})}{R(n)} d\eta(\omega)\geq \eta(\Omega)-G(s).$$ This in turn implies $$\eta(B(c,s,n))\geq (1-c)\eta(\Omega)-G(s).$$ It follows that given $\epsilon>0,$ we can pick $c>0$ and $s>0$ independent of $n$ such that 
\begin{equation}
\label{almost full}
\eta(B(c,s,n))\geq \eta(\Omega)-\epsilon.
\end{equation} Applying Fatou's lemma we have 
\begin{align*}
\int_{\Omega} \overline{d}(n:\omega\in B(c,s,n))d\eta &=\int_{\Omega}\limsup_{N\to\infty}\frac{\#\{1\leq n\leq N:\omega\in B(c,s,n)\}}{N}d\eta\\
&=\int_{\Omega}\limsup_{N\to\infty}\frac{\sum_{n=1}^{N}\chi_{B(c,s,n)}(\omega)}{N}d\eta\\
&\stackrel{Fatou}{\geq} \limsup_{N\to\infty}\frac{\sum_{n=1}^{N}\int_{\Omega}\chi_{B(c,s,n)}(\omega)d\eta}{N}\\
&=\limsup_{N\to\infty} \frac{\sum_{n=1}^{N}\eta(B(c,s,n))}{N}\\
&\stackrel{\eqref{almost full}}{\geq} \eta(\Omega)-\epsilon.
\end{align*}Summarising the above, we have shown that for this choice of $c$ and $s$ we have
\begin{equation}
\label{Banach bound}\int_{\Omega} \overline{d}(n:\omega\in B(c,s,n))d\eta \geq \eta(\Omega)-\epsilon.
\end{equation}
For the purpose of obtaining a contradiction, suppose 
\begin{equation}
\label{contradict1}
\eta\Big(\omega:\overline{d}(n:\omega\in B(c,s,n))\leq  1-\sqrt{\epsilon}\Big)> \sqrt{\epsilon}.
\end{equation}
Then using the fact $$0\leq \overline{d}(n:\omega\in B(c,s,n))\leq 1$$ for all $\omega\in \Omega$, we have
\begin{align*}
\int_{\Omega} \overline{d}(n:\omega\in B(c,s,n))d\eta& \leq \eta(\omega:\overline{d}(n:\omega\in B(c,s,n))\leq  1-\sqrt{\epsilon})(1-\sqrt{\epsilon})\\
&+ \eta(\omega:\overline{d}(n:\omega\in B(c,s,n))> 1-\sqrt{\epsilon})\\
&=\eta(\omega:\overline{d}(n:\omega\in B(c,s,n))\leq  1-\sqrt{\epsilon})(1-\sqrt{\epsilon})\\
&+ \eta(\Omega)-\eta(\omega:\overline{d}(n:\omega\in B(c,s,n))\leq 1-\sqrt{\epsilon})\\
&=\eta(\Omega)-\sqrt{\epsilon}\eta(\omega:\overline{d}(n:\omega\in B(c,s,n))\leq  1-\sqrt{\epsilon})\\
&\stackrel{\eqref{contradict1}}{<}\eta(\Omega)-\epsilon.
\end{align*}
 This contradicts \eqref{Banach bound}. Therefore \eqref{contradict1} is not possible and we have that for any $\epsilon>0,$ there exists $c,s>0$ such that
\begin{equation}
\label{shown to hold}
\eta\Big(\omega:\overline{d}(n:\omega\in B(c,s,n))>  1-\sqrt{\epsilon}\Big)\geq \eta(\Omega)-\sqrt{\epsilon}.
\end{equation}
Equation \eqref{shown to hold} in turn implies that for any $\epsilon>0$ we have
\begin{equation}
\label{full measure sep} \eta\Big(\bigcup_{c,s>0}\{\omega:\overline{d}(n:\omega\in B(c,s,n))\geq 1-\epsilon\}\Big)=\eta(\Omega).
\end{equation} One can see how \eqref{full measure sep} follows from \eqref{shown to hold} by first fixing $\epsilon>0$ and then applying \eqref{shown to hold} for a countable collection of $\epsilon_k$ strictly smaller than $\epsilon$. Now intersecting over all $\epsilon>0,$ we see that \eqref{full measure sep} implies the desired equality:
$$\eta\Big(\bigcap_{\epsilon>0}\bigcup_{c,s>0}\{\omega:\overline{d}(n:\omega\in B(c,s,n))\geq 1-\epsilon\}\Big)=\eta(\Omega).$$
\end{proof}

To prove Proposition \ref{general prop} and many other results in this paper, we will rely upon the following useful lemma known as the generalised Borel-Cantelli Lemma.

\begin{lemma}
	\label{Erdos lemma}
	Let $(X,A,\mu)$ be a finite measure space and $E_n\in A$ be a sequence of 
	sets such that $\sum_{n=1}^{\infty}\mu(E_n)=\infty.$ Then
	$$\mu(\limsup_{n\to\infty} E_{n})\geq \limsup_{Q\to\infty}\frac{(\sum_{n=1}^{Q}\mu(E_{n}))^{2}}{\sum_{n,m=1}^{Q}\mu(E_{n}\cap E_m)}.$$
\end{lemma}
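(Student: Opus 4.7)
The plan is to invoke the standard Cauchy--Schwarz (second moment) trick on the indicator functions $\chi_{E_n}$, first to bound below the measure of a finite union, and then to pass to the $\limsup$ set by a tail argument exploiting that $\mu$ is finite.

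First I would fix $N\le Q$ in $\mathbb{N}$ and set $f_{N,Q}:=\sum_{n=N}^{Q}\chi_{E_n}$. A direct calculation gives $\int f_{N,Q}\,d\mu=\sum_{n=N}^{Q}\mu(E_n)$ and $\int f_{N,Q}^{2}\,d\mu=\sum_{n,m=N}^{Q}\mu(E_n\cap E_m)$. Since $f_{N,Q}$ vanishes off $A_{N,Q}:=\bigcup_{n=N}^{Q}E_n$, the Cauchy--Schwarz inequality applied to $f_{N,Q}$ and $\chi_{A_{N,Q}}$ yields
$$\Big(\sum_{n=N}^{Q}\mu(E_n)\Big)^{2}=\Big(\int f_{N,Q}\chi_{A_{N,Q}}\,d\mu\Big)^{2}\le \mu(A_{N,Q})\sum_{n,m=N}^{Q}\mu(E_n\cap E_m),$$
which rearranges to $\mu(A_{N,Q})\ge \big(\sum_{n=N}^{Q}\mu(E_n)\big)^{2}\big/\sum_{n,m=N}^{Q}\mu(E_n\cap E_m)$.

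Next, I would let $B_{N}:=\bigcup_{n\ge N}E_n$, so that $A_{N,Q}\subseteq B_{N}$ and hence $\mu(B_{N})$ dominates the ratio above for every $Q\ge N$. Using the monotone bound $\sum_{n,m=N}^{Q}\mu(E_n\cap E_m)\le \sum_{n,m=1}^{Q}\mu(E_n\cap E_m)$ in the denominator, and noting that $\sum_{n=N}^{Q}\mu(E_n)$ differs from $\sum_{n=1}^{Q}\mu(E_n)$ by the fixed constant $\sum_{n=1}^{N-1}\mu(E_n)$, the hypothesis $\sum_{n=1}^{\infty}\mu(E_n)=\infty$ forces $\sum_{n=N}^{Q}\mu(E_n)/\sum_{n=1}^{Q}\mu(E_n)\to 1$ as $Q\to\infty$. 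Taking $\limsup_{Q\to\infty}$ therefore gives
$$\mu(B_{N})\ge \limsup_{Q\to\infty}\frac{(\sum_{n=1}^{Q}\mu(E_n))^{2}}{\sum_{n,m=1}^{Q}\mu(E_n\cap E_m)}$$
for every $N\in\mathbb{N}$. Finally, since $\mu$ is finite and the sets $B_{N}$ are decreasing with $\bigcap_{N\ge 1}B_{N}=\limsup_{n\to\infty}E_{n}$, continuity of measure from above yields $\mu(\limsup_{n\to\infty}E_{n})=\lim_{N\to\infty}\mu(B_{N})$, which is at least the right-hand side above. This gives the desired inequality.

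The only delicate point, and the one I would be careful about, is the passage from the $Q$-truncated bounds to the tail sets $B_{N}$ and thence to $\limsup_{n\to\infty}E_{n}$. Divergence of $\sum\mu(E_n)$ is exactly what lets us absorb the finitely many initial terms in the numerator without loss, while the denominator is handled for free by monotonicity; the finiteness of $\mu$ is then needed so that continuity from above applies to the nested sequence $B_{N}$. Everything else is just the classical second moment estimate.
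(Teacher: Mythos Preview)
Your argument is correct and is precisely the classical Cauchy--Schwarz (second moment) proof of the Kochen--Stone lemma. The paper itself does not supply a proof of this statement: it attributes the result to Kochen and Stone and refers the reader to \cite[Lemma~2.3]{Har} and \cite[Lemma~5]{Spr}, where the argument given is essentially the one you wrote down.
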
Lemma \ref{Erdos lemma} is due to Kochen and Stone \cite{KocSto}. For a proof of this lemma see either \cite[Lemma 2.3]{Har} or \cite[Lemma 5]{Spr}. 

Proposition \ref{general prop} will follow from the following proposition. This result will also be useful when it comes to proving some of our later results.

\begin{prop}
	\label{fixed omega}
Let $\omega\in \Omega$ and $h:\mathbb{N}\to[0,\infty).$ Assume the following properties are satisfied:
\begin{itemize}
	\item There exists $\gamma>1$ such that $$R_n\asymp \gamma^n.$$
	\item There exists $c>0$ and $s>0$ such that $$\sum_{n:\omega\in B(c,s,n)}h(n)=\infty.$$
\end{itemize} Then $$\left\{x\in\mathbb{R}^d:x\in\bigcup_{l=1}^{R_{n}}B\left(f_{l,n}(\omega),\left(\frac{h(n)}{R_{n}}\right)^{1/d}\right)\textrm{ for i.m. } n\in \mathbb{N}\right\}$$ has positive Lebesgue measure.
\end{prop}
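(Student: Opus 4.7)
The plan is to apply the Kochen--Stone lemma (Lemma \ref{Erdos lemma}) to a carefully trimmed version of the sets whose $\limsup$ we want to bound from below. Let $A := \{n \in \mathbb{N} : \omega \in B(c,s,n)\}$ and write $r_n := (h(n)/R_n)^{1/d}$ for the ball radius at scale $n$. Because $\tilde{X}$ is compact and $R_n \asymp \gamma^n$, and because throughout the paper $h$ is taken to be bounded, every ball $B(f_{l,n}(\omega), r_n)$ is contained in a fixed bounded open set $U \subset \mathbb{R}^d$ of finite Lebesgue measure. I would therefore apply the Kochen--Stone lemma in the finite measure space $(U, \mathcal{L}^d|_U)$, along the index set $A$.

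Next I would replace the given set
\[
E_n(\omega) := \bigcup_{l=1}^{R_n} B\bigl(f_{l,n}(\omega), r_n\bigr)
\]
by the smaller set
\[
\tilde{E}_n(\omega) := \bigcup_{y \in S(Y_n(\omega),\, s/R_n^{1/d})} B(y, \rho_n), \qquad \rho_n := \min\{r_n,\, s/(2R_n^{1/d})\}.
\]
For $n \in A$ the centre set has at least $cR_n$ elements, and the choice of $\rho_n$ ensures the constituent balls are pairwise disjoint. Hence $\mathcal{L}^d(\tilde{E}_n(\omega)) \geq c c_d \min\{h(n), (s/2)^d\}$, where $c_d$ is the volume of the Euclidean unit ball. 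Combined with the hypothesis $\sum_{n \in A} h(n) = \infty$, this yields $\sum_{n \in A} \mathcal{L}^d(\tilde{E}_n(\omega)) = \infty$, supplying the numerator of the Kochen--Stone ratio.

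The technical heart of the argument is a pairwise quasi-independence estimate
\[
\mathcal{L}^d\bigl(\tilde{E}_n(\omega) \cap \tilde{E}_m(\omega)\bigr) \leq K \, \mathcal{L}^d(\tilde{E}_n(\omega)) \, \mathcal{L}^d(\tilde{E}_m(\omega))
\]
with an absolute constant $K$, valid for all $n, m \in A$. The idea is that because $S(Y_m(\omega), s/R_m^{1/d})$ is $s/R_m^{1/d}$-separated, a standard volume-comparison argument bounds the number of its points inside any ball $B(y, \rho_n + \rho_m)$ by a multiple of $\max\{1,\, ((\rho_n+\rho_m) R_m^{1/d}/s)^d\}$. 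Summing the resulting ball--intersection measures over the at most $R_n$ centres of $\tilde{E}_n(\omega)$ and exploiting the exponential growth $R_n \asymp \gamma^n$ together with the matching lower bounds from the previous paragraph should then deliver the desired estimate, after splitting into the two regimes in which $\rho_n + \rho_m$ either dominates or is dominated by the fine separation scale $s/R_m^{1/d}$.

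Once both ingredients are in place, Lemma \ref{Erdos lemma} applied along $A$ yields
\[
\mathcal{L}^d\Bigl(\limsup_{n \in A} \tilde{E}_n(\omega)\Bigr) \geq \limsup_{Q \to \infty} \frac{\bigl(\sum_{n \in A,\, n \leq Q} \mathcal{L}^d(\tilde{E}_n(\omega))\bigr)^2}{\sum_{n,m \in A,\, n,m \leq Q} \mathcal{L}^d(\tilde{E}_n(\omega) \cap \tilde{E}_m(\omega))} \geq \frac{1}{K} > 0,
\]
and the inclusion $\tilde{E}_n(\omega) \subseteq E_n(\omega)$ promotes this to positive Lebesgue measure of the $\limsup$ of the $E_n(\omega)$, which is precisely the set in the conclusion of the proposition. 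I expect the main obstacle to be the quasi-independence bound: the naive volume-comparison estimate becomes uninformative in the regime where $\rho_n + \rho_m$ falls below $s/R_m^{1/d}$, and handling that regime cleanly requires genuinely exploiting both the pairwise disjointness built into each $\tilde{E}_n(\omega)$ and the exponential growth of $R_n$.
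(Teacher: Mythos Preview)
Your overall strategy matches the paper's: pass to the maximal $s R_n^{-1/d}$-separated subset, shrink the radii to $\rho_n=\min\{r_n,\,s/(3R_n^{1/d})\}$ so that the balls at each level are disjoint, verify $\sum_{n\in A}\L(\tilde E_n)=\infty$, and apply Kochen--Stone. The gap is your claimed \emph{pairwise} quasi-independence bound
\[
\L(\tilde E_n\cap\tilde E_m)\le K\,\L(\tilde E_n)\,\L(\tilde E_m).
\]
This is false in general. Take $n<m$ with $h(n)$ very small (so $\rho_n^d=h(n)/R_n$) and suppose $\rho_n\ll sR_m^{-1/d}$. Then each ball of $\tilde E_n$ meets $O(1)$ balls of $\tilde E_m$, giving $\L(\tilde E_n\cap\tilde E_m)=O(R_n\rho_m^d)$. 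Quasi-independence would require $R_n\rho_m^d\le K\,R_n\rho_n^d\cdot R_m\rho_m^d$, i.e.\ $R_m\rho_n^d\ge K^{-1}$, which forces $h(n)\ge R_n/(KR_m)$. For $m=n+1$ this means $h(n)\gtrsim\gamma^{-1}$, contradicting the fact that $h$ may take arbitrarily small positive values. The exponential growth of $R_n$ cannot rescue a pairwise inequality, because it is a statement about sums over $n$, not about any individual pair.

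The fix, which is exactly what the paper does, is to abandon pairwise quasi-independence and instead prove the two-term estimate
\[
\L(\tilde E_n\cap\tilde E_m)=O\!\left(R_n\rho_n^d\,R_m\rho_m^d+R_n\rho_m^d\right)\qquad(n<m),
\]
and then bound the full double sum. The first term sums to $O\bigl((\sum_{n\in A,\,n\le Q}\L(\tilde E_n))^2\bigr)$ directly. For the second term you sum the inner variable first and use $R_n\asymp\gamma^n$ to get $\sum_{n<m}R_n=O(R_m)$, so that
\[
\sum_{m\le Q}\sum_{n<m}R_n\rho_m^d=O\!\Big(\sum_{m\le Q}R_m\rho_m^d\Big)=O\!\Big(\sum_{m\le Q}\L(\tilde E_m)\Big)=O\!\Big(\Big(\sum_{m\le Q}\L(\tilde E_m)\Big)^2\Big),
\]
the last step using divergence of the series. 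This is precisely where the hypothesis $R_n\asymp\gamma^n$ enters, and it is the only place it is needed. With this correction your argument goes through.
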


\begin{proof}[Proof of Proposition \ref{fixed omega}]
We split our proof into individual steps for convenience. \\

\noindent \textbf{Step 1. Replacing our approximating function.}\\
Let $\omega$ and $h$ be fixed, and $c$ and $s>0$ be as in the statement of the proposition. We claim that 
\begin{equation}
\label{divergence1}\sum_{n:\omega\in B(c,s,n)} \sum_{u\in S(Y_n(\omega),\frac{s}{R_{n}^{1/d}})} \mathcal{L}\Big(B\Big(u,\Big(\frac{h(n)}{R_n}\Big)^{1/d}\Big)\Big)=\infty.
\end{equation} This follows from our assumption $$\sum_{n:\omega\in B(c,s,n)}h(n)=\infty,$$ and the following:
\begin{align*}
\sum_{n:\omega\in B(c,s,n)} \sum_{u\in S(Y_n(\omega),\frac{s}{R_{n}^{1/d}})} \mathcal{L}\Big(B\Big(u,\Big(\frac{h(n)}{R_n}\Big)^{1/d}\Big)\Big)&= \sum_{n:\omega\in B(c,s,n)} \sum_{u\in S(Y_n(\omega),\frac{s}{R_{n}^{1/d}})} \frac{h(n)\L(B(0,1))}{R_n}\\
&=\sum_{n:\omega\in B(c,s,n)} T\Big(Y_n(\omega),\frac{s}{R_{n}^{1/d}}\Big)\frac{h(n)\L(B(0,1))}{R_n}\\
&\geq c \L(B(0,1))\sum_{n:\omega\in B(c,s,n)}h(n)\\
&=\infty.
\end{align*} Let us now define $$g(n):=\min \Big\{\Big(\frac{h(n)}{R_n}\Big)^{1/d},\frac{s}{3R_{n}^{1/d}}\Big\}.$$ We claim that we still have 
\begin{equation}
\label{divergence2}\sum_{n:\omega\in B(c,s,n)} \sum_{u\in S(Y_n(\omega),\frac{s}{R_{n}^{1/d}})} \mathcal{L}(B(u,g(n)))=\infty.
\end{equation} If $g(n)=\frac{s}{3R_{n}^{1/d}}$ for finitely many $n\in\N,$ then \eqref{divergence1} would imply \eqref{divergence2}. Suppose therefore that $g(n)=\frac{s}{3R_{n}^{1/d}}$ for infinitely many $n\in\mathbb{N}$. For such an $n$ we would have $$\sum_{u\in S(Y_n(\omega),\frac{s}{R_{n}^{1/d}})} \mathcal{L}(B(u,g(n)))=T\Big(Y_n(\omega),\frac{s}{R_{n}^{1/d}}\Big) \frac{s^d\L(B(0,1))}{3^dR_{n}}> \frac{cs^d\L(B(0,1))}{3^d}.$$ This lower bound is strictly positive and independent of $n$. As such summing over it for infinitely many $n$ guarantees divergence. Therefore \eqref{divergence2} holds.

Note that it follows from the definition of $g$ that we will have proved our result if we can show that
\begin{equation}
\label{WANT1}
\L\left(\left\{x\in\mathbb{R}^d:x\in\bigcup_{l=1}^{R_{n}}B\left(f_{l,n}(\omega),g(n)\right)\textrm{ for i.m. } n\in \mathbb{N}\right\}\right)>0.
\end{equation}\\
	
\noindent \textbf{Step 2: Constructing our $E_n$.}\\
Since $g(n)\leq \frac{s}{3R_{n}^{1/d}}$ for all $n\in\mathbb{N}$, it follows that for any distinct $u,v\in S(Y_n(\omega),\frac{s}{R_{n}^{1/d}}),$ we must have 
\begin{equation}
\label{disjoint balls}B(u,g(n))\cap B(v,g(n))=\emptyset.
\end{equation} For each $n$ such that $\omega\in B(c,s,n),$ let $$E_{n}=\bigcup_{u\in S(Y_n(\omega),\frac{s}{R(n)^{1/d}})} B(u,g(n)).$$ We will show that 
\begin{equation}
\label{WANT2}
\L\Big(\Big\{x\in \mathbb{R}^d: x\in E_n \textrm{ for i.m. }n\in\mathbb{N} \textrm{ such that } \omega\in B(c,s,n)\Big\}\Big)>0.
\end{equation} Equation \eqref{WANT2} implies \eqref{WANT1}, so to complete our proof it suffices to show that \eqref{WANT2} holds. It follows from \eqref{divergence2} and \eqref{disjoint balls} that
\begin{equation}
\label{divergencez}
\sum_{n:\omega\in B(c,s,n)}\mathcal{L}(E_n)=\sum_{n:\omega\in B(c,s,n)} \sum_{u\in S(Y_{n}(\omega),\frac{s}{R_{n}^{1/d}})} \mathcal{L}(B(u,g(n))=\infty.
\end{equation}
 Equation \eqref{divergencez} shows that our collection of sets $\{E_n\}_{n:\omega\in B(c,s,n)}$ satisfies the hypothesis of Lemma \ref{Erdos lemma}. 
 
 We record here for later use the following fact: for each $n\in\mathbb{N}$ such that $\omega\in B(c,s,n),$ we have  
\begin{equation}
\label{E_n measure}
\L(E_n)\asymp R_ng(n)^d.
\end{equation}Equation \eqref{E_n measure} follows from \eqref{disjoint balls} and the fact that for each $n\in \N$ such that $\omega\in B(c,s,n),$ we have $$c R_n\leq T\Big(Y_n(\omega),\frac{s}{R_{n}^{1/d}}\Big)\leq R_n .$$\\

\noindent \textbf{Step 3: Bounding $\mathcal{L}(E_n\cap E_m)$.}\\
Assume $n$ is such that $\omega\in B(c,s,n),$ $m$ is such that $\omega\in B(c,s,m),$ and $m\neq n.$ Fix $u\in S\big(Y_n(\omega),\frac{s}{R_{n}^{1/d}}\big).$ We want to bound the quantity: $$\#\Big\{v\in S\Big(Y_m(\omega),\frac{s}{R_{m}^{1/d}}\Big):B(u,g(n))\cap B(v,g(m)) \neq \emptyset \Big\}.$$ If $g(m)\geq g(n),$ then every $v\in S\big(Y_m(\omega),\frac{s}{R_{m}^{1/d}}\big)$ satisfying  $B(u,g(n))\cap B(v,g(m)) \neq \emptyset$ must also satisfy $B(v,g(m))\subset B(u,3g(m)).$ It follows therefore from \eqref{disjoint balls} and a volume argument that  
\begin{equation}
\label{estimate1}\#\Big\{v\in S\Big(Y_m(\omega),\frac{s}{R_{m}^{1/d}}\Big):B(u,g(n))\cap B(v,g(m)) \neq \emptyset \Big\}=\mathcal{O}(1).
\end{equation}If $g(m)<g(n),$ then every $v\in S\big(Y_m(\omega),\frac{s}{R_{m}^{1/d}}\big)$ satisfying  $B(u,g(n))\cap B(v,g(m)) \neq \emptyset$ must also satisfy $B(v,g(m))\subseteq B(u,3g(n)).$ Since the elements of $S\big(Y_m(\omega),\frac{s}{R_{m}^{1/d}}\big)$ are by definition separated by a factor $\frac{s}{R_{m}^{1/d}},$ it follows from a volume argument that 
\begin{equation}
\label{estimate2}\#\Big\{v\in S\Big(Y_m(\omega),\frac{s}{R_{m}^{1/d}}\Big):B(u,g(n))\cap B(v,g(m)) \neq \emptyset \Big\}= \mathcal{O}\left(\frac{g(n)^dR_{m}}{s^d}+1\right).
\end{equation} Combining \eqref{estimate1} and \eqref{estimate2}, we see that for any $n\in\N$ such that $\omega\in B(c,s,n),$ and $m\neq n$ such that $\omega\in B(c,s,m),$ we have
\begin{equation}
\label{count bound}
 \#\Big\{v\in S\Big(Y_m(\omega),\frac{s}{R_{m}^{1/d}}\Big):B(u,g(n))\cap B(v,g(m)) \neq \emptyset \Big\}= \mathcal{O}\left(\frac{g(n)^dR_{m}}{s^d}+1\right).
\end{equation}
We now use \eqref{count bound} to bound $\L(E_n\cap E_m):$
\begin{align*}
\L(E_n\cap E_m)&\stackrel{\eqref{disjoint balls}}{=}\sum_{u\in S(Y_n(\omega),\frac{s}{R_{n}^{1/d}})} \L(B(u,g(n))\cap E_m)\\
&\leq \sum_{u\in S(Y_n(\omega),\frac{s}{R_{n}^{1/d}})} \L(0,g(m))\#\Big\{v\in S\Big(Y_m(\omega),\frac{s}{R_{m}^{1/d}}\Big):B(u,g(n))\cap B(v,g(m)) \neq \emptyset \Big\}\\
&\stackrel{\eqref{count bound}}{=} \sum_{u\in S(Y_n(\omega),\frac{s}{R_{n}^{1/d}})} \mathcal{O}\Big(g(m)^{d}\Big(\frac{g(n)^dR_{m}}{s^d}+1\Big)\Big)\\
&=\mathcal{O}\Big(R_{n}g(m)^{d}\Big(\frac{g(n)^dR_{m}}{s^d}+1\Big)\Big).
\end{align*} Summarising the above, we have shown that for any $n\in\mathbb{N}$ such that $\omega\in B(c,s,n),$ and $m\neq n$ such that $\omega\in B(c,s,m),$ we have:
\begin{equation}
\label{intersection bound}
\L(E_n\cap E_m)=\mathcal{O}\Big(R_{n}g(m)^{d}\Big(\frac{g(n)^dR_{m}}{s^d}+1\Big)\Big).
\end{equation}

\noindent \textbf{Step 4. Applying Lemma \ref{Erdos lemma}.}\\
By Lemma \ref{Erdos lemma}, to prove that \eqref{WANT2} holds, and to finish our proof, it suffices to show that 
\begin{equation}
\label{Erdos bound}
\sum_{\stackrel{n,m=1}{n:\omega\in B(c,s,n),\,m:\omega\in B(c,s,m)}}^{Q}\L(E_n\cap E_m)=\mathcal{O}\Big(\big(\sum_{\stackrel{n=1}{n:\omega\in B(c,s,n)}}^Q\L(E_n)\big)^2\Big).
\end{equation}This we do below. We start by separating terms:
\begin{equation}
\label{separating terms}
\sum_{\stackrel{n,m=1}{n:\omega\in B(c,s,n),\,m:\omega\in B(c,s,m)}}^{Q}\L(E_n\cap E_m)=\sum_{\stackrel{n=1}{n:\omega\in B(c,s,n)}}^{Q}\L(E_n)+2\sum_{\stackrel{m=2}{m:\omega\in B(c,s,m)}}^{Q}\sum_{\stackrel{n=1}{n:\omega\in B(c,s,n)}}^{m-1}\L(E_n\cap E_m).
\end{equation}Focusing on the first term on the right hand side of \eqref{separating terms}, we know that $$\sum_{\stackrel{n=1}{n:\omega\in B(c,s,n)}}^{\infty}\L(E_n)=\infty$$ by \eqref{divergencez}. Therefore, for all $Q$ sufficiently large we have $$\sum_{\stackrel{n=1}{n:\omega\in B(c,s,n)}}^{Q}\L(E_n)\geq 1.$$ This implies that 
\begin{equation}
\label{square bound}
\sum_{\stackrel{n=1}{n:\omega\in B(c,s,n)}}^{Q}\L(E_n)=\mathcal{O}\Big( \big(\sum_{\stackrel{n=1}{n:\omega\in B(c,s,n)}}^{Q}\L(E_n)\big)^2\Big).
\end{equation}Focusing on the second term in \eqref{separating terms}, we have 
\begin{align*}
\sum_{\stackrel{m=2}{m:\omega\in B(c,s,m)}}^{Q}\sum_{\stackrel{n=1}{n:\omega\in B(c,s,n)}}^{m-1}\L(E_n\cap E_m)&\stackrel{\eqref{intersection bound}}{=}\mathcal{O}\Big(\sum_{\stackrel{m=2}{m:\omega\in B(c,s,m)}}^{Q}\sum_{\stackrel{n=1}{n:\omega\in B(c,s,n)}}^{m-1}R_ng(m)^{d}\Big(\frac{g(n)^dR_m}{s^d}+1\Big)\Big)\\
&=\mathcal{O}\Big(\sum_{\stackrel{m=2}{m:\omega\in B(c,s,m)}}^{Q}\sum_{\stackrel{n=1}{n:\omega\in B(c,s,n)}}^{m-1}R_ng(n)^d R_mg(m)^d\Big)\\
&+\mathcal{O}\Big(\sum_{\stackrel{m=2}{m:\omega\in B(c,s,m)}}^{Q}\sum_{\stackrel{n=1}{n:\omega\in B(c,s,n)}}^{m-1}R_ng(m)^{d}\Big)
\end{align*}
Focusing on the first term in the above, we see that \begin{align*}
\sum_{\stackrel{m=2}{m:\omega\in B(c,s,m)}}^{Q}\sum_{\stackrel{n=1}{n:\omega\in B(c,s,n)}}^{m-1}R_ng(n)^d R_mg(m)^d&=\sum_{\stackrel{m=2}{m:\omega\in B(c,s,m)}}^{Q}R_mg(m)^d\sum_{\stackrel{n=1}{n:\omega\in B(c,s,n)}}^{m-1}R_ng(n)^d\\
&\leq \Big(\sum_{\stackrel{n=1}{n:\omega\in B(c,s,n)}}^{Q}R_ng(n)^d\Big)^2\\
&\stackrel{\eqref{E_n measure}}{=}\mathcal{O}\Big(\big(\sum_{\stackrel{n=1}{n:\omega\in B(c,s,n)}}^Q\L(E_n)\big)^2\Big).
\end{align*}
Focusing on the second term, we have 
\begin{align*}
\sum_{\stackrel{m=2}{m:\omega\in B(c,s,m)}}^{Q}\sum_{\stackrel{n=1}{n:\omega\in B(c,s,n)}}^{m-1}R_ng(m)^{d}&=\mathcal{O}\Big( \sum_{\stackrel{m=2}{m:\omega\in B(c,s,m)}}^Q R_mg(m)^d\Big)\\
&\stackrel{\eqref{E_n measure}}{=}\mathcal{O}\Big(\sum_{\stackrel{m=1}{m:\omega\in B(c,s,m)}}^{Q} \L(E_m)\Big)\\
&\stackrel{\eqref{square bound}}{=}\mathcal{O}\Big( \big(\sum_{\stackrel{m=1}{m:\omega\in B(c,s,m)}}^{Q} \L(E_m)\big)^2\Big).
\end{align*}In the first equality above we used the assumption that $R_n\asymp \gamma^n,$ and therefore by properties of geometric series $$\sum_{\stackrel{n=1}{n:\omega\in B(c,s,n)}}^{m-1}R_n\leq \sum_{n=1}^{m-1}R_n=\mathcal{O}(R_m).$$ This is the only point in the proof where we use the assumption $R_n\asymp \gamma^n$. 

Collecting the bounds obtained above, we see that 
\begin{equation}
\label{square bound2}
\sum_{\stackrel{m=2}{m:\omega\in(B(c,s,m)}}^{Q}\sum_{\stackrel{n=1}{n:\omega\in B(c,s,n)}}^{m-1}\L(E_n\cap E_m)=\mathcal{O}\Big(\big (\sum_{\stackrel{n=1}{n:\omega\in B(c,s,n)}}^{Q}\L(E_n)\big)^2\Big).
\end{equation}
Substituting the bounds  \eqref{square bound} and \eqref{square bound2} into \eqref{separating terms}, we see that \eqref{Erdos bound} holds as required. This completes our proof.

\end{proof}
	
With Proposition \ref{fixed omega} we can now prove Proposition \ref{general prop}.	
\begin{proof}[Proof of Proposition \ref{general prop}]

Let $$P:=\bigcap_{\epsilon>0}\bigcup_{c,s>0}\{\omega:\overline{d}(n:\omega\in B(c,s,n))\geq 1-\epsilon\}.$$ Fix $\omega\in P$. For any $h\in H$, by definition there exists $\epsilon>0$ such that $h\in H_{\epsilon}$. It follows from the definition of $P,$ that there exists $c,s>0$ such that $$\overline{d}(n:\omega\in B(c,s,n))> 1-\epsilon.$$ In which case, by the definition of $H_{\epsilon},$ we must have $$\sum_{n:\omega\in B(c,s,n)}h(n)=\infty.$$ Applying Proposition \ref{fixed omega} it follows that 
$$\left\{x\in\mathbb{R}^d:x\in\bigcup_{l=1}^{R_{n}}B\left(f_{l,n}(\omega),\left(\frac{h(n)}{R_{n}}\right)^{1/d}\right)\textrm{ for i.m. } n\in \mathbb{N}\right\}$$
 has positive Lebesgue measure. Our result now follows since $\omega\in P$ was arbitrary and we know by Lemma \ref{density separation lemma} that $\eta(P)=\eta(\Omega).$
\end{proof}

\subsubsection{Verifying the hypothesis of Proposition \ref{general prop}.}
To prove Theorems \ref{1d thm}, \ref{translation thm} and \ref{random thm}, we will apply Proposition \ref{general prop}. Naturally to do so we need to verify the hypothesis of Proposition \ref{general prop}. The exponential growth condition on the number of elements in our set will be automatically satisfied. Verifying the second integral condition is more involved. We will show that this integral condition holds via a transversality argument. Unfortunately the quantity $T(Y_{n}(\omega),\frac{s}{R_{n}^{1/d}})$ is not immediately amenable to transversality techniques. Instead we study the quantity:
$$R(\omega,s,n):=\left\{(l,l')\in\{1,\ldots,R_n\}^2 :|f_{l,n}(\omega)-f_{l',n}(\omega)|\leq\frac{s}{R_{n}^{1/d}}\,\textrm{ and } l\neq l'\right\}.$$ The following lemma allows us to deduce the integral bound appearing in Proposition \ref{general prop} from a similar bound for $R(\omega,s,n)$.
\begin{lemma}
	\label{integral bound}
	Assume there exists $G:(0,\infty)\to(0,\infty)$ satisfying $\lim_{s\to 0}G(s)=0,$ such that for all $n\in N$ we have $$\int_{\Omega} \frac{\#R(\omega,s,n)}{R_n}d\eta\leq G(s).$$ Then for all $n\in N$ we have 
	$$\eta(\Omega)-\int_{\Omega}\frac{T(Y_{n}(\omega),\frac{s}{R_{n}^{1/d}})}{R_n}d\eta \leq  G(s).$$
	
\end{lemma}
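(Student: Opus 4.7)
The plan is to reduce the claim to a pointwise combinatorial inequality
$$R_n - T\!\left(Y_n(\omega),\tfrac{s}{R_n^{1/d}}\right) \;\leq\; \#R(\omega,s,n)$$
holding for every $\omega\in\Omega$ and every $n\in\mathbb{N}$, and then simply divide by $R_n$ and integrate against $\eta$; the hypothesis of the lemma then delivers the stated bound. No measure-theoretic subtleties beyond monotonicity of the integral are needed once the pointwise inequality is in hand.

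To establish the pointwise inequality, fix $\omega$ and $n$, write $r:=s/R_n^{1/d}$, and run the natural greedy construction of an $r$-separated subset of $Y_n(\omega)$. Initialise $V=\{1,\ldots,R_n\}$ and $S=\emptyset$; at each step pick any remaining label $l\in V$, adjoin $f_{l,n}(\omega)$ to $S$, and delete from $V$ the label $l$ together with every $l''\in V$ for which $|f_{l,n}(\omega)-f_{l'',n}(\omega)|\leq r$. When $V$ is exhausted, the set $S$ is by construction an $r$-separated subset of $Y_n(\omega)$, so $|S|\leq T(Y_n(\omega),r)$.

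The heart of the argument is the charging step. For each label $l''$ removed as a neighbour (rather than as a picked representative), there is a witness $l$ picked at that iteration with $l''\neq l$ and $|f_{l,n}(\omega)-f_{l'',n}(\omega)|\leq r$, so $\{l,l''\}$ is an unordered close pair. Because $l''$ is deleted from $V$ at that moment and never reconsidered, each unordered close pair is charged at most once. Hence the number of non-picked labels satisfies $R_n-|S|\leq \#R(\omega,s,n)/2 \leq \#R(\omega,s,n)$, which is the desired pointwise bound. Dividing by $R_n$ and integrating gives
$$\eta(\Omega) - \int_{\Omega}\frac{T(Y_n(\omega),r)}{R_n}\,d\eta \;\leq\; \int_{\Omega}\frac{\#R(\omega,s,n)}{R_n}\,d\eta \;\leq\; G(s).$$

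I do not anticipate a genuine obstacle here; the only mild subtlety is ensuring the double counting is honest when several labels $l\neq l''$ share the same image $f_{l,n}(\omega)=f_{l'',n}(\omega)$. Such pairs do contribute to $R(\omega,s,n)$ (since distance zero is $\leq r$), and the greedy step correctly removes all duplicate labels at the iteration where their common image is first picked, so the charging argument proceeds without modification.
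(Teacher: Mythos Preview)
Your proof is correct. Both you and the paper reduce to the pointwise inequality $R_n - T(Y_n(\omega),s/R_n^{1/d}) \leq \#R(\omega,s,n)$ and then integrate, but the combinatorial arguments for that inequality differ. The paper takes a more direct route: it lets $W(\omega,s,n)$ be the set of labels $l$ that are \emph{isolated}, i.e.\ at distance greater than $s/R_n^{1/d}$ from every other $f_{l',n}(\omega)$; the images of labels in $W$ already form an $r$-separated set, so $\#W \leq T$, while each $l\in W^c$ has a witness $l'$ with $(l,l')\in R(\omega,s,n)$, giving $\#W^c\leq \#R(\omega,s,n)$ via the surjection $(l,l')\mapsto l$. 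Your greedy construction is a legitimate alternative and even yields the sharper bound $R_n - T \leq \#R(\omega,s,n)/2$, but the extra work and the factor of two are not needed here; the paper's isolated-label argument is shorter and avoids the bookkeeping about double counting.
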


\begin{proof}Let
	$$W(\omega,s,n):=\left\{l\in\{1,\ldots,R_n\}: |f_{l,n}(\omega)-f_{l',n}(\omega)|>\frac{s}{R_{n}^{1/d}}\, \forall l'\neq l\right\}.$$ Since for any distinct $l,l'\in W(\omega,s,n),$ the distance between $f_{l,n}(\omega)$ and $f_{l',n}(\omega)$ is at least $\frac{s}{R_{n}^{1/d}}$, it follows that $W(\omega,s,n)$ is a $\frac{s}{R_{n}^{1/d}}$-separated set. Therefore
	\begin{equation}
	\label{countbounda}
	\# W(\omega,s,n)\leq T\Big(Y_n(\omega),\frac{s}{R_{n}^{1/d}}\Big).
	\end{equation} Importantly we also have 
	\begin{equation}
	\label{countboundb}\#W(\omega,s,n)^c=\#\left\{l\in\{1,\ldots,R_n\}:|f_{l,n}(\omega)-f_{l',n}(\omega)|\leq \frac{s}{R_{n}^{1/d}}\,\textrm{ for some }l'\neq l\right\}\leq \#R(\omega,s,n).
	\end{equation} This follows because the map $f:R(\omega,s,n)\to W(\omega,s,n)^c$ defined by $f(l,l')=l$ is a surjective map. 
	
	Now suppose we have $G:(0,\infty)\to(0,\infty)$ satisfying the hypothesis of our proposition. Then for any $s>0$ and $n\in \N$ we have 
	\begin{align*}
	\eta(\Omega)=\int_{\Omega} \frac{\# W(\omega,s,n)+\# W(\omega,s,n)^c}{R_n}d\eta &\stackrel{\eqref{countbounda}\, \eqref{countboundb}}\leq \int_{\Omega} \frac{T(Y_{n}(\omega),\frac{s}{R_{n}^{1/d}})}{R_n}d\eta+\int_{\Omega}\frac{\#R(\omega,s,n)}{R_n}d\eta\\
	&\leq \int_{\Omega} \frac{T(Y_{n}(\omega),\frac{s}{R_{n}^{1/d}})}{R_n}d\eta+G(s).
	\end{align*}This implies 
	$$\eta(\Omega)-\int_{\Omega}\frac{T(Y_{n}(\omega),\frac{s}{R_{n}^{1/d}})}{R_n}d\eta \leq  G(s).$$
	
\end{proof}

\subsubsection{The non-existence of a Khintchine like result}
The purpose of this section is to prove the following proposition. It will be used in the proof of Theorem \ref{precise result} and Theorem \ref{Colette thm}. It demonstrates that a lack of separation along a subsequence can lead to the non-existence of a Khintchine like result.

\begin{prop}
	\label{fail prop}
Let $\omega\in\Omega$ and suppose that for some $s>0$ we have $$\liminf_{n\to\infty} \frac{T(Y_{n}(\omega),\frac{s}{R_{n}^{1/d}})}{R_n}=0.$$ Then there exists $h:\mathbb{N}\to[0,\infty)$ such that $$\sum_{n=1}^{\infty}h(n)=\infty,$$ yet $$\left\{x\in\mathbb{R}^d:x\in\bigcup_{l=1}^{R_{n}}B\left(f_{l,n}(\omega),\left(\frac{h(n)}{R_{n}}\right)^{1/d}\right)\textrm{ for i.m. } n\in \mathbb{N}\right\}$$ has zero Lebesgue measure. 
\end{prop}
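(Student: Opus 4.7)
The plan is to exploit the lack of separation along a subsequence. If the points of $Y_n(\omega)$ cluster at scale $s/R_n^{1/d}$, then a maximal $(s/R_n^{1/d})$-separated subset of $Y_n(\omega)$ covers all of $Y_n(\omega)$ by balls of that radius, so the set $\bigcup_l B(f_{l,n}(\omega),(h(n)/R_n)^{1/d})$ fits inside a disproportionately small region. Concentrating $h$ on such a clustering subsequence will keep the Borel--Cantelli volume sum convergent while still forcing $\sum_n h(n)=\infty$.

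Concretely, the hypothesis lets me extract a subsequence $(n_k)_{k=1}^{\infty}$ with
$$\frac{T\bigl(Y_{n_k}(\omega),\, s/R_{n_k}^{1/d}\bigr)}{R_{n_k}} \le 2^{-k},$$
and I would then define $h(n_k):=s^d$ for each $k$ and $h(n):=0$ otherwise, so $\sum_{n=1}^{\infty}h(n)=\infty$ is trivial. By maximality of $S\bigl(Y_n(\omega), s/R_n^{1/d}\bigr)$, every $y\in Y_n(\omega)$ lies within distance $s/R_n^{1/d}$ of some element of this set (otherwise the separated set could be enlarged). For $n=n_k$ the choice of $h$ gives $(h(n_k)/R_{n_k})^{1/d}=s/R_{n_k}^{1/d}$, so the triangle inequality yields
$$E_{n_k}:=\bigcup_{l=1}^{R_{n_k}}B\!\bigl(f_{l,n_k}(\omega),\,(h(n_k)/R_{n_k})^{1/d}\bigr)\subseteq \bigcup_{u\in S(Y_{n_k}(\omega),\, s/R_{n_k}^{1/d})}B\!\bigl(u,\,2s/R_{n_k}^{1/d}\bigr).$$

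Taking Lebesgue measure and using the cardinality bound gives $\L(E_{n_k})\le 2^{-k}R_{n_k}\cdot 2^d\L(B(0,1))\cdot s^d/R_{n_k}=2^d\L(B(0,1))\,s^d\,2^{-k}$. For $n\notin\{n_k\}$ the radius is $0$, so $E_n$ is a finite set and $\L(E_n)=0$. Hence $\sum_n\L(E_n)<\infty$, and the Borel--Cantelli lemma delivers $\L(\limsup_n E_n)=0$, which is the required conclusion. The only step that is not routine is the observation that maximality of the separated subset yields the covering by balls of the same radius; once this is in hand, the construction of $h$ and the Borel--Cantelli estimate are direct, so I do not expect any serious obstacle.
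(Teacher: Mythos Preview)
Your proof is correct and essentially identical to the paper's: both pick a subsequence along which $T(Y_{n_k}(\omega),s/R_{n_k}^{1/d})/R_{n_k}$ is summable (you use $2^{-k}$, the paper uses $1/k^2$), set $h(n_k)=s^d$ and $h=0$ elsewhere, use maximality of the separated set to cover $E_{n_k}$ by $T(\cdot)$ balls of comparable radius (you use $2s$, the paper uses $3s$), and finish with Borel--Cantelli. The differences are purely cosmetic.
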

\begin{proof}
Let $\omega\in \Omega$ and $s>0$ be as above. By our assumption, there exists a strictly increasing sequence $(n_j)$ such that 
\begin{equation}
\label{separated upper bound}
T\Big(Y_{n_j}(\omega),\frac{s}{R_{n_j}^{1/d}}\Big)\leq \frac{R_{n_j}}{j^2}
\end{equation} for all $j\in\mathbb{N}$. By the definition of a maximal $s\cdot R_{n_j}^{-1/d}$-separated set, we know that for each $l\in\{1,\ldots,R_{n_j}\},$ there exists $u\in S(Y_{n_j}(\omega),\frac{s}{R_{n_j}^{1/d}})$ such that $|u-f_{l,n_j}(\omega)|\leq s\cdot R_{n_j}^{-1/d}.$ It follows that 
\begin{equation}
\label{inclusion z}
\bigcup_{l=1}^{R_{n_{j}}}B\left(f_{l,n_j}(\omega),\frac{s}{R_{n_j}^{1/d}}\right)\subseteq \bigcup_{u\in S(Y_{n_j}(\omega),\frac{s}{R_{n_j}^{1/d}})}B\left(u,\frac{3s}{R_{n_j}^{1/d}}\right).
\end{equation}We now define our function $h:\mathbb{N}\to [0,\infty):$
$$h(n)=
\left\{
\begin{array}{ll}
s^d  & \mbox{if } n=n_j \textrm{ for some }j\in\mathbb{N}\\
0 &  \textrm{otherwise}
\end{array}
\right.
$$This function obviously satisfies $$\sum_{n=1}^{\infty}h(n)=\infty.$$ By \eqref{inclusion z} and the definition of $h,$ we see that
\begin{align}
&\L\left(\left\{x\in\mathbb{R}^d:x\in\bigcup_{l=1}^{R_{n}}B\left(f_{l,n}(\omega),\left(\frac{h(n)}{R_{n}}\right)^{1/d}\right)\textrm{ for i.m. } n\in \mathbb{N}\right\}\right)\nonumber \\
\leq &\L\left(\left\{x:x\in \bigcup_{u\in S(Y_{n_j}(\omega),\frac{s}{R_{n_j}^{1/d}})}B\left(u,\frac{3s}{R_{n_j}^{1/d}}\right)\textrm{ for i.m. }j\right\}\right)\label{inclusion zz}.
\end{align} So to prove our result it suffices to show that the right hand side of \eqref{inclusion zz} is zero. This fact now follows from the Borel-Cantelli lemma and the following inequalities:
\begin{align*}\sum_{j=1}^{\infty}\sum_{u\in S(Y_{n_j}(\omega),\frac{s}{R_{n_j}^{1/d}})}\L\Big(B\Big(u,\frac{3s}{R_{n_j}^{1/d}}\Big)\Big)&=\sum_{j=1}^{\infty}T\Big(Y_{n_j}(\omega),\frac{s}{R_{n_j}^{1/d}}\Big)\frac{(3s)^d\L(B(0,1))}{R_{n_j}}\\
&\stackrel{\eqref{separated upper bound}}{\leq}\sum_{j=1}^{\infty}\frac{(3s)^d\L(B(0,1))}{j^2}\\
&<\infty.
\end{align*}

\end{proof}

\subsection{Full measure statements}
The main result of the previous section was Proposition \ref{general prop}. This result provides sufficient conditions for us to conclude that for a parameterised family of points, almost surely each member of a class of limsup sets defined in terms of neighbourhoods of these points will have positive Lebesgue measure. We will eventually apply Proposition \ref{general prop} to the sets $U_{\Phi}(z,\m,h)$ defined in the introduction. Instead of just proving positive measure statements, we would like to be able to prove full measure results. The purpose of this section is to show how one can achieve this goal. Proposition \ref{full measure} achieves this by imposing some extra assumptions on the function $\Psi$. Proposition \ref{separated full measure} achieves this by imposing some stronger separation hypothesis.

The following lemma follows from Lemma 1 of \cite{BerVel2}. It is a consequence of the Lebesgue density theorem. 

\begin{lemma}[\cite{BerVel2}]
	\label{arbitrarily small}The following statements are true:
\begin{enumerate}
	\item Let $(x_j)$ be a sequence of points in $\mathbb{R}^d$ and $(r_j),(r_j')$ be two sequences of positive real numbers both converging to zero. If $r_j\asymp r_j'$ then $$\L(x:x\in B(x_j,r_j) \textrm{ for i.m. } j)=\L(x:x\in B(x_j,r_j') \textrm{ for i.m. } j).$$
	\item Let $B(x_j,r_j)$ be a sequence of balls in $\mathbb{R}^d$ such that $r_j\to 0$. Then $$\L(x:x\in B(x_j,r_j) \textrm{ for i.m. } j)=\L\left(\bigcap _{0<c<1}\{x:x\in B(x_j,cr_j) \textrm{ for i.m. } j\}\right).$$
\end{enumerate}
\end{lemma}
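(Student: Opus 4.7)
The plan is to prove statement (2) first and then deduce statement (1) as a quick consequence. Write $A := \{x : x \in B(x_j, r_j) \text{ for i.m. } j\}$ and, for each $c \in (0,1)$, $A_c := \{x : x \in B(x_j, c r_j) \text{ for i.m. } j\}$, so that $A_c \subseteq A$ and the family $\{A_c\}$ is monotone in $c$. Since $\bigcap_{c \in (0,1)} A_c \subseteq A_{c_0} \subseteq A$ for any $c_0$, the equality of measures in (2) is equivalent to the (superficially stronger) claim that $\L(A \setminus A_c) = 0$ for every $c \in (0,1)$, and this is what I would actually prove.

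For this claim I would argue by contradiction: suppose $\L(A \setminus A_c) > 0$ for some $c \in (0,1)$. For each $x \in A \setminus A_c$ there is a cutoff $J(x) \in \N$ such that $x \notin B(x_j, c r_j)$ for all $j \geq J(x)$. Setting $F_n := \{x \in A \setminus A_c : J(x) \leq n\}$ yields a measurable increasing decomposition $A \setminus A_c = \bigcup_n F_n$, so $\L(F_{n_0}) > 0$ for some $n_0$. By construction $F_{n_0} \cap B(x_j, c r_j) = \emptyset$ for every $j \geq n_0$. Now choose a Lebesgue density point $x \in F_{n_0}$ (which exists by the Lebesgue density theorem). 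Since $x \in A$ there are infinitely many $j \geq n_0$ with $x \in B(x_j, r_j)$, and along this subsequence $r_j \to 0$. For each such $j$ we have $B(x_j, c r_j) \subseteq B(x, (1+c) r_j)$, and $B(x_j, c r_j)$ is disjoint from $F_{n_0}$, so a volume comparison gives
$$\frac{\L(F_{n_0} \cap B(x, (1+c) r_j))}{\L(B(x, (1+c) r_j))} \leq 1 - \frac{c^d}{(1+c)^d} < 1.$$
Letting $j \to \infty$ along the subsequence produces arbitrarily small radii $(1+c)r_j \to 0$ on which the density ratio stays bounded away from $1$, contradicting $x$ being a density point of $F_{n_0}$.

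To deduce (1), suppose $r_j \asymp r_j'$ with $C^{-1} r_j \leq r_j' \leq C r_j$ for some $C \geq 1$. Then $B(x_j, C^{-1} r_j) \subseteq B(x_j, r_j') \subseteq B(x_j, C r_j)$, so the three corresponding limsup sets are nested. Applying statement (2) to the sequence of balls $B(x_j, C r_j)$ with shrinkage parameter $c = C^{-2}$ gives $\L(\limsup B(x_j, C r_j)) = \L(\limsup B(x_j, C^{-1} r_j))$. The measures of the three sandwiched limsup sets therefore coincide, proving (1).

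The main subtlety will be the density-point argument in (2): to pass from the pointwise statement ``for each $x$ there is some $J(x)$'' to a geometric statement about balls ``$B(x_j, cr_j)$ avoids an entire set of positive measure for all $j$ past a single threshold'', one must restrict to a level set $F_{n_0}$ where the cutoff is uniform. Once this uniformisation is in place, the Lebesgue density theorem closes the argument cleanly; the rest of the proof is a sandwich bookkeeping step.
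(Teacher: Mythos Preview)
Your proof is correct. The paper does not actually prove this lemma; it simply attributes the result to Lemma~1 of \cite{BerVel2} and remarks that it is a consequence of the Lebesgue density theorem, which is precisely the tool you employ. Your density-point argument for~(2) and sandwich deduction of~(1) constitute a clean self-contained proof along the lines the paper only hints at.
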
 
Lemma \ref{arbitrarily small} implies the following useful fact. If $\Psi:\D^*\to[0,\infty)$ is equivalent to $(\m,h)$ and $U_{\Phi}(z,\m,h)$ has positive Lebesgue measure, then $W_{\Phi}(z,\Psi)$ has positive Lebesgue measure. We will use this fact several times throughout this paper.

Lemma \ref{arbitrarily small} will be used in the proof of the following proposition and in the proofs of our main theorems. Recall that we say that a function $\Psi:\D^*\to[0,\infty)$ is weakly decaying if 
$$\inf_{\a\in \D^*}\min_{i\in \D}\frac{\Psi(i\a)}{\Psi(\a)}>0.$$

\begin{prop}
	\label{full measure}
The following statements are true:
\begin{enumerate}
		\item Assume $\Phi$ is a collection of similarities with attractor $X$. If $z\in X$ is such that $\L(W_{\Phi}(z,\Psi))>0$ for some $\Psi$ that is weakly decaying, then Lebesgue almost every $x\in X$ is contained in $W_{\Phi}(z,\Psi)$. 
	\item Assume $\Phi$ is an arbitrary IFS and there exists $\mu,$ the pushforward of a $\sigma$-invariant ergodic probability measure $\m,$ satisfying $\mu\sim \L|_{X}$. Then if $z\in X$ is such that $\L(W_{\Phi}(z,\Psi))>0$ for some $\Psi$ that is weakly decaying, then Lebesgue almost every $x\in X$ is contained in $W_{\Phi}(z,\Psi)$. 
	\end{enumerate}
\end{prop}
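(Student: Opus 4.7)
The strategy is to base both statements on one invariance property of $W := W_{\Phi}(z,\Psi)$: for each $i\in \D$, $\phi_i(W)\subseteq W$ modulo $\L$-null sets. To establish it, suppose $x\in W$, so $|x-\phi_\a(z)|\leq \Psi(\a)$ for infinitely many $\a\in\D^*$. Then $|\phi_i(x)-\phi_{i\a}(z)|\leq r_i\Psi(\a)$, and the weakly decaying hypothesis supplies $c>0$ with $\Psi(i\a)\geq c\Psi(\a)$, yielding $r_i\Psi(\a)\leq c^{-1}\Psi(i\a)$. Hence $\phi_i(x)\in W_{\Phi}(z,c^{-1}\Psi)$, and Lemma \ref{arbitrarily small}(1) identifies this set with $W$ modulo $\L$-null sets.

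\textbf{Statement (2).} Here I would transfer the invariance to $\D^{\N}$. Set $A:=\pi^{-1}(W)$; splitting by the first letter gives $\sigma^{-1}(A)\cap[i]=\pi^{-1}(\phi_i(W))\cap[i]$ and $A\cap[i]=\pi^{-1}(W)\cap[i]$, so $\sigma^{-1}(A)\setminus A \subseteq \pi^{-1}\bigl(\bigcup_{i\in\D}(\phi_i(W)\setminus W)\bigr)$. Each $\phi_i(W)\setminus W$ is $\L$-null, and the equivalence $\mu=\pi_*\m\sim \L|_X$ makes the union $\mu$-null, hence the symbolic set is $\m$-null. Combined with $\sigma$-invariance $\m(\sigma^{-1}(A))=\m(A)$, this forces $A=\sigma^{-1}(A)$ modulo $\m$-null, and ergodicity gives $\m(A)\in\{0,1\}$. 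Since $\L(W)>0$ and $\mu\sim\L|_X$ we have $\m(A)=\mu(W)>0$, so $\m(A)=1$, $\mu(W)=\mu(X)$, and $\L(X\setminus W)=0$.

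\textbf{Statement (1).} In the similarity setting there need not be a measure with absolutely continuous pushforward, so I would replace the ergodic argument by a Lebesgue density argument that exploits the exact scaling $\phi_i(B(p,r))=B(\phi_i(p),r_ir)$. Suppose for contradiction that $\L(X\setminus W)>0$, and pick Lebesgue density points $x_0\in W$ and $v\in X\setminus W$; the density of $W$ at $v$ is then $0$. Choose $\rho_0>0$ with $\L(W\cap B(x_0,\rho))/\L(B(x_0,\rho))\geq 1/2$ for $\rho\leq \rho_0$. Iterating the invariance together with the similarity scaling yields
\[
\frac{\L(W\cap B(\phi_\b(x_0), r_\b\rho_0))}{\L(B(\phi_\b(x_0), r_\b\rho_0))} \geq \tfrac{1}{2}
\]
for all $\b\in\D^*$. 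For a target scale $\rho^*>0$ I would pick, along some coding of $v$, the minimal prefix $\b$ with $r_\b\leq \rho^*$; then $r_\b\in(r_{\min}\rho^*,\rho^*]$, $v\in \phi_\b(X)$, and $|\phi_\b(x_0)-v|\leq \rho^*\mathrm{diam}(X)$. Consequently $B(\phi_\b(x_0),r_\b\rho_0)\subseteq B(v,R)$ for $R:=\rho^*(\mathrm{diam}(X)+\rho_0)$, so
\[
\frac{\L(W\cap B(v,R))}{\L(B(v,R))} \geq \frac{1}{2}\left(\frac{r_{\min}\rho_0}{\mathrm{diam}(X)+\rho_0}\right)^{d}=: \kappa > 0,
\]
a constant independent of $\rho^*$. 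Sending $\rho^*\to 0$ drives $R\to 0$ and contradicts the vanishing density of $W$ at $v$. The main obstacle will be this transport step: one must arrange $r_\b\asymp \rho^*$ and $|\phi_\b(x_0)-v|\leq \rho^*\mathrm{diam}(X)$ simultaneously, and it is the exact similarity scaling combined with the stopping-time choice that delivers both controls in tandem; any degradation in either estimate would let $\kappa$ shrink with $\rho^*$ and destroy the contradiction.
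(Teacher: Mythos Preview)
Your proof is correct and rests on the same core ingredients as the paper's: the weakly decaying hypothesis combined with Lemma~\ref{arbitrarily small} to manufacture an invariance, a Lebesgue density argument for statement~1, and ergodicity for statement~2.

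The packaging differs. The paper introduces the auxiliary set $A:=\bigcap_{0<c<1}W_\Phi(z,c\Psi)$, uses Lemma~\ref{arbitrarily small}(2) to get $\L(A)=\L(W)$, and then proves the \emph{exact} inclusion $\bigcup_{\a}\phi_\a(A)\subseteq W$; the density and ergodicity arguments then show this union has full measure in $X$. You instead keep $W$ itself and establish $\phi_i(W)\subseteq W$ modulo $\L$-null sets via Lemma~\ref{arbitrarily small}(1). Your formulation feeds more directly into the ergodicity step in statement~2 (almost-invariance of $\pi^{-1}(W)$), while the paper's formulation avoids carrying null sets through the density computation in statement~1. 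Both density arguments are essentially the same stopping-time construction along a coding of the bad point. One minor imprecision: you write $\sigma^{-1}(\pi^{-1}(W))\cap[i]=\pi^{-1}(\phi_i(W))\cap[i]$, but for a general IFS only the inclusion $\subseteq$ holds (the reverse would need $\phi_i$ injective); since your conclusion $\sigma^{-1}(A)\setminus A\subseteq\pi^{-1}\bigl(\bigcup_i(\phi_i(W)\setminus W)\bigr)$ uses only this direction, the argument is unaffected.
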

\begin{proof}
We prove each statement separately. \\

\noindent \textbf{Proof of statement 1.}\\
Let $\Phi$ be an IFS consisting of similarities and suppose $z$ and $\Psi$ satisfy the hypothesis of the proposition. Let $$A:=\bigcap_{0<c<1}W_{\Phi}(z,c\Psi).$$ It follows from Lemma \ref{arbitrarily small} that $$\L(A)=\L(W_{\Phi}(z,\Psi))>0.$$  We claim that 
\begin{equation}
\label{full measurea}\L\Big(\bigcup_{\a\in \D^*}\phi_{\a}(A)\Big)=\L(X).
\end{equation} To see that \eqref{full measurea} holds, suppose otherwise and assume $$\L\Big(X\setminus \bigcup_{\a\in \D^*}\phi_{\a}(A)\Big)>0.$$ Moreover, let $x^*$ be a density point of $$X\setminus \bigcup_{\a\in \D^*}\phi_{\a}(A).$$ Such a point has to exist by the Lebesgue density theorem.

Let $(b_j)\in \D^\N$ be such that $\pi((b_j))=x^*,$ and let $0<r<Diam(X)$ be arbitrary. We let $n(r)\in\N$ be such that $$Diam(X) \prod_{j=1}^{n(r)}r_{b_j}<r\leq Diam(X) \prod_{j=1}^{n(r)-1}r_{b_j}.$$ The parameter $n(r)$ satisfies the following: 
\begin{equation}
\label{inclusion1}
(\phi_{b_1}\circ \cdots \circ \phi_{b_{n(r)}})(X)\subseteq B(x^*,r),
\end{equation} and 
\begin{equation}
\label{prodapprox} \frac{r \cdot \min_{i\in D}r_i}{Diam(X)}\leq \prod_{j=1}^{n(r)}r_{b_j}. 
\end{equation}
Using \eqref{inclusion1} and \eqref{prodapprox} we can now bound $$\L\Big(B(x^{*},r)\cap \Big(X\setminus \bigcup_{\a\in \D^*}\phi_{\a}(A)\Big)\Big).$$ Observe
\begin{align*}
\L\Big(B(x^{*},r)\cap \Big(X\setminus \bigcup_{\a\in \D^*}\phi_{\a}(A)\Big)\Big)&\stackrel{\eqref{inclusion1}}{\leq}  \L(B(x^{*},r))- \L((\phi_{b_1}\circ \cdots \circ \phi_{b_{n(r)}})(A))\\
&=\L(B(0,1))r^d -\Big(\prod_{j=1}^{n(r)}r_{b_j}\Big)^d\L(A)\\
&\stackrel{\eqref{prodapprox}}{\leq} \L( B(0,1))r^d -\frac{r^d\L(A)\min_{i\in \D}r_i^d }{Diam(X)^d}\\
&\leq \L( B(0,1))r^d\Big(1-\frac{\L(A)\min_{i\in \D}r_i^d }{\L( B(0,1))Diam(X)^d}\Big).
\end{align*}
Therefore $$\limsup_{r\to 0}\frac{\L\big(B(x^{*},r)\cap \big(X\setminus \bigcup_{\a\in \D^*}\phi_{\a}(A)\big)\big)}{\L(B(x^*,r))}<1.$$ This implies that $x^*$ cannot be a density point of $$X\setminus \bigcup_{\a\in \D^*}\phi_{\a}(A),$$ and we may conclude that \eqref{full measurea} holds. 

We will now show that 
\begin{equation}
\label{limsup inclusion}\bigcup_{\a\in \D^*}\phi_{\a}(A)\subseteq W_{\Phi}(z,\Psi). 
\end{equation}Let $$y\in \bigcup_{\a\in \D^*}\phi_{\a}(A)$$ be arbitrary. Let $b_1\cdots b_k\in \D^*$ and $v\in A$ be such that $$(\phi_{b_1}\circ \cdots \circ \phi_{b_k})(v)=y.$$ Let 
\begin{equation}
\label{delightful}
d:=\inf_{\a\in \D^*}\min_{i\in \D}\frac{\Psi(i\a)}{\Psi(\a)}.
\end{equation} Since $\Psi$ is weakly decaying $d>0$. 

Now suppose $\a\in \D^*$ is such that $$|\phi_{\a}(z)-v|\leq c\Psi(\a),$$ where 
\begin{equation}
\label{c equation}
c:=d^k \max_{i\in \D}\{r_i\}^{-k}.
\end{equation} Then 
\begin{align*}
|(\phi_{b_1}\circ \cdots \circ \phi_{b_k}\circ \phi_{\a})(z)-y|&=|(\phi_{b_1}\circ \cdots \circ \phi_{b_k}\circ \phi_{\a}(z)-\phi_{b_1}\circ \cdots \circ \phi_{b_k}(v)|\\
&= |\phi_{\a}(z)-v|\prod_{j=1}^k r_{b_j}\\
&\leq c\Psi(\a)\prod_{j=1}^k r_{b_j}\\
&\stackrel{\eqref{delightful}}{\leq} cd^{-k}\Psi(b_1\cdots b_k \a)\prod_{j=1}^k r_{b_j}\\
&\stackrel{\eqref{c equation}}{\leq}\Psi(b_1\cdots b_k\a).
\end{align*}
It follows that for this choice of $c,$ whenever 
\begin{equation}
\label{solutiona}
|\phi_{\a}(z)-v|\leq c\Psi(\a),
\end{equation} we also have 
\begin{equation}
\label{solutionb}|(\phi_{b_1}\circ \cdots \circ \phi_{b_k}\circ \phi_{\a}(z)-y|\leq \Psi(b_1\cdots b_k\a).
\end{equation} It follows from the definition of $A$ that $v$ has infinitely many solutions to \eqref{solutiona}, therefore $y$ has infinitely many solutions to \eqref{solutionb} and $y\in W_{\Phi}(z,\Psi)$. It follows now from \eqref{full measurea} and \eqref{limsup inclusion} that Lebesgue almost every $x\in X$ is contained in $W_{\Phi}(z,\Psi)$ \\

\noindent \textbf{Proof of statement 2.}\\
Let $\Phi$ be an IFS and $\mu$ be the pushforward of some $\sigma$-invariant ergodic probability measure $\m.$ We assume that assume $\mu\sim \L|_{X}$. Let $z$ and $\Psi$ satisfy the hypothesis of our proposition. Let $A$ be as in the proof of statement $1$. It follows from our assumptions and Lemma \ref{arbitrarily small} that $\L(A)>0$. Since $\mu\sim \L|_{X}$ we also have $\mu(A)>0$. We will prove that 
\begin{equation}
\label{full measure10}
\mu\Big(\bigcup_{\a\in \D^*}\phi_{\a}(A)\Big)=1.
\end{equation} By our assumption $\m(\pi^{-1}(A))=\mu(A)>0.$ By the ergodicity of $\m$ we have 
\begin{equation}
\label{ergodicity}
\m\Big(\bigcup_{n=0}^{\infty}\sigma^{-n}(\pi^{-1}(A))\Big)=1.
\end{equation} Now observe that
\begin{equation*}
\mu\Big(\bigcup_{\a\in \D^*}\phi_{\a}(A)\Big)=\m\Big(\pi^{-1}\Big(\bigcup_{\a\in \D^*}\phi_{\a}(A)\Big)\Big)\geq \m\Big(\bigcup_{n=0}^{\infty}\sigma^{-n}(\pi^{-1}(A))\Big).
\end{equation*} Therefore \eqref{ergodicity} implies \eqref{full measure10}. Since $\mu \sim \L|_{X}$ it follows that 
\begin{equation}
\label{full measureABC} \L\Big(\bigcup_{\a\in \D^*}\phi_{\a}(A)\Big)=\L(X).
\end{equation}We now let $$y\in \bigcup_{\a\in \D^*}\phi_{\a}(A)$$ be arbitrary. Defining appropriate analogues of $b_1\cdots b_k$ and $c$ as in the proof of statement $1,$ it will follow that $y\in W_{\Phi}(z,\Psi)$. Therefore $$\bigcup_{\a\in \D^*}\phi_{\a}(A)\subseteq W_{\Phi}(z,\Psi).$$ Combining this fact with \eqref{full measureABC} completes the proof of statement $2$. 
\end{proof}

Proposition \ref{full measure} is a useful technique for proving full measure statements, but there is an additional cost as we require the function $\Psi$ to be weakly decaying. The following proposition requires no extra condition on the function $\Psi$, but does require some stronger separation assumptions. Before stating this proposition we recall some of our earlier definitions. Given a slowly decaying probability measure $\m$ we define
$$L_{\m,n}=\{\a\in\D^*: \m([a_1\cdots a_{|\a|}])\leq c_{\m}^n<\m([a_1\cdots a_{|\a|-1}])\}$$ and $$R_{\m,n}=\#L_{\m,n}.$$ Moreover, given $z\in X$ we define $$Y_{\m,n}(z):=\{\phi_{\a}(z)\}_{\a\in L_{\m,n}}.$$
\begin{prop}
\label{separated full measure}
Suppose $\Phi=\{A_i x+t_i\}$ is a collection of affine contractions and $\mu$ is the pushforward of a Bernoulli measure $\m$. Assume that one of following properties is satisfied:
\begin{itemize}
	\item $\Phi$ consists solely of similarities.
	\item $d=2$ and all the matrices $A_i$ are equal.
	\item All the matrices $A_i$ are simultaneously diagonalisable.
\end{itemize} Let $z\in X$ and suppose that for some $s>0$ there exists a subsequence $(n_k)$ satisfying $$\lim_{k\to\infty}\frac{T(Y_{\m,n_k}(z),\frac{s}{R_{\m,n_k}^{1/d}})}{R_{\m,n_k}}=1.$$ Then $\mu\sim \L|_{X},$ and for any $h$ that satisfies $$\sum_{k=1}^{\infty}h(n_k)=\infty,$$ we have that Lebesgue almost every $x\in X$ is contained in $U_{\Phi}(z,\m,h).$


\end{prop}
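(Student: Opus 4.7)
The plan has three steps: (i) apply Proposition \ref{fixed omega} along the subsequence $(n_k)$ to conclude that $U_{\Phi}(z,\m,h)$ has positive Lebesgue measure, (ii) deduce $\mu \sim \L|_X$ from the near-perfect separation combined with the matrix hypothesis, and (iii) upgrade positive Lebesgue measure to full Lebesgue measure via ergodicity and the affine structure.

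For step (i), I would apply Proposition \ref{fixed omega} with $\Omega = \{z\}$, $R_n = R_{\m,n}$, and $f_{\cdot, n}(z)$ an enumeration of $\{\phi_{\a}(z) : \a \in L_{\m,n}\}$. The slowly decaying hypothesis yields $R_{\m,n} \asymp c_{\m}^{-n}$, giving the required exponential growth; the hypothesis on $(n_k)$ implies $z \in B(c,s,n_k)$ for all sufficiently large $k$ for every $c \in (0,1)$, so $\sum_{n \, : \, z \in B(c,s,n)} h(n) \geq \sum_k h(n_k) = \infty$; and since $\m([\a]) \asymp 1/R_{\m,n}$ for $\a \in L_{\m,n}$, Lemma \ref{arbitrarily small} equates the limsup set produced by Proposition \ref{fixed omega} with $U_{\Phi}(z,\m,h)$ up to a Lebesgue-null difference. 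This gives $\L(U_{\Phi}(z,\m,h)) > 0$.

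For step (ii), the near-perfect separation of the $R_{\m,n_k}$ centres $\phi_{\a}(z)$ at scale $R_{\m,n_k}^{-1/d}$ inside the bounded set $X$, together with each of the three matrix hypotheses, forces $|\det A_{\a}| \asymp \m([\a])$ for typical $\a \in L_{\m,n_k}$ and constrains the cylinders $\phi_{\a}(X)$ to be approximately congruent affine copies of $X$ of diameter $\asymp R_{\m,n_k}^{-1/d}$. Inserting this into the decomposition $\mu = \sum_{\a \in L_{\m,n_k}} \m([\a])\,\mu \circ \phi_{\a}^{-1}$ and taking $k \to \infty$ bounds the Radon--Nikodym derivative of $\mu$ with respect to $\L|_X$ from above, and propagating the positivity of $\L(U_{\Phi}(z,\m,h))$ through the same identity applied to smaller cylinders furnishes the lower bound, yielding $\mu \sim \L|_X$.

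For step (iii), set $A := \bigcap_{0 < c < 1} U_{\Phi}(z,\m,c^{d}h)$. By Lemma \ref{arbitrarily small} and step (i), $\L(A) = \L(U_{\Phi}(z,\m,h)) > 0$, hence $\mu(A) > 0$ by step (ii). Ergodicity of the Bernoulli measure $\m$ yields $\m\bigl(\bigcup_{n \geq 0} \sigma^{-n}\pi^{-1}(A)\bigr) = 1$, and step (ii) translates this to $\L\bigl(\bigcup_{\b \in \D^{*}} \phi_{\b}(A)\bigr) = \L(X)$. It then suffices to show $\bigcup_{\b \in \D^{*}} \phi_{\b}(A) \subseteq U_{\Phi}(z,\m,h)$. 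For $v \in A$ and $\b \in \D^{*}$, the approximations $|v - \phi_{\a}(z)| \leq c(\m([\a])h(n))^{1/d}$ (valid for arbitrary $c \in (0,1)$ and infinitely many $n$) translate under $\phi_{\b}$ into $|\phi_{\b}(v) - \phi_{\b\a}(z)| \leq \|A_{\b}\| c(\m([\a])h(n))^{1/d}$, and using the comparison $\|A_{\b}\|^{d} \asymp \m([\b])$ from step (ii) together with the fact that $\b\a \in L_{\m,n'}$ for some $n'$ differing from $n$ by a bounded amount yields the inequality required for $\phi_{\b}(v) \in U_{\Phi}(z,\m,h)$, once $c$ is taken small enough to absorb the resulting constants.

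The main obstacle is the last inclusion in step (iii). The analogous step in Proposition \ref{full measure}(2) used the weakly decaying hypothesis on $\Psi$, which is not at our disposal here. We substitute for it via the separation-forced comparison $\|A_{\b}\|^{d} \asymp \m([\b])$, and the three matrix hypotheses (similarities, common matrix in dimension $2$, simultaneous diagonalisation) permit a case-by-case analysis of $\phi_{\b}$ fine enough to push this through. Handling the bounded index shift $n \mapsto n'$ requires care, especially when $h$ is irregular along the subsequence $(n_k)$.
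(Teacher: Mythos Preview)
Your step (i) is fine, and step (ii) reaches the correct conclusion although by the wrong route: the paper obtains $\mu\sim\L|_X$ by invoking Proposition~\ref{absolute continuity} (which gives $\mu\ll\L$ directly from the separation hypothesis) together with Lemma~\ref{equivalent measures} (which upgrades $\mu\ll\L$ to $\mu\sim\L|_X$ under any of the three matrix hypotheses). Your sketch of a direct argument in (ii), in particular the claimed comparison $|\det A_{\a}|\asymp\m([\a])$, is not justified and is not needed.

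The real problem is step (iii). The argument of Proposition~\ref{full measure}(2) that you are imitating relies on the weakly decaying property of $\Psi$, which is precisely what lets you absorb the index shift when you precompose by $\phi_{\b}$. Here there is no such hypothesis on $h$, and your proposed substitute $\|A_{\b}\|^d\asymp\m([\b])$ is neither established nor sufficient. Concretely: if $\a\in L_{\m,n_k}$ then $\b\a\in L_{\m,n'}$ with $n'=n_k+j$ for some fixed $j$ depending on $\b$, and you need $c^d\|A_{\b}\|^d h(n_k)\le\m([\b])h(n')$. But $h$ may vanish off the subsequence $(n_k)$; for instance take $h=\chi_{\{n_k\}}$ with $(n_k)$ sparse enough that $n_k+j$ is never in the subsequence. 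Then $h(n')=0$ and the inclusion $\phi_{\b}(A)\subseteq U_{\Phi}(z,\m,h)$ fails. The student-acknowledged difficulty with ``irregular $h$ along $(n_k)$'' is fatal, not merely technical.

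The paper avoids this entirely by a different mechanism: it proves a \emph{local} density statement. Using $\mu\sim\L|_X$ and the weak-star representation $\mu=\lim_n\sum_{\a\in L_{\m,n}}\m([\a])\delta_{\phi_{\a}(z)}$, one shows that for $\L$-a.e.\ $y\in X$ the number of points $\phi_{\a}(z)$ with $\a\in L_{\m,n_k}$ lying in $B(y,r)$ is $\asymp d(y)r^d R_{\m,n_k}$. The hypothesis $T/R\to1$ (crucially the limit is $1$, not merely positive) then forces almost all of these local points to be separated. One then reruns the generalised Borel--Cantelli argument of Proposition~\ref{fixed omega} inside $B(y,r)$, obtaining $\L(B(y,2r)\cap\limsup E_{n_k})\ge c_y r^d$ with $c_y$ independent of $r$. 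The Lebesgue density theorem finishes. This local argument never composes with $\phi_{\b}$ and so never encounters the index-shift obstruction.
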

The proof of Proposition \ref{separated full measure} is more involved than Proposition \ref{full measure} and will rely on the following technical result. 
\begin{lemma}
	\label{equivalent measures}
\begin{enumerate}
	\item Let $\mu$ be a self-similar measure. Then either $\mu\sim \L|_{X}$ or $\mu$ is singular. 
	\item Suppose $\Phi=\{A_i x+t_i\}$ is a collection of affine contractions and one of following properties is satisfied:
	\begin{itemize}
		\item $d=2$ and all the matrices $A_i$ are equal.
		\item All the matrices $A_i$ are simultaneously diagonalisable.
	\end{itemize} Then if $\mu$ is the pushforward of a Bernoulli measure we have either $\mu\sim \L|_{X}$ or $\mu$ is singular. 
	\item Let $\Phi$ be an arbitrary iterated function system and $\mu$ be the pushforward of a $\sigma$-invariant ergodic probability measure $\m$. Then either $\mu\ll\L$ or $\mu$ is singular (i.e. $\mu$ is of pure type).
\end{enumerate}	
\end{lemma}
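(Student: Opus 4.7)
The plan is to prove statement 3 first (which is the weakest dichotomy) and then upgrade it in the two special geometric settings of statements 1 and 2 to the stronger "equivalent-to-$\L|_X$ or singular" dichotomy.

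For statement 3, I would start from the Lebesgue decomposition $\mu = \mu_{ac} + \mu_s$ and pick a Borel set $A \subset X$ with $\mu_{ac}(X \setminus A) = 0$ and $\mu_s(A) = 0$ (for instance, the set of points where the upper derivative $D(x,\mu) = \limsup_{r \to 0}\mu(B(x,r))/\L(B(x,r))$ is finite). Set $B := \pi^{-1}(A) \subset \D^{\N}$, so that $\m(B) = \mu_{ac}(X)$. The key step is to establish that $B$ is, modulo $\m$-null sets, a $\sigma$-invariant event. For this I would use the relation $\pi \circ \sigma = \phi_{a_1}^{-1} \circ \pi$ holding on the cylinder $[a_1]$, together with the fact that contractions send absolutely continuous (resp.\ singular) measures to absolutely continuous (resp.\ singular) measures on their images, to show that $\pi(a_j)$ is a density point for $\mu$ if and only if $\pi(\sigma(a_j))$ is. Once $\m(B \triangle \sigma^{-1}B) = 0$ is verified, ergodicity of $\m$ forces $\m(B) \in \{0,1\}$, hence $\mu_{ac}(X) = 0$ or $\mu_s(X) = 0$, which is precisely the pure-type conclusion.

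For statement 1, once statement 3 supplies pure type, there is nothing to show in the singular case, so assume $\mu \ll \L$ with density $f$. I would then exploit the self-similarity relation $\mu = \sum_i p_i \cdot \mu \circ \phi_i^{-1}$, which forces the set $A' := \{x \in X : f(x) > 0\}$ to satisfy $\phi_i^{-1}(A') \supseteq A' \cap \phi_i(X)$ (up to Lebesgue-null sets). Applying the same ergodicity argument as in Proposition \ref{full measure}, statement 1, the Bernoulli invariance of $\m_{\textbf{p}}$ combined with ergodicity implies $\m_{\textbf{p}}(\pi^{-1}(A')) = 1$, so $\mu(A') = \mu(X)$; together with $\mu \ll \L$ this gives $\L(X \setminus A') = 0$, i.e.\ $\mu \sim \L|_X$.

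Statement 2 will be handled by the same two-step scheme, since the matrix hypotheses are exactly those that ensure the relevant push-forwards of Lebesgue remain Lebesgue-like in a controlled way: when all matrices are equal, the single linear part $A$ sends $\L$ to a constant multiple of $\L$ on $A(\R^d)$; when the matrices are simultaneously diagonalisable, the same is true after a common change of coordinates. In both sub-cases the functional equation for the density $f$ survives intact, and the ergodicity/spreading argument of the previous paragraph yields $\mu \sim \L|_X$ once $\mu \ll \L$ is known. The main obstacle across the whole lemma is the verification that the set $B = \pi^{-1}(A)$ is $\sigma$-invariant mod-$\m$: this is where absolute continuity must be transported through the IFS dynamics, and it is the point at which the geometric hypotheses of statements 1 and 2 (similarities, commuting affine parts, or the $d=2$ common matrix case) are needed to guarantee that $\phi_i^{-1}$ preserves the Lebesgue nullness of the singular part.
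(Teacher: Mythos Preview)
Your approach to statement 3 is a valid variant of the paper's argument; the paper works on the attractor side (take a Lebesgue-null set $A$ with $\mu_s(A)>0$, spread it by ergodicity to $\bigcup_{\a}\phi_{\a}(A)$ of full $\mu$-measure, and contradict $\mu_{ac}>0$ using that Lipschitz maps preserve Lebesgue-null sets), while you work on the symbolic side via $\sigma$-invariance of $\pi^{-1}(A)$. Both rely on exactly the same two ingredients. Note however that your last paragraph misplaces the role of the geometric hypotheses: the preservation of Lebesgue-null sets under $\phi_i$ holds for \emph{any} Lipschitz contraction, so nothing special is needed for statement 3.

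For statements 1 and 2 the paper does not argue at all but cites \cite{PeScSo,MauSim} and \cite{Shm3}. Your sketch for statement 1 contains a genuine gap. First, the inclusion you claim is backwards: from $f(x)=\sum_i p_i r_i^{-d} f(\phi_i^{-1}(x))$ one gets $\phi_i(A')\subseteq A'$ (if $f(y)>0$ then the $i$-th summand at $x=\phi_i(y)$ is positive), \emph{not} $\phi_i^{-1}(A')\supseteq A'\cap\phi_i(X)$, since $f(x)>0$ only forces \emph{some} summand to be positive. Second, and more seriously, the ergodicity argument you invoke yields only $\mu(A')=1$, which is trivially true by definition of $A'$ and says nothing about $\L(X\setminus A')$. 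The implication ``$\mu(A')=1$ and $\mu\ll\L$ give $\L(X\setminus A')=0$'' is simply false; it would require $\L|_X\ll\mu$, which is precisely what you are trying to prove. The correct route (essentially that of \cite{MauSim}) is to combine the forward inclusion $\phi_i(A')\subseteq A'$ with the Lebesgue-density argument of Proposition~\ref{full measure} \emph{statement 1} (not the ergodicity argument of statement 2, which already assumes $\mu\sim\L|_X$): since $\L(A')>0$, that density argument gives $\L\big(\bigcup_{\a}\phi_{\a}(A')\big)=\L(X)$, and the inclusion then forces $\L(A')=\L(X)$. For statement 2 the analogous density step needs a differentiation basis adapted to the affine geometry, which is exactly where the hypotheses ``$d=2$ with common matrix'' or ``simultaneously diagonalisable'' enter; this is the content of \cite{Shm3}, not the null-set preservation you point to.
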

\begin{proof}
	A proof of statement $1$ can be found in \cite{PeScSo}. It makes use of an argument originally appearing in \cite{MauSim}. Statement $2$ was proved in \cite[Section 4.4.]{Shm3} using ideas of Guzman \cite{Guz} and Fromberg \cite{NSW}.

	We could not find a proof of statement $3$ so we include one for completeness. Suppose that $\mu$ is not singular, then by the Lebesgue decomposition theorem $\mu= \mu_0+\mu_1$ where $\mu_0\ll\L,$ $\mu_1\perp \L,$ and $\mu_0(X)>0.$ Suppose that $\mu\neq \mu_0$. Then there exists $A$ such that $\mu_1(A)>0.$ Since $\mu_1\perp \L,$ we may assume without loss of generality that $\L(A)=0.$ Using the ergodicity of $\m,$ it follows from an analogous argument to that used in the proof of statement $2$ from Proposition \ref{full measure} that $\mu(\cup_{\a\in \D^*}\phi_{\a}(A))=1.$ Therefore we must have $\mu_{0}(\cup_{\a\in \D^*}\phi_{\a}(A))>0,$ and by absolute continuity $\L(\cup_{\a\in \D^*}\phi_{\a}(A))>0.$ Since each $\phi_{\a}$ is a contraction, $\L(A)=0$ implies that $\L(\phi_{\a}(A))=0$ for all $\a\in \D^*$. This contradicts that $\L(\cup_{\a\in \D^*}\phi_{\a}(A))>0.$ Therefore we must have $\mu= \mu_0.$
\end{proof}Only statement $1$ and statement $2$ from Lemma \ref{equivalent measures} will be needed in the proof of Proposition \ref{separated full measure}. Statement $3$ is needed in the proof of the following result which we formulate as generally as possible. 

\begin{prop}
	\label{absolute continuity}
Let $\mu$ be the pushforward of a slowly decaying $\sigma$-invariant ergodic probability measure $\m$. If for some $z\in X$ and $s>0$ we have $$\limsup_{n\to\infty} \frac{T\big(Y_{\m,n}(z),\frac{s}{R_{\m,n}^{1/d}}\big)}{R_{\m,n}}>0,$$ then $\mu\ll\L$. 
\end{prop}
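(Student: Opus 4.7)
The plan is to invoke the pure-type dichotomy of Lemma \ref{equivalent measures}(3), which asserts that $\mu$ is either absolutely continuous or singular, and to show that $\mu$ cannot be singular by exhibiting a nonzero absolutely continuous measure $\tau$ dominated by $\mu$.

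The first step is to approximate $\mu$ by the atomic probability measures
$$\mu_n := \sum_{\a \in L_{\m,n}} \m([\a])\,\delta_{\phi_\a(z)}.$$
For $\a\in L_{\m,n}$ the slowly decaying property implies $\m([\a])\geq c_\m^{|\a|}$, while by construction $\m([\a])\leq c_\m^n$, so $|\a|\geq n$ and hence $\mathrm{diam}(\phi_\a(X))\to 0$ uniformly over $\a\in L_{\m,n}$ as $n\to\infty$. For any continuous $f$ this gives
$$\Bigl|\int f\,d\mu_n-\int f\,d\mu\Bigr|\leq \sum_{\a\in L_{\m,n}}\int_{[\a]}\omega_f\bigl(\mathrm{diam}(\phi_\a(X))\bigr)\,d\m \longrightarrow 0,$$
so $\mu_n\to\mu$ in the weak-$\ast$ topology.

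Next, fix a subsequence $(n_k)$ and a constant $c>0$ witnessing the hypothesis, and for each $n_k$ choose a maximal $(s/R_{\m,n_k}^{1/d})$-separated subset $A_{n_k}^{\mathrm{sep}}\subseteq L_{\m,n_k}$ with $|A_{n_k}^{\mathrm{sep}}|\geq cR_{\m,n_k}$. Setting $B_\a := B(\phi_\a(z),s/(2R_{\m,n_k}^{1/d}))$, the balls $\{B_\a\}_{\a\in A_{n_k}^{\mathrm{sep}}}$ are disjoint, which allows me to define the absolutely continuous measure
$$\nu_{n_k} := \sum_{\a\in A_{n_k}^{\mathrm{sep}}} \m([\a])\,\frac{\mathcal{L}|_{B_\a}}{\mathcal{L}(B_\a)}.$$
The slowly decaying property gives $\m([\a])\in(c_\m^{n_k+1},c_\m^{n_k}]$ and $R_{\m,n_k}\asymp c_\m^{-n_k}$, so both $\m([\a])$ and $\mathcal{L}(B_\a)$ are of order $R_{\m,n_k}^{-1}$. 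Consequently the density $g_{n_k}$ of $\nu_{n_k}$ is uniformly bounded in $L^\infty$ independently of $k$, while the total mass $\nu_{n_k}(\R^d)=\sum_{\a\in A_{n_k}^{\mathrm{sep}}}\m([\a])$ is bounded below by a positive constant depending only on $c$ and $c_\m$.

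Finally, passing to a further subsequence so that both $\nu_{n_k}$ and $\mu_{n_k}|_{A_{n_k}^{\mathrm{sep}}} := \sum_{\a\in A_{n_k}^{\mathrm{sep}}}\m([\a])\delta_{\phi_\a(z)}$ converge weakly, a computation in the spirit of the first step (using $\mathrm{diam}(B_\a)\to 0$) shows they share a common weak limit $\tau$. Since $\mu_{n_k}|_{A_{n_k}^{\mathrm{sep}}}\leq\mu_{n_k}\to\mu$, taking weak limits of the positive measures $\mu_{n_k}-\mu_{n_k}|_{A_{n_k}^{\mathrm{sep}}}$ along the subsequence yields $\tau\leq\mu$. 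By Banach--Alaoglu in $L^\infty=(L^1)^\ast$ the densities $g_{n_k}$ admit, along a further subsequence, a weak-$\ast$ limit $g\in L^\infty$; pairing against $C_c(\R^d)\subseteq L^1$ identifies $\tau=g\,d\mathcal{L}$, so $\tau\ll\mathcal{L}$. Since $\tau(\R^d)>0$, the measure $\mu$ has a nontrivial absolutely continuous part, and the pure-type dichotomy forces $\mu\ll\mathcal{L}$.

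The main obstacle is the last step: transferring absolute continuity across a weak limit. Weak convergence of measures does not generally preserve absolute continuity, so the argument crucially needs the uniform $L^\infty$-bound on the densities $g_{n_k}$. This bound in turn rests on the geometric matching built into the separation hypothesis, namely that the separation scale $s/R_{\m,n_k}^{1/d}$ produces balls of Lebesgue volume comparable to the typical cylinder mass $\m([\a])$.
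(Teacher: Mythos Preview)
Your proof is correct and follows the same overall strategy as the paper: approximate $\mu$ by the atomic measures $\mu_n$, split off the portion supported on the separated points, show its weak limit is nonzero and absolutely continuous, and conclude via the pure-type dichotomy of Lemma~\ref{equivalent measures}(3).

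The only genuine difference is in how absolute continuity of the limit $\tau$ is established. The paper works directly with the atomic measures $\mu'_{n_k}=\sum_{\phi_\a(z)\in S}\m([\a])\delta_{\phi_\a(z)}$ and bounds their mass on an arbitrary cube of side $r$: the separation forces at most $\mathcal{O}(r^d R_{\m,n_k}/s^d)$ separated points in the cube, each carrying mass $\asymp R_{\m,n_k}^{-1}$, giving $\mu'_{n_k}(Q)=\mathcal{O}(r^d/s^d)$ uniformly in $k$; passing to the limit yields the same bound for $\nu'$, hence $\nu'\ll\L$. You instead smooth each atom to a ball and exploit the uniform $L^\infty$ bound on the resulting densities via Banach--Alaoglu. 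Both arguments rest on the same geometric matching (separation scale produces balls of Lebesgue volume comparable to the cylinder mass), but the paper's cube-counting is more elementary and avoids functional analysis, while your smoothing-and-compactness route makes the mechanism of the uniform density bound more explicit and would generalise more readily to other function-space settings.
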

\begin{proof}
	We start our proof by remarking that for any $z\in X$, 
	\begin{equation}
	\label{weak star}
	\mu=\lim_{n\to\infty} \sum_{\a\in L_{\m,n}}\m([\a])\cdot \delta_{\phi_{\a}(z)},
	\end{equation} where the convergence is meant with respect to the weak star topology\footnote{Equation \eqref{weak star} can be verified by checking that for each $n\in \N$ the measure $\sum_{\a\in L_{\m,n}}\m([\a])\cdot \delta_{\phi_{\a}(z)}$ is the pushforward of an appropriately chosen measure $\m_{n}$ on $\D^{\mathbb{N}},$ and that this sequence of measures satisfies $\lim_{n\to\infty}\m_{n}=\m$.}.  By our assumption, for some $z\in X$ and $s>0,$ there exists a sequence $(n_k)$ and $c>0$ such that 
	\begin{equation}
	\label{mass equation}
	\frac{T\big(Y_{\m,n_k}(z),\frac{s}{R_{\m,n_k}^{1/d}}\big)}{R_{\m,n_k}}>c
	\end{equation} for all $k$. Define $$\mu_{n_k}':=\sum_{\stackrel{\a\in L_{\m,n_k}}{\phi_{\a}(z)\in S\big(Y_{\m,n_k}(z),\frac{s}{R_{\m,n_k}^{1/d}}\big)}}\m([\a])\cdot \delta_{\phi_{\a}(z)}$$ and $$\mu_{n_k}'':=\sum_{\stackrel{\a\in L_{\m,n_k}}{\phi_{\a}(z)\notin S\big(Y_{\m,n_k}(z),\frac{s}{R_{\m,n_k}^{1/d}}\big)}}\m([\a])\cdot \delta_{\phi_{\a}(z)}.$$ Then $$\sum_{\a\in L_{\m,n_k}}\m([\a])\cdot \delta_{\phi_{\a}(z)}=\mu_{n_k}'+\mu_{n_k}''.$$ By taking subsequences if necessary, we may also assume without loss of generality that there exists two finite measures $\nu'$ and $\nu''$ such that $\lim_{k\to\infty}\mu'_{n_k}= \nu'$ and $\lim_{k\to\infty}\mu''_{n_k}= \nu''.$ Therefore by \eqref{weak star} we have $\mu=\nu'+\nu''$. We will prove that $\nu'(X)>0$ and $\nu'$ is absolutely continuous with respect to the Lebesgue measure. Since $\mu$ is either singular or absolutely continuous by Lemma \ref{equivalent measures}, it will follow that $\mu\ll \L$.
	
	
	
It follows from the definition of $L_{\m,n}$ that for any $\a,\a'\in L_{\m,n}$ we have $$\m([\a])\asymp \m([\a']).$$ This implies that for any $\a\in\L_{\m,n}$ we have 
\begin{equation}
\label{BBC}
\m([\a])\asymp R_{\m,n}^{-1}.
\end{equation} Using \eqref{mass equation} and \eqref{BBC}, we have that $$\mu'_{n_k}(X)=\sum_{\stackrel{\a\in L_{\m,n_k}}{\phi_{\a}(z)\in S\big(Y_{\m,n_k}(z),\frac{s}{R_{\m,n_k}^{1/d}}\big)}}\m([\a])\asymp \frac{T\big(Y_{\m,n_k}(z),\frac{s}{R_{\m,n_k}^{1/d}}\big)}{R_{\m,n_k}}\asymp \frac{c\cdot R_{\m,n_k}}{R_{\m,n_k}}\asymp 1.$$ Therefore $\nu'(X)\geq \lim_{k\to\infty}\mu'_{n_k}(X) >0$. Now we prove that $\nu'$ is absolutely continuous. Fix an arbitrary open $d$-dimensional cube $(x_1,x_1+r)\times\cdots \times (x_d,x_d+r)\subset \mathbb{R}^d,$ we have
	\begin{align}
&	\mu'_{n_k}((x_1,x_1+r)\times \cdots\times (x_d,x_d+r))\nonumber\\
&=\sum_{\stackrel{\a\in L_{\m,n}}{\phi_{\a}(z)\in S\big(Y_{\m,n_k}(z),\frac{s}{R_{\m,n}^{1/d}}\big)\cap (x_1,x_1+r)\times\cdots\times (x_d,x_d+r)}} \m([\a])\nonumber\\
	&=\mathcal{O}\left(\frac{\#\Big\{\phi_{\a}(z)\in S\big(Y_{\m,n_k}(z),\frac{s}{R_{\m,n}^{1/d}}\big)\cap (x_1,x_1+r)\times\cdots\times (x_d,x_d+r)\Big\}}{R_{\m,n}}\right)\label{jiggly}.
	\end{align}In the last line we used \eqref{BBC}. Since the elements of $S\big(Y_{\m,n_k}(z),\frac{s}{R_{\m,n}^{1/d}}\big)$ are separated by a factor $\frac{s}{R_{\m,n}^{1/d}},$ it follows from a volume argument that we must have
	$$\#\Big\{\phi_{\a}(z)\in S\big(Y_{\m,n_k}(z),\frac{s}{R_{\m,n}^{1/d}}\big)\cap (x_1,x_1+r)\times\cdots\times (x_d,x_d+r)\Big\}=\mathcal{O}\left(\frac{r^dR_{\m,n}}{s^d}\right).$$ 
	Substituting this bound into \eqref{jiggly} we have $$\mu'_{n_k}((x_1,x_1+r)\times \cdots\times (x_d,x_d+r))=\mathcal{O}\left(\frac{r^dR_{\m,n}}{s^dR_{\m,n}}\right)=\mathcal{O}\left(\frac{r^d}{s^d}\right).$$
	Letting $k\to\infty$, it follows that for any $d$-dimensional cube we have $$\nu'((x_1,x_1+r)\times \cdots\times (x_d,x_d+r))=\mathcal{O}\left(\frac{r^d}{s^d}\right).$$ Since $s$ is fixed $\nu'$ must be absolutely continuous. This completes our proof. 
\end{proof}
As well as Proposition \ref{absolute continuity} being used in our proof of Proposition \ref{separated full measure}, it can be seen as a new tool for proving that measures are absolutely continuous. Proposition \ref{absolute continuity} can be used in conjunction with Lemma \ref{density separation lemma} and Lemma \ref{integral bound} to recover known results on the absolute continuity of measures within a parameterised family. We include one such instance of this in Section \ref{applications}, where we recover the well known result due to Solomyak that for almost every $\lambda\in(1/2,1),$ the unbiased Bernoulli convolution is absolutely continuous \cite{Sol}.

With these preliminary results we are now in a position to prove Proposition \ref{separated full measure}.
\begin{proof}[Proof of Proposition \ref{separated full measure}]
Let $\Phi$ be an IFS satisfying one of our conditions and $\mu$ be the pushforward of a Bernoulli measure $\m$. Let $z\in X,$  $s>0,$ and $(n_k)$ satisfy the hypothesis of our proposition. By an application of Proposition \ref{absolute continuity}, we know that $\mu\ll \L.$ Moreover, by Lemma \ref{equivalent measures} we also know that $\mu\sim \L|_{X}$.

To prove our result, it is sufficient to show that
\begin{equation}
\label{Want to showa}
\L\left(\left\{x\in\mathbb{R}^d:x\in \bigcup_{\a\in L_{\m,n_k}}B\left(\phi_{\a}(z),\Big(\frac{h(n_k)}{R_{\m,n_k}}\Big)^{1/d}\right)\textrm{for i.m. }k\in \N\right\}\right)=\L(X),
\end{equation} for any $h$ satisfying \begin{equation}
\label{h divergence}
\sum_{k=1}^{\infty}h(n_k)=\infty.
\end{equation}
It will then follow from Lemma \ref{arbitrarily small}, and the fact that $\m([\a])\asymp R_{\m,n}^{-1}$ for any $\a\in L_{\m,n},$ that \eqref{Want to showa} implies that Lebesgue almost every $x\in X$ is contained in $U_{\Phi}(z,\m,h)$ for any $h$ satisfying \eqref{h divergence} 

Our proof of \eqref{Want to showa} will follow from a similar type of argument to that given in the proof of Proposition \ref{fixed omega}. Where necessary to avoid repetition we will omit certain details. Our strategy for proving \eqref{Want to showa} holds is to prove that for Lebesgue almost every $y\in X,$ there exists $c_y>0,$ such that for all $r$ sufficiently small we have 
\begin{equation}
\label{Want to showb}\L\left(B(y,2r)\cap\left\{x\in\mathbb{R}^d:x\in \bigcup_{\a\in L_{\m,n_k}}B\left(\phi_{\a}(z),\Big(\frac{h(n_k)}{R_{\m,n_k}}\Big)^{1/d}\right)\textrm{for i.m. }k\in \N\right\}\right)\geq c_y r^d.
\end{equation}Importantly $c_y$ will not depend upon $r$. It follows then by an application of the Lebesgue density theorem that \eqref{Want to showb} implies \eqref{Want to showa}. As in the proof of Proposition \ref{fixed omega}, we split our proof of \eqref{Want to showb} into smaller steps.\\

\noindent \textbf{Step $1$. Local information.}\\
We have already established that $\mu\sim \L|_{X}.$ Let $d$ denote the Radon-Nikodym derivative $d\mu/d\L$. For $\mu$-almost every $y$ we must have $d(y)>0$. It follows now by the Lebesgue differentiation theorem, and the fact that $\mu\sim \L|_{X},$ that for Lebesgue almost every $y\in X$ we have $$\lim_{r\to 0}\frac{\mu(B(y,r))}{\L(B(y,r))}= d(y)>0.$$ In what follows $y$ is a fixed element of $X$ satisfying this property. Let $r^*$ be such that for all $r\in(0,r^*),$ we have 
\begin{equation} 
\label{quack}
\frac{d(y)}{2}<\frac{\mu(B(y,r))}{\L(B(y,r))}<  2d(y).
\end{equation} Now using that $\mu$ is the weak star limit of the sequence of measures $$\mu_{n_k}:=\sum_{\a\in L_{\m,n_k}}\m([\a])\cdot\delta_{\phi_{\a}(z)},$$ together with \eqref{quack}, we can assert that for each $r\in(0,r^*),$ for $k$ sufficiently large we have  \begin{equation}
\label{dumbdumb}
\mu_{n_k}(B(y,r))=\sum_{\stackrel{\a\in L_{\m,n_k}}{\phi_{\a}(z)\in B(y,r)}}\m([\a])\asymp d(y)r^d.
\end{equation} By construction we know that each $\a\in L_{\m,n_k}$ satisfies $$\m([\a])\asymp R_{\m,n_k}^{-1}.$$ Therefore it follows from \eqref{dumbdumb} that for all $k$ sufficiently large
\begin{equation}
\label{local growth near}\frac{\#\{\a\in L_{\m,n_k}:\phi_{\a}(z)\in B(y,r)\}}{R_{\m,n_k}}\asymp d(y)r^d.
\end{equation} 
Let 
$$A(y,r,k):=\left\{\a\in L_{\m,n_{k}}:\phi_{\a}(z)\in S\Big(Y_{\m,n_k}(z),\frac{s}{R_{\m,n_k}^{1/d}}\Big)\cap B(y,r)\right\}.$$
Since $$\lim_{k\to\infty}\frac{T(Y_{\m,n_k}(z),\frac{s}{R_{\m,n_k}^{1/d}})}{R_{\m,n_k}}=1,$$ it follows from \eqref{local growth near} that there exists $K(r)\in\mathbb{N},$ such that for any $k\geq K(r)$ we have
\begin{equation}
\label{local growth}
\# A(y,r,k)\asymp  R_{\m,n_k}d(y)r^d .
\end{equation} Equation \eqref{local growth} shows that for each $k\geq K(r),$ there is a large separated set that is local to $B(y,r)$. 

We will prove that there exists $c_y>0$ such that

\begin{equation}
\label{WTS3}
\L\left(B(y,2r)\cap\left\{x\in\mathbb{R}^d:x\in \bigcup_{\a\in A(y,r,k)}B\left(\phi_{\a}(z),\Big(\frac{h(n_k)}{R_{\m,n_k}}\Big)^{1/d}\right)\textrm{for i.m. }k\in \N\right\}\right)\geq c_yr^d.
\end{equation} Equation \eqref{WTS3} implies \eqref{Want to showb}. So to complete our proof it suffices to show that \eqref{WTS3} holds.

For later use we note that \begin{equation}
\label{divergenceaa}
\sum_{k=K(r)}^{\infty}\sum_{\a\in A(y,r,k)}\L\left(B\big(\phi_{\a}(z),\Big(\frac{h(n_k)}{R_{\m,n_k}}\Big)^{1/d}\big)\right)=\infty.
\end{equation}Equation \eqref{divergenceaa} is true because
\begin{align*}
\sum_{k=K(r)}^{\infty}\sum_{\a\in A(y,r,k)}\L\Big(B\big(\phi_{\a}(z),\Big(\frac{h(n_k)}{R_{\m,n_k}}\Big)^{1/d}\big)\Big)&= \sum_{k=1}^{\infty}\# A(y,r,k)\frac{\L(B(0,1))h(n_k)}{R_{\m,n_k}}\\
&\stackrel{\eqref{local growth}}{\asymp} d(y)r^d\L(B(0,1))\sum_{k=K(r)}^{\infty} h(n_k)\\
&\stackrel{\eqref{h divergence}}{=}\infty.
\end{align*}\\

\noindent \textbf{Step $2.$ Replacing our approximating function.}\\
Let $$g(n_k):=\min\Big\{\Big(\frac{h(n_k)}{R_{\m,n_k}}\Big)^{1/d},\frac{s}{3R_{\m,n_k}^{1/d}}\Big\}.$$ For each $k\geq K(r)$ we define the set $$E_{n_k}:=\bigcup_{\a\in A(y,r,k)}B(\phi_{\a}(z),g(n_k)).$$ By construction the balls in this union are disjoint. Therefore by \eqref{local growth}, for each $k\geq K(r)$
 \begin{equation}
\label{local measure growth}\L(E_{n_k})\asymp g(n_k)^dR_{\m,n_k}d(y)r^d.
\end{equation} By a similar argument to that given in the proof of Proposition \ref{fixed omega}, it follows from \eqref{divergenceaa} that we have 
\begin{equation}
\label{divergencebb}\sum_{k=K(r)}^{\infty}\L(E_{n_k})=\infty.
\end{equation}
Therefore, the assumptions of Lemma \ref{Erdos lemma} are satisfied. We will use this lemma to show that 
\begin{equation}
\label{WTS4}
\L\Big( \limsup_{k\to\infty}E_{n_k}\Big)\geq c_yr^d.
\end{equation}Since $$\limsup_{k\to\infty}E_{n_k}\subseteq B(y,2r),$$ because $\lim_{k\to\infty}g(n_k)=0$, we see that \eqref{WTS4} implies \eqref{WTS3}. So verifying \eqref{WTS4} will complete our proof. \\

\noindent \textbf{Step $3$. Bounding $L(E_{n_k}\cap E_{n_{k'}})$.}\\
By an analogous argument to that given in the proof of Proposition \ref{fixed omega}, we can show that for any $\a\in A(y,r,k),$ we have $$\#\left\{\a'\in A(y,r,k'):B(\phi_{\a}(z),g(n_k))\cap B(\phi_{\a'}(z),g(n_{k'}))\neq \emptyset\right\}=\mathcal{O}\Big(\frac{g(n_k)^d R_{\m,n_{k'}}}{s^d}+1\Big).$$ Using this estimate and \eqref{local growth}, it can be shown that for any distinct $k,k'\geq K(r)$ we have
\begin{equation}
\label{intersection bounda}\L(E_{n_k}\cap E_{n_k'})=\mathcal{O}\left(d(y) r^dR_{\m,n_k}g(n_{k'})^d\left(\frac{g(n_k)^d R_{\m,n_{k'}}}{s^d}+1\right)\right).
\end{equation}\\

\noindent \textbf{Step $4$. Applying Lemma \ref{Erdos lemma}.}\\
Using \eqref{intersection bounda}, we can then replicate the arguments used in the proof of Proposition \ref{fixed omega} to show that  
\begin{equation}
\label{localbound1}\sum_{k,k'=K(r)}^{Q}\L(E_{n_k}\cap E_{n_k'})=\mathcal{O}\Big(d(y)r^d\Big(\sum_{k=K(r)}^QR_{\m,n_k}g(n_k)^d\Big)^2\Big).
\end{equation}We emphasise here that the underlying constants in \eqref{localbound1} do not depend upon $r$. By \eqref{local measure growth} we have 
\begin{equation}
\label{localbound2}\Big(\sum_{k=K(r)}^{Q}\L(E_{n_k})\Big)^2\asymp r^{2d}d(y)^2 \Big(\sum_{k=K(r)}^{Q}R_{\m,n_k}g(n_k)\Big)^2.
\end{equation} Applying Lemma \ref{Erdos lemma} in conjunction with \eqref{localbound1} and \eqref{localbound2} yields $$\L\Big(\limsup_{k\to\infty}E_{n_k}\Big)\geq c_yr^d,$$ for some $c_y>0$ that does not depend upon $r$. Therefore \eqref{WTS4} holds and we have completed our proof.



\end{proof}

\section{Applications of Proposition \ref{general prop}}
\label{applications}
In this section we apply the results of Section \ref{Preliminaries} to prove Theorems \ref{1d thm}, \ref{translation thm} and \ref{random thm}. We begin by briefly explaining why the exponential growth condition appearing in Proposition \ref{general prop} will always be satisfied in our proofs. 

Let $\m$ be a slowly decaying probability measure supported on $\D^{\N}$. We remark that each $\a\in L_{\m,n}$ satisfies $$\m([\a])\asymp c_{\m}^n.$$ Recall that $c_{\m}$ is defined in Section \ref{auxillary sets}. Importantly the cylinders corresponding to elements of $L_{\m,n}$ are disjoint, and we have $\m(\cup_{\a\in L_{\m,n}}[\a])=1$. It follows from these observations that $$R_{\m,n}\asymp c_{\m}^{-n}.$$  Similarly, if $\tilde{L}_{\m,n}\subseteq L_{\m,n}$ and  $\m(\cup_{\a\in \tilde{L}_{\m,n}}[\a])>d$ for some $d>0$ for each $n$, then $$\#\tilde{L}_{\m,n}=:\tilde{R}_{\m,n}\asymp c_{\m}^{-n}.$$ Where the underlying constants depend upon $d$ but are independent of $n$. In our proofs of Theorems \ref{1d thm}, \ref{translation thm} and \ref{random thm}, it will be necessary to define an $\tilde{L}_{\m,n}$ contained in $L_{\m,n},$ whose union of cylinders has measure uniformly bounded away from zero. By the above discussion the exponential growth condition appearing in Proposition \ref{general prop} will automatically be satisfied by $\tilde{L}_{\m,n}$.


\subsection{Proof of Theorem \ref{1d thm}}
Before proceeding with our proof of Theorem \ref{1d thm} we recall some useful results from \cite{Sol}.


\begin{lemma}[Lemma 2.1 \cite{Sol}]
	\label{delta lemma}
	For any $\epsilon_1>0,$ there exists $\delta=\delta(\epsilon_1)>0$ such that if $g\in \mathcal{B}_{\Gamma}$, and $g(0)\neq 0,$ then $$x\in (0,\alpha(\mathcal{B}_{\Gamma})-\epsilon_1], |g(x)|<\delta \implies |g'(x)|>\delta.$$
\end{lemma}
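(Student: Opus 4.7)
The plan is to argue by contradiction using a compactness argument on $\mathcal{B}_{\Gamma}$. Suppose the conclusion fails for some $\epsilon_1>0$. Then for each $n\in\mathbb{N}$ there exist $g_n\in\mathcal{B}_{\Gamma}$ with $g_n(0)\neq 0$ and $x_n\in(0,\alpha(\mathcal{B}_{\Gamma})-\epsilon_1]$ satisfying $|g_n(x_n)|<1/n$ and $|g_n'(x_n)|\leq 1/n$. My aim is to extract a limit $(g_*,x_*)$ that witnesses a double zero strictly below $\alpha(\mathcal{B}_{\Gamma})$, contradicting the definition of $\alpha(\mathcal{B}_{\Gamma})$.

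First I would extract subsequences. The interval $[0,\alpha(\mathcal{B}_{\Gamma})-\epsilon_1]$ is compact, so passing to a subsequence I may assume $x_n\to x_*\in[0,\alpha(\mathcal{B}_{\Gamma})-\epsilon_1]$. Each coefficient of $g_n$ lies in the finite set $\Gamma$, so a standard diagonal extraction yields a further subsequence along which every coefficient is eventually constant. Calling the limiting sequence of coefficients $(g_{*,j})_{j\geq 0}\subseteq\Gamma$, this defines $g_*\in\mathcal{B}_{\Gamma}$. In particular $g_{*,0}=g_n(0)$ for all large $n$ on this subsequence, and since $g_n(0)\in\Gamma\setminus\{0\}$, which is finite, we have $g_*(0)=g_{*,0}\neq 0$; hence $g_*\not\equiv 0$.

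Next I would upgrade coefficient-wise convergence to analytic convergence on a disk containing $x_*$. Fix $r$ with $x_*<r<1$ and set $M=\max_{\gamma\in\Gamma}|\gamma|$. For any $|z|\leq r$, splitting the tail at index $N$ and using $|g_{n,j}-g_{*,j}|\leq 2M$ together with the fact that the first $N$ coefficients eventually match exactly, a standard geometric tail estimate shows $g_n\to g_*$ uniformly on $\{|z|\leq r\}$; the same argument applied to $\sum j g_{n,j}z^{j-1}$ gives $g_n'\to g_*'$ uniformly on a slightly smaller disk. Combined with $x_n\to x_*$, this yields $g_n(x_n)\to g_*(x_*)$ and $g_n'(x_n)\to g_*'(x_*)$, hence $g_*(x_*)=0$ and $g_*'(x_*)=0$.

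Finally I would close the contradiction. Since $g_*(0)\neq 0$ we must have $x_*>0$, and $x_*\leq\alpha(\mathcal{B}_{\Gamma})-\epsilon_1<1$, so $x_*\in(0,1)$ is a genuine double zero of a nonzero element of $\mathcal{B}_{\Gamma}$. By definition this forces $x_*\in\Lambda(\mathcal{B}_{\Gamma})$, so $\Lambda(\mathcal{B}_{\Gamma})\neq\emptyset$ and $\alpha(\mathcal{B}_{\Gamma})=\inf\Lambda(\mathcal{B}_{\Gamma})\leq x_*\leq\alpha(\mathcal{B}_{\Gamma})-\epsilon_1$, which is absurd. The only real subtlety is making sure the normalisation condition $g(0)\neq 0$ survives the limit; this is precisely what prevents $x_*=0$ and forces a genuine element of $\Lambda(\mathcal{B}_{\Gamma})$ in the limit, and it is handled cleanly by the finiteness of $\Gamma$.
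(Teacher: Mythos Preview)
Your compactness argument is correct and is exactly the standard proof of this transversality lemma. The paper itself does not prove this statement; it is quoted directly from Solomyak \cite{Sol} (as Lemma~2.1 there), so there is no ``paper's own proof'' to compare against beyond the original reference, where the same contradiction-by-compactness approach is used.
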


Lemma \ref{delta lemma} has the following useful consequence.

\begin{lemma}
	\label{zero intervals}
	Let $\epsilon_1>0$ and $\delta(\epsilon_1)>0$ be as in Lemma \ref{delta lemma}. Then for any $\epsilon_2>0$ and $g\in \mathcal{B}_{\Gamma}$ such that $g(0)\neq 0$, we have $$\L\big(\{\lambda\in(0,\alpha(\mathcal{B}_{\Gamma})-\epsilon_1]: |g(\lambda)|\leq \epsilon_2\}\big)=\mathcal{O}(\epsilon_2).$$ Where the underlying constant depends upon $\delta(\epsilon_1).$	
\end{lemma}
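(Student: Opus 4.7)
The plan is as follows. Fix $\epsilon_1 > 0$ and set $\delta := \delta(\epsilon_1)$. I may assume $\epsilon_2 < \delta$, since otherwise
\[
\L\big(\{\lambda \in (0, \alpha(\mathcal{B}_\Gamma) - \epsilon_1] : |g(\lambda)| \leq \epsilon_2\}\big) \leq 1 \leq \epsilon_2/\delta,
\]
and the bound is trivial. Let $E := \{\lambda \in (0, \alpha(\mathcal{B}_\Gamma) - \epsilon_1] : |g(\lambda)| \leq \epsilon_2\}$. Since $|g(\lambda)| \leq \epsilon_2 < \delta$ on $E$, Lemma \ref{delta lemma} gives $|g'(\lambda)| > \delta$ throughout $E$. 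Hence $E$ is a disjoint union of intervals $I_1, I_2, \ldots$, and on each $I_j$ the derivative $g'$ has constant sign, so $g|_{I_j}$ is strictly monotonic. Because $g(I_j) \subseteq [-\epsilon_2, \epsilon_2]$ and $|g'| > \delta$, a mean value argument (equivalently, the change of variables $u = g(\lambda)$) yields $|I_j| \leq 2\epsilon_2/\delta$.

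The main task is then to bound the number $N$ of components $I_j$ uniformly in $g$. For an interior component $I_j = [a_j, b_j]$, continuity and monotonicity force $\{g(a_j), g(b_j)\} = \{-\epsilon_2, +\epsilon_2\}$, so $g$ has exactly one zero in $I_j$. Between consecutive components $I_j$ and $I_{j+1}$, $|g|$ strictly exceeds $\epsilon_2$, so $g$ keeps a single sign there, which forces $g(b_j) = g(a_{j+1})$ (consistent with the alternation above). Hence $N \leq Z(g) + 2$, where $Z(g)$ is the number of zeros of $g$ in $(0, \alpha(\mathcal{B}_\Gamma) - \epsilon_1]$ and the ``$+2$'' absorbs at most two boundary components that need not contain a zero.

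Finally, I would bound $Z(g)$ uniformly in $g$. The point is that $g(\lambda) = \sum_{j \geq 0} g_j \lambda^j$ has coefficients $g_j$ drawn from the fixed finite set $\Gamma$, so $g$ extends to a holomorphic function on $\{|z| < 1\}$. Pick $R \in (\alpha(\mathcal{B}_\Gamma) - \epsilon_1, 1)$; on $|z| \leq R$, $|g(z)| \leq C/(1 - R)$ with $C := \max_{\gamma \in \Gamma}|\gamma|$, while $|g(0)| = |g_0| \geq c_0 := \min\{|\gamma| : \gamma \in \Gamma \setminus \{0\}\} > 0$. Jensen's formula applied on the disc of radius $R$ then bounds $Z(g)$, and hence $N$, by a constant $N_0 = N_0(\Gamma, \epsilon_1)$ independent of $g$. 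Combining everything gives $\L(E) \leq (2\epsilon_2/\delta)\cdot N_0 = \mathcal{O}(\epsilon_2)$. The main obstacle is this last step: the per-component length estimate is immediate from Lemma \ref{delta lemma}, whereas ruling out the pathology of many small components requires the analyticity of $g$ together with the uniform lower bound $|g(0)| \geq c_0$ that comes from $\Gamma$ being finite and $g(0) \neq 0$.
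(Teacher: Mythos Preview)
Your argument is correct and is essentially the standard transversality argument. The paper itself does not give a proof of this lemma; it simply cites Section~2.4 of Solomyak~\cite{Sol}, where exactly this kind of reasoning is carried out: Lemma~\ref{delta lemma} forces $g$ to be strictly monotone on each component of $\{|g|<\delta\}$, so the sublevel set $\{|g|\le\epsilon_2\}$ meets each such component in an interval of length at most $2\epsilon_2/\delta$, and one then bounds the number of components by bounding the number of real zeros of $g$ uniformly over $g\in\mathcal{B}_\Gamma$ with $g(0)\neq 0$.

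Two small points of presentation. First, the sentence ``Hence $E$ is a disjoint union of intervals'' does not follow merely from $|g'|>\delta$ on $E$; the clean way to see it is to pass to the open set $U=\{|g|<\delta\}\cap(0,\alpha(\mathcal{B}_\Gamma)-\epsilon_1]$, observe that $E\subseteq U$, and note that on each component of $U$ the function $g$ is strictly monotone, so $E$ intersects each component of $U$ in a single (possibly empty) interval. This also rules out singleton components and makes the zero-per-component count transparent. Second, your use of Jensen's formula is a perfectly valid way to get the uniform bound on $Z(g)$; Solomyak's original argument obtains the same conclusion via compactness of the family $\{g\in\mathcal{B}_\Gamma:g(0)\neq 0\}$ on compact subsets of the unit disc, but the two routes are interchangeable here and yield the same constant dependence on $\Gamma$ and $\epsilon_1$.
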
Lemma \ref{zero intervals} follows from the analysis given in Section 2.4. from \cite{Sol}. Equipped with Lemma \ref{zero intervals} and the results of Section \ref{Preliminaries}, we can now prove Theorem \ref{1d thm}.

\begin{proof}[Proof of Theorem \ref{1d thm}]We treat each statement in this theorem individually. We start with the proof of statement $1$.\\
	
\noindent \textbf{Proof of statement $1$.}\\
Let us start by fixing $\m$ a slowly decaying $\sigma$-invariant ergodic probability measure with $\h(\m)>0,$ and $(a_j)\in \D^{\mathbb{N}}.$ Let $\epsilon_1>0$ be arbitrary. We now choose $\epsilon_2>0$ sufficiently small so that we have
\begin{equation}
\label{epsilons ratio}
e^{\h(\m)-\epsilon_2}(e^{-\h(\m)}+\epsilon_1)>1.
\end{equation}
By the Shannon-McMillan-Breiman theorem, we know that for $\m$-almost every $\a\in \D^{\mathbb{N}}$ we have 
\begin{equation}
\label{SMB}
\lim_{n\to\infty}\frac{-\log \m([a_1\cdots a_n])}{n}= \h(\m).
\end{equation}
It follows from \eqref{SMB} and Egorov's theorem, that there exists $C=C(\epsilon_2)>0$ such that $$\m\Big(\a\in \D^{\N}: \frac{e^{k(-\h(\m)-\epsilon_2)}}{C}\leq \m([a_1\cdots a_k])\leq C e^{k(-\h(\m)+\epsilon_2)},\, \forall k\in\mathbb{N}\Big)>1/2.$$ Let $$\tilde{L}_{\m,n}:=\Big\{\a\in L_{\m,n}: \frac{e^{k(-\h(\m)-\epsilon)}}{C}\leq \m([a_1\cdots a_{k}])\leq C e^{k(-\h(\m)+\epsilon)},\, \forall 1\leq k\leq |\a|\Big\}$$and 
$$\tilde{R}_{\m,n}:=\#\tilde{L}_{\m,n}.$$
Since $$\m\Big(\bigcup_{\a\in L_{\m,n}}[\a]\Big)=1,$$ it follows from the above that 
\begin{equation}
\label{half bound}
\m\Big(\bigcup_{\a\in \tilde{L}_{\m,n}}[\a]\Big)>1/2. 
\end{equation}By the discussion at the beginning of this section we know that $\tilde{R}_{\m,n}$ satisfies the exponential growth condition of Proposition \ref{general prop}. It also follows from this discussion that 
\begin{equation}
\label{equivalen cardinality}\tilde{R}_{\m,n}\asymp R_{\m,n}.
\end{equation}
Recalling the notation used in Section \ref{Preliminaries}, let $$R(\lambda,s,n):=\left\{(\a,\a')\in \tilde{L}_{\m,n}\times \tilde{L}_{\m,n}:\left|\phi_{\a}\Big(\sum_{j=1}^{\infty}d_{a_j}\lambda^{j-1}\Big)-\phi_{\a'}\Big(\sum_{j=1}^{\infty}d_{a_j}\lambda^{j-1}\Big)\right|\leq \frac{s}{\tilde{R}_{\m,n}}\textrm{ and }\a\neq\a'\right\}.$$The main step in our proof of statement $1$ is to show that  
\begin{equation}
\label{transversality separation}
\int_{e^{-\h(\m)}+\epsilon_1}^{\alpha(\mathcal{B}_{\Gamma})-\epsilon_1} \frac{\#R(\lambda,s,n)}{\tilde{R}_{\m,n}}d\lambda=\mathcal{O}(s).
\end{equation} We will then be able to employ the results of Section \ref{Preliminaries} to prove our theorem. Our proof of \eqref{transversality separation} is based upon an argument of Benjamini and Solomyak \cite{BenSol}, which in turn is based upon an argument of Peres and Solomyak \cite{PerSol}.\\

\noindent \textbf{Step 1. Proof of \eqref{transversality separation}.}\\
Observe the following:
\begin{align}
\label{2b substituted}
&\int_{e^{-\h(\m)}+\epsilon_1}^{\alpha(\mathcal{B}_{\Gamma})-\epsilon_1} \frac{\#R(\lambda,s,n)}{\tilde{R}_{\m,n}}d\lambda \nonumber\\
&=\int_{e^{-\h(\m)}+\epsilon_1}^{\alpha(\mathcal{B}_{\Gamma})-\epsilon_1}\frac{1}{\tilde{R}_{\m,n}}\sum_{\stackrel{\a,\a'\in \tilde{L}_{\m,n}}{\a\neq \a'}}\chi_{[-\frac{s}{\tilde{R}_{\m,n}},\frac{s}{\tilde{R}_{\m,n}}]}\left(\phi_{\a}\Big(\sum_{j=1}^{\infty}d_{a_j}\lambda^{j-1}\Big)-\phi_{\a'}\Big(\sum_{j=1}^{\infty}d_{a_j}\lambda^{j-1}\Big)\right)\, d\lambda \nonumber \\
&=\mathcal{O}\left(\tilde{R}_{\m,n}\int_{e^{-\h(\m)}+\epsilon_1}^{\alpha(\mathcal{B}_{\Gamma})-\epsilon_1}\sum_{\stackrel{\a,\a'\in \tilde{L}_{\m,n}}{\a\neq \a'}}\chi_{[-\frac{s}{\tilde{R}_{\m,n}},\frac{s}{\tilde{R}_{\m,n}}]}\left(\phi_{\a}\Big(\sum_{j=1}^{\infty}d_{a_j}\lambda^{j-1}\Big)-\phi_{\a'}\Big(\sum_{j=1}^{\infty}d_{a_j}\lambda^{j-1}\Big)\right)\m([\a])\m([\a'])\, d\lambda\right)\nonumber \\
&=\mathcal{O}\left(\tilde{R}_{\m,n}\sum_{\stackrel{\a,\a'\in \tilde{L}_{\m,n}}{\a\neq \a'}} \m([\a])\m([\a'])\int_{e^{-\h(\m)}+\epsilon_1}^{\alpha(\mathcal{B}_{\Gamma})-\epsilon_1} \chi_{[-\frac{s}{\tilde{R}_{\m,n}},\frac{s}{\tilde{R}_{\m,n}}]}\left(\phi_{\a}\Big(\sum_{j=1}^{\infty}d_{a_j}\lambda^{j-1}\Big)-\phi_{\a'}\Big(\sum_{j=1}^{\infty}d_{a_j}\lambda^{j-1}\Big)\right)d\lambda\right).
\end{align}
In the penultimate line we used that for any $\a\in \tilde{L}_{\m,n}$ we have $\m([\a])\asymp \tilde{R}_{\m,n}^{-1}$. 

Note that for any distinct $\a,\a'\in \tilde{L}_{\m,n}$ we have  $$\phi_{\a}\Big(\sum_{j=1}^{\infty}d_{a_j}\lambda^{j-1}\Big)-\phi_{\a'}\Big(\sum_{j=1}^{\infty}d_{a_j}\lambda^{j-1}\Big)\in \mathcal{B}_{\Gamma}.$$  Let $|\a\wedge \a'|:=\inf\{k:a_k\neq a_k'\}.$ Then $$\phi_{\a}\Big(\sum_{j=1}^{\infty}d_{a_j}\lambda^{j-1}\Big)-\phi_{\a'}\Big(\sum_{j=1}^{\infty}d_{a_j}\lambda^{j-1}\Big)=\lambda^{|\a\wedge \a'|-1}g(\lambda),$$ for some $g\in\mathcal{B}_{\Gamma}$ satisfying $g(0)\neq 0$. Therefore, for any distinct $\a,\a'\in \tilde{L}_{\m,n}$ we have
\begin{align*}
&\int_{e^{-\h(\m)}+\epsilon_1}^{\alpha(\mathcal{B}_{\Gamma})-\epsilon_1} \chi_{[-\frac{s}{\tilde{R}_{\m,n}},\frac{s}{\tilde{R}_{\m,n}}]}\left(\phi_{\a}\Big(\sum_{j=1}^{\infty}d_{a_j}\lambda^{j-1}\Big)-\phi_{\a'}\Big(\sum_{j=1}^{\infty}d_{a_j}\lambda^{j-1}\Big)\right)d\lambda\\
&=\L\left(\left\{\lambda\in (e^{-\h(\m)}+\epsilon_1,\alpha(\mathcal{B}_{\Gamma})-\epsilon_1):\phi_{\a}\Big(\sum_{j=1}^{\infty}d_{a_j}\lambda^{j-1}\Big)-\phi_{\a'}\Big(\sum_{j=0}^{\infty}d_{a_j}\lambda^{j-1}\Big)\in \Big[-\frac{s}{\tilde{R}_{\m,n}},\frac{s}{\tilde{R}_{\m,n}}\Big]\right\}\right)\\
&=\L\left(\left\{\lambda\in (e^{-\h(\m)}+\epsilon_1,\alpha(\mathcal{B}_{\Gamma})-\epsilon_1)):\lambda^{|\a\wedge \a'|-1}g(\lambda)\in \Big[-\frac{s}{\tilde{R}_{\m,n}},\frac{s}{\tilde{R}_{\m,n}}\Big]\right\}\right)\\
&=\L\left(\left\{\lambda\in (e^{-\h(\m)}+\epsilon_1,\alpha(\mathcal{B}_{\Gamma})-\epsilon_1)):g(\lambda)\in \Big[-\frac{s\lambda^{-|\a\wedge \a'|+1}}{\tilde{R}_{\m,n}},\frac{s\lambda^{-|\a\wedge \a'|+1}}{\tilde{R}_{\m,n}}\Big]\right\}\right)\\
&\leq \L\left(\left\{\lambda\in (e^{-\h(\m)}+\epsilon_1,\alpha(\mathcal{B}_{\Gamma})-\epsilon_1)):g(\lambda)\in \Big[-\frac{s(e^{-\h(\m)}+\epsilon_1)^{-|\a\wedge \a'|+1}}{\tilde{R}_{\m,n}},\frac{s(e^{-\h(\m)}+\epsilon_1)^{-|\a\wedge \a'|+1}}{\tilde{R}_{\m,n}}\Big]\right\}\right)\\
&=\mathcal{O}\left(\frac{s(e^{-\h(\m)}+\epsilon_1)^{-|\a\wedge\a'|}}{\tilde{R}_{\m,n}}\right).
\end{align*}
Where in the last line we used Lemma \ref{zero intervals}. Summarising the above, we have shown that 
\begin{equation}
\label{transversality bound}
\int_{e^{-\h(\m)}+\epsilon_1}^{\alpha(\mathcal{B}_{\Gamma})-\epsilon_1} \chi_{[-\frac{s}{\tilde{R}_{\m,n}},\frac{s}{\tilde{R}_{\m,n}}]}\left(\phi_{\a}(\sum_{j=1}^{\infty}d_{a_j}\lambda^{j-1})-\phi_{\a'}(\sum_{j=1}^{\infty}d_{a_j}\lambda^{j-1})\right)d\lambda=\mathcal{O}\left(\frac{s(e^{-\h(\m)}+\epsilon_1)^{-|\a\wedge\a'|}}{\tilde{R}_{\m,n}}\right).
\end{equation}
Substituting \eqref{transversality bound} into \eqref{2b substituted} we obtain:
\begin{align*}
\int_{e^{-\h(\m)}+\epsilon_1}^{\alpha(\mathcal{B}_{\Gamma})-\epsilon_1} \frac{\#R(\lambda,s,n)}{\tilde{R}_{\m,n}}d\lambda&=\mathcal{O}\left(\tilde{R}_{\m,n}\sum_{\stackrel{\a,\a'\in \tilde{L}_{\m,n}}{\a\neq \a'}} \m([\a])\m([\a']) \frac{s(e^{-\h(\m)}+\epsilon_1)^{-|\a\wedge \a'|}}{\tilde{R}_{\m,n}}\right)\\
&=\mathcal{O}\left(s\sum_{\stackrel{\a,\a'\in \tilde{L}_{\m,n}}{\a\neq \a'}} \m([\a])\m([\a']) (e^{-\h(\m)}+\epsilon_1)^{-|\a\wedge \a'|}\right)\\
&=\mathcal{O}\left(s\sum_{\a\in \tilde{L}_{\m,n}}\m([\a])\sum_{k=1}^{|\a|-1}\sum_{\stackrel{\a'\in\tilde{L}_{\m,n} }{|\a\wedge\a'|=k}}\m([\a']) (e^{-\h(\m)}+\epsilon_1)^{-k}\right)\\
&=\mathcal{O}\left(s\sum_{\a\in \tilde{L}_{\m,n}}\m([\a])\sum_{k=1}^{|\a|-1}\m([a_1\cdots a_{k-1}])(e^{-\h(\m)}+\epsilon_1)^{-k}\right)\\ 
&=\mathcal{O}\left(s\sum_{\a\in \tilde{L}_{\m,n}}\m([\a])\sum_{k=1}^{|\a|-1} e^{-k(\h(\m)-\epsilon_2)}(e^{-\h(\m)}+\epsilon_1)^{-k}\right)\\
&=\mathcal{O}\left(s\sum_{a\in \tilde{L}_{\m,n}}\m([\a])\sum_{k=1}^{\infty}e^{-k(\h(\m)-\epsilon_2)}(e^{-\h(\m)}+\epsilon_1)^{-k}\right).
\end{align*}
By \eqref{epsilons ratio} we know that $$\sum_{k=1}^{\infty}e^{-k(\h(\m)-\epsilon_2)}(e^{-\h(\m)}+\epsilon_1)^{-k}<\infty.$$ Therefore 
$$\int_{e^{-\h(\m)}+\epsilon_1}^{\alpha(\mathcal{B}_{\Gamma})-\epsilon_1} \frac{\#R(\lambda,s,n)}{\tilde{R}_{\m,n}}d\lambda=\mathcal{O}\left(s\sum_{a\in \tilde{L}_{\m,n}}\m([\a])\right)=\mathcal{O}\left(s\right)$$ as required.\\

\noindent\textbf{Step 2. Applying \eqref{transversality separation}.}\\
Combining \eqref{transversality separation} and Lemma \ref{integral bound} we obtain
\begin{equation}
\L([e^{-\h(\m)}+\epsilon_1,\alpha(\mathcal{B}_{\Gamma})-\epsilon_1])-\int_{e^{-\h(\m)}+\epsilon_1}^{\alpha(\mathcal{B}_{\Gamma})-\epsilon_1}\frac{T\Big(\big\{\phi_{\a}\big(\sum_{j=1}^{\infty}d_{a_j}\lambda^{j-1}\big)\big\}_{\a\in \tilde{L}_{\m,n}},\frac{s}{\tilde{R}_{\m,n}}\Big)}{\tilde{R}_{\m,n}}d\lambda=\mathcal{O}(s).
\end{equation} Therefore by Proposition \ref{general prop}, we know that for Lebesgue almost every $\lambda\in [e^{-\h(\m)}+\epsilon_1,\alpha(\mathcal{B}_{\Gamma})-\epsilon_1]$ the set$$\left\{x\in\mathbb{R}:x\in \bigcup_{\a\in \tilde{L}_{\m,n}}B\left(\phi_{\a}\left(\sum_{j=1}^{\infty}d_{a_j}\lambda^{j-1}\right),\frac{h(n)}{\tilde{R}_{\m,n}}\right)\textrm{ for i.m. }n\in\N\right\}$$
has positive Lebesgue measure for any $h\in H$. Since $\epsilon_1$ was arbitrary, we can assert that for Lebesgue almost every $\lambda\in (e^{-\h(\m)},\alpha(\mathcal{B}_{\Gamma})),$ for any $h\in H$ the above set has positive Lebesgue measure. By \eqref{equivalen cardinality} we know that $\tilde{R}_{\m,n}\asymp R_{\m,n}$. Which by the discussion given at the start of this section implies $\tilde{R}_{\m,n}^{-1}\asymp \m([\a])$ for each $\a\in L_{\m,n}$. Therefore by Lemma \ref{arbitrarily small}, we may conclude that for Lebesgue almost every $\lambda\in (e^{-\h(\m)},\alpha(\mathcal{B}_{\Gamma})),$ for any $h\in H$ the set $U_{\Phi_{\lambda,D}}(\sum_{j=1}^{\infty}d_{a_j}\lambda^{j-1},\m,h)$ has positive Lebesgue measure.  \\


\noindent \textbf{Proof of statement 2.}\\
We start our proof of this statement by remarking that since $\m$ is the uniform $(1/l,\ldots,1/l)$ Bernoulli measure, we have $L_{\m,n}=\D^n$ for each $n\in\mathbb{N}$. Since the words in $\D^n$ have the same length and each similarity contracts by a factor $\lambda$, it can be shown that $$\phi_{\a}(z)-\phi_{\a}(z')=\lambda^n(z-z'),$$ for all $\a\in \D^n$ for any $z,z'\in X_{\lambda,D}$. Importantly this difference does not depend upon $\a.$ Therefore the sets $\{\phi_{\a}(z)\}_{\a\in\D^n}$ and $\{\phi_{\a}(z')\}_{\a\in \D^n}$ are translates of each other. In which case 
\begin{equation}
\label{happy}
T\Big(\{\phi_{\a}(z)\}_{\a\in \D^n},\frac{s}{l^n}\Big)=T\Big(\{\phi_{\a}(z')\}_{\a\in \D^n},\frac{s}{l^n}\Big)
\end{equation} for any $z,z'\in X_{\lambda,D}$. 



By \eqref{transversality separation}, the assumptions of Proposition \ref{general prop} are satisfied and by Lemma \ref{density separation lemma}, for any $(a_j)\in \D^{\mathbb{N}},$ for Lebesgue almost every $\lambda\in[1/l+\epsilon_1,\alpha(\mathcal{B}_{\Gamma})-\epsilon_1],$ given an $\epsilon>0$ we can pick $c,s>0$ such that 
\begin{equation}
\label{happier}\overline{d}\left(n:\frac{T\Big(\big\{\phi_{\a}\big(\sum_{j=1}^{\infty}d_{a_j}\lambda^{j-1}\big)\big\}_{\a\in \D^n},\frac{s}{l^n}\Big)}{l^n}\geq c \right)>1-\epsilon.
\end{equation} If $\lambda$ is such that \eqref{happier} holds for a specific sequence $(a_j)\in \D^{\mathbb{N}},$ then \eqref{happy} implies that it must hold for all $(a_j)\in \D^{\mathbb{N}}$ simultaneously. Therefore, we may assert that for Lebesgue almost every $\lambda\in[1/l+\epsilon_1,\alpha(\mathcal{B}_{\Gamma})-\epsilon_1],$ given an $\epsilon>0$ we can pick $c,s>0,$ such that for any $z\in X_{\lambda,D}$ we have 
\begin{equation}
\label{home stretch} \overline{d}\left(n:\frac{T\Big(\big\{\phi_{\a}(z)\big\}_{\a\in \D^n},\frac{s}{l^n}\Big)}{l^n}\geq c \right)>1-\epsilon.
\end{equation} Examining the proof of Proposition \ref{general prop}, we see that \eqref{home stretch} implies that for Lebesgue almost every $\lambda\in [1/l+\epsilon_1,\alpha(\mathcal{B}_{\Gamma})-\epsilon_1],$ for any $z\in X_{\lambda,D}$ and $h\in H$, the set
$$\left\{x\in\mathbb{R}:x\in\bigcup_{\a\in \D^{n}}B\left(\phi_{\a}(z),\frac{h(n)}{l^{n}}\right)\textrm{ for i.m. }n\in \N\right\}$$ 
has positive Lebesgue measure. In other words, for Lebesgue almost every $\lambda\in [1/l+\epsilon_1,\alpha(\mathcal{B}_{\Gamma})-\epsilon_1],$ for any $z\in X_{\lambda,D}$ and $h\in H$, the set $U_{\Phi_{\lambda,D}}(z,\m,h)$ has positive Lebesgue measure. Since $\epsilon_1$ was arbitrary we can conclude our result for Lebesgue almost every $\lambda\in(1/l,\alpha(\mathcal{B}_{\Gamma})).$\\



\noindent \textbf{Proof of statement 3.}\\
 By statement $1$ we know that for any $(a_j)\in \D^{\N},$ for Lebesgue almost every $\lambda\in(e^{-\h(\m)},\alpha(\mathcal{B}_{\Gamma})),$ for any $h\in H$ the set $U_{\Phi_{\lambda,D}}(\sum_{j=1}^{\infty}d_{a_j}\lambda^{j-1},\m,h)$ has positive Lebesgue measure. It follows therefore by Lemma \ref{arbitrarily small} that for any $(a_j)\in \D^{\N},$ for Lebesgue almost every $\lambda\in(e^{-\h(\m)},\alpha(\mathcal{B}_{\Gamma})),$ for any $\Psi$ that is equivalent to $(\m,h)$ for some $h\in H,$ the set $W_{\Phi_{\lambda,D}}(\sum_{j=1}^{\infty}d_{a_j}\lambda^{j-1},\Psi)$ has positive Lebesgue measure. Applying Proposition \ref{full measure} we may conclude that for any $(a_j)\in \D^{\N},$ for Lebesgue almost every $\lambda\in(e^{-\h(\m)},\alpha(\mathcal{B}_{\Gamma})),$ for any $\Psi\in \Upsilon_{\m}$ Lebesgue almost every $x\in X_{\lambda,D}$ is contained in  $W_{\Phi_{\lambda,D}}(\sum_{j=1}^{\infty}d_{a_j}\lambda^{j-1},\Psi)$.\\
 
\noindent \textbf{Proof of statement 4.}\\
The proof of statement $4$ is analogous to the proof of statement $3$. The only difference is that instead of using statement $1$ at the beginning we use statement $2$.
\end{proof}

We now explain how Corollary \ref{example cor} follows from Theorem \ref{1d thm}.

\begin{proof}[Proof of Corollary \ref{example cor}]
Let us start by fixing $h:\mathbb{N}\to[0,\infty)$ to be $h(n)=1/n$. We remark that this function $h$ is an element of $H$. This can be proved using the well known fact $$\sum_{n=1}^{N}\frac{1}{n}\sim \log N.$$ Let us now fix a Bernoulli measure $\m$ as in the statement of Corollary \ref{example cor}. Observe that for any $\a\in \D^*$ we have 
\begin{equation}
\label{cheap decay}(\min_{i\in \D} p_i)^{|\a|}\leq \m([\a])\leq (\max_{i\in \D} p_i)^{|\a|}.
\end{equation} Using \eqref{cheap decay} and the fact that each $\a\in L_{\m,n}$ satisfies $\m([\a])\asymp c_{\m}^{-n},$ it can be shown that each $\a\in L_{\m,n}$ satisfies $$|\a|\asymp n.$$ This implies that for any $\a\in L_{\m,n}$ we have $$\frac{\prod_{j=1}^{|\a|}p_{\a_j}}{|\a|}\asymp \frac{\m([\a])}{n}.$$ In other words, the function $\Psi:\D^*\to[0,\infty)$ given by $$\Psi(\a)=\frac{\prod_{j=1}^{|\a|}p_{\a_j}}{|\a|}$$ is equivalent to $(\m,h)$ for our choice of $h$. One can verify that our function $\Psi$ is weakly decaying and hence $\Psi\in \Upsilon_{\m}.$ Therefore by Theorem \ref{1d thm}, for any $(a_j)\in \D^{\mathbb{N}},$  for almost every $\lambda\in(\prod_{i=1}^{l}p_i^{p_i},\alpha(\mathcal{B}_{\Gamma})),$ Lebesgue almost every $x\in X_{\lambda,D}$ is contained in the set $W_{\Phi_{\lambda,D}}(\sum_{j=1}^{\infty}d_{a_j}\lambda^{j-1},\Psi).$

\end{proof}

\subsubsection{Bernoulli Convolutions}
Given $\lambda\in(0,1)$ and $p\in(0,1)$, let $\mu_{\lambda,p}$ be the distribution of the random sum $$\sum_{j=0}^{\infty}\pm \lambda^{j},$$ where $+$ is chosen with probability $p$, and $-$ is chosen with probability $(1-p)$. When $p=1/2$ we simply denote $\mu_{\lambda,1/2}$ by $\mu_{\lambda}$. We call $\mu_{\lambda,p}$ a Bernoulli convolution. When we want to emphasise the case when $p=1/2$ we call $\mu_{\lambda}$ the unbiased Bernoulli convolution. Importantly, for each $p\in(0,1)$ the Bernoulli convolution $\mu_{\lambda,p}$ is a self-similar measure for the iterated function system $\Phi_{\lambda,\{-1,1\}}=\{\lambda x -1,\lambda x+1\}.$ 

The study of Bernoulli convolutions dates back to the $1930s$ and to the important work of Jessen and Wintner \cite{JesWin}, and  Erd\H{o}s \cite{Erdos1, Erdos2}. When $\lambda\in(0,1/2)$ then $\mu_{\lambda,p}$ is supported on a Cantor set and determining the dimension of $\mu_{\lambda,p}$ is relatively straightforward. When $\lambda\in(1/2,1)$ the support of $\mu_{\lambda,p}$ is the interval $[\frac{-1}{1-\lambda},\frac{1}{1-\lambda}].$  Analysing a Bernoulli convolution for $\lambda\in(1/2,1)$ is a more difficult task. The important problems in this area are:
\begin{itemize}
	\item To classify those $\lambda\in(1/2,1)$ and $p\in(0,1)$ such that \begin{equation}
	\label{expected dimension}\dim_{H}\mu_{\lambda,p}=\min\Big\{ \frac{p\log p+(1-p)\log(1-p)}{\log \lambda},1\Big\}.
	\end{equation}
	\item To classify those $\lambda\in(1/2,1)$ and $p\in(0,1)$ such that $\mu_{\lambda,p}\ll\L$.
\end{itemize}  Initial progress was made on the second problem by Erd\H{o}s in \cite{Erdos1}. He proved that whenever $\lambda$ is the reciprocal of a Pisot number then $\mu_{\lambda}\perp \L$. This result was later improved upon in two papers by Alexander and Yorke \cite{AleYor}, and Garsia \cite{Gar2}, who independently proved that $\dim_{H}\mu_{\lambda}<1$ when $\lambda$ is the reciprocal of a Pisot number. Garsia in \cite{Gar} also provided an explicit class of algebraic integers for which $\mu_{\lambda}\ll\L$. The next breakthrough came in a result of Solomyak \cite{Sol} who proved that $\mu_{\lambda}\ll\L$ with a density in $L^2$ for almost every $\lambda\in(1/2,1)$. His proof relied on studying the Fourier transform of $\mu_{\lambda}$. A simpler proof of this result was subsequently obtained by Peres and Solomyak in \cite{PerSol}. This proof relied upon a characterisation of absolute continuity in terms of differentiation of measures (see \cite{Mat}). Improvements and generalisations of this result appeared subsequently in \cite{PS}, \cite{PerSol2}, and \cite{Rams}. Over the last few years dramatic progress has been made on the problems listed above. In particular, Hochman in \cite{Hochman} proved that for a set $E$ of packing dimension $0$, it is the case that if $\lambda\in(1/2,1)\setminus E$ then we have equality in \eqref{expected dimension} for any $p\in(0,1)$. Building upon this result, Shmerkin in \cite{Shm} proved that $\mu_{\lambda}\ll \L$ for every $\lambda\in(1/2,1)$ outside of a set of Hausdorff dimension zero. This result was later generalised to the case of general $p$ by Shmerkin and Solomyak in \cite{ShmSol}. Similarly building upon the result of Hochman, Varju recently proved in \cite{Varju2} that $\dim_{H}\mu_{\lambda}=1$ whenever $\lambda$ is a transcendental number. Varju has also recently provided new explicit examples of $\lambda$ and $p$ such that $\mu_{\lambda,p}\ll \L$ (see \cite{Var}).

Theorem \ref{1d thm} can be applied to the IFS $\{\lambda x -1,\lambda x+1\}.$ In \cite{Sol} Solomyak proved that $\alpha(\mathcal{B}(\{-1,0,1\}))> 0.639,$ this was subsequently improved upon by Shmerkin and Solomyak in \cite{ShmSol2} who proved that $\alpha(\mathcal{B}(\{-1,0,1\}))> 0.668\ldots.$ Using this information we can prove the following result. 

\begin{thm}
\label{BC cor}
Let $\Psi:\D^*\to[0,\infty)$ be given by $\Psi(\a)=\frac{1}{2^{|\a|}\cdot |\a|}$. Then for Lebesgue almost every $\lambda\in(1/2,0.668),$ we have that for any $z\in [\frac{-1}{1-\lambda},\frac{1}{1-\lambda}],$ Lebesgue almost every $x\in[\frac{-1}{1-\lambda},\frac{1}{1-\lambda}]$ is contained in $W_{\Phi_{\lambda,\{-1,1\}}}(z,\Psi)$. 
\end{thm}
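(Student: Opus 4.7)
The plan is to obtain Theorem \ref{BC cor} as a direct application of statement 4 of Theorem \ref{1d thm}, with the IFS $\Phi_{\lambda,\{-1,1\}}=\{\lambda x-1,\lambda x+1\}$ playing the role of $\Phi_{\lambda,D}$ for $D=\{-1,1\}$, and with $\m$ taken to be the uniform $(1/2,1/2)$ Bernoulli measure on $\D^{\N}=\{1,2\}^{\N}$. The payoff is that for $\lambda\in(1/2,1)$ the attractor $X_{\lambda,D}$ equals the interval $[-\tfrac{1}{1-\lambda},\tfrac{1}{1-\lambda}]$, so the desired almost-everywhere statement becomes the conclusion of statement 4 applied to this family. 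Thus everything reduces to two checks: that the admissible range $(1/l,\alpha(\mathcal{B}_{\Gamma}))$ contains $(1/2,0.668)$, and that the prescribed $\Psi$ belongs to $\Upsilon_{\m}$.

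For the first check, $\Gamma=D-D=\{-2,0,2\}$, and since scaling the coefficient set by a nonzero constant does not affect the location of common zeros of the power series in $\mathcal{B}_{\Gamma}$, we have $\alpha(\mathcal{B}_{\{-2,0,2\}})=\alpha(\mathcal{B}_{\{-1,0,1\}})$; by the Shmerkin--Solomyak bound cited just above the statement, this exceeds $0.668$. Since $1/l=1/2$, the interval $(1/2,0.668)$ lies inside $(1/l,\alpha(\mathcal{B}_{\Gamma}))$, so statement 4 of Theorem \ref{1d thm} applies for a.e.\ $\lambda$ in this range.

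For the second check, note that because $\m$ is the uniform Bernoulli measure we have $L_{\m,n}=\D^{n}$ and $\m([\a])=2^{-|\a|}$ for every $\a\in L_{\m,n}$; moreover $d=1$ here. Setting $h(n):=1/n$, the equivalence relation $\Psi(\a)\asymp(\m([\a])h(n))^{1/d}$ reduces to the identity $\tfrac{1}{2^{|\a|}|\a|}=\tfrac{h(|\a|)}{2^{|\a|}}$, so $\Psi$ is equivalent to $(\m,h)$. The ratio $\Psi(i\a)/\Psi(\a)=|\a|/(2(|\a|+1))$ is bounded below by $1/4$, so $\Psi$ is weakly decaying. It remains to argue that $h(n)=1/n$ lies in $H$. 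The quick observation here is that any $B\subseteq\N$ with $\overline{d}(B)>0$ satisfies $\sum_{n\in B}1/n=\infty$: indeed, if this sum were finite then the tails would force $|B\cap(N,2N]|/N\to 0$, and a dyadic summation would give $|B\cap[1,N]|/N\to0$, contradicting positive upper density. Since any $\epsilon\in(0,1)$ makes $1-\epsilon>0$, this places $1/n$ in $H_{\epsilon}$ for every such $\epsilon$, hence in $H$.

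With these ingredients in place, statement 4 of Theorem \ref{1d thm} delivers exactly the conclusion of Theorem \ref{BC cor}: for Lebesgue a.e.\ $\lambda\in(1/2,0.668)$, for any $z\in X_{\lambda,D}=[-\tfrac{1}{1-\lambda},\tfrac{1}{1-\lambda}]$, Lebesgue a.e.\ $x$ in this interval lies in $W_{\Phi_{\lambda,\{-1,1\}}}(z,\Psi)$. There is no serious obstacle; the only mildly delicate point is the verification that $1/n\in H$ rather than merely $H^{*}$, and this is handled by the elementary upper-density observation above.
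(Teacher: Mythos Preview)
Your proposal is correct and follows essentially the same approach as the paper, which explicitly omits the proof and states it is an adaptation of the proof of Corollary~\ref{example cor}. You correctly specialise statement~4 of Theorem~\ref{1d thm} to $D=\{-1,1\}$ with the uniform $(1/2,1/2)$ measure, verify $(1/2,0.668)\subset(1/l,\alpha(\mathcal{B}_\Gamma))$ via the Shmerkin--Solomyak bound, and check $\Psi\in\Upsilon_{\m}$ by taking $h(n)=1/n$; your upper-density argument for $1/n\in H$ is a valid alternative to the paper's appeal to $\sum_{n\le N}1/n\sim\log N$.
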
The proof of Theorem \ref{BC cor} is an adaptation of the proof of Corollary \ref{example cor} and is therefore omitted.

As a by-product of our analysis we can recover the result of Solomyak that for Lebesgue almost every $\lambda\in(1/2,1)$ the unbiased Bernoulli convolution is absolutely continuous. Our approach does not allow us to assert anything about the density. However our approach does have the benefit of being particularly simple and intuitive. Instead of relying on the Fourier transform, differentiation of measures, or the advanced entropy methods of Hochman \cite{Hochman}, the proof given below appeals to the fact that $\mu_{\lambda}$ is of pure type and makes use of a decomposition argument due to Solomyak. For the sake of brevity, the proof below only focuses on the important features of the argument.

\begin{thm}[Soloymak \cite{Sol}]
For Lebesgue almost every $\lambda\in(1/2,1)$ we have $\mu_{\lambda}\ll\L$.
\end{thm}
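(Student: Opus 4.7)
The plan is to first establish $\mu_\lambda\ll\L$ on the transversality sub-interval $(1/2,\alpha(\mathcal{B}_\Gamma))$ by combining Proposition \ref{absolute continuity} with the calculation already carried out in the proof of Theorem \ref{1d thm}, and then to bootstrap to the full range $(1/2,1)$ via Solomyak's convolution decomposition of $\mu_\lambda$ together with pure type.

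For the first step I take $D=\{-1,1\}$, $\Gamma=\{-2,0,2\}$, and let $\m$ be the uniform $(1/2,1/2)$ Bernoulli measure, so that $\mu_\lambda=\m\circ\pi_{\lambda,D}^{-1}$ and $L_{\m,n}=\D^n$. Step~1 of the proof of Theorem \ref{1d thm}(2) establishes the transversality bound
\begin{equation*}
\int_{1/2+\epsilon_1}^{\alpha(\mathcal{B}_\Gamma)-\epsilon_1}\frac{\#R(\lambda,s,n)}{2^n}\,d\lambda=\mathcal{O}(s)
\end{equation*}
for every $\epsilon_1>0$. Lemma \ref{integral bound} upgrades this to an integral bound on $(2^n-T(Y_{\m,n}(0),s/2^n))/2^n$, and Lemma \ref{density separation lemma} then yields, for almost every $\lambda$ in the range, constants $c,s>0$ such that $T(Y_{\m,n}(0),s/2^n)/2^n\geq c$ along a subset of $\N$ of positive upper density; in particular the corresponding limsup is strictly positive. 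An appeal to Proposition \ref{absolute continuity} — the step where pure type via Lemma \ref{equivalent measures}(1) does the decisive work — then forces $\mu_\lambda\ll\L$. Letting $\epsilon_1\to 0$ along a countable sequence gives $\mu_\lambda\ll\L$ for almost every $\lambda\in(1/2,\alpha(\mathcal{B}_\Gamma))$.

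To extend the conclusion to all of $(1/2,1)$ I would use Solomyak's regrouping identity. For any $m\in\N$, writing $\sum_{j\geq 0}\pm\lambda^j=\sum_{i=0}^{m-1}\lambda^i\sum_{k\geq 0}\pm\lambda^{mk}$ and invoking independence of the digits yields
\begin{equation*}
\mu_\lambda=(S_{\lambda^0}\mu_{\lambda^m})\ast(S_\lambda\mu_{\lambda^m})\ast\cdots\ast(S_{\lambda^{m-1}}\mu_{\lambda^m}),
\end{equation*}
where $S_c$ is the push-forward under $x\mapsto cx$. Because $S_c$ preserves absolute continuity and the convolution of an absolutely continuous measure with any finite measure is absolutely continuous, it suffices to find, for each $\lambda\in(1/2,1)$, some $m=m(\lambda)$ for which $\lambda^m$ falls into the regime $(1/2,\alpha(\mathcal{B}_\Gamma))$ handled by the first step — something the bound $\alpha(\mathcal{B}_\Gamma)>1/2$ from Proposition \ref{transversality constants} allows. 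If $B_\alpha\subset(1/2,\alpha(\mathcal{B}_\Gamma))$ denotes the null exceptional set produced above, then the $\lambda\in(1/2,1)$ for which $\mu_\lambda$ fails to be absolutely continuous lie in $\bigcup_{m\geq 1}\{\lambda:\lambda^m\in B_\alpha\}$, a countable union of null sets since $\lambda\mapsto\lambda^m$ is a diffeomorphism of $(0,1)$.

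The main obstacle is the gap between the transversality regime and the full interval $(1/2,1)$: the integral estimate driving Proposition \ref{absolute continuity} is simply unavailable once $\lambda$ exceeds $\alpha(\mathcal{B}_\Gamma)$, and the usual ways of closing this gap rely on Fourier decay, differentiation of measures, or entropy bounds. Solomyak's regrouping is the clean device that sidesteps this: it reexpresses $\mu_\lambda$ as a convolution of scaled copies of $\mu_{\lambda^m}$, exchanging a large contraction ratio $\lambda$ for a small contraction ratio $\lambda^m$ that the transversality machinery can handle.
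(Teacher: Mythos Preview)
Your first step matches the paper's Step~1 and is correct. The gap is in the bootstrap. You assert that for every $\lambda\in(1/2,1)$ one can find $m$ with $\lambda^m\in(1/2,\alpha(\mathcal{B}_\Gamma))$, claiming that $\alpha(\mathcal{B}_\Gamma)>1/2$ suffices for this. It does not. For the sequence $(\lambda^m)_{m\ge 1}$ to always hit $(1/2,\alpha)$ you need $\alpha\ge 1/\sqrt{2}$: indeed, if $\alpha<1/\sqrt{2}$ then for $\lambda$ just above $\alpha$ one has $\lambda^1>\alpha$ while $\lambda^2<\alpha^2<1/2$, so every power misses the target interval. Concretely, with the known estimate $\alpha(\mathcal{B}_{\{-1,0,1\}})\approx 0.668<1/\sqrt{2}$, the choice $\lambda=0.7$ gives $\lambda^1=0.7>0.668$ and $\lambda^2=0.49<1/2$, and your argument produces nothing for this $\lambda$.

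This is exactly why the paper inserts an intermediate Step~2 before bootstrapping. There one writes $\mu_\lambda=\eta_\lambda\ast\nu_\lambda$, where $\eta_\lambda$ is the distribution of $\sum_{j\not\equiv 2\ (\mathrm{mod}\ 3)}\pm\lambda^j$, realised as a self-similar measure for a four-map IFS with contraction $\lambda^3$. The relevant transversality constant for this auxiliary family exceeds $0.713$, and the same machinery (transversality bound $\Rightarrow$ Lemma~\ref{density separation lemma} $\Rightarrow$ Proposition~\ref{absolute continuity}) gives $\eta_\lambda\ll\L$, hence $\mu_\lambda\ll\L$, for a.e.\ $\lambda\in(2^{-2/3},0.713)$. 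Since $(1/2,0.668)\cup(2^{-2/3},0.713)\supset(1/2,1/\sqrt{2})$, one now has the result on $(1/2,1/\sqrt{2})$, and \emph{only then} does the convolution bootstrap $\mu_\lambda=\mu_{\lambda^k}\ast\nu_\lambda$ work: for $\lambda\in(2^{-1/k},2^{-1/(2k)})$ one has $\lambda^k\in(1/2,1/\sqrt{2})$, and these intervals exhaust $(1/\sqrt{2},1)$. Your regrouping identity is a valid substitute for the paper's $\mu_\lambda=\mu_{\lambda^k}\ast\nu_\lambda$, but it cannot be invoked until the base interval has been stretched to length at least $1/\sqrt{2}$ on the right.
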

\begin{proof}
We split our proof into individual steps.\\
	
\noindent \textbf{Step 1. Proof that $\mu_{\lambda}\ll \L$ for Lebesgue almost every $\lambda\in(1/2,0.668)$.}\\
Fix $(a_j)\in \D^\N$. We know by our proof of Theorem \ref{1d thm} that for any $\epsilon_1>0$ we have 
\begin{equation}
\label{tran bound} \L([1/2+\epsilon_1,0.668-\epsilon_1])-\int_{1/2+\epsilon_1}^{0.668-\epsilon_1}\frac{T(\{\phi_{\a}(\sum_{j=1}^{\infty}a_j\lambda^{j-1})\}_{\a\in\D^n},\frac{s}{2^n})}{2^n}d\lambda=\mathcal{O}(s).
\end{equation} Combining \eqref{tran bound} with Lemma \ref{density separation lemma}, we may conclude that 
\begin{equation}
\label{nearly finished}\L\Big(\bigcap_{\epsilon>0}\bigcup_{c,s>0}\{\lambda\in ([1/2+\epsilon_1,0.668-\epsilon_1]):\overline{d}(n:\omega\in B(c,s,n))\geq 1-\epsilon\}\Big)=\L([1/2+\epsilon_1,0.668-\epsilon_1]).
\end{equation} Here $$B(c,s,n)=\Big\{\lambda\in [1/2+\epsilon_1,0.668-\epsilon_1]: \frac{T(\{\phi_{\a}(\sum_{j=1}^{\infty}a_j\lambda^{j-1})\}_{\a\in\D^n},\frac{s}{2^n})}{2^n}\geq c\Big\}.$$ In particular, \eqref{nearly finished} implies that for Lebesgue almost every $\lambda\in [1/2+\epsilon_1,0.668-\epsilon_1],$ there exists $c>0$ and $s>0$ such that $$\limsup_{n\to\infty}\frac{T(\{\phi_{\a}(\sum_{j=1}^{\infty}a_j\lambda^{j-1})\}_{\a\in\D^n},\frac{s}{2^n})}{2^n}\geq c.$$ Applying Proposition \ref{absolute continuity}, it follows that for Lebesgue almost every $\lambda\in [1/2+\epsilon_1,0.668-\epsilon_1],$ the measure $\mu_{\lambda}$ is absolutely continuous. Since $\epsilon_1$ is arbitrary we know that for Lebesgue almost every $\lambda\in (1/2,0.668)$ the measure $\mu_{\lambda}$ is absolutely continuous. \\

\noindent \textbf{Step 2. Proof that $\mu_{\lambda}\ll \L$ for Lebesgue almost every $\lambda\in(2^{-2/3},0.713)$.}\\
Let $\eta_{\lambda}$ denote the distribution of the random sum $$\sum_{\stackrel{j=0}{j\neq 2 \textrm{ mod }3}}^{\infty}\pm \lambda^{j},$$ where each digit is chosen with probability $1/2$. One can show that $\mu_{\lambda}=\eta_{\lambda}\ast \nu_{\lambda}$ for some measure $\nu_{\lambda}$ corresponding to the remaining terms (see \cite{PerSol, Sol}). Since the convolution of an absolutely continuous measure with an arbitrary measure is still absolutely continuous, to prove $\mu_{\lambda}\ll \L$ for Lebesgue almost every $\lambda\in(2^{-2/3},0.713)$, it suffices to shown that $\eta_{\lambda}$ is absolutely continuous for Lebesgue almost every $\lambda\in(2^{-2/3},0.713)$. Importantly $\eta_{\lambda}$ can be realised as the self-similar measure for the iterated function system
$$\Big\{\rho_{1}(x)=\lambda^3x+1+\lambda,\,\, \rho_{2}(x)=\lambda^3x-1+\lambda,\,\, \rho_{3}(x)=\lambda^3x+1-\lambda,\,\,\rho_{4}(x)=\lambda^3x-1-\lambda\Big\}$$ and the uniform $(1/4,1/4,1/4,1/4)$ Bernoulli measure. Because the translation parameter depends upon $\lambda,$ this family of iterated function systems does not immediately fall into the class considered by Theorem \ref{1d thm}. However this distinction is only superficial, and one can adapt the argument used in the proof of \eqref{tran bound} to prove that for any $\epsilon_1>0$ and $(a_j)\in\{1,2,3,4\}^{\mathbb{N}},$ we have 
\begin{equation}
\L([2^{-2/3}+\epsilon_1,0.713-\epsilon_1])-\label{dumb}\int_{2^{-2/3}+\epsilon_1}^{0.713-\epsilon_1}\frac{T(\{\rho_{\a}(\pi(a_j))\}_{\a\in\{1,2,3,4\}^n},\frac{s}{4^n})}{4^n}d\lambda=\mathcal{O}(s).
\end{equation} The parameter $0.713$ comes from \cite{Sol} and is a lower bound for the appropriate analogue of $\alpha(\mathcal{B}(\{-1,0,1\}))$ for the family of iterated function systems $\{\rho_1,\rho_2,\rho_3,\rho_4\}$. Without going into details, it can be shown that appropriate analogues of Lemma \ref{delta lemma} and Lemma \ref{zero intervals} persist for this family of iterated function systems. These statements can then be used to deduce that \eqref{dumb} holds. By the arguments used in step $1$, we can use \eqref{dumb} in conjunction with Lemma \ref{density separation lemma} and Proposition \ref{absolute continuity} to deduce that $\eta_{\lambda}$ is absolutely continuous for Lebesgue almost every $\lambda\in(2^{-2/3},0.713)$.\\

\noindent \textbf{Step 3. Proof that $\mu_{\lambda}\ll \L$ for Lebesgue almost every $\lambda\in(1/2,1)$.}\\
Since $(1/2,1/\sqrt{2})\subset (1/2,0.668)\cup (2^{-2/3},0,713),$ we know by the two previous steps that for Lebesgue almost every $\lambda\in(1/2,1/\sqrt{2}),$ the measure $\mu_{\lambda}$ is absolutely continuous. For any $\lambda\in(2^{-1/k},2^{-1/2k})$ for some $k\geq 2,$ we can express $\mu_{\lambda}$ as $\mu_{\lambda^k}\ast \nu_{\lambda}$ for some measure $\nu_{\lambda}$ (see \cite{PerSol, Sol}). Since for Lebesgue almost every $\lambda\in(1/2,1/\sqrt{2})$ the measure $\mu_{\lambda}$ is absolutely continuous, it follows that for Lebesgue almost every $\lambda\in (2^{-1/k},2^{-1/2k})$ the measure $\mu_{\lambda^k}$ is also absolutely continuous. Since $\mu_{\lambda}=\mu_{\lambda^k}\ast \nu_{\lambda}$ it follows that $\mu_{\lambda}$ is absolutely continuous for Lebesgue almost every $\lambda\in (2^{-1/k},2^{-1/2k})$. Importantly the intervals $(2^{-1/k},2^{-1/2k})$ exhaust $(1/\sqrt{2},1)$. It follows therefore that $\mu_{\lambda}$ is absolutely continuous for Lebesgue almost every $\lambda\in(1/\sqrt{2},1).$ Our previous steps cover the interval $(1/2,1/\sqrt{2}),$ so we may conclude that $\mu_{\lambda}$ is absolutely continuous for Lebesgue almost every $\lambda\in(1/2,1).$

\end{proof}
\subsubsection{The $\{0,1,3\}$ problem}
Let $\lambda\in(0,1)$ and $$C_{\lambda}:=\left\{\sum_{j=0}^{\infty}a_j\lambda^j:a_j\in\{0,1,3\}\right\}.$$ $C_{\lambda}$ is the attractor of the IFS $\{\lambda x,\lambda x +1,\lambda x +3\}$. When $\lambda\in(0,1/4)$ the IFS satisfies the strong separation condition and one can prove that $\dim_{H}C_{\lambda}=\frac{\log 3}{-\log \lambda}.$ When $\lambda\geq 2/5$ the set $C_{\lambda}$ is the interval $[0,\frac{3}{1-\lambda}]$. The two main problems in the study of $C_{\lambda}$ are:
\begin{itemize}
	\item Classify those $\lambda\in(1/4,1/3)$ such that $\dim_{H}C_{\lambda}=\frac{\log 3}{-\log \lambda}.$ 
	\item Classify those  $\lambda\in(1/3,2/5)$ such that $C_{\lambda}$ has positive Lebesgue measure. 
\end{itemize}Initial progress on these problems was made by Pollicott and Simon in \cite{PolSimon}, Keane, Smorodinsky and Solomyak in \cite{KeSmSo}, and Solomyak in \cite{Sol}. In \cite{PolSimon} it was shown that for Lebesgue almost every $\lambda\in(1/4,1/3)$ we have $\dim_{H}C_{\lambda}=\frac{\log 3}{-\log \lambda}.$ In \cite{Sol} it was shown that for Lebesgue almost every $\lambda\in(1/3,2/5)$ the set $C_{\lambda}$ has positive Lebesgue measure. It follows from the recent work of Hochman \cite{Hochman}, and Shmerkin and Solomyak \cite{ShmSol}, that the set of exceptions for both of these statements has zero Hausdorff dimension. 

In \cite{Sol} it was shown that $\alpha(B(\{0,\pm 1,\pm 2, \pm 3\}))>0.418.$ Using this information we can prove the following result.
\begin{thm}
\label{013thm}	
Let $\Psi:\D^*\to[0,\infty)$ be given by $\Psi(\a)=\frac{1}{3^{|\a|}|\a|}$. Then for Lebesgue almost every $\lambda\in(1/3,0.418),$ we have that for any $z\in C_{\lambda},$ Lebesgue almost every $x\in C_{\lambda}$ is contained in $W_{\Phi_{\lambda,\{0,1,3\}}}(z,\Psi).$ 
\end{thm}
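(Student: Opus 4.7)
The plan is to derive Theorem \ref{013thm} as a direct application of statement 4 of Theorem \ref{1d thm}, in exactly the same spirit as Corollary \ref{example cor} and Theorem \ref{BC cor}. We take $D=\{0,1,3\}$, so $l=3$ and $\Gamma=D-D=\{0,\pm 1,\pm 2,\pm 3\}$. The Solomyak transversality estimate $\alpha(\mathcal{B}_{\Gamma})>0.418$ quoted just before Theorem \ref{013thm} guarantees the inclusion $(1/3,0.418)\subset(1/l,\alpha(\mathcal{B}_{\Gamma}))$, which is the range of $\lambda$ needed to apply the theorem with $\m$ equal to the uniform $(1/3,1/3,1/3)$ Bernoulli measure on $\D^{\mathbb{N}}$.

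It then remains to verify that the prescribed function $\Psi(\a)=\frac{1}{3^{|\a|}|\a|}$ belongs to the class $\Upsilon_{\m}$. For the uniform Bernoulli measure, one has $L_{\m,n}=\D^{n}$ and $\m([\a])=3^{-|\a|}$ for every $\a\in L_{\m,n}$. Setting $h(n):=1/n$, we obtain $\Psi(\a)=\m([\a])h(|\a|)$ exactly, so $\Psi$ is equivalent (with constant $1$) to $(\m,h)$.

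Two small verifications remain. First, $h(n)=1/n$ lies in $H$: if $\overline{d}(B)>0$, dyadic partitioning yields infinitely many $k$ with $|B\cap[2^{k},2^{k+1})|\geq c\cdot 2^{k}$, which gives $\sum_{n\in B\cap[2^{k},2^{k+1})}1/n\geq c/2$ and hence $\sum_{n\in B}1/n=\infty$; so $h\in H_{\epsilon}$ for every $\epsilon\in(0,1)$. Second, $\Psi$ is weakly decaying because
$$\frac{\Psi(i\a)}{\Psi(\a)}=\frac{|\a|}{3(|\a|+1)}\geq \frac{1}{6}$$
for every $\a\in\D^{*}$ and $i\in\D$. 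Combined, these give $\Psi\in\Upsilon_{\m}$.

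With these ingredients in place, statement 4 of Theorem \ref{1d thm} applies verbatim and yields the conclusion: for Lebesgue almost every $\lambda\in(1/3,0.418)$, for every $z\in C_{\lambda}=X_{\lambda,D}$, Lebesgue almost every $x\in C_{\lambda}$ lies in $W_{\Phi_{\lambda,\{0,1,3\}}}(z,\Psi)$. There is no serious obstacle; the only nontrivial input is the transversality bound $\alpha(\mathcal{B}_{\Gamma})>0.418$, which is imported from Solomyak \cite{Sol}, and the rest is the bookkeeping verification that $\Psi$ has the right normalisation to fit the $\Upsilon_{\m}$ framework.
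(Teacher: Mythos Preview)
Your proof is correct and follows exactly the approach the paper intends: the paper explicitly states that the proof of Theorem~\ref{013thm} ``is an adaptation of the proof of Corollary~\ref{example cor} and is therefore omitted,'' and your argument carries out precisely that adaptation, applying statement~4 of Theorem~\ref{1d thm} with $D=\{0,1,3\}$, the uniform Bernoulli measure, $h(n)=1/n$, and the Solomyak bound $\alpha(\mathcal{B}_{\Gamma})>0.418$.
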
Just like the proof of Theorem \ref{BC cor}, the proof of Theorem \ref{013thm} is an adaptation of the proof of Corollary \ref{example cor} and is therefore omitted. 

As stated above, in \cite{Sol} it was shown that for Lebesgue almost every $\lambda\in(1/3,2/5)$ the set $C_{\lambda}$ has positive Lebesgue measure. This was achieved by proving $C_{\lambda}$ supported an absolutely continuous self-similar measure. To the best of the author's knowledge, all results establishing that $C_{\lambda}$ has positive Lebesgue measure for some $\lambda\in(1/3,2/5)$ do so by proving that $C_{\lambda}$ supports an absolutely continuous self-similar measure. It is interesting therefore to note that our methods yield a simple proof of the fact stated above without any explicit mention of a measure. In the proof below, we instead construct a subset of $C_{\lambda}$ that has positive Lebesgue measure for Lebesgue almost every $\lambda\in(1/3,2/5)$.

\begin{thm}[Solomyak \cite{Sol}]
For Lebesgue almost every $\lambda\in(1/3,2/5)$ the set $C_{\lambda}$ has positive Lebesgue measure.	
\end{thm}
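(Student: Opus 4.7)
The plan is to apply Theorem \ref{1d thm}(2) with $D=\{0,1,3\}$, $l=3$, $\m$ the uniform $(1/3,1/3,1/3)$ Bernoulli measure, and $h\equiv 1$, so as to exhibit a positive-Lebesgue-measure subset of $C_\lambda$ directly, without ever constructing an absolutely continuous measure supported on $C_\lambda$. The difference set here is $\Gamma = D-D = \{0,\pm 1,\pm 2,\pm 3\}$, and the bound $\alpha(\mathcal{B}_\Gamma) > 0.418$ recorded just above gives $(1/3,\, 2/5) \subset (1/l,\, \alpha(\mathcal{B}_\Gamma))$, so the admissible $\lambda$-range in Theorem \ref{1d thm}(2) covers the interval of interest.

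Next I would check the remaining hypotheses. The constant function $h\equiv 1$ lies in $H$: any $B\subseteq\mathbb{N}$ with $\overline{d}(B)>0$ is automatically infinite, so $\sum_{n\in B}1=\infty$, whence $h\equiv 1\in H_{\epsilon}$ for every $\epsilon\in(0,1)$. The measure $\m$ is non-atomic and $h$ is bounded, so by the standing assumption on $U_{\Phi}(z,\m,h)$ recorded in Section \ref{auxillary sets} we have the inclusion
$$U_{\Phi_{\lambda,D}}(z,\m,1)\ \subseteq\ X_{\lambda,D} = C_\lambda$$
for every $\lambda\in(0,1)$ and every $z\in C_\lambda$. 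Moreover, since $\m$ is the uniform Bernoulli measure we have $L_{\m,n}=\D^n$ and $\m([\a])=3^{-n}$ for each $\a\in \D^n$, so the set on the left is concretely the limsup set
$$\Big\{x\in\mathbb{R}:\ |x-\phi_{\a}(z)|\leq 3^{-n}\ \text{for i.m.}\ \a\in \D^n,\ n\in\mathbb{N}\Big\}.$$

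With the setup verified, Theorem \ref{1d thm}(2) immediately yields, for Lebesgue almost every $\lambda\in(1/3,\,\alpha(\mathcal{B}_\Gamma))$ and any $z\in C_\lambda$, that $\mathcal{L}(U_{\Phi_{\lambda,D}}(z,\m,1))>0$. Combining this with the inclusion above gives $\mathcal{L}(C_\lambda)>0$ on the full range, and in particular on $(1/3,\,2/5)\subset(1/3,\,0.418)$, proving the theorem.

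There is no real technical obstacle here once Theorem \ref{1d thm}(2) is in hand; the matter reduces to checking $h\equiv 1\in H$ and $(1/3,2/5)\subset(1/l,\alpha(\mathcal{B}_\Gamma))$, both of which are essentially bookkeeping. The novelty the author is emphasising is stylistic but substantive: earlier proofs establish $\mathcal{L}(C_\lambda)>0$ by showing that some self-similar measure on $C_\lambda$ is absolutely continuous, whereas here the positive-measure subset of $C_\lambda$ is cut out purely by a Diophantine-type limsup condition on the cylinder images $\phi_\a(z)$, and absolute continuity plays no explicit role.
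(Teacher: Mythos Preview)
Your proof is correct, and the overall strategy matches the paper's: exhibit a limsup set of the form $\{x:|x-\phi_{\a}(0)|\leq c\cdot 3^{-|\a|}$ for i.m.\ $\a\}$ inside $C_\lambda$ and show it has positive Lebesgue measure, bypassing absolute continuity entirely.

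The one noteworthy difference is in the final step. You invoke Theorem~\ref{1d thm}(2) as a black box, which ultimately rests on Proposition~\ref{general prop} and hence on the generalised Borel--Cantelli lemma (Lemma~\ref{Erdos lemma}). The paper instead unpacks only what is needed: from the transversality bound and Lemma~\ref{density separation lemma} it extracts, for a.e.\ $\lambda$, constants $c,s>0$ and infinitely many $n$ with $T(\{\phi_\a(0)\}_{\a\in\D^n},s/3^n)\geq c\cdot 3^n$; then a one-line continuity-from-above argument shows the limsup of the unions $\bigcup_{\a\in\D^n}(\phi_\a(0)-s/(2\cdot 3^n),\phi_\a(0)+s/(2\cdot 3^n))$ has measure at least $cs$. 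So the paper's version is deliberately more elementary---it avoids Lemma~\ref{Erdos lemma} altogether---while yours is the cleaner corollary-style deduction from the main theorem. Both are perfectly valid and yield the same limsup subset.
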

\begin{proof}
Taking $(a_j)$ to be the sequence consisting of all zeros in our proof of Theorem \ref{1d thm}, so that $\pi(a_j)=0$ for all $\lambda$, it can be shown that for any $\epsilon_1>0$ we have 
\begin{equation}
\label{tran bound2}
\L([1/3+\epsilon_1,4/5-\epsilon_1])-\int_{1/3+\epsilon}^{4/5-\epsilon_1}\frac{T(\{\sum_{j=0}^{n-1}a_j\lambda^j\}_{\a\in\{0,1,3\}^n},\frac{s}{3^n})}{3^n}d\lambda=\mathcal{O}(s).	
\end{equation}Therefore by Lemma \ref{density separation lemma}, we have
$$\L\Big(\bigcap_{\epsilon>0}\bigcup_{c,s>0}\{\lambda\in ([1/3+\epsilon_1,4/5-\epsilon_1]):\overline{d}(n:\omega\in B(c,s,n))\geq 1-\epsilon\}\Big)=\L([1/3+\epsilon_1,4/5-\epsilon_1]).$$ This implies that for Lebesgue almost every $\lambda\in [1/3+\epsilon_1,4/5-\epsilon_1],$ there exists $c>0$ and $s>0$ such that for infinitely many $n\in\mathbb{N}$ we have 
\begin{equation}
\label{n equation}
\frac{T(\{\sum_{j=0}^{n-1}a_j\lambda^j\}_{\a\in\{0,1,3\}},\frac{s}{3^n})}{3^n}\geq c.
\end{equation}
Let $\lambda'\in [1/3+\epsilon_1,4/5-\epsilon_1]$ be a $\lambda$ satisfying \eqref{n equation} for infinitely many $n$. For any $n\in \N$ satisfying \eqref{n equation} we must also have 
\begin{align}
\label{cheap measure}cs&\leq \L\left(\bigcup_{u\in S(\{\sum_{j=0}^{n-1}a_j\lambda'^j\}_{\a\in\{0,1,3\}},\frac{s}{3^n})}\Big(u-\frac{s}{2\cdot 3^n},u+\frac{s}{2\cdot 3^n}\Big)\right)\nonumber\\
&\leq \L\left(\bigcup_{\a\in \{0,1,3\}^n}\Big(\sum_{j=0}^{n-1}a_j\lambda'^j-\frac{s}{2\cdot 3^n},\sum_{j=0}^{n-1}a_j\lambda'^j+\frac{s}{2\cdot 3^n}\Big)\right).
\end{align}
In which case it follows from \eqref{n equation} and \eqref{cheap measure} that
\begin{align*}
&\L\left(x: |x-\phi_{\a}(0)|\leq \frac{s}{2\cdot 3^n}\textrm{ for i.m. }\a\in \{0,1,3\}^*\right)\\
=&\L\left(\bigcap_{N=1}^{\infty}\bigcup_{n=N}\bigcup_{\a\in \{0,1,3\}^n}\Big(\sum_{j=0}^{n-1}a_j\lambda'^j-\frac{s}{2\cdot 3^n},\sum_{j=0}^{n-1}a_j\lambda'^j+\frac{s}{2\cdot 3^n}\Big)\right)\\
=&\lim_{N\to\infty} \L\left(\bigcup_{n=N}\bigcup_{\a\in \{0,1,3\}^n}\Big(\sum_{j=0}^{n-1}a_j\lambda'^j-\frac{s}{2\cdot 3^n},\sum_{j=0}^{n-1}a_j\lambda'^j+\frac{s}{2\cdot 3^n}\Big)\right)\\
\geq &cs.
\end{align*}
In the penultimate equality we used that Lebesgue measure is continuous from above. In the final inequality we used that there are infinitely many $n\in\mathbb{N}$ such that \eqref{n equation} holds, and therefore infinitely many $n\in\mathbb{N}$ such that \eqref{cheap measure} holds. 

Since $$\left\{x: |x-\phi_{\a}(0)|\leq \frac{s}{2\cdot 3^n}\textrm{ for i.m. }\a\in\{0,1,3\}^*\right\}\subset C_{\lambda'},$$ it follows $C_{\lambda'}$ has positive Lebesgue measure. Since $\lambda'$ was arbitrary, it follows that for Lebesgue almost every $\lambda\in[1/3+\epsilon_1,4/5-\epsilon_1],$ the set $C_{\lambda}$ has positive Lebesgue measure. Since $\epsilon_1$ was arbitrary we can upgrade this statement and conclude that for Lebesgue almost every $\lambda\in(1/3,4/5),$ the set $C_{\lambda}$ has positive Lebesgue measure.
\end{proof}

\subsection{Proof of Theorem \ref{translation thm}}
In this section we prove Theorem \ref{translation thm}. Recall that in the setting of Theorem \ref{translation thm} we obtain a family of IFSs by first of all fixing a set of $d\times d$ non-singular matrices $\{A_i\}_{i=1}^l$ each satisfying $\|A_i\|<1$. For any $\t=(t_1,\ldots,t_l)\in\mathbb{R}^{ld}$ we then define $\Phi_{\t}$ to be the IFS consisting of the contractions $$\phi_{i}(x)=A_ix +t_i.$$ The parameter $\t$ is allowed to vary. We denote the corresponding attractor by $X_\t$ and the projection map from $\D^{\mathbb{N}}$ to $X_\t$ by $\pi_\t$.

To prove Theorem \ref{translation thm} we will need a technical result due to Jordan, Pollicott, and Simon from \cite{JoPoSi}. It is rephrased for our purposes.
 \begin{lemma}\cite[Lemma 7]{JoPoSi}
 	\label{translation transversality}
Assume that $\|A_i\|<1/2$ for all $1\leq i\leq l$ and let $U$ be an arbitrary open ball in $\mathbb{R}^{ld}$. Then for any two distinct sequences $\a,\b\in\D^{\mathbb{N}}$ we have
$$\L\left(\{\t\in U:|\pi_{t}(\a)-\pi_t(\b)|\leq r\}\right)=\mathcal{O}\left(\frac{r^d}{\prod_{i=1}^d \alpha_{i}(A_{a_1\cdots  a_{|\a\wedge \b|-1}})}\right).$$
 \end{lemma}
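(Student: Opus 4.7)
The function \(\t\mapsto\pi_\t(\a)-\pi_\t(\b)\) is linear in \(\t\), and factorises through the common prefix: setting \(k=|\a\wedge\b|-1\) and \(i=a_{k+1}\ne j=b_{k+1}\),
\[
\pi_\t(\a)-\pi_\t(\b) \;=\; A_{a_1\cdots a_k}\,v(\t),\qquad v(\t):=\pi_\t(\sigma^k\a)-\pi_\t(\sigma^k\b).
\]
The preimage \(\{w\in\R^d:|A_{a_1\cdots a_k}w|\le r\}\) is an ellipsoid of \(d\)-volume \(r^d V(d)/\prod_{i=1}^d\alpha_i(A_{a_1\cdots a_k})\), so the target estimate is exactly the ellipsoid volume one would get by viewing the condition \(|A_{a_1\cdots a_k}v|\le r\) as cutting a single \(d\)-dimensional slice of \(\R^{ld}\). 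The plan is therefore a Fubini decomposition in which one \(d\)-dimensional slicing variable \(u\) satisfies \(\partial v/\partial u=\mathcal E\) with \(\mathcal E\) invertible and \(|\det\mathcal E|\) bounded below by a constant depending only on \(\rho:=\max_i\|A_i\|\).

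The naive choice \(u=t_i\) fails in the range \(\rho\in[1/3,1/2)\): a direct computation gives \(\partial v/\partial t_i = I + A_i\widehat S_i - A_j\widehat T_i\), where
\[
\widehat S_i := \sum_{n\ge 2,\,a_{k+n}=i} A_{a_{k+2}\cdots a_{k+n-1}},\qquad \widehat T_i := \sum_{n\ge 2,\,b_{k+n}=i} A_{b_{k+2}\cdots b_{k+n-1}},
\]
each bounded in operator norm by \(1/(1-\rho)\); the triangle inequality only yields \(\|\partial v/\partial t_i - I\|\le 2\rho/(1-\rho)\), which falls below \(1\) only when \(\rho<1/3\). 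Instead I would slice along \(u:=t_i-t_j\), keeping \(s:=t_i+t_j\) and \((t_m)_{m\notin\{i,j\}}\) fixed. A parallel calculation for \(\partial v/\partial t_j\) followed by averaging gives
\[
\tfrac{\partial v}{\partial u} \;=\; I+\tfrac12\bigl[A_i(\widehat S_i-\widehat S_j)-A_j(\widehat T_i-\widehat T_j)\bigr] \;=:\; \mathcal E.
\]
The crucial observation is that \(\widehat S_i\) and \(\widehat S_j\) are built from the \emph{same} matrices \(A_{a_{k+2}\cdots a_{k+n-1}}\), but indexed over the \emph{disjoint} sets \(\{n\ge 2:a_{k+n}=i\}\) and \(\{n\ge 2:a_{k+n}=j\}\); therefore
\[
\|\widehat S_i-\widehat S_j\| \;\le\; \sum_{n\ge 2}\bigl|\mathbf 1_{a_{k+n}=i}-\mathbf 1_{a_{k+n}=j}\bigr|\,\rho^{n-2} \;\le\; \tfrac1{1-\rho},
\]
and analogously \(\|\widehat T_i-\widehat T_j\|\le 1/(1-\rho)\). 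Consequently \(\|\mathcal E-I\|\le \rho/(1-\rho)<1\) \emph{precisely when} \(\rho<1/2\), giving the uniform lower bound \(|\det\mathcal E|\ge\bigl((1-2\rho)/(1-\rho)\bigr)^d>0\) that depends only on \(\rho\) and \(d\).

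With this in hand I would change variables \((t_i,t_j)\mapsto(u,s)\), whose Jacobian is the constant \(2^{-d}\). For each fixed \(s\) and \((t_m)_{m\notin\{i,j\}}\), the set \(\{u\in\R^d: |A_{a_1\cdots a_k}(\mathcal E u+c)|\le r\}\) is an ellipsoid of \(d\)-volume
\[
\frac{r^d V(d)}{|\det \mathcal E|\cdot \prod_{i=1}^d\alpha_i(A_{a_1\cdots a_k})} \;=\; \mathcal O\!\left(\frac{r^d}{\prod_{i=1}^d \alpha_i(A_{a_1\cdots a_k})}\right),
\]
with implied constant depending only on \(\rho\) and \(d\). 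Integrating this uniform bound against \(\chi_U\) in the remaining \(d(l-1)\) coordinates contributes only a factor of \(\mathcal O(1)\) (absorbing the finite Lebesgue measure of the projection of \(U\)), and yields the claimed estimate. The only nontrivial step is the Jacobian bound of the middle paragraph: once the disjointness of the index sets \(\{a_{k+n}=i\}\) and \(\{a_{k+n}=j\}\) is exploited to upgrade the bound from \(2\rho/(1-\rho)\) to \(\rho/(1-\rho)\), the remainder is a routine change-of-variables and Fubini calculation.
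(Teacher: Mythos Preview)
Your proof is correct. The paper itself does not prove this lemma: it is quoted as Lemma~7 of Jordan--Pollicott--Simon \cite{JoPoSi} and simply cited. So there is no in-paper argument to compare against; your write-up supplies a self-contained proof where the paper defers to the literature.

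For what it is worth, the approach you describe is the standard one and is essentially the argument in \cite{JoPoSi} (which in turn adapts Solomyak's improvement \cite{Sol2} of Falconer's original $\|A_i\|<1/3$ transversality). The decisive step is exactly the one you identify: slicing along $u=t_i-t_j$ rather than along $t_i$ alone. Your computation that
\[
\Bigl\|\tfrac{\partial v}{\partial u}-I\Bigr\|\le \tfrac12\Bigl(\|A_i\|\,\|\widehat S_i-\widehat S_j\|+\|A_j\|\,\|\widehat T_i-\widehat T_j\|\Bigr)\le \frac{\rho}{1-\rho}
\]
is correct, and the reason the bound improves from $2\rho/(1-\rho)$ to $\rho/(1-\rho)$ is precisely the disjointness of the index sets $\{n:a_{k+n}=i\}$ and $\{n:a_{k+n}=j\}$ that you highlight. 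One minor remark: the lower bound $|\det\mathcal E|\ge\bigl((1-2\rho)/(1-\rho)\bigr)^d$ follows from the singular-value inequality $\sigma_{\min}(\mathcal E)\ge 1-\|\mathcal E-I\|$, which you might state explicitly. Otherwise the Fubini/change-of-variables wrap-up is routine, as you say, and the implied constant indeed depends only on $\rho$, $d$, and $\L(U)$.
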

 With Lemma \ref{translation transversality} we are now in a position to prove Theorem \ref{translation thm}.
 
 \begin{proof}[Proof of Theorem \ref{translation thm}]
Let us start by fixing a set of $d\times d$ non-singular matrices $\{A_i\}_{i=1}^l$ such that $\|A_i\|<1/2$ for all $1\leq i\leq l$. We prove each statement appearing in this theorem individually.\\

\noindent \textbf{Proof of statement $1$}\\
Instead of proving our result for Lebesgue almost every $\t\in\mathbb{R}^{ld},$ it is sufficient to prove our result for Lebesgue almost every $\t\in U,$ where $U$ is an arbitrary ball in $\mathbb{R}^{ld}$. In what follows we fix such a $U$.

By the Shannon-McMillan-Breiman theorem, and the definition of the Lyapunov exponent, we know that for $\m$-almost every $\a\in\D^{\mathbb{N}}$ we have $$\lim_{k\to\infty}\frac{-\log \m([a_1\cdots a_k])}{k}=\h(\m)$$ and for each $1\leq i\leq d$
$$\lim_{k\to\infty}\frac{\log \alpha_{i}(A_{a_1\cdots a_k})}{k}=\lambda_{i}(\m).$$ Applying Egorov's theorem, it follows that for any $\epsilon>0,$ there exists $C>0$ such that the set of $\a\in \D^{\mathbb{N}}$ satisfying 
\begin{equation}
\label{exponential SHM}
\frac{e^{k(-\h(\m)-\epsilon)}}{C}\leq \m([a_1\cdots a_k])\leq Ce^{k(-\h(\m)+\epsilon)}
\end{equation}and 
\begin{equation}
\label{exponential Lyapunov}
\frac{e^{k(\lambda_i(\m)-\epsilon)}}{C}\leq \alpha_{i}(A_{a_1\cdots a_k})\leq Ce^{k(\lambda_i(\m)+\epsilon)}
\end{equation}for each $1\leq i\leq d$ for all $n\in\mathbb{N},$ has $\m$-measure strictly larger than $1/2$. In what follows we will assume that $\epsilon$ has been picked to be sufficiently small so that we have
\begin{equation}
\label{epsilon small}
\h(\m)-\epsilon>-\lambda_1(\m)-\cdots -\lambda_d(\m)+d\epsilon.
\end{equation}
 Such an $\epsilon$ exists because of our underlying assumption $\h(\m)>-\lambda_1(\m)-\cdots -\lambda_d(\m)$. 
 
 For each $n\in \N$ let $$\tilde{L}_{\m,n}=\{\a\in L_{\m,n}: \eqref{exponential SHM} \textrm{ and } \eqref{exponential Lyapunov} \textrm{ hold for }1\leq k\leq |\a|\}$$ and $$\tilde{R}_{\m,n}:=\# \widetilde{L}_{\m,n}.$$ It follows from the above that $$\m\Big(\bigcup_{\a\in \tilde{L}_{\m,n}}[\a]\Big)>1/2.$$ By the discussion given at the beginning of this section, we known $\tilde{R}_{\m,n}$ satisfies the exponential growth condition of Proposition \ref{general prop}. It also follows from our construction that 
 \begin{equation}
 \label{hiphop}
 \tilde{R}_{\m,n}\asymp R_{\m,n}.
 \end{equation} Let us now fix $(a_j)\in \D^{\mathbb{N}}$ and let 
$$ R(\t,s,n):=\left\{(\a,\a')\in \tilde{L}_{\m,n}\times \tilde{L}_{\m,n} :|\phi_{\a}(\pi_{\t}(a_j))-\phi_{\a}(\pi_{\t}(a_j))|\leq \frac{s}{\tilde{R}_{\m,n}^{1/d}} \textrm{ and }\a\neq \a'\right\}.$$
 Our goal now is to prove the bound:
 \begin{equation}
 \label{WTSAA}\int_{U}\frac{\#R(\t,s,n)}{\tilde{R}_{\m,n}}d\L=\mathcal{O}(s^d).
 \end{equation} Repeating the arguments given in the proof statement $1$ from Theorem \ref{1d thm}, it can be shown that 
 $$\int_{U}\frac{\#R(\t,s,n)}{\tilde{R}_{\m,n}}d\L=\mathcal{O}\left(\tilde{R}_{\m,n}\sum_{\stackrel{\a,\a'\in \tilde{L}_{\m,n}}{\a\neq \a'}}\m([\a])\m([\a'])\L(\t\in U:|\pi_\t(\a (a_j))-\pi_\t(\a' (a_j))|\leq \frac{s}{\tilde{R}_{\m,n}^{1/d}})\right).$$ Applying the bound given by Lemma \ref{translation transversality}, we obtain  
 \begin{align*}
 \int_{U}\frac{\#R(\t,s,n)}{\tilde{R}_{\m,n}}d\L&= \mathcal{O}\left(\tilde{R}_{\m,n}\sum_{\stackrel{\a,\a'\in \tilde{L}_{\m,n}}{\a\neq \a'}}\m([\a])\m([\a'])\frac{s^d}{\tilde{R}_{\m,n} \prod_{i=1}^d \alpha_{i}(A_{a_1\cdots a_{|\a\wedge \a'|-1}})} \right)\\
 &= \mathcal{O}\left(\sum_{\stackrel{\a,\a'\in \tilde{L}_{\m,n}}{\a\neq \a'}}\m([\a])\m([\a'])\frac{s^d}{\prod_{i=1}^d \alpha_{i}(A_{a_1\cdots  a_{|\a\wedge \a'|-1}})} \right)\\
 &= \mathcal{O}\left(s^d\sum_{\a\in \tilde{L}_{\m,n}}\m([\a])\sum_{k=1}^{|\a|-1}\sum_{\a':|\a\wedge \a'|=k}\frac{\m([\a'])}{\prod_{i=1}^d \alpha_{i}(A_{a_1\cdots a_{k-1}})} \right)\\
 &= \mathcal{O}\left(s^d\sum_{\a\in \tilde{L}_{\m,n}}\m([\a])\sum_{k=1}^{|\a|-1}\frac{\m([a_1\cdots a_{k-1}])}{\prod_{i=1}^d \alpha_{i}(A_{a_1\cdots a_{k-1}})} \right).
 \end{align*} We now substitute in the bounds provided by \eqref{exponential SHM} and \eqref{exponential Lyapunov} to obtain 
 \begin{align*}
  \int_{U}\frac{\#R(\t,s,n)}{\tilde{R}_{\m,n}}d\L&= \mathcal{O}\left(s^d\sum_{\a\in \tilde{L}_{\m,n}}\m([\a])\sum_{k=1}^{|\a|-1}\frac{e^{k(-\h(\m)+\epsilon)}}{\prod_{i=1}^d e^{k(\lambda_i(\m)-\epsilon)}} \right) \\
  &=\mathcal{O}\left(s^d\sum_{\a\in \tilde{L}_{\m,n}}\m([\a])\sum_{k=1}^{|\a|-1}\frac{e^{k(-\h(\m)+\epsilon)}}{ e^{k(\sum_{i=1}^d\lambda_i(\m)-d\epsilon)}} \right)\\
  &=\mathcal{O}\left(s^d\sum_{\a\in \tilde{L}_{\m,n}}\m([\a])\sum_{k=1}^{\infty}\frac{e^{k(-\h(\m)+\epsilon)}}{ e^{k(\sum_{i=1}^d\lambda_i(\m)-d\epsilon)}}\right)\\
  &=\mathcal{O}\left(s^d\sum_{\a\in \tilde{L}_{\m,n}}\m([\a])\right)\\
  &=\mathcal{O}(s^d).
 \end{align*} In our penultimate equality we used \eqref{epsilon small} to assert that $$\sum_{k=1}^{\infty}\frac{e^{k(-\h(\m)+\epsilon)}}{ e^{k(\sum_{i=1}^d\lambda_i(\m)-d\epsilon)}}<\infty.$$
 We have shown that \eqref{WTSAA} holds. It follows now from \eqref{WTSAA} and Lemma \ref{integral bound} that 
 \begin{equation}
 \label{integral bound2}
 \L(U)-\int_{U}\frac{T\big(\{\phi_{\a}(\pi_\t(a_j))\}_{\a\in \tilde{L}_{\m,n}},\frac{s}{\tilde{R}_{\m,n}^{1/d}}\big)}{\tilde{R}_{\m,n}}d\lambda=\mathcal{O}(s^d).
 \end{equation} Therefore, by Proposition \ref{general prop}, we have that for Lebesgue almost every $\t\in U,$ the set 
 $$\left\{x\in X:x\in\bigcup_{\a\in \tilde{L}_{\m,n}}B\left(\phi_{\a}(\pi_\t(a_j)),\Big(\frac{h(n)}{\tilde{R}_{\m,n}}\Big)^{1/d}\right)\textrm{ for i.m. } n\in \N \right\}$$
 has positive Lebesgue measure for any $h\in H$. By \eqref{hiphop} we know that $\tilde{R}_{\m,n}\asymp R_{\m,n}.$ Which by the discussion given at the beginning of this section implies $\tilde{R}_{\m,n}^{-1}\asymp \m([\a])$ for each $\a\in L_{\m,n}$.  Combining this fact with Lemma \ref{arbitrarily small} and the above, we can conclude that for Lebesgue almost every $\t\in U,$ for any $h\in H$ the set $U_{\Phi_\t}(\pi_{\t}(a_j),\m,h)$ has positive Lebesgue measure. \\
 
 
 \noindent \textbf{Proof of statement 2}\\
Under the assumptions of statement $2,$ it can be shown that for any $\a\in \D^n$ the difference $\phi_{\a}(z)-\phi_{\a}(z')$ is independent of $\a$ for any $z,z'\in X$. Therefore $\{\phi_{\a}(\pi_\t(z))\}_{\a\in \D^n}$ is a translation of $\{\phi_{\a}(\pi_\t(z'))\}_{\a\in \D^n}$ for any $z,z'\in X$. The proof of statement $2$ now follows by the same reasoning as that given in the proof of statement $2$ from Theorem \ref{1d thm}.\\
 
\noindent \textbf{Proof of statement 3}\\
As in the proof of statement $1$, it suffices to show that statement $3$ holds for Lebesgue almost every $\t\in U$ where $U$ is an arbitrary ball. We know by statement $1$ that for any $(a_j)\in \D^{\N},$ for Lebesgue almost every $t\in U$, the set $U_{\Phi_\t}(\pi_{\t}(a_j),\m,h)$ has positive Lebesgue measure for any $h\in H$. Applying Lemma \ref{arbitrarily small} it follows that for any $(a_j)\in \D^{\N},$ for Lebesgue almost every $t\in U$, the set $W_{\Phi_\t}(\pi_{\t}(a_j),\Psi)$ has positive Lebesgue measure for any $\Psi$ that is equivalent to $(\m,h)$ for some $h\in H$. If each $A_i$ is a similarity then we apply the first part of Proposition \ref{full measure} to assert that for any $(a_j)\in \D^{\N},$ for Lebesgue almost every $\t\in U$, for any $\Psi\in \Upsilon_{\m}$ Lebesgue almost every $x\in X_{\t}$ is contained in $W_{\Phi_\t}(\pi_{\t}(a_j),\Psi)$. 

To prove statement $3$ in the case when $d=2$ and all the matrices are equal, and in the case when all the matrices are simultaneously diagonalisable, we will apply the second part of Proposition \ref{full measure}. We need to show that under either of these conditions, for Lebesgue almost every $\t\in U$ the measure $\mu,$ the pushforward of our $\m,$ is equivalent to $\L|_{X_\t}$. Now let us assume our set of matrices satisfies either of these conditions. By \eqref{integral bound2} and Lemma \ref{density separation lemma} we know that $$\L\left(\bigcap_{\epsilon>0}\bigcup_{c,s>0}\{\t\in U: \overline{d}(n:\t\in B(c,s,n))\geq 1-\epsilon\}\right)=\L(U).$$ In particular, this implies that for Lebesgue almost every $\t\in U,$ there exists some $c,s>0$ such that $$\frac{T(\{\phi_{\a}(\pi_t(a_j))\}_{\a\in \tilde{L}_{\m,n}},\frac{s}{\tilde{R}_{\m,n}^{1/d}})}{\tilde{R}_{\m,n}}>c$$ for infinitely many $n\in\mathbb{N}$. By Proposition \ref{absolute continuity} it follows that $\mu\ll \L$ for Lebesgue almost every $\t\in U$. By our hypothesis and Lemma \ref{equivalent measures} we can improve this statement to $\mu\sim \L|_{X_\t}$ for Lebesgue almost every $\t\in U$. Now applying Proposition \ref{full measure} we can conclude that for any $(a_j)\in \D^{\N},$ for Lebesgue almost every $t\in U$, for any $\Psi\in \Upsilon_{\m},$ Lebesgue almost every $x\in X_{\t}$ is contained in $W_{\Phi_\t}(\pi_{\t}(a_j),\Psi).$ \\
 
 \noindent \textbf{Proof of statement 4}\\
The proof of statement $4$ is an adaptation of statement $3,$ where the role of statement $1$ is played by statement $2$. 
 	
  \end{proof}

The proof of Corollary \ref{translation cor} is analogous to the proof of Corollary \ref{example cor} and is therefore omitted. 
\subsection{Proof of Theorem \ref{random thm}}
The proof of Theorem \ref{random thm} mirrors the proof of Theorem \ref{translation thm}. As such we will only state the appropriate analogue of Lemma \ref{translation transversality} and leave the details to the interested reader. The following lemma was proved in \cite{JoPoSi}.
\begin{lemma}\cite[Lemma 6]{JoPoSi}
Assume that $\|A_i\|<1$ for all $1\leq i\leq l$. For any two distinct sequences $\a,\b\in \D^{\mathbb{N}}$ we have $$\mathbf{P}(\y\in\mathbf{D}^{\mathbb{N}}:|\pi_\y(\a)-\pi_\y(\b)|\leq r)=\mathcal{O}\left(\frac{r^d}{\prod_{i=1}^d \alpha_{i}(A_{a_1\cdots a_{|\a\wedge \b|-1}})}\right).$$	
\end{lemma}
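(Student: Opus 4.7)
The plan is to exploit the fact that among the countably many independent perturbations indexed by $\D^*$, the single perturbation $y_{a_1\cdots a_k}$, where $k:=|\a\wedge\b|$, appears in $\pi_\y(\a)$ but not in $\pi_\y(\b)$, and moreover enters linearly with a coefficient determined by the common prefix. Conditioning on every other source of randomness then reduces the probability estimate to a volume calculation for the preimage of a ball under an invertible linear map.

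First I would expand the perturbed projection as the convergent series
\[
\pi_\y(\a) = \sum_{n=0}^{\infty} A_{a_1\cdots a_n}\bigl(t_{a_{n+1}} + y_{a_1\cdots a_{n+1}}\bigr),
\]
with the convention that $A_{a_1\cdots a_0}$ is the identity, and similarly for $\b$. Because $a_i = b_i$ for $i \leq k-1$, the blocks indexed by $n = 0, \ldots, k-2$ cancel exactly in the difference $\pi_\y(\a) - \pi_\y(\b)$. The block at $n = k-1$ contributes
\[
A_{a_1\cdots a_{k-1}}\bigl(t_{a_k} - t_{b_k}\bigr) + A_{a_1\cdots a_{k-1}}\bigl(y_{a_1\cdots a_k} - y_{b_1\cdots b_k}\bigr),
\]
while every remaining block ($n \geq k$) involves only perturbations indexed by words of length at least $k+1$ starting with either $a_1\cdots a_k$ or $b_1\cdots b_k$. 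Since $a_k \neq b_k$, none of these perturbations equals $y_{a_1\cdots a_k}$, and we obtain a decomposition
\[
\pi_\y(\a) - \pi_\y(\b) = M\, y_{a_1\cdots a_k} + W,
\]
where $M := A_{a_1\cdots a_{k-1}}$ and $W$ is a random vector independent of $y_{a_1\cdots a_k}$.

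Next I would condition on $W$ and invoke the bounded density assumption on $\eta$. Since each $A_i$ is non-singular, $M$ is invertible, and the event $\{|M y_{a_1\cdots a_k} + W| \leq r\}$ is the event that $y_{a_1\cdots a_k}$ lies in $M^{-1}B(-W, r)$. This set has Lebesgue measure $r^d\,\L(B(0,1))/|\det M|$, and $|\det M| = \prod_{i=1}^d \alpha_i(A_{a_1\cdots a_{k-1}})$. Bounding $\eta$ by a uniform multiple of Lebesgue measure on $\mathbf{D}$ and integrating out $W$ via Fubini yields the stated estimate.

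The main obstacle is purely the bookkeeping in the decomposition step: one must verify that $y_{a_1\cdots a_k}$ really does appear only in the $n=k-1$ block on the $\a$-side, so that its coefficient in $\pi_\y(\a) - \pi_\y(\b)$ is exactly $A_{a_1\cdots a_{k-1}}$ and all other perturbations can legitimately be absorbed into the independent residual $W$. Once this structural identity is pinned down, the probability bound follows immediately from the density assumption on $\eta$ together with the volume formula for an affine image of a Euclidean ball.
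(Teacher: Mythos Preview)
Your proposal is correct and is essentially the standard transversality argument used in \cite{JoPoSi}; the paper itself does not reprove this lemma but simply quotes it from that reference. The only point worth flagging is that your final step uses $\eta \leq C\,\L$ on $\mathbf{D}$, i.e.\ a bounded density, which is indeed the hypothesis in \cite{JoPoSi} though the present paper states it slightly less explicitly.
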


\section{A specific family of IFSs}
\label{Specific family}
In this section we focus on the following family of IFSs:
$$\Phi_t:=\Big\{\phi_1(x)=\frac{x}{2},\,\phi_2(x)=\frac{x+1}{2},\,\phi_3(x)=\frac{x+t}{2},\,\phi_4(x)=\frac{x+1+t}{2}\Big\},$$ where $t\in [0,1]$. We also fix $\m$ throughout to be the uniform $(1/4,1/4,1/4,1/4)$ Bernoulli measure. To each $t\in[0,1]\setminus\mathbb{Q}$ we associate the continued fraction expansion $(\zeta_m)\in \N^{\N}$ and the corresponding sequence of partial quotients $(p_m/q_m)$. In this section we will make use of the following well known properties of continued fractions.  

\begin{itemize}
	\item For any $m\in\mathbb{N}$ we have 
	\begin{equation}
	\label{property1}\frac{1}{q_m(q_{m+1}+q_m)}<\left|t-\frac{p_m}{q_m}\right|<\frac{1}{q_mq_{m+1}}.
	\end{equation}
	\item If we set $p_{-1}=1, q_{-1}=0, p_0=0, q_0=1$, then for any $m\geq 1$ we have 
	\begin{align}
	\label{property2}
	p_m&=\zeta_m p_{m-1}+p_{m-2}\\
	q_m&=\zeta_m q_{m-1}+q_{m-2}. \nonumber
	\end{align}
	\item If $t$ is such that $(\zeta_m)$ is bounded, i.e. $t$ is badly approximable, then there exists $c_t>0$ such that for any $(p,q)\in\mathbb{Z}\times \mathbb{N},$ we have 
	\begin{equation}
	\label{property3}
	\Big|t-\frac{p}{q}\Big|\geq \frac{c_t}{q^2}.
	\end{equation}
	\item If $q< q_{m+1}$ then \begin{equation}
	\label{property4}
	|qt-p|\geq |q_{m}t-p_m|
	\end{equation}for any $p\in\mathbb{Z}$.
\end{itemize}  For a proof of these properties we refer the reader to \cite{Bug} and \cite{Cas}. 

Let us now remark that for any $\a\in \D^n,$ there exist two sequences  $(b_j), (c_j)\in\{0,1\}^{n}$ satisfying 
\begin{equation}
\label{separate terms}
\phi_{\a}(x)=\frac{x}{2^n}+\sum_{j=1}^n\frac{b_j}{2^j}+t\sum_{j=1}^n\frac{c_j}{2^j}.
\end{equation}Importantly for each $\a\in \D^n$ the sequences $(b_j)$ and $(c_j)$ satisfying \eqref{separate terms} are unique. What is more, for any  $(b_j),(c_j)\in\{0,1\}^{n},$ there exists a unique $\a\in\D^n$ such that $(b_j)$ and $(c_j)$ satisfy \eqref{separate terms} for this choice of $\a$.

We separate our proof of Theorem \ref{precise result} into individual propositions. Statement $1$ of Theorem \ref{precise result} is contained in the following result.

\begin{prop}
	\label{overlap prop}
$\Phi_t$ contains an exact overlap if and only if $t\in \mathbb{Q}$. Moreover if $t\in \mathbb{Q},$ then for any $z\in[0,1+t],$ the set $U_{\Phi_t}(z,\m,1)$ has Hausdorff dimension strictly less than $1$.
\end{prop}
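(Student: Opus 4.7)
The plan is to exploit the explicit form of the composed maps recorded in \eqref{separate terms}. For every $\a\in\D^n$ one has $\phi_\a(x)=2^{-n}x+c_\a$, where $c_\a=u+tv$ with $u=\sum_{j=1}^n b_j/2^j$, $v=\sum_{j=1}^n c_j/2^j$, and the assignment $\a\mapsto((b_j),(c_j))$ is a bijection from $\D^n$ onto $\{0,1\}^n\times\{0,1\}^n$. Since $\phi_\a$ determines its own contraction ratio $2^{-|\a|}$, any exact overlap must involve two words of equal length, so it suffices to analyze when $c_\a=c_{\a'}$ for distinct $\a,\a'\in\D^n$.

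Writing $u=k/2^n$, $v=l/2^n$, the equation $c_\a=c_{\a'}$ becomes $(k-k')+t(l-l')=0$ with $(k,l)\ne(k',l')$ in $\{0,\ldots,2^n-1\}^2$. I would first note that $l=l'$ would force $k=k'$, a contradiction; hence $l\ne l'$ and $t=(k-k')/(l'-l)\in\mathbb{Q}$, giving the ``only if'' direction. For the converse, given $t=p/q$ in lowest terms with $p,q\in\mathbb{N}\cup\{0\}$, I would choose $n$ with $2^n>\max(p,q)$ and exhibit the explicit pairs $(k,l)=(0,q)$ and $(k',l')=(p,0)$, for which $qk+pl=pq=qk'+pl'$; the corresponding distinct $\a,\a'\in\D^n$ satisfy $\phi_\a=\phi_{\a'}$. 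This establishes the dichotomy between the rational and irrational cases.

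For the dimension statement, suppose $t=p/q$ with $\gcd(p,q)=1$. Clearing denominators, the number of distinct translations $c_\a$ with $\a\in\D^n$ equals the number of distinct values of $qk+pl$ as $(k,l)$ ranges over $\{0,\ldots,2^n-1\}^2$. These values lie in the integer interval $[0,(p+q)(2^n-1)]$, so
\[
\#\{\phi_\a:\a\in\D^n\}\;\le\;(p+q)(2^n-1)+1\;=\;O(2^n),
\]
and in particular $\#Y_{\m,n}(z)\le C\cdot 2^n$ uniformly in $z$ and $n$, with $C$ depending only on $t$.

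To conclude, I would insert this count into a direct cover of the limsup set. Since $\m$ is the uniform Bernoulli measure, $L_{\m,n}=\D^n$ and $\m([\a])=4^{-n}$, so
\[
U_{\Phi_t}(z,\m,1)\;\subseteq\;\bigcup_{n\ge N}\bigcup_{y\in Y_{\m,n}(z)} B(y,4^{-n})
\]
for every $N\in\mathbb{N}$. Estimating the $s$-dimensional Hausdorff $\delta$-content of the right-hand side with $\delta=2\cdot 4^{-N}$ gives, for any $s>1/2$,
\[
\mathcal{H}^{s}_{\,2\cdot 4^{-N}}(U_{\Phi_t}(z,\m,1))\;\ll\;\sum_{n\ge N} 2^n\cdot(2\cdot 4^{-n})^{s}\;\ll\;\sum_{n\ge N} 2^{n(1-2s)}\;\longrightarrow\;0
\]
as $N\to\infty$. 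Hence $\mathcal{H}^{s}(U_{\Phi_t}(z,\m,1))=0$ for every $s>1/2$, yielding $\dim_H U_{\Phi_t}(z,\m,1)\le 1/2<1$. The only non-routine step is the structural observation that parameterises $c_\a$ by the integer pair $(k,l)$ and thereby by the single integer $qk+pl$; once this is in hand, the combinatorial count and the Hausdorff-content estimate are direct, so I do not expect a serious obstacle.
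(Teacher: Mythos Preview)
Your proof is correct. The first part (exact overlap iff $t\in\mathbb{Q}$) follows the same idea as the paper, using the bijection $\a\mapsto(k,l)$ induced by \eqref{separate terms}; your observation that equal contraction ratios force $|\a|=|\a'|$ is a slightly cleaner way of reducing to equal-length words than the paper's concatenation trick.

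For the dimension bound your argument is genuinely different from the paper's and in fact sharper. The paper argues abstractly: an exact overlap at some level $j$ gives $\#\{\phi_\a(z):\a\in\D^{j}\}\le 4^{j}-1$, hence $\#\{\phi_\a(z):\a\in\D^{n}\}=O((4-c)^{n})$ for some small $c>0$, and the resulting cover yields $\dim_H U_{\Phi_t}(z,\m,1)\le s$ for any $s$ with $4^{s}>4-c$, i.e.\ some $s<1$ close to $1$. You instead exploit the explicit arithmetic: writing $t=p/q$, the translations $c_\a$ are determined by the single integer $qk+pl\in\{0,\ldots,(p+q)(2^n-1)\}$, so $\#Y_{\m,n}(z)=O(2^{n})$ rather than merely $O((4-c)^{n})$. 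This buys you the concrete bound $\dim_H U_{\Phi_t}(z,\m,1)\le 1/2$, which is substantially stronger than the paper's conclusion. The trade-off is that the paper's argument would transfer verbatim to any IFS with an exact overlap, whereas yours uses the specific lattice structure of $\Phi_t$; for the statement at hand, however, your route is both simpler and more informative.
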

\begin{proof}
If $\Phi_t$ contains an exact overlap then there exists distinct $\a,\a'\in \D^{*}$ such that $\phi_{\a}=\phi_{\a'}$. By considering $\a\a'$ and $\a'\a$ if necessary, we can assume that $|\a|=|\a'|.$  Using \eqref{separate terms} we see that the following equivalences hold:
\begin{align*}
&\textrm{There exists distinct }\a,\a'\in \D^n \textrm{ such that }\phi_{\a}=\phi_{\a'}.\\
\iff & \textrm{There exists }(b_j),(c_j),(b_j'),(c_j')\in \{0,1\}^n \textrm{ such that } \sum_{j=1}^n\frac{b_j}{2^j}+t\sum_{j=1}^n\frac{c_j}{2^j}=\sum_{j=1}^n\frac{b_j'}{2^j}+t\sum_{j=1}^n\frac{c_j'}{2^j}\\
& \textrm{ and either } (b_j)\neq (b_j') \textrm{ or } (c_j)\neq (c_j').\\
\iff &\textrm{There exists }(b_j),(c_j),(b_j'),(c_j')\in \{0,1\}^n \textrm{ such that  }\sum_{j=1}^n\frac{b_j-b_j'}{2^j}=t\sum_{j=1}^n\frac{c_j'-c_j}{2^j}\\
& \textrm{ and either } (b_j)\neq (b_j') \textrm{ or } (c_j)\neq (c_j').\\
\iff &\textrm{There exists }(b_j),(c_j),(b_j'),(c_j')\in \{0,1\}^n \textrm{ such that }\sum_{j=1}^n2^{n-j}(b_j-b_j')=t\sum_{j=1}^n2^{n-j}(c_j'-c_j)\\
& \textrm{ and either } (b_j)\neq (b_j') \textrm{ or } (c_j)\neq (c_j').\\
\iff &\textrm{There exists }1\leq p,q\leq 2^{n}-1 \textrm{ such that } p=qt.
\end{align*}It follows from these equivalences that there is an exact overlap if any only if $t\in\mathbb{Q}$.

We now prove the Hausdorff dimension part of our proposition. By the first part we know that $t\in\mathbb{Q}$ if and only if $\Phi_t$ contains an exact overlap. It follows from the presence of an exact overlap that for each $t\in \mathbb{Q},$ there exists $j\in \mathbb{N}$ such that $\#\{\phi_{\a}(z):\a\in \D^{j}\}\leq 4^j-1$ for any $z\in[0,1+t]$. This in turn implies that for any $k\in \mathbb{N}$ we have $\#\{\phi_{\a}(z):\a\in \D^{jk}\}\leq (4^j-1)^k$ for any $z\in[0,1+t]$. It follows now from this latter inequality that for an appropriate choice of $c>0,$ for any $z\in[0,1+t]$ we have 
\begin{equation}\label{slower growth}
\#\{\phi_{\a}(z):\a\in \D^n\}=\mathcal{O}((4-c)^n).
\end{equation} For any $z\in[0,1+t]$ and $N\in \N$, the set of intervals $$\{[\phi_{\a}(z)-4^{-n},\phi_{\a}(z)+4^{-n}]\}_{\stackrel{n\geq N}{\phi_{\a}(z):\a\in\D^n}}$$forms a $2\cdot 4^{-N}$ cover of $U_{\Phi_t}(z,\m,1)$. Now let $s\in(0,1)$ be sufficiently large that $$(4-c)<4^s.$$It follows now that we have the following bound on the $s$-dimensional Hausdorff measure of $U_{\Phi_t}(z,\m,1)$
\begin{align*}
\mathcal{H}^s(U_{\Phi_t}(z,\m,1))&\leq \lim_{N\to\infty}\sum_{n=N}^{\infty}\sum_{\phi_{\a}(z):\a\in\D^n}Diam([\phi_{\a}(z)-4^{-n},\phi_{\a}(z)+4^{-n}])^s\\
&\stackrel{\eqref{slower growth}}{=} \lim_{N\to\infty}\mathcal{O}\left(\sum_{n=N}^{\infty} (4-c)^n4^{-ns}\right)\\
&=0.
\end{align*} In the last line we used that $(4-c)<4^s$ to guarantee $\sum_{n=1}^{\infty} (4-c)^n4^{-ns}<\infty$. Therefore $\dim_{H}(U_{\Phi_t}(z,\m,1))\leq s$ for any $z\in[0,1+t]$.

\end{proof}
Adapting the proof of the first part of Proposition \ref{overlap prop}, we can show that the following simple lemma holds.
\begin{lemma}
	\label{obvious lemma}
Let $t\in [0,1]$, $z\in [0,1+t],$ and $s>0$. For $n$ sufficiently large, there exists distinct $\a,\a'\in \D^n$ such that $$|\phi_{\a}(z)-\phi_{\a'}(z)|\leq \frac{s}{4^n}$$ if and only if there exists $1\leq p,q\leq 2^{n}-1$ such that $$|qt-p|\leq \frac{s}{2^n}.$$
\end{lemma}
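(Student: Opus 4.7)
The plan is to unpack the formula \eqref{separate terms} and reduce everything to an algebraic identity for $\phi_\a(z) - \phi_{\a'}(z)$. Concretely, for any pair $\a,\a' \in \D^n$ with associated binary tuples $((b_j),(c_j))$ and $((b_j'),(c_j'))$, the $z/2^n$ terms cancel and
\[
\phi_{\a}(z)-\phi_{\a'}(z)
= \sum_{j=1}^n \frac{b_j-b_j'}{2^j} \;+\; t\sum_{j=1}^n \frac{c_j-c_j'}{2^j}
= \frac{p - qt}{2^n},
\]
where $p := \sum_{j=1}^n 2^{n-j}(b_j-b_j')$ and $q := \sum_{j=1}^n 2^{n-j}(c_j'-c_j)$ are integers with $|p|,|q|\leq 2^n-1$. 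The bijectivity established in the discussion after \eqref{separate terms} shows $\a \neq \a'$ corresponds exactly to $(p,q)\neq(0,0)$. So the inequality $|\phi_\a(z)-\phi_{\a'}(z)|\leq s/4^n$ is equivalent to $|qt-p|\leq s/2^n$ with $(p,q)\in\mathbb{Z}^2\setminus\{(0,0)\}$ and $|p|,|q|\leq 2^n-1$.

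For the forward direction I would then normalize signs. Replacing $(\a,\a')$ by $(\a',\a)$ if necessary sends $(p,q)\mapsto(-p,-q)$, so I may assume $q\geq 0$. If $q=0$ then $|p|\leq s/2^n$, which for $n>\log_2 s$ forces $p=0$, contradicting $(p,q)\neq(0,0)$; hence $1\leq q\leq 2^n-1$. To get $p\geq 1$, use $t>0$ (the cases $t=0$ and $t\in\mathbb{Q}$ are covered by Proposition \ref{overlap prop} and are anyway not relevant to the applications): then $p\geq qt - s/2^n \geq t - s/2^n > 0$ for $n$ large, so $p\geq 1$, and $p\leq qt+s/2^n\leq 2^n-1$ for the same range of $n$.

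For the converse, given $1\leq p,q\leq 2^n-1$, I will simply take the binary expansions $p=\sum_{j=1}^n 2^{n-j}b_j$ and $q=\sum_{j=1}^n 2^{n-j}c_j'$ with $b_j,c_j'\in\{0,1\}$, set $b_j'=c_j=0$ for all $j$, and let $\a,\a'\in\D^n$ be the unique words corresponding to the tuples $((b_j),(c_j))$ and $((b_j'),(c_j'))$ via \eqref{separate terms}. Because $p\geq 1$, $(b_j)\neq(b_j')$ and so $\a\neq\a'$; the identity above then gives $|\phi_\a(z)-\phi_{\a'}(z)|=|qt-p|/2^n\leq s/4^n$.

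There is no real technical obstacle — the lemma is essentially a bookkeeping exercise on top of \eqref{separate terms} and mirrors the rational/irrational dichotomy already worked out in Proposition \ref{overlap prop}. The only mildly delicate point is the sign and lower-bound issue in the forward direction, where the clause \emph{``for $n$ sufficiently large''} is needed to promote $|p|,|q|\geq 1$ to $p,q\geq 1$ after a sign flip; once one observes that $t>0$ together with $q\geq 1$ forces $p\geq 1$ for all large $n$, everything else is immediate from the algebraic identity.
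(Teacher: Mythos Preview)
Your proof is correct and follows precisely the approach the paper indicates (it merely says ``adapting the proof of the first part of Proposition \ref{overlap prop}'' and gives no further details). Your handling of the sign normalisation and the use of the clause ``for $n$ sufficiently large'' to force $q\geq 1$ and then $p\geq 1$ is exactly what is needed, and your observation that the case $t=0$ is degenerate (and indeed the biconditional fails there, since exact overlaps give the left-hand side while the right-hand side requires $p\geq 1$) is a fair point about the lemma as literally stated---the paper only ever invokes this lemma for irrational $t$.
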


Lemma \ref{obvious lemma} will be used in the proofs of all the full measure statements in Theorem \ref{precise result}. It immediately yields the following proposition which corresponds to statement $3$ from Theorem \ref{precise result}.

\begin{prop}
If $t$ is badly approximable, then for any $z\in[0,1+t]$ and $h:\mathbb{N}\to[0,\infty)$ satisfying $\sum_{n=1}^{\infty}h(n)=\infty,$ we have that Lebesgue almost every $x\in[0,1+t]$ is contained in $U_{\Phi_t}(z,\m,h)$.
\end{prop}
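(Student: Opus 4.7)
The plan is to reduce the claim to a direct application of Proposition \ref{separated full measure}, exploiting the fact that $\Phi_t$ consists of similarities (so the first bullet in that proposition is automatically satisfied). Since $\m$ is the uniform $(1/4,1/4,1/4,1/4)$ Bernoulli measure, $L_{\m,n}=\D^n$, $R_{\m,n}=4^n$, and $\m([\a])=4^{-n}$ for any $\a\in\D^n$, so verifying the separation hypothesis of Proposition \ref{separated full measure} amounts to exhibiting some $s>0$ such that $T(Y_{\m,n}(z),s/4^n)=4^n$ for all sufficiently large $n$.

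The key step is to use Lemma \ref{obvious lemma} together with the defining property \eqref{property3} of a badly approximable number. More precisely, if $t$ is badly approximable then there exists $c_t>0$ such that $|qt-p|\ge c_t/q$ for all $(p,q)\in\mathbb{Z}\times\mathbb{N}$. Fix any $s\in(0,c_t)$. For any $1\le q\le 2^n-1$ and any $p\in\mathbb{Z}$ we therefore have $|qt-p|\ge c_t/(2^n-1)>s/2^n$. By Lemma \ref{obvious lemma}, this precludes the existence of distinct $\a,\a'\in\D^n$ with $|\phi_\a(z)-\phi_{\a'}(z)|\le s/4^n$ once $n$ is large enough (and this holds uniformly in $z\in[0,1+t]$, since Lemma \ref{obvious lemma} is stated for arbitrary $z$). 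Consequently $Y_{\m,n}(z)=\{\phi_\a(z)\}_{\a\in\D^n}$ is $s/4^n$-separated for all sufficiently large $n$, which gives
\[
T\!\left(Y_{\m,n}(z),\frac{s}{4^n}\right)=4^n
\]
and hence $T(Y_{\m,n}(z),s/R_{\m,n})/R_{\m,n}=1$ eventually.

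With this verified, I would take the subsequence $(n_k)$ in Proposition \ref{separated full measure} to be all sufficiently large integers. Since $\Phi_t$ is an IFS of similarities, the first bullet in the hypothesis of Proposition \ref{separated full measure} holds. The hypothesis $\sum_k h(n_k)=\infty$ then reduces to the given assumption $\sum_{n=1}^\infty h(n)=\infty$ (finitely many small terms are irrelevant). Therefore Proposition \ref{separated full measure} yields that Lebesgue almost every $x\in[0,1+t]$ lies in $U_{\Phi_t}(z,\m,h)$, as required.

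The proof is short because all the heavy lifting has been done in Section \ref{Preliminaries}; the only real content is the translation from the arithmetic fact ``$t$ is badly approximable'' to the geometric fact ``the cylinders at level $n$ are uniformly $s\cdot 4^{-n}$-separated for all large $n$''. The mildest obstacle is checking that the $s$-separation argument via Lemma \ref{obvious lemma} is robust enough to yield the full set $Y_{\m,n}(z)$ (not merely a large subset), which is what lets us take the limit in the hypothesis of Proposition \ref{separated full measure} to be exactly $1$ rather than just bounded below; here the clean form of \eqref{property3} makes this automatic.
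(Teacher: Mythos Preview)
Your proposal is correct and follows essentially the same route as the paper's own proof: both derive from \eqref{property3} that $|qt-p|>c_t/2^n$ for all $1\le q\le 2^n-1$, invoke Lemma \ref{obvious lemma} to conclude that $\{\phi_{\a}(z)\}_{\a\in\D^n}$ is $s/4^n$-separated for all large $n$, and then apply Proposition \ref{separated full measure}. The only cosmetic difference is that the paper works directly with $s=c_t$ whereas you fix some $s\in(0,c_t)$; your version also makes explicit that $\Phi_t$ consists of similarities, which the paper leaves implicit.
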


\begin{proof}
Since $t$ is badly approximable, we know by \eqref{property3} that there exists $c_t>0$ such that \begin{equation}
\label{badly approximable}
|qt-p|\geq \frac{c_t}{q}
\end{equation} for all $(p,q)\in\mathbb{Z}\times\mathbb{N}$. Equation \eqref{badly approximable} implies that for any $1\leq p,q\leq 2^{n}-1$ we have $$|qt-p|>\frac{c_t}{2^n}.$$ Applying Lemma \ref{obvious lemma}, we see that for any $z\in [0,1+t],$ for all $n$ sufficiently large, if $\a,\a'\in \D^n$ are distinct then $$|\phi_{\a}(z)-\phi_{\a'}(z)|>\frac{c_t}{4^n}.$$ Therefore, for any $z\in [0,1+t]$ we have $$S\left(\{\phi_{\a}(z)\}_{\a\in \D^n},\frac{c_t}{4^n}\right)=\{\phi_{\a}(z)\}_{\a\in \D^n}$$ for all $n$ sufficiently large. Our result now follows by an application of Proposition \ref{separated full measure}.
\end{proof}
For our other full measure statements a more delicate analysis is required. We need to identify integers $n$ for which the set of images $\{\phi_{\a}(z)\}_{\a\in \D^n}$ are well separated. This we do in the following two lemmas. 

\begin{lemma}
	\label{diophantine lemma}
Let $s>0$. For $n$ sufficiently large, if $n$ satisfies $$2sq_{m}\leq 2^{n}-1<q_{m}$$ for some $m$, then for any $z\in [0,1+t]$ we have $$|\phi_{\a}(z)-\phi_{\a'}(z)|>\frac{s}{4^n},$$ for distinct $\a,\a'\in \D^n$.
\end{lemma}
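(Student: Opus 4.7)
The plan is to reduce the claim about IFS separation to a purely Diophantine statement via Lemma~\ref{obvious lemma}. For $n$ sufficiently large (large enough for Lemma~\ref{obvious lemma} to apply), it suffices to show the contrapositive: no integer pair $(p,q)$ with $1\le p,q\le 2^n-1$ satisfies $|qt-p|\le \frac{s}{2^n}$. If this holds, then Lemma~\ref{obvious lemma} gives that every two distinct $\a,\a'\in\D^n$ produce images $\phi_\a(z),\phi_{\a'}(z)$ that are $\frac{s}{4^n}$-separated, uniformly in $z\in[0,1+t]$.

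First, I would exploit the right-hand hypothesis $2^n-1<q_m$: every integer $q$ with $1\le q\le 2^n-1$ is strictly less than $q_m$, so property~\eqref{property4} applied with the index shifted down by one gives
\begin{equation*}
|qt-p|\ \geq\ |q_{m-1}t-p_{m-1}|\qquad\text{for every }p\in\mathbb{Z}.
\end{equation*}
Next, I would bound $|q_{m-1}t-p_{m-1}|$ from below using property~\eqref{property1}: since $\bigl|t-\tfrac{p_{m-1}}{q_{m-1}}\bigr|>\tfrac{1}{q_{m-1}(q_m+q_{m-1})}$, multiplying through by $q_{m-1}$ and using the monotonicity $q_{m-1}\le q_m$ yields
\begin{equation*}
|q_{m-1}t-p_{m-1}|\ >\ \frac{1}{q_{m-1}+q_m}\ \geq\ \frac{1}{2q_m}.
\end{equation*}

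The left-hand hypothesis $2sq_m\le 2^n-1<2^n$ rearranges to $\tfrac{1}{2q_m}>\tfrac{s}{2^n}$, so chaining gives
\begin{equation*}
|qt-p|\ \geq\ |q_{m-1}t-p_{m-1}|\ >\ \frac{1}{2q_m}\ >\ \frac{s}{2^n},
\end{equation*}
which is exactly what the Diophantine reformulation demands.

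I do not anticipate any serious obstacle; this is a clean chaining of two classical continued-fraction identities together with Lemma~\ref{obvious lemma}. The only point requiring care is the index shift: because the hypothesis $2^n-1<q_m$ forces $q<q_m=q_{(m-1)+1}$, property~\eqref{property4} must be invoked at index $m-1$, so the separation is ultimately controlled by the $(m-1)$-th convergent rather than the $m$-th. Note that the two hypotheses together force $2sq_m<q_m$, so the lemma is only non-vacuous when $s<1/2$ and $m$ is large enough that $q_m\ge(1-2s)^{-1}$; this is harmless, as in later applications one is free to choose $s$ as small as one wishes.
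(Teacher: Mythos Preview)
Your proof is correct and matches the paper's argument essentially line for line: reduce via Lemma~\ref{obvious lemma}, apply \eqref{property4} at index $m-1$ to get $|qt-p|\ge|q_{m-1}t-p_{m-1}|$, bound the latter below by $\frac{1}{2q_m}$ via \eqref{property1}, and finish using $2sq_m\le 2^n-1$. Your added remarks on the index shift and the non-vacuity constraint $s<1/2$ are accurate and harmless.
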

\begin{proof}
Fix $s>0$. If $2^{n}-1<q_{m}$, then by \eqref{property1} and \eqref{property4}, for all $1\leq p,q\leq 2^{n}-1$ we have $$|qt-p|\geq |q_{m-1}t-p_{m-1}|\geq \frac{1}{2q_{m}}.$$ If $2sq_{m}\leq 2^{n}-1$ as well, then the above implies that for all $1\leq p,q\leq 2^{n}-1$ we have  $$|qt-p|\geq \frac{s}{2^{n}-1}> \frac{s}{2^n}.$$ Applying Lemma \ref{obvious lemma} completes our proof.
\end{proof}
Lemma \ref{diophantine lemma} demonstrates that if $2^{n}-1$ is strictly less than but close to some denominator arising from the partial quotients of $t$, then at the $n$-th level we have good separation properties. The following lemma demonstrates a similar phenomenon but instead relies upon the digits appearing in the continued fraction expansion. To properly states this lemma we need to define the following sequence. Given $t$ with corresponding sequence of partial quotients $(p_m/q_m)$, we define the sequence of integers $(m_n)$ via the inequalities: 
$$q_{m_n}\leq 2^{n}-1<q_{m_n+1}.$$
\begin{lemma}
	\label{continued fraction lemma}
Let $s>0$. For $n$ sufficiently large, if $n$ is such that $\zeta_{m_n+1}\leq (3s)^{-1},$ then for any $z\in[0,1+t]$ we have $$|\phi_{\a}(z)-\phi_{\a'}(z)|>\frac{s}{4^n}$$ for distinct $\a,\a'\in \D^n$. 
\end{lemma}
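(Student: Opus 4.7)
The plan is to reduce everything to a Diophantine statement about $t$ via Lemma \ref{obvious lemma}, and then exploit the continued fraction machinery listed at the start of the section. Specifically, by Lemma \ref{obvious lemma}, for all $n$ sufficiently large it is enough to verify that $|qt-p|>s/2^n$ for every pair of integers $1\leq p,q\leq 2^n-1$, since a failure of the inequality $|\phi_{\a}(z)-\phi_{\a'}(z)|>s/4^n$ for some distinct $\a,\a'\in\D^n$ would force the existence of such $p,q$ violating the Diophantine bound.

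Next I would use property \eqref{property4}. By the definition of $m_n$ we have $q\leq 2^n-1<q_{m_n+1}$, and hence $|qt-p|\geq |q_{m_n}t-p_{m_n}|$ for every integer $p$. Combining this with the lower half of property \eqref{property1} gives
$$|qt-p|\;\geq\;|q_{m_n}t-p_{m_n}|\;>\;\frac{1}{q_{m_n+1}+q_{m_n}}.$$
So it suffices to estimate the denominator $q_{m_n+1}+q_{m_n}$ from above.

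For this I would invoke the recursion \eqref{property2}: since $q_{m_n-1}\leq q_{m_n}$,
$$q_{m_n+1}+q_{m_n}\;=\;\zeta_{m_n+1}q_{m_n}+q_{m_n-1}+q_{m_n}\;\leq\;(\zeta_{m_n+1}+2)q_{m_n}\;\leq\;3\zeta_{m_n+1}q_{m_n},$$
where the last step uses $\zeta_{m_n+1}\geq 1$. Using the definition of $m_n$ once more to bound $q_{m_n}\leq 2^n-1$, we obtain
$$|qt-p|\;>\;\frac{1}{3\zeta_{m_n+1}(2^n-1)}.$$

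Finally, plugging in the hypothesis $\zeta_{m_n+1}\leq (3s)^{-1}$ gives $3\zeta_{m_n+1}\leq s^{-1}$, so
$$|qt-p|\;>\;\frac{s}{2^n-1}\;>\;\frac{s}{2^n},$$
and Lemma \ref{obvious lemma} converts this back into the desired separation of the images $\phi_{\a}(z)$ at level $n$. There is no serious obstacle here; the only subtlety is book-keeping the constant $3$ carefully so that the threshold $(3s)^{-1}$ on $\zeta_{m_n+1}$ matches the bound $q_{m_n+1}+q_{m_n}\leq 3\zeta_{m_n+1}q_{m_n}$.
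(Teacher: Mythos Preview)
Your argument is correct and is essentially identical to the paper's own proof: both reduce to the Diophantine inequality via Lemma~\ref{obvious lemma}, apply \eqref{property4} and \eqref{property1} to bound $|qt-p|$ below by $1/(q_{m_n+1}+q_{m_n})$, use the recursion \eqref{property2} together with $\zeta_{m_n+1}\geq 1$ to obtain $q_{m_n+1}+q_{m_n}\leq 3\zeta_{m_n+1}q_{m_n}$, and then combine the hypothesis $\zeta_{m_n+1}\leq (3s)^{-1}$ with $q_{m_n}\leq 2^n-1$. The only cosmetic difference is the order in which you substitute the bound on $q_{m_n}$ and the bound on $\zeta_{m_n+1}$.
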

\begin{proof}
By \eqref{property1}, \eqref{property2}, and \eqref{property4}, we know that for any $1\leq p,q\leq 2^{n}-1$ we have $$|qt-p|\geq |q_{m_n}t-p_{m_n}|\geq \frac{1}{(q_{m_n+1}+q_{m_n})}= \frac{1}{(\zeta_{m_n+1}+1)q_{m_n}+q_{m_n-1}}>  \frac{1}{3\zeta_{m_n+1}q_{m_n}}.$$ Now using our assumption $\zeta_{m_n+1}\leq (3s)^{-1},$ we may conclude that for any $1\leq p,q\leq 2^{n}-1$ we have $$|qt-p|\geq \frac{s}{q_{m_n}}>\frac{s}{2^n}.$$
Applying Lemma \ref{obvious lemma}, we may conclude our proof.
\end{proof}

With Lemma \ref{diophantine lemma}  and Lemma \ref{continued fraction lemma} in mind we introduce the following definition. We say that $n$ is a good $s$-level if either $$2sq_{m}\leq 2^{n}-1<q_{m}$$ for some $m$, or if $$\zeta_{m_n +1}\leq (3s)^{-1}.$$ It follows from Lemma \ref{diophantine lemma} and Lemma \ref{continued fraction lemma} that if $n$ is a good $s$-level then $$S\left(\{\phi_{\a}(z)\}_{\a\in \D^n},\frac{s}{4^n}\right)=\{\phi_{\a}(z)\}_{\a\in \D^n}$$ for any $z\in [0,1+t]$. 

The following proposition implies statement $2$ from Theorem \ref{precise result}.

\begin{prop}
	\label{Fred}
If $t\notin \mathbb{Q},$ then there exists $h:\mathbb{N}\to[0,\infty)$ depending upon the continued fraction expansion of $t,$ such that $\lim_{n\to\infty}h(n)= 0,$ and for any $z\in[0,1+t]$ Lebesgue almost every $x\in [0,1+t]$ is contained in $U_{\Phi_t}(z,\m,h).$ 
\end{prop}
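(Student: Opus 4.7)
The plan is to exploit the continued fraction expansion of $t$ to produce an infinite sequence of scales $n$ at which the images $\{\phi_{\a}(z)\}_{\a\in\D^n}$ are uniformly $\tfrac{1}{8\cdot 4^n}$-separated, and then apply Proposition \ref{separated full measure}. Because $\Phi_t$ consists of similarities and $\m$ is a Bernoulli measure with $L_{\m,n}=\D^n$ and $R_{\m,n}=4^n$, that proposition applies once we exhibit such a subsequence, and the required function $h$ can then be chosen with $h(n)\to 0$.

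First, for each $m$ with $q_m\geq 4$, set $n_m:=\lfloor\log_2 q_m\rfloor$, so that $2^{n_m}\leq q_m<2^{n_m+1}$. An elementary estimate gives $q_m/4\leq 2^{n_m}-1<q_m$, which is precisely the inequality $2sq_m\leq 2^{n_m}-1<q_m$ with $s=1/8$. Lemma \ref{diophantine lemma} therefore certifies that $n_m$ is a good $1/8$-level: $|\phi_{\a}(z)-\phi_{\a'}(z)|>\tfrac{1}{8\cdot 4^{n_m}}$ for all distinct $\a,\a'\in\D^{n_m}$ and all $z\in[0,1+t]$. Because $t\notin\mathbb{Q}$ forces $q_m\to\infty$, the set $\{n_m:m\geq 1\}$ is infinite; enumerate it in strictly increasing order as $(n_k)_{k\geq 1}$. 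Note that the sequence $(n_k)$ is determined purely by the continued fraction expansion of $t$.

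At every such $n_k$ the images $\{\phi_{\a}(z)\}_{\a\in\D^{n_k}}$ are therefore $4^{n_k}$ pairwise distinct points with mutual separation exceeding $\tfrac{1}{8\cdot 4^{n_k}}$, for every $z\in[0,1+t]$. Consequently
\begin{equation*}
\frac{T\bigl(Y_{\m,n_k}(z),\tfrac{1}{8\cdot 4^{n_k}}\bigr)}{R_{\m,n_k}}=\frac{4^{n_k}}{4^{n_k}}=1,
\end{equation*}
so the hypothesis of Proposition \ref{separated full measure} holds with $s=1/8$ for each $z\in[0,1+t]$.

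Finally, define $h:\mathbb{N}\to[0,\infty)$ by $h(n_k):=1/k$ and $h(n):=0$ for $n\notin\{n_k\}$. Then $\sum_{k=1}^{\infty}h(n_k)=\infty$, while $h(n)\to 0$ as $n\to\infty$ because $n_k\to\infty$. Fixing $z\in[0,1+t]$ and applying Proposition \ref{separated full measure} with this $h$ yields that Lebesgue almost every $x\in[0,1+t]$ lies in $U_{\Phi_t}(z,\m,h)$. The main point of the argument is producing a uniform separation constant $s$ along infinitely many scales; this is achieved by the elementary dyadic bracket $q_m/4\leq 2^{n_m}-1<q_m$, after which Lemma \ref{diophantine lemma} and Proposition \ref{separated full measure} do the rest.
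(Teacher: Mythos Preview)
Your proof is correct and follows essentially the same route as the paper: you identify infinitely many scales $n_m$ satisfying $q_m/4\leq 2^{n_m}-1<q_m$ (the paper's condition \eqref{blahblah} with $s=1/8$), invoke Lemma~\ref{diophantine lemma} to get full separation at those scales, and then apply Proposition~\ref{separated full measure}. The only cosmetic differences are that you write $n_m=\lfloor\log_2 q_m\rfloor$ explicitly and give a concrete choice $h(n_k)=1/k$, whereas the paper merely asserts the existence of such an $h$.
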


\begin{proof}
Fix $t\notin \mathbb{Q}$ and let $s=1/8$. For any $m\in\mathbb{N},$ it follows from the definition that $n$ is a good $1/8$-level if $n$ satisfies
\begin{equation}
\label{blahblah}
\frac{q_{m}}{4}\leq 2^{n}-1<q_{m}.
\end{equation}For any $m$ sufficiently large there is clearly at least one value of $n$ satisfying \eqref{blahblah}. As such there are infinitely many good $1/8$ levels. Now let $h:\mathbb{N}\to [0,\infty)$ be a function satisfying $\lim_{n\to\infty}h(n)=0$ and  $$\sum_{\stackrel{n}{n \textrm{ is a good }1/8{\textrm{-level}}}}h(n)=\infty.$$ Now as remarked above, if $n$ is a good $1/8$-level, then $$S\left(\{\phi_{\a}(z)\}_{\a\in \D^n},\frac{1}{8\cdot 4^n}\right)=\{\phi_{\a}(z)\}_{\a\in \D^n}$$ for any $z\in[0,1+t]$. We may now apply Proposition \ref{separated full measure} and conclude that for any $z\in[0,1+t],$ Lebesgue almost every $x\in [0,1+t]$ is contained in $U_{\Phi_t}(z,\m,h)$ for this choice of $h$. 
\end{proof}

In the proof of Proposition \ref{Fred} we showed that if $t\notin \mathbb{Q},$ then for infinitely many $n\in\mathbb{N}$ we have $$S(\{\phi_{\a}(z)\}_{\a\in \D^n},\frac{1}{8\cdot 4^n})=\{\phi_{\a}(z)\}_{\a\in \D^n}.$$ Theorem \ref{overlap or optimal} now follows from this observation and Proposition \ref{overlap prop}.

The following proposition implies statement $5$ from Theorem \ref{precise result}.
\begin{prop}
	\label{propa}
Suppose $t\notin \mathbb{Q}$ is such that for any $\epsilon>0,$ there exists $L\in\mathbb{N}$ for which the following inequality holds for $M$ sufficiently large: $$\sum_{\stackrel{1\leq m \leq M}{\frac{q_{m+1}}{q_m}\geq L}}\log_{2}(\zeta_{m+1}+1) \leq \epsilon M.$$ Then for any $z\in[0,1+t]$ and $h\in H^*,$ Lebesgue almost every $x\in[0,1+t]$ is contained in $U_{\Phi_t}(z,\m,h)$. 
\end{prop}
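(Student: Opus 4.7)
The plan is to reduce the proposition to Proposition \ref{separated full measure} applied along the subsequence of good $s$-levels for a carefully chosen $s>0$. Recall that if $n$ is a good $s$-level then Lemma \ref{diophantine lemma} and Lemma \ref{continued fraction lemma} give $S(\{\phi_\a(z)\}_{\a\in\D^n},\tfrac{s}{4^n})=\{\phi_\a(z)\}_{\a\in\D^n}$ for every $z\in[0,1+t]$, so $T(Y_{\m,n}(z),\tfrac{s}{R_{\m,n}})/R_{\m,n}=1$ exactly (not merely asymptotically). Enumerating the good $s$-levels as $(n_k)$, the separation hypothesis of Proposition \ref{separated full measure} is thus trivially satisfied, and the conclusion becomes: for any $h$ with $\sum_k h(n_k)=\infty$, Lebesgue almost every $x\in[0,1+t]$ lies in $U_{\Phi_t}(z,\m,h)$. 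Hence the proof boils down to showing that for the given $h\in H^*$, namely $h\in H^*_\epsilon$ for some $\epsilon>0$, there is a choice of $s$ making the set of good $s$-levels have lower density strictly greater than $1-\epsilon$.

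To estimate the density of good $s$-levels, I would stratify the positive integers according to the index $m_n$ defined by $q_{m_n}\leq 2^n-1<q_{m_n+1}$. The block $\{n:m_n=m\}$ has cardinality $N_m\leq \log_2(\zeta_{m+1}+1)+O(1)$, since $q_{m+1}/q_m\leq \zeta_{m+1}+1$. If $\zeta_{m+1}\leq (3s)^{-1}$, then by Lemma \ref{continued fraction lemma} every $n$ in this block is good; in particular, the contribution of such $m$ to the bad count is zero. Otherwise, the only bad $n$ in the block are those with $2^n-1<2sq_{m+1}$, a count bounded crudely by $N_m$. Apply the hypothesis with some small $\epsilon'>0$ to produce $L\in\mathbb{N}$, choose $s$ so that $(3s)^{-1}>L$, and observe that $\zeta_{m+1}>(3s)^{-1}>L$ forces $q_{m+1}/q_m\geq \zeta_{m+1}>L$. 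Setting $M(N)=\max\{m:q_m\leq 2^N\}$, I would then bound
\begin{equation*}
\#\{1\leq n\leq N:n\text{ is bad}\}\leq \sum_{\substack{m\leq M(N)\\ q_{m+1}/q_m\geq L}}\bigl(\log_2(\zeta_{m+1}+1)+O(1)\bigr)\leq \epsilon' M(N)+\tfrac{\epsilon' M(N)}{\log_2(L+1)},
\end{equation*}
using the hypothesis directly for the first term and a Markov-type bound $\#\{m\leq M:\zeta_{m+1}\geq L\}\leq \epsilon' M/\log_2(L+1)$ for the second. Since $q_m\gtrsim \phi^m$ (Fibonacci lower bound), one has $M(N)\leq CN+O(1)$ with $C=1/\log_2\phi$, so the number of bad $n\leq N$ is at most $2C\epsilon' N$ for $L$ large, giving an upper density of bad $s$-levels no larger than $2C\epsilon'$. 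Choosing $\epsilon':=\epsilon/(4C)$ at the start yields lower density of good $s$-levels at least $1-\epsilon/2>1-\epsilon$, hence $\sum_k h(n_k)=\infty$ by the defining property of $H^*_\epsilon$. Applying Proposition \ref{separated full measure} finishes the proof.

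The hardest part is the bookkeeping in the second paragraph: verifying that the hypothesis, which controls a sum indexed by $m$, translates into a density bound indexed by $n$. The key observations that make this work are the comparability $q_{m+1}/q_m\asymp \zeta_{m+1}+1$, the fact that at most $\log_2(\zeta_{m+1}+1)+O(1)$ values of $n$ have $m_n=m$, and the linear relationship $M(N)\lesssim N$ coming from the golden-ratio lower bound on $q_m$. A minor technical nuisance is that the hypothesis only holds for $M$ sufficiently large, but since $M(N)\to\infty$ with $N$ this poses no real difficulty.
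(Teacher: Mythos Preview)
Your proposal is correct and follows essentially the same route as the paper's proof: show that the set of good $s$-levels has lower density strictly exceeding $1-\epsilon$, then invoke Proposition \ref{separated full measure} along that subsequence. The paper's argument is marginally more streamlined in the bookkeeping step---it uses the crude bound $q_m\geq 2^{(m-2)/2}$ to get $M(N)\leq 2N+2$ and the block-size estimate $\#\{n:q_m\leq 2^n-1<q_{m+1}\}\leq \log_2(\zeta_{m+1}+1)$ without an additive $O(1)$, so that the bad count is bounded directly by the hypothesis sum with $L=1/(3s)$ and no separate Markov term is needed; your version with the explicit $O(1)$ and the auxiliary count $\#\{m\leq M:\zeta_{m+1}\geq L\}$ is a bit more careful but arrives at the same place.
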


\begin{proof}
Fix $t$ satisfying the hypothesis of our proposition and $h\in H^*$. By definition, there exists $\epsilon>0$ such that for any $B\subset \mathbb{N}$ satisfying $\underline{d}(B)\geq 1-\epsilon$ we have 
\begin{equation}
\label{bump}\sum_{n\in B}h(n)=\infty.
\end{equation}Now let us fix $s$ to be sufficiently small so that 
\begin{equation}
\label{s condition}
\sum_{\stackrel{1\leq m\leq 2N+2}{q_{m+1}/q_{m}\geq 1/3s}}\log_2 (\zeta_{m+1}+1)\leq \epsilon N
\end{equation}for $N$ sufficiently large.  

We observe that if $n$ is not a good $s$-level then by \eqref{property2} we must have $$\frac{q_{m_n+1}}{q_{m_n}}>\frac{1}{3s}.$$ 
Using \eqref{property2} and an induction argument, one can also show that $$q_{m}\geq 2^{\frac{m-2}{2}}$$ for all $m\geq 1$. Combining these observations, it follows that if $1\leq n\leq N$ and $n$ is not a good $s$-interval, then there exists $1\leq m\leq 2N+2$ such that $\frac{q_{m+1}}{q_{m}}\geq 1/3s$ and $q_m\leq 2^n-1<q_{m+1}$. As such we have the bound
\begin{equation}
\label{count bounda}\#\{1\leq n\leq N: n \textrm{ is not a good }s\textrm{-interval}\}\leq \sum_{\stackrel{1\leq m\leq 2N+2}{q_{m+1}/q_{m}\geq 1/3s}}\#\{n:q_{m}\leq 2^{n}-1< q_{m+1}\}.
\end{equation}By \eqref{property2} we know that for any $m\in\mathbb{N}$ we have $$\#\{n:q_{m}\leq 2^{n}-1< q_{m+1}\}\leq \log_2 (\zeta_{m+1}+1).$$ Substituting this bound into \eqref{count bounda} and applying \eqref{s condition}, we obtain 
$$
\#\{1\leq n\leq N: n \textrm{ is not a good }s\textrm{-interval}\}\leq \sum_{\stackrel{1\leq m\leq 2N+2}{q_{m+1}/q_{m}\geq 1/3s}}\log_2 (\zeta_{m+1}+1)\leq \epsilon N$$ for $N$ sufficiently large. It follows therefore that $$\underline{d}(n: n\textrm{ is a good } s\textrm{-level})\geq 1-\epsilon.$$ In which case, by \eqref{bump} we have \begin{equation}
\label{robot}\sum_{\stackrel{n}{n\textrm{ is a good } s\textrm{-level}}}h(n)=\infty.
\end{equation} We know that for a good $s$-level we have $$S\left(\{\phi_{\a}(z)\}_{\a\in \D^n},\frac{s}{ 4^n}\right)=\{\phi_{\a}(z)\}_{\a\in \D^n},$$ for all $z\in [0,1+t]$. Therefore combining \eqref{robot} with Proposition \ref{separated full measure} finishes our proof.
\end{proof}

The following proposition implies statement $6$ from Theorem \ref{precise result}.
\begin{prop} Suppose $\mu$ is an ergodic invariant measure for the Gauss map, and satisfies $$\sum_{m=1}^{\infty}\mu\Big(\left[\frac{1}{m+1},\frac{1}{m}\right]\Big)\log_2 (m +1)<\infty.$$ Then for $\mu$-almost every $t,$ we have that for any $z\in[0,1+t]$ and $h\in H^*,$ Lebesgue almost every $x\in[0,1+t]$ is contained in $U_{\Phi_t}(z,\m,h).$ In particular, for Lebesgue almost every $t\in[0,1],$ we have that for any $z\in[0,1+t]$ and $h\in H^*$, Lebesgue almost every $x\in[0,1+t]$ is contained in $U_{\Phi_t}(z,\m,h)$.
\end{prop}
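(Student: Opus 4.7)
The plan is to reduce to statement 5 of Theorem \ref{precise result}, which has just been proved, by verifying its continued-fraction hypothesis for $\mu$-almost every $t$; the ``in particular'' clause will then follow by checking that the Gauss measure satisfies the summability condition. Let $T:[0,1]\to[0,1]$ denote the Gauss map $T(x)=\{1/x\}$, so that the continued fraction digits of $t$ satisfy $\zeta_{m+1}(t)=\lfloor 1/T^m(t)\rfloor$. I would define $f:(0,1)\to[0,\infty)$ by $f(x):=\log_2(\lfloor 1/x\rfloor+1)$, so that $f(T^m(t))=\log_2(\zeta_{m+1}(t)+1)$ and the hypothesis on $\mu$ is exactly the statement $f\in L^1(\mu)$.

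For each integer $L\geq 2$ I will set $g_L(x):=f(x)\cdot \mathbf{1}_{\{\lfloor 1/x\rfloor\geq L-1\}}$. Then $0\leq g_L\leq f\in L^1(\mu)$ and $g_L\to 0$ pointwise as $L\to\infty$, so dominated convergence gives $\int g_L\,d\mu\to 0$. Since $\mu$ is ergodic and $T$-invariant, Birkhoff's theorem gives, for each fixed $L$ and $\mu$-almost every $t$, $\frac{1}{M}\sum_{m=0}^{M-1}g_L(T^m(t))\to \int g_L\,d\mu$; intersecting over the countable collection $L\in\mathbb{N}$ yields a single $\mu$-full set $E$ on which this holds for every $L$.

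Next I would fix $t\in E$ and $\epsilon>0$, choose $L$ so large that $\int g_L\,d\mu<\epsilon/2$, and use the recursion $q_{m+1}=\zeta_{m+1}q_m+q_{m-1}$ to conclude that $q_{m+1}/q_m\in[\zeta_{m+1},\zeta_{m+1}+1)$, so $q_{m+1}/q_m\geq L$ forces $\zeta_{m+1}\geq L-1$. Consequently, for $M$ sufficiently large,
\[
\sum_{\stackrel{1\leq m\leq M}{q_{m+1}/q_m\geq L}}\log_{2}(\zeta_{m+1}+1)\;\leq\;\sum_{m=0}^{M-1}g_L(T^m(t))\;\leq\;\epsilon M.
\]
This is exactly the hypothesis of statement 5 of Theorem \ref{precise result}, which then immediately delivers the desired conclusion for every $t\in E$.

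For the final sentence I would use the Gauss measure $\mu_G$ with density $\frac{1}{(1+x)\log 2}$, which is ergodic and $T$-invariant and equivalent to Lebesgue measure on $[0,1]$. Since $\mu_G([\tfrac{1}{m+1},\tfrac{1}{m}])\asymp m^{-2}$, the series $\sum_m \mu_G([\tfrac{1}{m+1},\tfrac{1}{m}])\log_2(m+1)\asymp\sum_m m^{-2}\log_2(m+1)$ converges, so the first half of the proposition applies with $\mu=\mu_G$, and equivalence with Lebesgue measure transfers the conclusion to Lebesgue-almost every $t\in[0,1]$. There is no serious obstacle here beyond bookkeeping: the crux is packaging the summability hypothesis as integrability of a tail function so that dominated convergence combines cleanly with Birkhoff's theorem to deliver the digit bound demanded by statement 5.
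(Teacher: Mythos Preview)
Your proposal is correct and matches the paper's argument: both reduce to Proposition~\ref{propa} (statement~5) by recasting the summability hypothesis as integrability of a tail function, choosing $L$ so that the tail integral is small, and applying Birkhoff's ergodic theorem, with the Gauss-measure step for the Lebesgue conclusion handled identically. One harmless index slip: your displayed inequality needs $\sum_{m=1}^{M}g_L(T^m t)$ on the right rather than $\sum_{m=0}^{M-1}$, so that the $\zeta_{m+1}$ terms line up---this does not affect the Birkhoff limit.
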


\begin{proof}
Let $\mu$ be a measure satisfying the hypothesis of our proposition. To prove the first part of our result we will show that $\mu$-almost every $t$ satisfies the hypothesis of Proposition \ref{propa}. 

Recall that the Gauss map $T:[0,1]\setminus\mathbb{Q}\to[0,1]\setminus\mathbb{Q}$ is given by $$T(x)=\frac{1}{x}-\Bigl\lfloor\frac{1}{x}\Bigr\rfloor.$$ It is well known that the dynamics of the Gauss map and the continued fraction expansion of a number $t$ are intertwined. In particular, it is known that 
\begin{equation}
\label{gauss}
\zeta_{m+1}=\zeta \textrm{ if and only if }T^{m}(t)\in\left(\frac{1}{\zeta+1},\frac{1}{\zeta}\right).
\end{equation} By \eqref{property2} we know that $q_{m+1}/q_{m}\geq L$ implies $\zeta_{m+1}\geq L-1$. Using \eqref{gauss} and this observation, we have that for any $t\notin\mathbb{Q}$
\begin{equation}
\label{Hat}\sum_{\stackrel{1\leq m \leq M}{\frac{q_{m+1}}{q_m}\geq L}}\log _2(\zeta_{m+1}+1) \leq \sum_{1\leq m\leq M}\chi_{(0,\frac{1}{L-1})}(T^m(t))\log_2(f(T^m(t))+1).
\end{equation} Where $f:(0,1]\to\mathbb{N}$ is given by $$f(t)=N\textrm{ if }t\in\left(\frac{1}{N+1},\frac{1}{N}\right].$$ By our assumptions on $\mu$, we know that for any $\epsilon>0$ we can pick $L$ sufficiently large such that $$\sum_{m=L-1}^{\infty}\mu\left(\left[\frac{1}{m+1},\frac{1}{m}\right]\right)\log_2(m+1)<\epsilon.$$

Assuming that we have picked such an $L$ sufficiently large, we know by the Birkhoff ergodic theorem that for $\mu$-almost every $t$ we have \begin{align*}
\lim_{M\to\infty}\frac{1}{M}\sum_{1\leq m\leq M}\chi_{(0,\frac{1}{L-1})}(T^m(t))\log_2(f(T^m(t))+1)&=\int \chi_{(0,\frac{1}{L-1})}\log(f(t)+1)d\mu(t)\\
&=\sum_{m=L-1}^{\infty}\mu\Big(\Big[\frac{1}{m+1},\frac{1}{m}\Big]\Big)\log_2(m+1)\\
&<\epsilon.
\end{align*}
 Combining the above with \eqref{Hat} shows that $\mu$-almost every $t$ satisfies the hypothesis of Proposition \ref{propa}. Applying Proposition \ref{propa} completes the first half of our proof.

To deduce the Lebesgue almost every part of our proposition we remark that the Gauss measure given by $$\mu_{G}(A)=\frac{1}{\log 2}\int_{A} \frac{1}{1+x}\, dx$$ is an ergodic invariant measure for the Gauss map and is equivalent to the Lebesgue measure restricted to $[0,1]$. One can easily check that $\mu_{G}$ satisfies $$\mu_{G}\left(\left[\frac{1}{m+1},\frac{1}{m}\right]\right)=\mathcal{O}\left(\frac{1}{m^2}\right),$$ which clearly implies $$\sum_{m=1}^{\infty}\mu_{G}\left(\left[\frac{1}{m+1},\frac{1}{m}\right]\right)\log (m +1)<\infty.$$ Applying the first part of this proposition completes the proof.

\end{proof}

The following proposition proves statement $4$ from Theorem \ref{precise result}.
\begin{prop}
	Suppose $t\notin\mathbb{Q}$ is not badly approximable. Then there exists $h:\mathbb{N}\to[0,\infty)$ such that $\sum_{n=1}^{\infty}h(n)=\infty,$ yet $U_{\Phi_t}(z,\m,h)$ has zero Lebesgue measure for any $z\in[0,1+t]$. 
\end{prop}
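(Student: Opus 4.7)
The plan is to apply Proposition~\ref{fail prop}, which means I must exhibit some $s>0$ and a subsequence $(n_k)$ with
$$\frac{T(\{\phi_\a(z)\}_{\a\in\D^{n_k}},\,s/4^{n_k})}{4^{n_k}}\longrightarrow 0.$$
A preliminary observation streamlines the dependence on $z$: by \eqref{separate terms}, the map $\a\mapsto (k(\a),\ell(\a))$ is a bijection $\D^n\to\{0,\ldots,2^n-1\}^2$ with $\phi_\a(z)=z/2^n+(k(\a)+t\ell(\a))/2^n$. Hence $\{\phi_\a(z)\}_{\a\in\D^n}$ is just a translate by $z/2^n$ of $\{(k+t\ell)/2^n:0\le k,\ell\le 2^n-1\}$, so $T(\{\phi_\a(z)\}_{\a\in\D^n},r)$ does not depend on $z$. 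Because $\m$ is uniform, $L_{\m,n}=\D^n$ and $\m([\a])=4^{-n}$, so $U_{\Phi_t}(z,\m,h)$ coincides with the limsup set of Proposition~\ref{fail prop}, and a single $h$ extracted there will work simultaneously for every $z\in[0,1+t]$.

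Since $t$ is not badly approximable, $(\zeta_m)$ is unbounded. Fix $s\in(0,1)$ and pick an increasing sequence $(m_k)$ with $\zeta_{m_k+1}\ge 4/s$ and $\zeta_{m_k+1}\to\infty$. Set
$$n_k:=\bigl\lfloor \tfrac12\log_2\!\bigl(s\,q_{m_k}q_{m_k+1}\bigr)\bigr\rfloor,$$
so $\sqrt{s\,q_{m_k}q_{m_k+1}}/2\le 2^{n_k}\le\sqrt{s\,q_{m_k}q_{m_k+1}}$; together with $\zeta_{m_k+1}\ge 4/s$ and \eqref{property2} the lower bound also gives $2^{n_k}\ge q_{m_k}$.

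The heart of the argument is an orbit decomposition. Partition $\{0,\ldots,2^{n_k}-1\}^2$ into orbits of the $\mathbb{Z}$-action $(k,\ell)\mapsto(k+p_{m_k},\ell-q_{m_k})$ (restricted to the box). By \eqref{property1}, along each orbit the values $k+t\ell$ form an arithmetic progression with common difference $-(q_{m_k}t-p_{m_k})$, of absolute value $<1/q_{m_k+1}$. Each orbit has at most $\lfloor 2^{n_k}/q_{m_k}\rfloor+1$ elements, so its spread of $(k+t\ell)$-values is bounded by $2^{n_k}/(q_{m_k}q_{m_k+1})\le s/2^{n_k}$ by the choice of $n_k$; dividing by $2^{n_k}$, any single orbit contributes to $\{\phi_\a(z)\}_{\a\in\D^{n_k}}$ a set of diameter $\le s/4^{n_k}$, and hence at most one point to any $s/4^{n_k}$-separated subset. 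On the other hand, every orbit has a representative $(k^*,\ell^*)$ where moving backwards by one step leaves the box, forcing either $k^*<p_{m_k}$ or $\ell^*>2^{n_k}-1-q_{m_k}$; the number of such lattice points is at most $p_{m_k}\cdot 2^{n_k}+q_{m_k}\cdot 2^{n_k}\le 2q_{m_k}\cdot 2^{n_k}$. Combining,
$$\frac{T(\{\phi_\a(z)\}_{\a\in\D^{n_k}},\,s/4^{n_k})}{4^{n_k}}\le \frac{2q_{m_k}\cdot 2^{n_k}}{4^{n_k}}\le \frac{4}{\sqrt{s\,\zeta_{m_k+1}}}\longrightarrow 0,$$
and Proposition~\ref{fail prop} supplies the desired $h$.

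The main obstacle is calibrating $n_k$: it must be large enough that the near-coincidence $|q_{m_k}t-p_{m_k}|<1/q_{m_k+1}$ propagates into orbit clusters of diameter below the very tight threshold $s/4^{n_k}$, yet small enough that the number of orbits is $o(4^{n_k})$. These opposing constraints are simultaneously satisfiable precisely because $\zeta_{m_k+1}$ can be made arbitrarily large, which is exactly the failure of $t$ to be badly approximable.
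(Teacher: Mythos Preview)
Your proof is correct and shares the same core mechanism as the paper's: exploit the near-relation $q_{m_k}t\approx p_{m_k}$ to cluster the lattice points $\{(k,\ell):0\le k,\ell\le 2^{n_k}-1\}$ along the translation $(k,\ell)\mapsto(k+p_{m_k},\ell-q_{m_k})$, bound the number of clusters, and invoke Proposition~\ref{fail prop}. The observation that the separation count is independent of $z$ is handled identically.

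The packaging differs in two respects. First, the paper chooses $\zeta_{m_k+1}\ge k^3$ and sets $2^{n_k}\asymp k^2 q_{m_k}$, then restricts to a sub-box $\{kp_{m_k}\le p,\ q\le 2^{n_k}-1-kq_{m_k}\}$ and explicitly builds a greedy covering by intervals of length $4^{-n_k}$, each absorbing at least $k+1$ points; the complement of the sub-box is handled separately. Your orbit decomposition of the \emph{entire} box is tidier: every point lies on a single orbit segment, the diameter bound follows in one line, and the orbit count is read off from the boundary strip. Second, your calibration $2^{n_k}\asymp\sqrt{s\,q_{m_k}q_{m_k+1}}$ is the natural geometric mean balancing the two competing constraints you identify at the end, whereas the paper's choice ties the scale to the index $k$ through the specific growth $\zeta_{m_k+1}\ge k^3$. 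Both routes give $T/4^{n_k}\to 0$; yours yields the sharper-looking rate $O(\zeta_{m_k+1}^{-1/2})$ directly.
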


\begin{proof}
Let $t\notin\mathbb{Q}$ and suppose $t$ is not badly approximable. We will prove that for some $s>0$ we have 
\begin{equation}
\label{WTSQ}
\liminf_{n\to\infty}\frac{T(\{\phi_{\a}(z)\}_{\a\in \D^{n}},\frac{s}{4^{n}})}{4^{n}}=0,
\end{equation} for all $z\in[0,1+t]$. Proposition \ref{fail prop} then guarantees for each $z\in[0,1+t]$ the existence of a $h$ satisfying $\sum_{n=1}^{\infty}h(n)=\infty,$ such that $U_{\Phi_t}(z,\m,h)$ has zero Lebesgue measure. What is not clear from the statement of Proposition \ref{fail prop} is whether there exists a $h$ which satisfies this property simultaneously for all $z\in [0,1+t]$. Examining the proof of Proposition \ref{fail prop}, we see that the function $h$ that is constructed only depends upon the speed at which $$\frac{T(\{\phi_{\a}(z)\}_{\a\in \D^{n}},\frac{s}{4^{n}})}{4^{n}}$$ converges to zero along a subsequence. Since 
\begin{equation}
\label{18}
T\left(\{\phi_{\a}(z)\}_{\a\in \D^{n}},\frac{s}{4^{n}}\right)=T\left(\{\phi_{\a}(z')\}_{\a\in \D^{n}},\frac{s}{4^{n}}\right)
\end{equation} for any $z,z'\in[0,1+t]$ and $n\in\mathbb{N}$, it is clear that the speed of convergence to zero along any subsequence is independent of $z$. In particular, the sequence $(n_j)$ constructed in \eqref{separated upper bound} is independent of the choice of $z$. Therefore the $h$ constructed in Proposition \ref{fail prop} will work for all $z\in[0,1+t]$ simultaneously. As such to prove our proposition it is sufficient to show that \eqref{WTSQ} holds for all $z\in [0,1+t]$. 

It also follows from \eqref{18} that to prove there exists $s>0$ such that \eqref{WTSQ} holds for all $z\in[0,1+t]$, it suffices to prove that there exists $s>0$ such that \eqref{WTSQ} for a specific $z\in[0,1+t].$ As such let us now fix $z=0$. It can be shown that for any $n\in \N$ we have $$\{\phi_{\a}(0)\}_{\a\in\D^n}=\left\{\frac{p+qt}{2^n}: 0\leq p\leq 2^{n}-1,\,0\leq q\leq 2^n -1\right\}.$$

Since $t$ is not badly approximable, there exists a sequence $(m_k)$ such that  $\zeta_{m_k+1}\geq k^3$ for all $k\in N$. In which case, by \eqref{property1} and \eqref{property2} we know that
\begin{equation}
\label{stepsize}
|q_{m_k}t-p_{m_k}|\leq \frac{1}{k^3q_{m_k}}
\end{equation} for each $k\in\mathbb{N}$. Without loss of generality we assume $q_{m_k}t-p_{m_k}>0$ for all $k$. This assumption will simplify some of our later arguments.

Define the sequence $(n_k)$ via the inequalities 
\begin{equation}
\label{approximate N}
2^{n_k}\leq k^2q_{m_k}< 2^{n_k+1}.
\end{equation} Consider the set of $(p,q)\in\mathbb{N}^2$ satisfying 
\begin{equation}
\label{glove1}kp_{m_k}\leq p\leq 2^{n_k}-1
\end{equation}
and 
\begin{equation}
\label{glove2}0\leq q\leq 2^{n_k}-1-kq_{m_k}.
\end{equation}	
Note that for any $(p,q)\in \N^2$ satisfying \eqref{glove1} and \eqref{glove2} we have $$0\leq p-ip_{m_k} \leq 2^{n_k}-1$$ and $$0\leq q+iq_{m_k} \leq 2^{n_k}-1$$ for all $0\leq i\leq k$.

Given $k\in \N$ we let $$z_1=\inf\Big\{\frac{p+tq}{2^{n_k}}: (p,q) \textrm{ satisfy }\eqref{glove1}\textrm{ and }\eqref{glove2}\Big\}.$$ Equations \eqref{stepsize} and \eqref{approximate N} imply that for all $0\leq i\leq k$ we have $$z_1+\frac{i(q_{m_k}t-p_{m_k})}{2^{n_k}}\in \left[z_1,z_1+\frac{k}{2^{n_k}k^3q_{m_k}}\right]\subseteq \left[z_1,z_1+\frac{1}{4^{n_k}}\right].$$ 
Assume we have defined $z_1,\ldots,z_l$, we then define $z_{l+1}$ to be $$z_{l+1}=\inf\left\{\frac{p+qt}{2^{n_k}}:\frac{p+qt}{2^{n_k}}> z_l+\frac{1}{4^{n_k}} \textrm{ and }(p,q) \textrm{ satisfy }\eqref{glove1}\textrm{ and }\eqref{glove2}\right\},$$ assuming the set we are taking the infimum over is non-empty. By an analogous argument to that given above, it can be shown that for all $0\leq i\leq k$ we have
$$z_{l+1}+\frac{i(q_{m_k}t-p_{m_k})}{2^{n_k}}\in  \left[z_{l+1},z_{l+1}+\frac{1}{4^{n_k}}\right].$$ This process must eventually end yielding $z_1,\ldots,z_{L(k)}.$ By our construction, we known that if $(p,q)$ satisfy \eqref{glove1} and \eqref{glove2}, then there must exist $1\leq l\leq L(k)$ such that $$\frac{p+qt}{2^{n_k}}\in \left[z_l,z_l+\frac{1}{4^{n_k}}\right].$$ It also follows from our construction that each interval $[z_l,z_l+4^{-{n_k}}]$ contains at least $k+1$ distinct points of the form $\frac{p+qt}{2^{n_k}}$ where $0\leq p\leq 2^{n_k}-1$ and $0\leq q\leq 2^{n_k}-1.$ Since there are only $4^{n_k}$ such points we have 
\begin{equation}
\label{cake1}
L(k)\leq \frac{4^{n_k}}{k+1}.
\end{equation} We also have the bound 
\begin{equation}
\label{cake2}
\#\left\{(p,q):\textrm{either }\eqref{glove1} \textrm{ or }\eqref{glove2}\textrm{ is not satisfied}\right\}=\mathcal{O}(2^{n_k} kp_{m_k}+2^{n_k} kq_{m_k})).
\end{equation} Now let $S(\{\frac{p+tq}{2^{n_k}}\},\frac{1}{4^{n_k}})$ be a maximal $4^{-n_k}$ separated subset of $\{\frac{p+tq}{2^{n_k}}\},$ or equivalently of $\{\phi_{\a}(0)\}_{\a\in \D^{n_k}}$. Then we have
\begin{align}
\label{mario}
\frac{T(\{\frac{p+tq}{2^{n_k}}\},\frac{1}{4^{n_k}})}{4^{n_k}}&=\frac{\#\{(p,q): \eqref{glove1} \textrm{ and }\eqref{glove2} \textrm{ are satisfied and }\frac{p+tq}{2^{n_k}}\in S(\{\frac{p+tq}{2^{n_k}}\},\frac{1}{4^{n_k}})\}}{4^{n_k}} \\
&+\frac{\#\{(p,q):\textrm{either }\eqref{glove1} \textrm{ or }\eqref{glove2}\textrm{ is not satisfied and }\frac{p+tq}{2^{n_k}}\in S(\{\frac{p+tq}{2^{n_k}}\},\frac{1}{4^{n_k}})\}}{4^{n_k}}.\nonumber
\end{align}
If $(p,q)$ satisfy \eqref{glove1} and \eqref{glove2}, then as stated above $\frac{p+tq}{2^{n_k}}\in [z_l,z_l+\frac{1}{4^{n_k}}]$ for some $1\leq l\leq L(k)$. Clearly a $4^{-n_k}$ separated set can only contain one point from each interval $[z_l,z_l+\frac{1}{4^{n_k}}].$ Therefore 
\begin{equation}
\label{cake3}\#\left\{(p,q): \eqref{glove1} \textrm{ and }\eqref{glove2} \textrm{ are satisfied and }\frac{p+tq}{2^n}\in S\Big(\Big\{\frac{p+tq}{2^{n_k}}\Big\},\frac{1}{4^{n_k}}\Big)\right\}\leq L(k).
\end{equation} Substituting the bounds  \eqref{cake1}, \eqref{cake2}, and \eqref{cake3} into \eqref{mario}, we obtain
$$\frac{T(\{\frac{p+tq}{2^{n_k}}\},\frac{1}{4^{n_k}})}{4^{n_k}}=\mathcal{O}\left(\frac{1}{k+1}+\frac{kp_{m_{k}}+kq_{m_{k}}}{2^{n_k}}\right).$$ Employing \eqref{approximate N} and the fact $q_{m_k}\asymp p_{m_k},$ we obtain $$\frac{T(\{\frac{p+tq}{2^{n_k}}\},\frac{1}{4^{n_k}})}{4^{n_k}}=\mathcal{O}\left(\frac{1}{k}\right).$$ Therefore $$\lim_{k\to\infty}\frac{T(\{\phi_{\a}(z)\}_{\a\in \D^{n_k}},\frac{1}{4^{n_k}})}{4^{n_k}}=0$$ and our proof is complete.
\end{proof}

\section{Proof of Theorem \ref{Colette thm}}
\label{Colette section}
In this section we prove Theorem \ref{Colette thm}. We start with a reformulation of what it means for an IFS to be consistently separated with respect to a measure $\m$.

\begin{thm}
	\label{new colette}
Let $\m$ be a slowly decaying measure. An IFS has the CS property with respect to $\m$ if and only if there exists $z\in X$ and $s>0$ such that $$\liminf_{n\to\infty}\frac{T(\{\phi_{\a}(z)\}_{\a\in L_{\m,n}},\frac{s}{R_{\m,n}^{1/d}})}{R_{\m,n}}>0.$$
\end{thm}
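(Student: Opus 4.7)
My plan is to prove Theorem \ref{new colette} as a direct application of the two main technical tools from Section \ref{Preliminaries}: Proposition \ref{fixed omega} handles one direction, and Proposition \ref{fail prop} handles the contrapositive of the other. The key observation is that when $\m$ is slowly decaying the exponential growth condition $R_{\m,n}\asymp c_{\m}^{-n}$ from the beginning of Section \ref{applications} is automatic, so we may apply these propositions with $\Omega=\{z\}$ (a single point) and $f_{l,n}$ enumerating $Y_{\m,n}(z)=\{\phi_{\a}(z)\}_{\a\in L_{\m,n}}$. Throughout, I will use that $\m([\a])\asymp R_{\m,n}^{-1}$ for every $\a\in L_{\m,n}$ together with Lemma \ref{arbitrarily small} to pass between the balls appearing in the propositions and the balls defining $U_{\Phi}(z,\m,h)$.

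For the $(\Leftarrow)$ direction, I will fix $z$ and $s$ for which the liminf is positive, pick $c>0$ strictly below that liminf, and observe that $z\in B(c,s,n)$ for all sufficiently large $n$. Then for any $h$ with $\sum_{n=1}^{\infty}h(n)=\infty$ we trivially have $\sum_{n:z\in B(c,s,n)}h(n)=\infty$, so Proposition \ref{fixed omega} applies and yields positive Lebesgue measure for the limsup of balls $B(\phi_{\a}(z),(h(n)/R_{\m,n})^{1/d})$. Converting radii via Lemma \ref{arbitrarily small} shows that $U_{\Phi}(z,\m,h)$ has positive Lebesgue measure, giving the CS property at $z$.

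For the $(\Rightarrow)$ direction, I will argue by contradiction. Suppose $\Phi$ has the CS property witnessed by some $z\in X$, but for every $s>0$ the liminf equals $0$. Fixing any single $s>0$ (say $s=1$), Proposition \ref{fail prop} produces some $h:\mathbb{N}\to[0,\infty)$ with $\sum_{n=1}^{\infty}h(n)=\infty$ such that the associated limsup set has zero Lebesgue measure. Using $\m([\a])\asymp R_{\m,n}^{-1}$ and Lemma \ref{arbitrarily small} once more, this implies $\L(U_{\Phi}(z,\m,h))=0$, contradicting the CS property at $z$.

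There is no serious obstacle here — both implications are essentially bookkeeping on top of Propositions \ref{fixed omega} and \ref{fail prop}. The only point that requires a little care is to confirm that the singleton $\Omega=\{z\}$ fits into the abstract framework (the measure $\eta$ on $\Omega$ is irrelevant to Proposition \ref{fixed omega}, which is stated for a fixed $\omega$), and that the elementary comparison $\m([\a])\asymp R_{\m,n}^{-1}$ for $\a\in L_{\m,n}$ allows us to replace the radii $(h(n)/R_{\m,n})^{1/d}$ appearing in those propositions with the radii $(\m([\a])h(n))^{1/d}$ appearing in the definition of $U_{\Phi}(z,\m,h)$, up to a multiplicative constant that is absorbed by Lemma \ref{arbitrarily small}.
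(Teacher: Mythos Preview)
Your proposal is correct and follows essentially the same route as the paper: both directions are obtained by invoking Proposition \ref{fixed omega} and Proposition \ref{fail prop} respectively, together with the comparison $\m([\a])\asymp R_{\m,n}^{-1}$ and Lemma \ref{arbitrarily small} to match the radii. The only cosmetic difference is that the paper phrases the forward implication as a direct contrapositive rather than a contradiction.
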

\begin{proof}
Suppose that for any $z\in X$ and $s>0$ we have $$\liminf_{n\to\infty}\frac{T(\{\phi_{\a}(z)\}_{\a\in L_{\m,n}},\frac{s}{R_{\m,n}^{1/d}})}{R_{\m,n}}=0.$$ Then by Proposition \ref{fail prop}, Lemma \ref{arbitrarily small}, and the fact $R_{\m,n}^{-1}\asymp \m([\a])$, for any $z\in X$ there exists $h:\mathbb{N}\to[0,\infty)$ such that $\sum_{n=1}^{\infty}h(n)=\infty,$ yet $U_{\Phi}(z,\m,h)$ has zero Lebesgue measure. Therefore the IFS cannot satisfy the CS property with respect to $\m$. So we have shown the rightwards implication in our if and only if.

Now suppose that there exists $z\in X$ and $s>0$ such that $$\liminf_{n\to\infty}\frac{T(\{\phi_{\a}(z)\}_{\a\in L_{\m,n}},\frac{s}{R_{\m,n}^{1/d}})}{R_{\m,n}}>0.$$ Then there exists $c>0$ such that for all $n$ sufficiently large we have $$\frac{T(\{\phi_{\a}(z)\}_{\a\in L_{\m,n}},\frac{s}{R_{\m,n}^{1/d}})}{R_{\m,n}}>c.$$ Combining the fact that $R_{\m,n}^{-1}\asymp \m([\a])$ for $\a\in L_{\m,n}$ together with Lemma \ref{arbitrarily small}, we see that Proposition \ref{fixed omega} implies that the set $U_{\Phi}(z,\m,h)$ has positive Lebesgue measure for any $h$ satisfying $\sum_{n=1}^{\infty}h(n)=\infty.$ Therefore our IFS satisfies the CS property with respect to $\m$ and we have proved the leftwards implication of our if and only if.
\end{proof}
The reformulation of the CS property provided by Theorem \ref{new colette} better explains why we used the terminology consistently separated to describe this property.

With the reformulation provided by Theorem \ref{new colette}, we can give a short proof of Theorem \ref{Colette thm}.

\begin{proof}[Proof of Theorem \ref{Colette thm}]
Let $\m$ be a slowly decaying $\sigma$-invariant ergodic probability measure. Suppose that $\mu$, the pushforward of $\m,$ is not absolutely continuous. Then by Proposition \ref{absolute continuity}, for any $z\in X$ and $s>0$ we have $$\lim_{n\to\infty}\frac{T(\{\phi_{\a}(z)\}_{\a\in L_{\m,n}},\frac{s}{R_{\m,n}^{1/d}})}{R_{\m,n}}=0.$$ By Theorem \ref{new colette} it follows that the IFS $\Phi$ does not satisfy the CS property with respect to $\m$. 
\end{proof}

\section{Proof of Theorem \ref{overlapping conformal theorem}}
\label{conformal section}
In this section we prove Theorem \ref{overlapping conformal theorem}. Recall that Theorem \ref{overlapping conformal theorem} relates to conformal iterated function systems. The parameter $\dim_{S}(\Phi)$ is the unique solution to $$P(s\cdot \log |\phi_{a_1}'(\pi(\sigma(a_j)))|)=0.$$ Moreover, $\m_{\Phi}$ is the unique measure supported on $\D^{\mathbb{N}}$ satisfying $$h_{\m_{\Phi}}+\int \dim_{S}(\Phi)\cdot \log |\phi_{a_1}'(\pi(\sigma(a_j)))|d\m_{\Phi}=0.$$ To prove Theorem \ref{overlapping conformal theorem} we need to state some additional properties of the measure $\m_{\Phi}$:
\begin{itemize}
	\item Let $x\in X$ and $(a_j)$ be such that $\pi(a_j)=x$. Then for any $r\in(0,Diam(X)),$ there exists $N(r)\in\mathbb{N}$ such that \begin{equation}
	\label{prop1}X_{a_1\cdots a_{N(r)}}\subseteq B(x,r)\textrm{ and } Diam(X_{a_1\cdots a_{N(r)}})\approx r.
	\end{equation}
	\item For any $\a\in \D^*$ we have 
	\begin{equation}
	\label{prop2}\m_{\Phi}([\a])\asymp Diam(X_{\a})^{\dim_{S}(X)}.
	\end{equation}
	\item For any $\a,\b\in\D^*$ we have 
	\begin{equation}
\label{prop3}	\m_{\Phi}([\a\b])\asymp \m_{\Phi}([\a])\m_{\Phi}([\b]).
	\end{equation}		
	\item For any $\a,\b\in\D^*$ we have 
	\begin{equation}
	\label{prop5}
	Diam(X_{\a\b})\asymp Diam(X_{\a})Diam(X_{\b}).
	\end{equation}
	\item There exists $\gamma\in(0,1)$ such that 
	\begin{equation}
	\label{prop4}
	\m_{\Phi}([\a])	=\mathcal{O}(\gamma^{|\a|}).
	\end{equation}
\end{itemize}
For a proof of these properties we refer the reader to \cite{Fal2}, \cite{PRSS}, and \cite{Rue}. 

Before giving our proof we make an observation. Given $\theta:\mathbb{N}\to[0,\infty)$ we have the following equivalences:
\begin{align*} \sum_{n=1}^{\infty}\sum_{\a\in\D^n}(Diam(X_\a)\theta(n))^{\dim_{S}(X)}=\infty
\iff&\sum_{n=1}^{\infty}\theta(n)^{\dim_{S}(\Phi)}\sum_{\a\in\D^n}Diam(X_\a)^{\dim_{S}(X)}=\infty\\
\stackrel{\eqref{prop2}}{\iff}&\sum_{n=1}^{\infty}\theta(n)^{\dim_{S}(\Phi)}\sum_{\a\in\D^n}\m_{\Phi}([\a])=\infty\\
\iff&\sum_{n=1}^{\infty}\theta(n)^{\dim_{S}(\Phi)}=\infty.
\end{align*}So the hypothesis of Theorem \ref{overlapping conformal theorem} can be restated in terms of the divergence of $\sum_{n=1}^{\infty}\theta(n)^{\dim_{S}(\Phi)}.$
\begin{proof}[Proof of Theorem \ref{overlapping conformal theorem}]
	We split our proof into individual steps.\\
	
\noindent\textbf{Step $1$. Lifting to $\D^{\mathbb{N}}$.}\\
Let us fix $z\in X$ and $\theta$ satisfying the hypothesis of our theorem. Moreover, we let $(z_j)\in \D^{\N}$ be a sequence such that $\pi(z_j)=z$. For any $\a\in \D^*$ consider the ball $$B(\phi_{\a}(z),Diam(X_\a)\theta(|\a|)).$$ By \eqref{prop1} we know that there exists $N(\a,\theta)$ such that 
\begin{equation}
\label{inclusionZZ}X_{\a z_1\cdots z_{N(\a,\theta)}}\subseteq B(\phi_{\a}(z),Diam(X_\a)\theta(|\a|))
\end{equation}and 
\begin{equation}
\label{hamster}
Diam(X_{\a z_1 \cdots z_{N(\a,\theta)}})\asymp Diam(X_\a)\theta(|\a|).
\end{equation} In what follows we let $$\a_{\theta}:=\a z_1\cdots z_{N(\a,\theta)}.$$Equation \eqref{inclusionZZ} implies the following:
\begin{align*}
\mu_{\Phi}(W_{\Phi}(z,\theta))&=\m_{\Phi}((b_j):\pi(b_j)\in W_{\Phi}(z,\theta))\\
&\geq  \m_{\Phi}((b_j):(b_j)\in [\a_{\theta}] \textrm{ for i.m. }\a\in \D^*).
\end{align*} To complete our proof it therefore suffices to show that 
\begin{equation}
\label{need to show}\m_{\Phi}((b_j):(b_j)\in [\a_{\theta}] \textrm{ for i.m. }\a\in \D^*)=1.
\end{equation} Note that we have 
\begin{equation}
\label{divergencezz}\sum_{n=1}^{\infty}\sum_{\a\in \D^n}\m_{\Phi}([\a_{\theta}])=\infty.
\end{equation}This is because of our underlying divergence assumption and the following:
$$\sum_{n=1}^{\infty}\sum_{\a\in \D^n}\m_{\Phi}([\a_{\theta}])\stackrel{\eqref{prop2}}{\asymp}\sum_{n=1}^{\infty}\sum_{\a\in \D^n}Diam(X_{\a_{\theta}})^{\dim_{S}(\Phi)}\stackrel{\eqref{hamster}}{\asymp}\sum_{n=1}^{\infty}\sum_{\a\in \D^n}(Diam(X_\a)\theta(|\a|))^{\dim_{S}(\Phi)}.$$

\noindent \textbf{Step 2. A density theorem for $\D^{\mathbb{N}}$.}\\ 
To prove \eqref{need to show} we will make use of a density argument. Since we are working in $\D^{\mathbb{N}}$ we do not have the Lebesgue density theorem. Instead we have the statement: suppose $E\subset \D^{\mathbb{N}}$ satisfies $\m_{\Phi}(E)>0,$ then for $\m_{\Phi}$-almost every $(c_j)\in E$ we have 
\begin{equation}
\label{sequence density}\lim_{M\to\infty}\frac{\m_{\Phi}([c_1\cdots c_M]\cap E)}{\m_{\Phi}([c_1\cdots c_M])}=1.
\end{equation} One can see that this statement holds using the results of Rigot \cite{Rig}. In particular, we can equip $\D^\mathbb{N}$ with a metric so that $\m_{\Phi}$ is doubling measure. We can then apply Theorem $2.15$ and Theorem $3.1$ from \cite{Rig}. Using \eqref{sequence density}, we see that to prove \eqref{need to show}, it suffices to show that for any $(c_j)\in \D^{\mathbb{N}},$ there exists $d>0$ such that 
\begin{equation}
\label{needtoshowb}
\frac{\m_{\Phi}([c_1\cdots c_M]\cap \{(b_j):(b_j)\in [\a_{\theta}] \textrm{ for i.m. }\a\in \D^*\})}{\m_{\Phi}([c_1\cdots c_M])}>d
\end{equation} for all $M$ sufficiently large. The rest of the proof now follows from a similar argument to that given by the author in \cite{Bak2}. The difference being here we are now working in the sequence space $\D^\N$ rather than $\mathbb{R}^d$. We include the relevant details for the sake of completeness. \\

\noindent\textbf{Step 3. Defining $E_n$ and verifying the hypothesis of Lemma \ref{Erdos lemma}.}\\
Let us fix $(c_j)\in\D^{\mathbb{N}}$ and $M\in\mathbb{N}$. In what follows we let $\c=c_1\cdots c_M$. For $n\geq M$ let $$E_n:=\left\{[\a_{\theta}]:\a\in\D^n\textrm{ and }a_1\cdots a_M=\c\right\},$$ and let   $$E:=\limsup_{n\to\infty} E_n.$$ Note that $$E\subseteq [\c]\cap \big\{(b_j):(b_j)\in [\a_{\theta}] \textrm{ for i.m. }\a\in \D^*\big\}.$$ Therefore to prove \eqref{needtoshowb}, it is sufficient to prove that there exists $d>0$ independent of $M$ such that 
\begin{equation}
\label{needtoshowc}
\m_{\Phi}(E)>d\m_{\Phi}([\c]).
\end{equation} Note that $$\sum_{n=M}^{\infty}\m_{\Phi}(E_n)=\infty.$$This follows from  
\begin{align*}
\sum_{n=M}^{\infty}\m_{\Phi}(E_n)&=\sum_{n=M}^{\infty}\sum_{\stackrel{\a\in \D^n}{a_1\cdots a_M=\c}}\m_{\Phi}([\a_{\theta}])\\
&=\sum_{n=M}^{\infty}\sum_{\b\in \D^{n-M}}\m_{\Phi}([\c\b z_1\cdots  z_{N(\c\b,\theta)}])\\
&\stackrel{\eqref{prop2}}{\asymp} \sum_{n=M}^{\infty}\sum_{\b\in \D^{n-M}} Diam(X_{\c\b z_1\cdots z_{N(\c\b,\theta)}})^{\dim_{S}(\Phi)}\\
&\stackrel{\eqref{hamster}}{\asymp} \sum_{n=M}^{\infty}\sum_{\b\in \D^{n-M}}(Diam(X_{ \c\b})\theta(n))^{\dim_{S}(\Phi)}\\
&\stackrel{\eqref{prop5}}{\asymp} \sum_{n=M}^{\infty}\sum_{\b\in \D^{n-M}} Diam(X_{\c})^{\dim_{S}(\Phi)}(Diam(X_{\b})\theta(n))^{\dim_{S}(\Phi)}\\
&\stackrel{\eqref{prop2}}{\asymp}  Diam(X_{\c})^{\dim_{S}(\Phi)}\sum_{n=M}^{\infty}\theta(n)^{\dim_{S}(\Phi)}\sum_{\b\in \D^{n-M}}\m_{\Phi}([\b])\\
&=Diam(X_{\c})^{\dim_{S}(\Phi)}\sum_{n=M}^{\infty}\theta(n)^{\dim_{S}(\Phi)}\\
&=\infty.
\end{align*}
In the last line we made use of our underlying hypothesis and the equivalence stated before our proof. Importantly we see that the collection of sets $\{E_n\}_{n\geq M}$ satisfies the hypothesis of Lemma \ref{Erdos lemma}. \\

\noindent\textbf{Step 4. Bounding $\sum_{n,m=M}^Q\m_{\Phi}(E_n\cap E_m).$}\\
To apply Lemma \ref{Erdos lemma} we need to show that the following bound holds:
\begin{equation}
\label{superman}
\sum_{n,m=M}^Q\m_{\Phi}(E_n\cap E_m)=\mathcal{O}\left(\m_{\Phi}([\c])\left(\sum_{n=M}^{Q}\theta(n)^{\dim_{S}(\Phi)} + \left(\sum_{n=M}^{Q}\theta(n)^{\dim_{S}(\Phi)}\right)^2 \right)\right).
\end{equation} 
Let $\a\in \D^n$ be such that $a_1\cdots a_M=\c$ and $m\geq M$. As a first step in our proof of \eqref{superman} we will bound $$\m_{\Phi}([\a_\theta]\cap E_m).$$ There are two cases that naturally arise, when $m>|\a|+N(\a,\theta)$ and when $|\a|<m\leq |\a|+N(\a,\theta).$ Let us consider first the case $|\a|<m\leq |\a|+N(\a,\theta).$ If $|\a|< m\leq |\a|+N(\a,\theta)$ then there is at most one $\a'\in \D^m$ such that 
$$[\a_{\theta}]\cap [\a'_{\theta}]\neq\emptyset.$$ Moreover this $\a'$ must equal $\a z_1\cdots z_{m-n}.$ This gives us the bound:
\begin{align*}
\m_{\Phi}([\a_\theta]\cap E_m)&=\m_{\Phi}([\a_\theta]\cap[\a'_{\theta}])\\
&\leq \m_{\Phi}([\a'_{\theta}])\\
&\stackrel{\eqref{prop2}}{\asymp} Diam(X_{\a'_{\theta}})^{\dim_{S}(\Phi)}\\
&\stackrel{\eqref{hamster}}{\asymp} (Diam(X_{\a'})\theta(m))^{\dim_{S}(\Phi)}\\
&\stackrel{\eqref{prop5}}{\asymp}(Diam(X_{\a})Diam(X_{z_1\cdots z_{m-n}})\theta(m))^{\dim_{S}(\Phi)}\\
&\stackrel{\eqref{prop2}}{\asymp}\m_{\Phi}([\a])\m_{\Phi}([z_1\cdots z_{m-n}])\theta(m)^{\dim_{S}(\Phi)}\\
&\leq\m_{\Phi}([\a])\m_{\Phi}([z_1\cdots z_{m-n}])\theta(n)^{\dim_{S}(\Phi)}\\
&\stackrel{\eqref{prop4}}{=}\mathcal{O}\left(\m_{\Phi}([\a])\theta(n)^{\dim_{S}(\Phi)}\gamma^{m-n}\right).
\end{align*}
In the penultimate line we used that $\theta$ is decreasing. Thus we have shown that if $|\a|<m\leq |\a|+N(\a,\theta)$ then
\begin{equation}
\label{level 1 bound}
\m_{\Phi}([\a_{\theta}]\cap E_m)=\mathcal{O}\left(\m_{\Phi}([\a])\theta(n)^{\dim_{S}(\Phi)}\gamma^{m-n}\right).
\end{equation}
We now consider the case where $m>|\a|+N(\a,\theta)|.$ In this case, if $\a'\in \D^m$ and $$[\a_{\theta}]\cap [\a'_{\theta}]\neq\emptyset,$$ we must have $$a_1'\cdots a_{|\a|+N(\a,\theta)}'=\a_{\theta}.$$ Using this observation we obtain:
\begin{align*}
\m_{\Phi}([\a_{\theta}]\cap E_m)&=\sum_{\stackrel{\a'\in \D^m}{a_1'\cdots a_{|\a|+N(\a,\theta)}'=\a_{\theta}}}\m_{\Phi}([\a'_{\theta}])\\
&=\sum_{\b'\in D^{m-n-N(\a,\theta)}}\m_{\Phi}([\a_{\theta}\b'z_1\cdots  z_{N(\a_{\theta}\b',\theta)}])\\
&\stackrel{\eqref{prop2}}{\asymp}\sum_{\b'\in D^{m-n-N(\a,\theta)}}Diam(X_{\a_{\theta}\b'z_1\cdots z_{N(\a_{\theta}\b',\theta)}})^{\dim_{S}(\Phi)}\\
&\stackrel{\eqref{hamster}}{\asymp}\sum_{\b'\in D^{m-n-N(\a,\theta)}}(Diam(X_{\a_{\theta}\b'})\theta(m))^{\dim_{S}(\Phi)}\\
&\stackrel{\eqref{prop5}}{\asymp}(Diam(X_{\a_{\theta}})\theta(m))^{\dim_{S}(\Phi)}\sum_{\b'\in D^{m-n-N(\a,\theta)}}Diam(X_{\b'})^{\dim_{S}(\Phi)}\\
&\stackrel{\eqref{prop2}}{\asymp}(Diam(X_{\a_{\theta}})\theta(m))^{\dim_{S}(\Phi)}\sum_{\b'\in D^{m-n-N(\a,\theta)}}\m_{\Phi}([\b'])\\
&\stackrel{\eqref{hamster}}{\asymp}(Diam(X_\a)\theta(n)\theta(m))^{\dim_{S}(\Phi)}\\
&\stackrel{\eqref{prop2}}{\asymp}\m_{\Phi}([\a])\theta(n)^{\dim_{S}(\Phi)}\theta(m)^{\dim_{S}(\Phi)}.
\end{align*}Thus we have shown that if $m>|\a|+N(\a,\theta)|$ then
\begin{equation}
\label{level 2 bound}
\m_{\Phi}([\a_{\theta}]\cap E_m)\asymp\m_{\Phi}([\a])\theta(n)^{\dim_{S}(\Phi)}\theta(m)^{\dim_{S}(\Phi)}.
\end{equation}Combining \eqref{level 1 bound} and \eqref{level 2 bound} we obtain the bound
\begin{equation}
\label{level 3 bound}
\m_{\Phi}([\a_{\theta}]\cap E_m)=\mathcal{O}\left(\m([\a])\theta(n)^{\dim_{S}(\Phi)}\gamma^{m-n}+\m_{\Phi}([\a])\theta(n)^{\dim_{S}(\Phi)}\theta(m)^{\dim_{S}(\Phi)}\right).
\end{equation}Importantly this bound holds for all $m>n$. 

Applying \eqref{level 3 bound} we obtain:
\begin{align}
\label{triple split}
\sum_{n,m=M}^Q\m_{\Phi}(E_n\cap E_m)&=\sum_{n=M}^{Q}\m_{\Phi}(E_n)+2\sum_{n=M}^{Q-1}\sum_{m=n+1}^{Q}\m_{\Phi}(E_n\cap E_m)\nonumber\\
&=\sum_{n=M}^{Q}\m_{\Phi}(E_n)+2\sum_{n=M}^{Q-1}\sum_{\stackrel{\a\in \D^n}{a_1\cdots a_M=\c}}\sum_{m=n+1}^{Q}\m_{\Phi}([\a_{\theta}]\cap E_m)\nonumber\\
&\stackrel{\eqref{level 3 bound}}{=}\sum_{n=M}^{Q}\m_{\Phi}(E_n)+\mathcal{O}\left(\sum_{n=M}^{Q-1}\sum_{\stackrel{\a\in \D^n}{a_1\cdots a_M=\c}}\sum_{m=n+1}^{Q}\m_{\Phi}([\a])\theta(n)^{\dim_{S}(\Phi)}\gamma^{m-n}\right)\nonumber\\
&+\mathcal{O}\left(\sum_{n=M}^{Q-1}\sum_{\stackrel{\a\in \D^n}{a_1\cdots a_M=\c}}\sum_{m=n+1}^{Q}\m_{\Phi}([\a])\theta(n)^{\dim_{S}(\Phi)}\theta(m)^{\dim_{S}(\Phi)}\right).
\end{align}
We now analyse each term in \eqref{triple split} individually. Repeating the arguments given at the end of Step $3,$ we can show that
\begin{equation}
\label{piece1}
\sum_{n=M}^{Q}\m_{\Phi}(E_n)\asymp \m_{\Phi}([\c])\sum_{n=M}^Q\theta(n)^{\dim_{S}(\Phi)}.
\end{equation}Focusing on the second term in \eqref{triple split} we obtain:
\begin{align}
\label{piece2}
&\sum_{n=M}^{Q-1}\sum_{\stackrel{\a\in \D^n}{a_1\cdots a_M=\c}}\sum_{m=n+1}^{Q}\m_{\Phi}([\a])\theta(n)^{\dim_{S}(\Phi)}\gamma^{m-n}\nonumber\\
&\stackrel{\eqref{prop2}}{\asymp} \m_{\Phi}([\c])\sum_{n=M}^{Q-1}\sum_{\b\in \D^{n-M}}\sum_{m=n+1}^{Q}\m_{\Phi}([\b])\theta(n)^{\dim_{S}(\Phi)}\gamma^{m-n}\nonumber\\
&\asymp \m_{\Phi}([\c])\sum_{n=M}^{Q-1}\theta(n)^{\dim_{S}(\Phi)}\sum_{\b\in \D^{n-M}}\m_{\Phi}([\b])\sum_{m=n+1}^{Q}\gamma^{m-n}\nonumber\\
&=\mathcal{O}\left(\m_{\Phi}([\c])\sum_{n=M}^{Q-1}\theta(n)^{\dim_{S}(\Phi)}\right).
\end{align}In the last line we used that $\gamma\in(0,1)$ so $\sum_{m=n+1}^{Q}\gamma^{m-n}$ can be bounded above by a constant independent of $n$ and $Q$. 

We now focus on the third term in \eqref{triple split}:
\begin{align}
\label{piece3}
&\sum_{n=M}^{Q-1}\sum_{\stackrel{\a\in \D^n}{a_1\cdots a_M=\c}}\sum_{m=n+1}^{Q}\m_{\Phi}([\a])\theta(n)^{\dim_{S}(\Phi)}\theta(m)^{\dim_{S}(\Phi)}\nonumber\\
& \stackrel{\eqref{prop2}}{\asymp}\m_{\Phi}([\c])\sum_{n=M}^{Q-1}\sum_{\b\in \D^{n-M}}\sum_{m=n+1}^{Q}\m_{\Phi}([\b])\theta(n)^{\dim_{S}(\Phi)}\theta(m)^{\dim_{S}(\Phi)}\nonumber\\
&=\m_{\Phi}([\c])\sum_{n=M}^{Q-1}\theta(n)^{\dim_{S}(\Phi)}\sum_{\b\in \D^{n-M}}\m_{\Phi}([\b])\sum_{m=n+1}^{Q}\theta(m)^{\dim_{S}(\Phi)}\nonumber\\
&\leq \m_{\Phi}([\c])\left(\sum_{n=M}^{Q}\theta(n)^{\dim_{S}(\Phi)}\right)^2.
\end{align}Substituting \eqref{piece1}, \eqref{piece2}, and \eqref{piece3} into \eqref{triple split} we obtain 
$$\sum_{n,m=M}^Q\m_{\Phi}(E_n\cap E_m)=\mathcal{O}\left(\m_{\Phi}([\c])\left(\sum_{n=M}^{Q}\theta(n)^{\dim_{S}(\Phi)} + \left(\sum_{n=M}^{Q}\theta(n)^{\dim_{S}(\Phi)}\right)^2 \right)\right).$$ Therefore \eqref{superman} holds. \\


\noindent\textbf{Step 5. Applying Lemma \ref{Erdos lemma}.}\\
Since $\sum_{n=M}^{\infty}\theta(n)^{\dim_{S}(\Phi)}=\infty$ there exists $Q$ such that $\sum_{n=M}^{Q}\theta(n)^{\dim_{S}(\Phi)}>1.$ Therefore for $Q$ sufficiently large we have 
\begin{equation}
\label{luigi}\sum_{n=M}^{Q}\theta(n)^{\dim_{S}(\Phi)}<\left(\sum_{n=M}^{Q}\theta(n)^{\dim_{S}(\Phi)}\right)^2.
\end{equation}It follows now by \eqref{superman}, \eqref{piece1}  and \eqref{luigi} that there exists some $d>0$ independent of $M$ such that
$$\limsup_{Q\to\infty}\frac{(\sum_{n=M}^{Q}\m_{\Phi}(E_n))^2}{\sum_{n,m=M}^Q\m_{\Phi}(E_n\cap E_m)}\geq \limsup_{Q\to\infty}\frac{d\cdot\left(\m_{\Phi}([\c]) \sum_{n=M}^{Q}\theta(n)^{\dim_{S}(\Phi)}\right)^2}{\m_{\Phi}([\c])\left(\sum_{n=M}^{Q}\theta(n)^{\dim_{S}(\Phi)}\right)^2}= d \m_{\Phi}([\c]).$$
Applying Lemma \ref{Erdos lemma} it follows that $$\m_{\Phi}(\limsup_{n\to\infty} E_n)\geq d \m_{\Phi}([\c]).$$ This implies \eqref{needtoshowc} and completes our proof. 

\end{proof}
\section{Applications of the mass transference principle}
\label{misc}
The main results of this paper give conditions ensuring a limsup set of the form $W_{\Phi}(z,\Psi)$ or $U_{\Phi}(z,\m,h)$ has positive or full Lebesgue measure. For these results it is necessary to assume that some appropriate volume sum diverges. If the relevant volume sum converged, then the limsup set in question would automatically have zero Lebesgue measure by the Borel-Cantelli lemma. It is still an interesting problem to determine the metric properties of a limsup set when the volume sum converges. Thankfully there is a powerful tool for determining the size of a limsup set when the volume sum converges. This tool is known as the mass transference principle and is due to Beresnevich and Velani \cite{BerVel}. We provide a brief account of this technique below. 

We say that a set $X\subset\mathbb{R}^d$ is Ahlfors regular if $$\mathcal{H}^{\dim_{H}(X)}(X\cap B(x,r))\asymp r^{\dim_{H}(X)}$$ for all $x\in X$ and $0<r<Diam(X).$ Given $s>0$ and a ball $B(x,r),$ we define $$B^s:=B(x,r^{s/\dim_{H}(X)}).$$ The theorem stated below is a weaker version of a statement proved in \cite{BerVel}. It is sufficient for our purposes.

\begin{thm}
\label{Mass transference principle}
Let $X$ be Ahlfors regular and $(B_j)$ be a sequence of balls with radii converging to zero. Let $s>0$ and suppose that for any ball $B$ in $X$ we have $$\mathcal{H}^{\dim_{H}(X)}(B\cap \limsup_{j\to\infty}B_j^s)=\mathcal{H}^{\dim_{H}(X)}(B).$$ Then, for any ball $B$ in $X$ $$\mathcal{H}^s(B\cap \limsup_{j\to\infty} B_j)=\mathcal{H}^s(B).$$
\end{thm}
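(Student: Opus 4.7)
The plan is to follow the strategy introduced by Beresnevich and Velani: inside an arbitrary ball $B \subset X$, I would construct a Cantor-like subset $F$ of $B \cap \limsup_{j} B_j$ supporting a probability measure $\mu$ satisfying a Frostman-type bound $\mu(B(x,r)) \lesssim r^s$, and then invoke the mass distribution principle to deduce $\mathcal{H}^s(F) > 0$. A standard sub-ball exhaustion argument then upgrades this positive-measure conclusion to the claimed full-measure statement.

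First I would reduce to $s < \dim_{H}(X)$, since $s = \dim_{H}(X)$ is the hypothesis and $s > \dim_{H}(X)$ makes $\mathcal{H}^{s}(B) = 0$ by Ahlfors regularity. In this regime $B_j^s$ is strictly larger than $B_j$, and Ahlfors regularity gives $\mathcal{H}^{\dim_{H}(X)}(B_j^s \cap X) \asymp r_j^{s}$, so there is room inside each $B_j^s \cap X$ to pack many small balls drawn from the sequence $(B_j)$. Exploiting the hypothesis that $\limsup_j B_j^s$ has full $\mathcal{H}^{\dim_{H}(X)}$-measure in $B$, I would recursively construct finite disjoint collections $\mathcal{G}_n$ of balls from $(B_j)$, with radii in a carefully chosen shrinking geometric range $[\rho_n, \rho_n^*]$, so that: (i) each ball of $\mathcal{G}_n$ lies inside $B_{j'}^s$ for some $B_{j'} \in \mathcal{G}_{n-1}$; (ii) every $B_{j'} \in \mathcal{G}_{n-1}$ has many children in $\mathcal{G}_n$; and (iii) the children of $B_{j'}$ are well-separated inside $B_{j'}^s \cap X$. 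The nested intersection $F := \bigcap_n \bigcup_{B \in \mathcal{G}_n} B$ is then a Cantor-like subset of $B \cap \limsup_j B_j$.

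The measure $\mu$ on $F$ is defined by distributing each parent's mass uniformly among its children; arranging the cardinalities correctly, one obtains $\mu(B_j) \asymp r_j^{s}$ for every $B_j \in \mathcal{G}_n$ and every $n$. The mass distribution principle (see, e.g., Falconer's \emph{Fractal Geometry}) then gives $\mathcal{H}^{s}(F) \gtrsim \mu(F) > 0$, hence $\mathcal{H}^{s}(B \cap \limsup_j B_j) > 0$ for every ball $B \subset X$. Applying this conclusion to a countable exhausting family of sub-balls of $B$ and using the Borel regularity of $\mathcal{H}^{s}$ then promotes "positive $\mathcal{H}^{s}$-measure in every ball" to "full $\mathcal{H}^{s}$-measure in every ball", completing the proof.

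The main obstacle will be verifying the global Frostman estimate $\mu(B(x,r)) \lesssim r^{s}$ for \emph{every} $x \in \mathbb{R}^d$ and every $r > 0$, not merely for the balls of the construction. An arbitrary ball $B(x,r)$ at an intermediate scale can intersect balls from several generations $\mathcal{G}_n$, and the tension is sharp: the children of each parent $B_{j'}$ must be numerous enough to carry the full parental mass distributed according to the $\dim_{H}(X)$-dimensional geometry of $B_{j'}^s \cap X$, yet sparse enough that no fixed ball at any intermediate scale accumulates more than $\asymp r^{s}$ of mass. Balancing these competing constraints is the heart of the Beresnevich--Velani argument, and it is what dictates both the choice of the radii ranges $[\rho_n, \rho_n^*]$ and the precise separation imposed on the collections $\mathcal{G}_n$; the rest of the proof is comparatively routine once this local $s$-dimensional control is in hand.
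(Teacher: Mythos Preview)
The paper does not prove this statement at all: Theorem~\ref{Mass transference principle} is quoted from Beresnevich and Velani \cite{BerVel} as an external input, with the remark that it is ``a weaker version of a statement proved in \cite{BerVel}''. There is therefore no proof in the paper to compare your proposal against.

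That said, your outline is an accurate high-level summary of the Beresnevich--Velani argument itself: the Cantor-type construction of nested well-separated collections $\mathcal{G}_n$ of balls from the sequence $(B_j)$, the distribution of mass to obtain a Frostman-type measure on the limit set, and the sub-ball argument to pass from positive to full $\mathcal{H}^s$-measure are exactly the ingredients of the original proof. You have also correctly identified the genuine technical crux, namely the verification of $\mu(B(x,r))\lesssim r^s$ at \emph{all} scales, including intermediate ones not aligned with the generations of the construction; in \cite{BerVel} this is handled via the $K_{G,B}$-lemma and a careful two-part analysis of how an arbitrary ball meets the tree. So as a sketch of the Beresnevich--Velani proof your proposal is on target, but for the purposes of this paper no proof is required.
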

Theorem \ref{Mass transference principle} can be applied in conjunction with Theorems \ref{1d thm}, \ref{translation thm} and \ref{precise result}, to prove many Hausdorff dimension results for the limsup sets $W_{\Phi}(z,\Psi)$ and $U_{\Phi}(z,\m,h)$ when the appropriate volume sum converges. We simply have to restrict to a subset of the parameter space where we know that the corresponding attractor will always be Ahlfors regular. For the sake of brevity we content ourselves with the following statement for the family of iterated function systems studied in Section \ref{Specific family}. This statement is a consequence of Theorem \ref{precise result} and Theorem \ref{Mass transference principle}.

\begin{thm}
Suppose $t\notin \mathbb{Q},$ then for any $z\in [0,1+t]$ and $s>0$ we have $$\dim_{H}(W_{\Phi}(z,4^{-|\a|(1+s)}))=\frac{1}{1+s}$$and $$\mathcal{H}^{\frac{1}{1+s}}(W_{\Phi}(z,4^{-|\a|(1+s)}))=\infty.$$
\end{thm}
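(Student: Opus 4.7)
The attractor $X=[0,1+t]$ is Ahlfors $1$-regular, so Theorem \ref{Mass transference principle} applies with $\dim_H(X)=1$. The plan is to view the limsup set $W_{\Phi}(z,4^{-|\a|(1+s)})$ as arising from the balls $B_{\a}:=B(\phi_{\a}(z),4^{-|\a|(1+s)})$, and to apply the mass transference principle with inflation parameter $s_0:=1/(1+s)$. Under this choice one has $B_{\a}^{s_0}=B(\phi_{\a}(z),(4^{-|\a|(1+s)})^{s_0})=B(\phi_{\a}(z),4^{-|\a|})$, so that $\limsup_{\a}B_{\a}^{s_0}=W_{\Phi}(z,4^{-|\a|})$.

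The key input is statement $2$ of Theorem \ref{precise result}: since $t\notin\mathbb{Q}$, there exists $h:\mathbb{N}\to[0,\infty)$ with $h(n)\to 0$ such that for every $z\in[0,1+t]$, Lebesgue almost every $x\in[0,1+t]$ lies in $U_{\Phi_{t}}(z,\m,h)$, where $\m$ is the uniform Bernoulli measure on $\{1,2,3,4\}^{\N}$. For this measure $L_{\m,n}=\D^{n}$ and $\m([\a])=4^{-n}$ for $\a\in\D^{n}$, so the defining balls of $U_{\Phi_{t}}(z,\m,h)$ have radius $4^{-n}h(n)$. Since $h(n)\leq 1$ for all sufficiently large $n$, we obtain (up to a tail of finitely many levels, which does not affect the limsup) the inclusion $U_{\Phi_{t}}(z,\m,h)\subseteq W_{\Phi}(z,4^{-|\a|})$. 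Consequently, for any ball $B\subseteq[0,1+t]$ we have $\mathcal{H}^{1}(B\cap W_{\Phi}(z,4^{-|\a|}))=\mathcal{H}^{1}(B)$, which is exactly the hypothesis of Theorem \ref{Mass transference principle} at scale $s_0$.

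Applying Theorem \ref{Mass transference principle} then yields $\mathcal{H}^{1/(1+s)}(B\cap W_{\Phi}(z,4^{-|\a|(1+s)}))=\mathcal{H}^{1/(1+s)}(B)$ for every ball $B$ in $[0,1+t]$. Taking $B=[0,1+t]$ gives $\mathcal{H}^{1/(1+s)}(W_{\Phi}(z,4^{-|\a|(1+s)}))=\infty$, since $1/(1+s)<1$ forces the $1/(1+s)$-dimensional Hausdorff measure of the interval to be infinite. This immediately gives the lower bound $\dim_{H}W_{\Phi}(z,4^{-|\a|(1+s)})\geq 1/(1+s)$.

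For the matching upper bound, for any $\sigma>1/(1+s)$ the family $\{B_{\a}:\a\in\D^{n},\,n\geq N\}$ forms a cover of $W_{\Phi}(z,4^{-|\a|(1+s)})$ whose diameters tend to $0$, and
\begin{equation*}
\sum_{n=N}^{\infty}\sum_{\a\in\D^{n}}\bigl(2\cdot 4^{-n(1+s)}\bigr)^{\sigma}=2^{\sigma}\sum_{n=N}^{\infty}4^{n(1-\sigma(1+s))}\xrightarrow[N\to\infty]{}0,
\end{equation*}
so $\mathcal{H}^{\sigma}(W_{\Phi}(z,4^{-|\a|(1+s)}))=0$ and $\dim_{H}W_{\Phi}(z,4^{-|\a|(1+s)})\leq 1/(1+s)$. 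Combining these bounds proves both assertions. The only mild obstacle is ensuring that the inclusion $U_{\Phi_{t}}(z,\m,h)\subseteq W_{\Phi}(z,4^{-|\a|})$ holds up to a harmless tail, which is handled by the elementary observation that omitting finitely many indices from a limsup does not change it.
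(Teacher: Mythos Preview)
Your proof is correct and follows exactly the approach indicated by the paper, which simply states that the theorem is a consequence of Theorem \ref{precise result} and Theorem \ref{Mass transference principle} without spelling out the details. You have filled in those details accurately: the inflation exponent $s_0=1/(1+s)$ turns the balls $B(\phi_\a(z),4^{-|\a|(1+s)})$ into $B(\phi_\a(z),4^{-|\a|})$, statement~2 of Theorem~\ref{precise result} ensures full Lebesgue measure of the inflated limsup, and the upper bound is the standard Borel--Cantelli covering estimate.
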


\section{Examples}
\label{Examples}
The purpose of this section is to provide some explicit examples to accompany the main results of this paper.
\subsection{IFSs satisfying the CS property}
Here we provide two classes of IFSs that satisfy the CS property with respect to some measure $\m$. These IFSs will have contraction ratios lying in a special class of algebraic integers known as Garsia numbers. A Garsia number is a positive real algebraic integer with norm $\pm 2$ whose Galois conjugates all have modulus strictly greater than $1$. Examples of Garsia numbers include $\sqrt[n]{2}$ for any $n\in\mathbb{N}$, and $1.76929\ldots,$ the appropriate root of $x^3-2x-2=0$. The lemma below is due to Garsia \cite{Gar}, for a short proof see \cite{Bak2}.

\begin{lemma}
	\label{Garsia separation}
	Let $\lambda$ be the reciprocal of a Garsia number. Then there exists $s>0$ such that for any two distinct $\a,\a'\in\{-1,1\}^n$ we have
$$\Big|\sum_{j=0}^{n-1}a_j\lambda^j-\sum_{j=0}^{n-1}a_j'\lambda^j\Big|> \frac{s}{2^n}.$$
\end{lemma}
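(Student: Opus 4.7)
Write $\beta = 1/\lambda$, so $\beta$ is a Garsia number: a positive real algebraic integer of norm $\pm 2$ whose non-principal Galois conjugates $\beta_2,\ldots,\beta_d$ all satisfy $|\beta_i|>1$. Given distinct $\a,\a'\in\{-1,1\}^n$, set $c_j=(a_j-a_j')/2\in\{-1,0,1\}$, so the quantity of interest equals
$$D:=\sum_{j=0}^{n-1}(a_j-a_j')\lambda^j=2\sum_{j=0}^{n-1}c_j\beta^{-j}=2\beta^{-(n-1)}P(\beta),$$
where $P(x):=\sum_{j=0}^{n-1}c_jx^{n-1-j}\in\mathbb{Z}[x]$ has coefficients in $\{-1,0,1\}$ and is not identically zero. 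The plan is to bound $|P(\beta)|$ from below by a constant multiple of $(\beta/2)^n$, since that will translate precisely into the desired $s\cdot 2^{-n}$ bound for $|D|$.

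The first step is to verify that $P(\beta)\neq 0$. If it vanished, the minimal polynomial $m(x)$ of $\beta$ would divide $P(x)$ in $\mathbb{Z}[x]$; since the constant term of $m$ equals $\pm 2$, the constant term of $P$ would be even, but as an element of $\{-1,0,1\}$ it would have to be $0$. Then $P(x)=xP_1(x)$, and $P_1(\beta)=0$ would allow the argument to be repeated, eventually forcing every $c_j=0$, a contradiction. So $P(\beta)$ is a nonzero algebraic integer, whence $|N(P(\beta))|=\prod_{i=1}^d|P(\beta_i)|\geq 1$.

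The second step is to estimate $|P(\beta_i)|$ for $i\geq 2$. Using $|\beta_i|>1$,
$$|P(\beta_i)|\leq\sum_{j=0}^{n-1}|\beta_i|^{n-1-j}=\frac{|\beta_i|^n-1}{|\beta_i|-1}\leq\frac{|\beta_i|^n}{|\beta_i|-1}.$$
Combining these with $\prod_{i\geq 2}|\beta_i|=|N(\beta)|/\beta=2/\beta$ gives
$$|P(\beta)|\geq\frac{1}{\prod_{i\geq 2}|P(\beta_i)|}\geq\Bigl(\prod_{i\geq 2}(|\beta_i|-1)\Bigr)\cdot\Bigl(\frac{\beta}{2}\Bigr)^n=:C\Bigl(\frac{\beta}{2}\Bigr)^n,$$
where $C>0$ depends only on $\beta$. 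Substituting into $|D|=2\beta^{-(n-1)}|P(\beta)|$ yields $|D|\geq 2\beta C\cdot 2^{-n}$, which is the claim with $s=2\beta C$.

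The main subtlety is the sharpness of the norm bound in the second step: a crude estimate $|P(\beta_i)|\leq n|\beta_i|^{n-1}$ would introduce a spurious polynomial factor $n^{d-1}$ and spoil the required uniform constant $s$. Using the geometric-series identity above exactly cancels that loss, exploiting the two defining features of a Garsia number (norm $\pm 2$ and $|\beta_i|>1$) simultaneously. The nonvanishing argument in step one is the other place where the norm-$2$ hypothesis is essential; without it, a $\{-1,0,1\}$-polynomial could annihilate $\beta$ and the estimate would fail trivially.
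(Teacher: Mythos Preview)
The paper does not supply its own proof of this lemma; it simply attributes the result to Garsia \cite{Gar} and points to \cite{Bak2} for a short argument. Your proof is correct and is precisely the classical Garsia argument: reduce to a $\{-1,0,1\}$-polynomial $P$ evaluated at $\beta=1/\lambda$, use the norm-$\pm 2$ hypothesis to show $P(\beta)\neq 0$, and then bound the product of the conjugate values $|P(\beta_i)|$ via a geometric series, exploiting $\prod_{i\geq 2}|\beta_i|=2/\beta$ to get the exact exponential rate $(\beta/2)^n$. This is the proof one finds in the cited references, so there is nothing to contrast. One cosmetic remark: your final inequality gives $|D|\geq (2\beta C)\cdot 2^{-n}$ rather than a strict inequality, but since the bound $\frac{|\beta_i|^n-1}{|\beta_i|-1}<\frac{|\beta_i|^n}{|\beta_i|-1}$ is strict for $i\geq 2$ (and in the degree-one case $\beta=2$ one can simply take any $s<4$), the strict form in the statement follows immediately.
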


\begin{example}
Let $\m$ be the $(1/2,1/2)$ Bernoulli measure and for each $\lambda\in(1/2,1),$ let the corresponding IFS be $$\Phi_{\lambda}:=\{\phi_{-1}(x)=\lambda x -1,\phi_1(x)=\lambda x+1\}.$$  For any $\a,\a'\in\{-1,1\}^{n}$ and $z\in [\frac{-1}{1-\lambda},\frac{1}{1-\lambda}],$ it can be shown that $$\phi_{\a}(z)-\phi_{\a'}(z)=\sum_{j=0}^{n-1}a_j\lambda^j-\sum_{j=0}^{n-1}a_j'\lambda^j.$$ Therefore by Lemma \ref{Garsia separation}, if $\lambda$ is the reciprocal of a Garsia number, for any $z\in [\frac{-1}{1-\lambda},\frac{1}{1-\lambda}]$ and distinct $\a,\a'\in\{-1,1\}^n,$ we have $$|\phi_{\a}(z)-\phi_{\a'}(z)|> \frac{s}{2^n}.$$ It follows that for any $z\in [\frac{-1}{1-\lambda},\frac{1}{1-\lambda}]$ we have $$S\left(\{\phi_{\a}(z)\}_{\a\in\{-1,1\}^n},\frac{s}{2^n}\right)=  \{\phi_{\a}(z)\}_{\a\in\{-1,1\}^n}$$ for all $n\in\mathbb{N}$. Applying Proposition \ref{separated full measure} we see that for any $z\in[\frac{-1}{1-\lambda},\frac{1}{1-\lambda}]$ and $h:\mathbb{N}\to [0,\infty)$ satisfying $\sum_{n=1}^{\infty}h(n)=\infty,$ we have that Lebesgue almost every $x\in [\frac{-1}{1-\lambda},\frac{1}{1-\lambda}]$ is contained in $U_{\Phi_{\lambda}}(z,\m,h).$ Therefore if $\lambda$ is the reciprocal of a Garsia number, then the IFS $\Phi_{\lambda}$ has the CS property with respect to $\m$. This fact is a consequence of the main result of \cite{Bak}. The proof given there relied upon certain counting estimates due to Kempton \cite{Kem}. The argument given in the proof of Proposition \ref{separated full measure} doesn't rely on any such counting estimates. Instead we make use of the fact that the Bernoulli convolution is equivalent to the Lebesgue measure and is expressible as the weak star limit of weighted Dirac masses supported on elements of the set $\{\phi_{\a}(z)\}_{\a\in\{-1,1\}^n}$. 
\end{example}

\begin{example}
Let $\m$ be the $(1/4,1/4,1/4,1/4)$ Bernoulli measure and let our IFS be \begin{align*}
\Phi_{\lambda_1,\lambda_2}:=\{&\phi_1(x,y)=(\lambda_1 x+1,\lambda_2 y+1),\phi_2(x,y)=(\lambda_1 x+1,\lambda_2 y-1),\\
&\phi_3(x,y)=(\lambda_1 x-1,\lambda_2 y+1),\phi_4(x,y)=(\lambda_1 x-1,\lambda_2 y-1)\},
\end{align*} where $\lambda_1,\lambda_2\in(1/2,1)$. For each $\Phi_{\lambda_1,\lambda_2}$ the corresponding attractor is $[\frac{-1}{1-\lambda_1},\frac{1}{1-\lambda_1}]\times [\frac{-1}{1-\lambda_2},\frac{1}{1-\lambda_2}]$.  If both $\lambda_1$ and $\lambda_2$ are reciprocals of Garsia numbers, then it follows from Lemma \ref{Garsia separation} that for some $s>0,$ for any $z\in [\frac{-1}{1-\lambda_1},\frac{1}{1-\lambda_1}]\times [\frac{-1}{1-\lambda_2},\frac{1}{1-\lambda_2}],$ we have $$|\phi_{\a}(z)-\phi_{\a'}(z)|> \frac{s}{2^n}$$ for distinct $\a,\a'\in \{1,2,3,4\}^n.$ Therefore
$$S\left(\{\phi_{\a}(z)\}_{\a\in\{1,2,3,4\}^n},\frac{s}{2^n}\right)=  \{\phi_{\a}(z)\}_{\a\in\{-1,1\}^n}$$ for any $z\in [\frac{-1}{1-\lambda_1},\frac{1}{1-\lambda_1}]\times [\frac{-1}{1-\lambda_2},\frac{1}{1-\lambda_2}]$ for all $n\in\mathbb{N}.$

Note that $d=2$ and each of our contractions have the same matrix part. Applying Proposition \ref{separated full measure}, we see that that for any $z\in [\frac{-1}{1-\lambda_1},\frac{1}{1-\lambda_1}]\times [\frac{-1}{1-\lambda_2},\frac{1}{1-\lambda_2}]$ and $h:\mathbb{N}\to [0,\infty)$ satisfying $\sum_{n=1}^{\infty}h(n)=\infty,$ we have that Lebesgue almost every $x\in [\frac{-1}{1-\lambda_1},\frac{1}{1-\lambda_1}]\times [\frac{-1}{1-\lambda_2},\frac{1}{1-\lambda_2}]$ is contained in $U_{\Phi_{\lambda_1,\lambda_2}}(z,\m,h).$ Therefore when $\lambda_1,\lambda_2$ are both reciprocals of Garsia numbers, the IFS $\Phi_{\lambda_1,\lambda_2}$ satisfies the CS property with respect to $\m$. It is perhaps also worth mentioning that by Proposition \ref{separated full measure}, if both $\lambda_1$ and $\lambda_2$ are reciprocals of Garsia numbers, then the pushforward of $\m$ is absolutely continuous. 

\end{example}

\subsection{The non-existence of Khintchine like behaviour without exact overlaps}
In \cite{Bak2} the author asked whether the only mechanism preventing an IFS from observing some sort of Khintchine like behaviour was the presence of exact overlaps. The example below, which is based upon Example 1.2 from \cite{Hochman2}, shows that there are other mechanisms preventing Khintchine like behaviour.

\begin{example}
Pick $t^*\in(0,2/3)$ so that the IFS $$\Phi_{t^*}:=\left\{\phi_{1}(x)=\frac{x}{3},\,\phi_{2}(x)=\frac{x+1}{3},\,\phi_{3}(x)=\frac{x+2}{3},\,\phi_{4}(x)=\frac{x+t^*}{3}\right\}.$$ does not contain an exact overlap. Now consider the following IFS acting on $\mathbb{R}^2$:
\begin{align*}
\Phi_{t^*}':=\{&\phi_1'(x,y)=(x/3,y/3),\phi_2'(x,y)=((x+1)/3,y/3),\\
&\phi_3'(x,y)=((x+2)/3,y/3),\phi_4'(x,y)=((x+t^*)/3,y/3),\\
&\phi_5'(x,y)=(x/3,(y+2)/3),\phi_6'(x,y)=((x+1)/3,(y+2)/3),\\
&\phi_7'(x,y)=((x+2)/3,(y+2)/3),\phi_8'(x,y)=((x+t^*)/3,(y+2)/3)\}.
\end{align*}
The attractor $X$ for $\Phi_{t^*}'$ is $[0,1]\times C$, where $C$ is the middle third Cantor set. Therefore $\dim_{H}(X)=1+\frac{\log 2}{\log 3}$. Since $\Phi_{t^*}$ did not contain an exact overlap, it follows that $\Phi_{t^*}'$ also does not contain an exact overlap.

 Let $\gamma\approx 0.279$ be such that $$8\gamma^{1+\frac{\log 2}{\log 3}}=1.$$ So in particular we have 
 \begin{equation}
\label{nearly} \sum_{n=1}^{\infty}\sum_{\a\in \D^n}\gamma^{n(1+\frac{\log 2}{\log 3})}=\infty.
 \end{equation} If it were the case that our IFS exhibited Khintchine like behaviour, then with \eqref{nearly} in mind, at the very least we would expect that there exists $z\in X$ such that the set $$W:=\Big\{(x,y)\in \mathbb{R}^2:|(x,y)-\phi_{\a}'(z)|\leq \gamma^{|\a|} \textrm{ for i.m. }\a\in \bigcup_{n=1}^{\infty} \{1,\ldots,8\}^n\Big\}$$ has Hausdorff dimension equal to $1+\frac{\log 2}{\log 3}$. We now show that in fact $\dim_{H}(W)<1+\frac{\log 2}{\log 3}.$ 

Let $$\Phi'':=\left\{\phi_1''(y)=\frac{y}{3},\phi_2''(y)=\frac{y+2}{3}\right\}.$$ Clearly $\Phi''$ has the middle third Cantor set as its attractor. We now make the simple observation that if $(x,y)\in \mathbb{R}^2$ satisfies $|(x,y)-\phi_{\a}'(z)|\leq \gamma^{n}$ for some $\a \in\{1,\ldots,8\}^n$ for $z=(z_1,z_2)$, then $|y-\phi_{\a}''(z_2)|\leq \gamma^{n}$ for some $\a\in\{1,2\}^n$. This means that if $|(x,y)-\phi_{\a}'(z)|\leq \gamma^{n}$ for some $\a\in\{1,\ldots,8\}^n,$ then $(x,y)$ must be contained in one of $2^n$ horizontal strip of height $2\gamma^{n}$ and width $1$. Such a strip can be covered by $C(1/\gamma)^{n}$ balls of diameter $\gamma^{n}$ for some $C>0$ independent of $n$. It follows that the set of 
$(x,y)\in\mathbb{R}^2$ satisfying $|(x,y)-\phi_{\a}'(z)|\leq \gamma^{n}$ for some $\a\in\{1,\ldots,8\}^n,$ can be covered by $C(2/\gamma)^n$ balls of diameter $\gamma^{n}$. For each $n$ let $U_n$ be such a collection of balls. By construction, for any $N\in\N$ the set $\cup_{n\geq N}\{B\in U_n\}$ is a $\gamma^N$ cover of $W$. 

Now let 
\begin{equation}
\label{scondition}
s>\frac{\log \gamma - \log 2}{\log \gamma}\approx 1.542.
\end{equation} Then 
\begin{align*}
\mathcal{H}^s\left(W\right)\leq \lim_{N\to\infty}\sum_{n=N}^{\infty}\sum_{B\in U_n}Diam(B)^s\leq \lim_{N\to\infty}\sum_{n=N}^{\infty}C(2/\gamma)^n\cdot \gamma^{sn}=0.
\end{align*}In the final equality we used \eqref{scondition} to guarantee $\sum_{n=1}^{\infty}C(2/\gamma)^n\cdot \gamma^{sn}<\infty.$
 We have shown that $\mathcal{H}^s(W)=0$ for any $s>\frac{\log \gamma - \log 2}{\log \gamma}$. Therefore $\dim_{H}(W)\leq \frac{\log \gamma - \log 2}{\log \gamma}.$ Since $\frac{\log \gamma - \log 2}{\log \gamma}\approx 1.542$ and $1+\frac{\log 2}{\log 3}\approx 1.631,$ we have $\dim_{H}(W)<1+\frac{\log 2}{\log 3}$ as required.
\end{example}
Note that this example can easily be generalised to demonstrate a similar phenomenon when the underlying attractor has positive Lebesgue measure.

\section{Final discussion and open problems}
\label{Final discussion}
A number of problems and questions naturally arise from the results of this paper. The first and likely most difficult question is the following:
\begin{itemize}
	\item Can one derive general, verifiable conditions for an IFS under which we can conclude it exhibits Khintchine like behaviour? 
\end{itemize} This question seems to be very difficult and appears to be out of reach of our current methods. As such it seems that a more reasonable immediate goal would be to prove results for general parameterised families of iterated function systems. One can define a parameterised family of iterated function systems in the following general way. Suppose that $U$ is an open subset of $\mathbb{R}^k,$ and for each $u\in U$ we have an IFS given by $$\Phi_u:=\{\phi_{i,u}(x)=A_{i}(u)(x)+t_{i}(u)\}_{i=1}^l,$$ where for each $1\leq i\leq l$ we have $A_{i}:U\to GL(d,\mathbb{R})\cap\{A:\|A\|<1\}$ and $t_{i}:U\to \mathbb{R}^d.$ For each $u\in U$ we denote the attractor corresponding to this iterated function system by $X_u$. We would like to be able to describe what, if any, Khintchine like behaviour is observed for $\Phi_u$ for a typical $u\in U$. The methods of this paper do not extend to this general a setting and only work when some transversality condition is assumed. We expect that the conjecture stated below holds under some weak assumptions on the functions $A_i$ and $t_i.$

For a $\sigma$-invariant ergodic probability measure $\m,$ and a fixed $u\in U,$ we denote the corresponding Lyapunov exponents by $\lambda_1(\m,u),\ldots,\lambda_d(\m,u).$

\begin{conjecture}
	\label{conjecture}
	Let $\m$ be a slowly decaying $\sigma$-invariant ergodic probability measure and suppose that $\h(\m)>-(\lambda_1(\m,u)+\cdots+\lambda_{d}(\m,u))$ for Lebesgue almost every $u\in U$. Then the following statements hold:
	\begin{itemize}
		\item For Lebesgue almost every $u\in U,$ for any $z\in X_u$ and $h\in H^*,$ Lebesgue almost every $x\in X_u$ is contained in $U_{\Phi_u}(z,\m,h)$.
			\item For Lebesgue almost every $u\in U,$ for any $z\in X_u,$ there exists $h:\mathbb{N}\to[0,\infty)$ such that $\sum_{n=1}^{\infty}h(n)=\infty,$ yet $U_{\Phi_u}(z,\m,h)$ has zero Lebesgue measure. 
	\end{itemize}
\end{conjecture}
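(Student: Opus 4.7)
The plan is to extend the machinery developed for Theorems \ref{1d thm} and \ref{translation thm} to this parameterised setting. The first task is to formulate and establish a transversality estimate of the form
\begin{equation*}
\mathcal{L}\bigl(\{u\in U_0:|\pi_u(\a)-\pi_u(\b)|\le r\}\bigr)=\mathcal{O}\left(\frac{r^d}{\prod_{i=1}^d\alpha_i\bigl(A_{a_1\cdots a_{|\a\wedge\b|-1}}(u^*)\bigr)}\right)
\end{equation*}
for any relatively compact $U_0\subset U$, distinct $\a,\b\in\D^{\mathbb{N}}$, and some reference point $u^*\in U_0$. Identifying a genuinely weak, verifiable hypothesis on the maps $u\mapsto A_i(u),t_i(u)$ under which this holds is the primary obstruction. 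The affine proof of Jordan--Pollicott--Simon uses linearity of $\pi_u(\a)-\pi_u(\b)$ in $u$, while here it is only smooth, so one is likely forced to impose a non-degeneracy condition ensuring that the differential $D_u\bigl(\pi_u(\a)-\pi_u(\b)\bigr)$ has sufficiently large rank to permit a Fubini/slicing argument in the spirit of Solomyak's power-series transversality.

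Granting such an estimate, the positive-measure statement for $h\in H$ would proceed exactly as in Theorem \ref{translation thm}(1). Shannon--McMillan--Breiman together with Oseledec's theorem and Egorov's theorem furnish, for each $\epsilon>0$, a subset $\tilde L_{\m,n}\subseteq L_{\m,n}$ whose union of cylinders has measure bounded below uniformly in $n$, and on which both $\m([a_1\cdots a_k])$ and the singular values $\alpha_i(A_{a_1\cdots a_k}(u))$ obey matching exponential bounds uniformly in the prefix. The transversality estimate combined with the telescoping computation used in the proof of Theorem \ref{translation thm}(1) would then yield
\begin{equation*}
\int_{U_0}\frac{\#R(u,s,n)}{\tilde R_{\m,n}}\,du=\mathcal{O}(s^d),
\end{equation*}
the crucial use of $\h(\m)>-(\lambda_1+\cdots+\lambda_d)$ being exactly to make the resulting geometric series convergent. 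Proposition \ref{general prop}, via Lemma \ref{integral bound}, then delivers positive Lebesgue measure of $U_{\Phi_u}(z,\m,h)$ for every $h\in H$ and almost every $u\in U$.

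To upgrade from $H$ to $H^*$ and from positive to full measure, the task is to show that for almost every $u$ the proportion of well-separated cylinders in $L_{\m,n}$ tends to $1$ along a subsequence of full upper density. Here I would aim to iterate Lemma \ref{density separation lemma} with $s\to 0$, or to run a variance/second-moment argument on $\#R(u,s,n)$, to conclude $\limsup_n T\bigl(Y_{\m,n}(u,z),sR_{\m,n}^{-1/d}\bigr)/R_{\m,n}\to 1$ as $s\to 0$ almost everywhere. Proposition \ref{absolute continuity} would then force the pushforward $\mu$ to be absolutely continuous, which (under a case-by-case invocation of Lemma \ref{equivalent measures}, or under suitable minimality of the support) upgrades to $\mu\sim\mathcal{L}|_{X_u}$; Proposition \ref{separated full measure} or Proposition \ref{full measure} would then convert the positive-measure conclusion into the desired full-measure one.

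For the second part, apply Proposition \ref{fail prop}: it suffices to produce, for almost every $u$ and every $z\in X_u$, some $s>0$ and a subsequence $(n_k)$ along which $T(Y_{\m,n_k}(u,z),sR_{\m,n_k}^{-1/d})/R_{\m,n_k}\to 0$. Heuristically, under the transversality estimate, the expected number of near-collisions of pairs of cylinders at scale $sR_{\m,n}^{-1/d}$ is bounded below as well as above by a constant times $s^d$; a Borel--Cantelli / Chebyshev argument along a sparse sequence of scales $s=s_k\to 0$ should produce the required bad subsequence for almost every $u$, by analogy with Duffin--Schaeffer's original construction and with the analysis of $\Phi_t$ in Section \ref{Specific family}. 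The sequence $(n_k)$ must be arranged to be independent of $z$, which in Section \ref{Specific family} followed from a translation-invariance argument that will not be available in general and likely requires either an additional structural hypothesis or a uniform-in-$z$ Chebyshev estimate. The principal obstacle throughout both parts is the transversality estimate and the attendant non-degeneracy hypothesis; once these are in place the remaining steps largely parallel arguments already assembled in this paper.
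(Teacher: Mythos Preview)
The statement you have been asked to prove is explicitly labelled a \emph{conjecture} in the paper (it appears in the final ``discussion and open problems'' section), and the paper offers no proof of it whatsoever. There is therefore nothing to compare your attempt against: the author merely says ``We expect that the conjecture stated below holds under some weak assumptions on the functions $A_i$ and $t_i$'' and leaves it at that.

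Your write-up is, appropriately, not a proof but a research programme. You correctly identify that the entire first bullet would follow from a transversality estimate of Jordan--Pollicott--Simon type together with the machinery of Propositions~\ref{general prop}, \ref{full measure}, \ref{separated full measure} and Lemma~\ref{integral bound}, and you are honest that producing such an estimate under ``weak assumptions'' on $A_i(u),t_i(u)$ is the real obstruction. That diagnosis is accurate and matches the spirit of the paper's discussion. Two points are worth flagging. First, your upgrade from $H$ to $H^*$ and to full measure leans on Proposition~\ref{separated full measure}, but that proposition is stated only for Bernoulli measures and under structural hypotheses on the matrices; extending it to arbitrary slowly decaying ergodic $\m$ and general $A_i(u)$ is itself nontrivial and not addressed in the paper. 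Second, for the second bullet your Borel--Cantelli/Chebyshev heuristic for producing bad subsequences is plausible but vague: in the one explicit case the paper handles (Section~\ref{Specific family}) the argument relies on arithmetic input (continued fractions) rather than a soft variance bound, and you acknowledge that the required $z$-independence has no obvious substitute in general. In short, your outline is a reasonable sketch of how one might attack the conjecture, but it remains a conjecture, and the gaps you flag are genuine open problems rather than routine details.
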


Much of the analysis of this paper was concerned with the sequence \begin{equation}
\label{important sequence}
\left(\frac{T(\{\phi_{\a}(z)\}_{\a\in L_{\m,n}},\frac{s}{R_{\m,n}^{1/d}})}{R_{\m,n}}\right)_{n=1}^{\infty},
\end{equation} where $z\in X$ and $\m$ is some slowly decaying $\sigma$-invariant ergodic probability measure. In fact each of our main results was obtained by deriving some quantitative information about the values this sequence takes for typical values of $n$. The behaviour of this sequence provides another useful method for measuring how an IFS overlaps. For the parameterised families considered above, we conjecture that the statement below is true under some weak assumptions on the maps $A_i$ and $t_i.$
\begin{conjecture}
\label{conjecture2}
Let $\m$ be a slowly decaying $\sigma$-invariant ergodic probability measure and suppose that $\h(\m)>-(\lambda_1(\m,u)+\cdots+\lambda_{d}(\m,u))$ for Lebesgue almost every $u\in U$. Then for Lebesgue almost every $u\in U$, for any $z\in X_u,$ for $s$ sufficiently small we have 
$$0=\liminf_{n\to\infty}\frac{T(\{\phi_{\a}(z)\}_{\a\in L_{\m,n}},\frac{s}{R_{\m,n}^{1/d}})}{R_{\m,n}}<\limsup_{n\to\infty}\frac{T(\{\phi_{\a}(z)\}_{\a\in L_{\m,n}},\frac{s}{R_{\m,n}^{1/d}})}{R_{\m,n}}=1.$$ 
\end{conjecture}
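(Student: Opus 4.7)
The plan is to prove the two equalities separately, with the $\limsup = 1$ statement being a quantitative strengthening of the transversality machinery already developed, and the $\liminf = 0$ statement being the genuinely harder direction.

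For the $\limsup = 1$ part, I would first establish, under the hypothesis $\h(\m) > -(\lambda_1(\m,u) + \cdots + \lambda_d(\m,u))$, an integral bound of the form
$$\int_U \#R(u,s,n)/R_{\m,n}\, d\L = \mathcal{O}(s^d)$$
by mirroring the transversality estimates used in the proofs of Theorems \ref{1d thm} and \ref{translation thm}. This requires assuming enough transversality on the family (some Falconer--Solomyak-type condition on the maps $A_i$ and $t_i$); otherwise the integral bound need not hold. Lemma \ref{integral bound} then converts this to a bound $\eta(\Omega) - \int T/R\, d\eta = \mathcal{O}(s^d)$, and a refinement of the proof of Lemma \ref{density separation lemma} gives: for any $\delta, \epsilon > 0$ one can choose $s = s(\delta,\epsilon)$ so that $\eta(\{u : T(Y_n(u), s/R_{\m,n}^{1/d})/R_{\m,n} \geq 1-\delta\}) \geq \eta(\Omega) - \epsilon$ uniformly in $n$. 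Applying Fatou's lemma along a countable sequence $\delta_k, \epsilon_k \to 0$ with matching $s_k \to 0$ then yields, for a.e.\ $u$, that $\limsup_n T/R_{\m,n} = 1$ for every $s \leq s_k$, hence for all sufficiently small $s$. Handling the uniformity over $z \in X_u$ follows the pattern of statement 2 of Theorem \ref{1d thm}, using the translation-invariance of the set $\{\phi_\a(z)\}_{\a \in L_{\m,n}}$ (up to a shift depending on $z$) when the matrix parts are equal; for more general families, I would first establish the result for $z = \pi_u(a_j)$ with a specific sequence, and then attempt to propagate it to all $z \in X_u$ using ergodicity and Lemma \ref{arbitrarily small}.

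For the $\liminf = 0$ part, the strategy I propose is modelled on statement 4 of Theorem \ref{precise result}: identify, for typical $u \in U$, a sequence of ``resonance scales'' $(n_k)$ at which many distinct words in $L_{\m, n_k}$ produce nearly-coincident images of $z$. Concretely, the plan is to construct a countable family of ``bad sets'' $B_{k} \subset U$, where $B_k$ is the set of parameters $u$ for which $T(Y_{\m,n_k}(z), s/R_{\m,n_k}^{1/d})/R_{\m,n_k}$ fails to be small; control the $\L$-measure of each $B_k$ using a second-moment/transversality computation (bounding the expected number of pairs $(\a, \a')$ for which $\phi_\a(z)$ and $\phi_{\a'}(z)$ lie within a ball of shrinking radius), and then apply a Borel--Cantelli-type argument to produce, for $\L$-a.e.\ $u$, a subsequence along which the fraction of separated images tends to zero. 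For the specific families of Section \ref{applications}, this would play the role of the continued-fraction analysis: the ``well-approximable'' parameters (now in higher-dimensional parameter space) form a Lebesgue-generic set by a standard limsup/covering computation.

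The main obstacle is certainly the $\liminf = 0$ direction, and in particular the step of constructing resonances involving not just one close pair of images but many images clustering at once, so that the fraction $T/R_{\m,n}$ actually drops to zero rather than being merely bounded away from $1$. Transversality gives clean upper bounds on the measure of parameters for which a given pair of words is close, but what one really needs is control of the joint behaviour of many such pairs; this appears to require either a higher-order transversality estimate (a large-deviations style bound for $\#R(u,s,n)/R_{\m,n}$) or a direct geometric construction of resonant parameters. A complementary, less quantitative attack would be to argue by contradiction: if $\liminf T/R_{\m,n} > 0$ held with positive probability in $u$ for every small $s$, one could attempt to deduce that $\mu_u$ satisfies a uniform quantitative flatness condition on balls of every scale, which should contradict the expected non-uniform scaling of self-similar measures away from exact self-similarity. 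Turning either of these heuristics into a proof is, I believe, the substantive content of Conjecture \ref{conjecture2}.
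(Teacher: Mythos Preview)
The statement you are addressing is labelled as a \emph{conjecture} in the paper and appears in Section~\ref{Final discussion} (Final discussion and open problems). The paper does \emph{not} contain a proof of it; the only supporting evidence offered is the remark that ``Theorem~\ref{precise result} supports the validity of Conjectures~\ref{conjecture}, \ref{conjecture2}, and \ref{conjecture3}.'' So there is no proof in the paper to compare your proposal against.

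That said, your proposal is appropriately framed as a strategy rather than a proof, and your own closing sentence (``Turning either of these heuristics into a proof is, I believe, the substantive content of Conjecture~\ref{conjecture2}'') correctly identifies that the $\liminf = 0$ direction is genuinely open. Your plan for the $\limsup$ side is reasonable and close in spirit to what the paper's machinery already delivers: Lemma~\ref{density separation lemma} together with Lemma~\ref{integral bound} gives, under transversality, that $T/R_{\m,n}$ is bounded below by a constant on a set of $n$ of positive upper density for a.e.\ parameter. Pushing this to $\limsup = 1$ would require the sharpening you describe (taking $c \to 1$ as $s \to 0$), which the paper's existing estimates make plausible but do not establish.

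Your diagnosis of the obstruction on the $\liminf = 0$ side is accurate: transversality controls the measure of parameters where a \emph{single pair} of images is close, whereas forcing $T/R_{\m,n} \to 0$ along a subsequence requires, for a.e.\ parameter, the simultaneous clustering of a positive proportion of \emph{all} images at some scales. The paper's only worked example of this phenomenon (statement~4 of Theorem~\ref{precise result}) relies on the explicit arithmetic of continued fractions, not on a general mechanism. Neither your Borel--Cantelli sketch nor the contradiction argument you outline currently closes this gap, and the paper offers no further tools toward it.
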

One of the interesting ideas to arise from this paper is the notion of an IFS satisfying the CS property with respect to a measure $\m$. Proceeding via analogy with Theorem \ref{precise result}, we expect that given a measure $\m$, it is the case that within a parameterised family of IFSs the CS property will not typically be satisfied with respect to $\m$. Indeed if Conjecture \ref{conjecture2} were true then this statement would follow from Proposition \ref{fail prop}. That being said, we still expect that for a parameterised family of IFSs, it will often be the case that there exists a large subset of the parameter space where the IFS does satisfy the CS property with respect to $\m$. We conjecture that the statement below is true under some weak assumptions on the maps $A_i$ and $t_i.$

\begin{conjecture}
	\label{conjecture3}
Let $\m$ be a slowly decaying $\sigma$-invariant ergodic probability measure and suppose that $\h(\m)>-(\lambda_1(\m,u)+\cdots+\lambda_{d}(\m,u))$ for Lebesgue almost every $u\in U$. Then there exists $U'\subset U$ such that $\dim_{H}(U')=k,$ and for any $u\in U'$ the IFS $\Phi_u$ satisfies the CS property with respect to $\m$. 
\end{conjecture}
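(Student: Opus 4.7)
The plan is to reformulate the CS property via Theorem \ref{new colette} and then construct $U'$ by a Cantor-type procedure analogous to the classical proof that badly approximable vectors have full Hausdorff dimension. First I would fix an arbitrary $(a_j) \in \D^{\N}$, set $z(u) := \pi_u((a_j))$, and reduce the claim to finding a subset $U' \subseteq U$ with $\dim_H(U') = k$ such that each $u \in U'$ admits $s = s(u) > 0$ and $c = c(u) > 0$ with
$$\frac{T\bigl(\{\phi_{\a,u}(z(u))\}_{\a \in L_{\m,n}},\, s/R_{\m,n}^{1/d}\bigr)}{R_{\m,n}} > c \qquad \text{for all } n \geq N(u).$$
This uniform-in-$n$ separation is exactly the CS property by Theorem \ref{new colette}, and is the parametric analogue of $u$ being "badly approximable" in the sense developed for $\Phi_t$ in Section \ref{Specific family} and exploited in statement $3$ of Theorem \ref{precise result}.

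Next I would extend the transversality machinery from the proofs of Theorems \ref{1d thm} and \ref{translation thm} to the general parameterised family, under an appropriate real-analytic non-degeneracy hypothesis on the coordinate maps $A_i(\cdot)$ and $t_i(\cdot)$. The target is an analogue of Lemma \ref{translation transversality}, yielding
$$\L\bigl(\{u \in V : |\phi_{\a,u}(z(u)) - \phi_{\a',u}(z(u))| \leq r\}\bigr) = \mathcal{O}\left(\frac{r^d}{\prod_{i=1}^d \alpha_i(A_{\a \wedge \a',u_0})}\right)$$
for any small open box $V \subseteq U$ and reference point $u_0 \in V$. Combined with Lemma \ref{integral bound} and the entropy hypothesis $\h(\m) > -\sum_i \lambda_i(\m, u)$, this would give the uniform bound $\L(B_n(s, c)) = \mathcal{O}(s^d)$, where $B_n(s,c) \subseteq V$ is the set of $u$ at which the separation ratio at scale $n$ falls below $c$. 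Crucially, $B_n(s,c)$ is contained in a union of thin neighbourhoods of codimension-$d$ analytic subvarieties $\{u : \phi_{\a,u}(z(u)) = \phi_{\a',u}(z(u))\}$ indexed by pairs $(\a, \a') \in L_{\m,n}^2$.

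With this geometric picture in place, I would run a Cantor construction inside $V$ with $s$ chosen small and $c = c(s)$ accordingly. Starting from a grid of boxes of sidelength $r_1$, at stage $n$ one refines each surviving box into subboxes of sidelength $r_{n+1}$ and discards those children meeting the thin-tube cover of $B_n(s, c)$. The uniform measure bound together with the codimension-$d$ structure of the supporting varieties should permit retaining a uniformly positive proportion of children at each stage, so that the resulting limit set $U'$ satisfies $\dim_H U' = k$ by a standard mass distribution principle. By construction every $u \in U'$ then has the required uniform separation for this fixed $s, c$, hence $\Phi_u$ enjoys the CS property with respect to $\m$.

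The main obstacle will be executing the Cantor step in high codimension against the exponentially growing combinatorics of pairs in $L_{\m,n}^2$: although $\L(B_n(s,c)) = O(s^d)$ is uniformly small, the number of constituent thin tubes grows like $R_{\m,n}^2$, and naive removal can destroy Hausdorff dimension unless the tubes admit an efficient covering compatible with the refinement scale $r_n$. Overcoming this will likely require either a Jarník–Besicovitch style ubiquity argument in the framework of Beresnevich–Dickinson–Velani, or showing that the set of CS-good parameters is hyperplane-absolute winning for a Schmidt game on $U$, which would yield full Hausdorff dimension automatically and behave well under countable intersections. A further subtlety, which is the parametric analogue of the exact overlap phenomenon in Proposition \ref{overlap prop}, is ruling out degenerate families where the coincidence varieties $\{\phi_{\a,u} \equiv \phi_{\a',u}\}$ accumulate along positive-dimensional subsets of $U$; this is precisely where the "weak assumptions on $A_i$ and $t_i$" alluded to in the conjecture must be made quantitative.
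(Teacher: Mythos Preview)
The statement you are attempting to prove is Conjecture~\ref{conjecture3}, which appears in the paper's final discussion section (Section~\ref{Final discussion}) as an \emph{open problem}. The paper does not prove it; the only remark following the conjecture is that ``Theorem~\ref{precise result} supports the validity of Conjectures~\ref{conjecture}, \ref{conjecture2}, and~\ref{conjecture3}.'' There is therefore no proof in the paper to compare your proposal against.

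Your proposal is not a proof but a research plan, and you are candid about this: you explicitly flag the main obstacle as the Cantor-set construction step, where one must remove neighbourhoods of the coincidence varieties while retaining full Hausdorff dimension against exponentially many pairs in $L_{\m,n}^2$. That obstacle is genuine and is precisely why the statement remains a conjecture. The only case in the paper where anything like this is carried out is the one-parameter family $\{\Phi_t\}$ of Section~\ref{Specific family}, where the CS property is shown to hold exactly on the badly approximable numbers (statement~3 of Theorem~\ref{precise result}); there the Hausdorff dimension conclusion comes for free from classical Diophantine approximation (Jarn\'{i}k's theorem), not from any argument internal to the paper. Your suggestion to attack the general case via a Schmidt-game or ubiquity framework is a reasonable line of investigation, but nothing in the paper supplies the missing ingredients, and in particular the transversality estimate you write down for general real-analytic $A_i(\cdot)$, $t_i(\cdot)$ is itself not established anywhere in the paper.
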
Theorem \ref{precise result} supports the validity of Conjectures \ref{conjecture}, \ref{conjecture2}, and \ref{conjecture3}. 

Theorem \ref{Colette thm} states that satisfying the CS property with respect to $\m$ implies the pushforward $\mu$ is absolutely continuous. The CS property appears to only be satisfied in exceptional circumstances. As such it is natural to wonder whether there exists a more easily verifiable condition phrased in terms of limsup sets, which implies the absolute continuity of $\mu$. We pose the following question:

\begin{itemize}
	\item Let $\mu$ be the pushforward of a measure $\m$. What is the smallest class of functions, such that if for some $z\in X$ the set $U_{\Phi}(z,\m,h)$ has positive Lebesgue measure for all $h$ belonging to this class, then $\mu$ will be absolutely continuous?
\end{itemize}  

Much of the work presented in this paper is inspired by the classical theorem of Khintchine stated as Theorem \ref{Khintchine} in our introduction. Along with Khintchine's theorem, one of the first results encountered in a course on Diophantine approximation is the following result due to Dirichlet.
\begin{thm}[Dirichlet]
	\label{Dirichlet}
For any $x\in \R$ and $Q\in\N$, there exists $1\leq q\leq Q$ and $p\in\mathbb{Z}$ such
$$\left|x-\frac{p}{q}\right|< \frac{1}{qQ}.$$ Therefore, for any $x\in\mathbb{R}$ there exists infinitely many $(p,q)\in\mathbb{Z}\times\mathbb{N}$ satisfying $$\left|x-\frac{p}{q}\right|< \frac{1}{q^2}.$$
\end{thm}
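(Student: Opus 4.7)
The plan is to prove Dirichlet's theorem via the pigeonhole principle, which is the classical approach and essentially the only elementary route available. First I would fix $x \in \mathbb{R}$ and $Q \in \mathbb{N}$, and consider the $Q+1$ fractional parts $\{0\cdot x\}, \{1 \cdot x\}, \ldots, \{Q \cdot x\}$ lying in $[0,1)$. Partitioning $[0,1)$ into the $Q$ subintervals $[k/Q, (k+1)/Q)$ for $0 \leq k \leq Q-1$, the pigeonhole principle forces two of these fractional parts, say $\{ix\}$ and $\{jx\}$ with $0 \leq i < j \leq Q$, to lie in a common subinterval. Setting $q := j - i$ and $p := \lfloor jx \rfloor - \lfloor ix \rfloor$, one obtains $1 \leq q \leq Q$ and $|qx - p| < 1/Q$, which upon division by $q$ yields the desired estimate $|x - p/q| < 1/(qQ)$.

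For the second statement, I would deduce the existence of infinitely many good rational approximations by a standard iteration. If $x \in \mathbb{Q}$, say $x = a/b$ with $b \geq 1$, then the pairs $(ka, kb) \in \mathbb{Z} \times \mathbb{N}$ for $k \in \mathbb{N}$ trivially satisfy $|x - ka/(kb)| = 0 < 1/(kb)^2$, giving infinitely many solutions immediately. If $x \notin \mathbb{Q}$, suppose for contradiction that only finitely many $(p,q) \in \mathbb{Z} \times \mathbb{N}$ satisfy $|x - p/q| < 1/q^2$; label them $(p_1,q_1),\ldots,(p_N,q_N)$. Since $x$ is irrational, each quantity $|x - p_i/q_i|$ is strictly positive, so one may choose $Q \in \mathbb{N}$ large enough that $1/Q < \min_{1 \leq i \leq N} |x - p_i/q_i|$. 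Applying the first part of the theorem with this $Q$ produces a pair $(p,q)$ with $1 \leq q \leq Q$ and $|x - p/q| < 1/(qQ) \leq 1/q^2$, but the inequality $|x - p/q| < 1/Q$ also holds, contradicting the choice of $Q$. Hence infinitely many solutions exist.

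There is no real obstacle here beyond keeping the bookkeeping clean: the argument is entirely elementary and self-contained. The only minor point requiring care is the distinction between the rational and irrational cases in the second part, since the "infinitely many pairs" statement is trivial in the rational case and requires the iteration argument only in the irrational case.
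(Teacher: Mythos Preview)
Your proof is correct and is the standard pigeonhole argument for Dirichlet's theorem. Note, however, that the paper does not actually prove this statement: Theorem~\ref{Dirichlet} is merely quoted in the final discussion section as a classical result from Diophantine approximation, with no proof given. There is therefore nothing to compare against; your write-up supplies exactly the elementary argument one would expect, and the bookkeeping in both the first part (pigeonhole on fractional parts) and the second part (the rational/irrational case split and the iteration for irrational $x$) is handled cleanly.
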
For us the interesting feature of Dirichlet's theorem lies in the fact that it is a statement for all $x\in \mathbb{R}$. In our setting it is obvious that for any IFS $\Phi$, for any $z\in X$ we have 
\begin{equation}
\label{Dirichleta}X=\left\{x\in\mathbb{R}^d:|x-\phi_{\a}(z)|\leq Diam(X_{\a})\textrm{ for i.m. }\a\in \D^*\right\}.
\end{equation} The results of this paper demonstrate that for many overlapping IFSs, given a $z\in X,$ then Lebesgue almost every point in $X$ can be approximated by images of $z$ infinitely often at a scale decaying to zero at an exponentially faster rate than $Diam(X_{\a})$. See for example Theorem \ref{precise result} where Lebesgue almost every point can be approximated at the scale $4^{-|\a|},$ yet $Diam(X_{\a})=2^{-|\a|}.$ With Theorem \ref{Dirichlet} in mind, it is natural to wonder whether there exists conditions under which \eqref{Dirichleta} can be improved upon.
\begin{itemize}
	\item Can one construct an IFS for which there exists $s>1$ such that $$X=\left\{x\in\mathbb{R}^d:|x-\phi_{\a}(z)|\leq Diam(X_{\a})^s\textrm{ for i.m. }\a\in \D^*\right\}.$$ Alternatively one could ask whether there exists $s>1$ such that these sets differ by a finite or countable set.
\end{itemize}
We remark here that for the family of IFSs $\{\lambda x -1,\lambda x+1\},$ it can be shown that there exists $\lambda\in(1/2,0.668)$ and $z\in[\frac{-1}{1-\lambda},\frac{1}{1-\lambda}],$ such that Lebesgue almost every $x\in[\frac{-1}{1-\lambda},\frac{1}{1-\lambda}]$ can be approximated by images of $z$ at the scale $2^{-|\a|},$ yet there exists a set of positive Hausdorff dimension within $[\frac{-1}{1-\lambda},\frac{1}{1-\lambda}]$ that cannot be approximated by images of $z$ at a scale better than $\lambda^{|\a|}$. For more details on this example we refer the reader to the discussion at the end of \cite{Bak}.

We conclude now by emphasising one of the technical difficulties that is present within this paper that is not present within similar works on this topic. In many situations, if $\mu=\mu'\ast \mu'',$ and we have some method for measuring how evenly distributed a measure is with $\mathbb{R}^d$ (examples of methods of measurement include: absolute continuity, entropy, and $L^q$ dimension), then often $\mu$ will be at least as evenly distributed as $\mu'$ with respect to this method of measurement. One may in fact see a strict increase in how evenly distributed $\mu$ is with respect to this method of measurement (see for example \cite{Hochman,Shm}). A useful feature of the pushforward of Bernoulli measures is that they are often equipped with some sort of convolution structure. In many papers this convolution structure and the idea described above can be exploited to obtain results (see for example \cite{Hochman,SSS,Shm,ShmSol,Sol,Var}). Within this paper, the relevant method for measuring how evenly distributed a measure is, is to study the sequence given by \eqref{important sequence}. On a technical level, one of the main difficulties for us is that this method of measurement does not behave well under convolution. This is easy to see with an example. Let $\m$ be the $(1/2,1/2)$ Bernoulli measure and let our IFS be $\{\phi_1(x)=\frac{x}{2},\phi_2(x)=\frac{x+1}{2}\}.$ For this IFS the attractor is $[0,1].$ We denote the pushforward of $\m$ by $\mu'$. It is easy to see that for any $z\in [0,1]$ and $n\in\mathbb{N}$, we have
\begin{equation}
\label{optimall}\frac{T(\{\phi_{\a}(z)\}_{\a\in \{1,2\}n},\frac{1}{2\cdot 2^n})}{2^n}=1.
\end{equation}
So $\mu'$ exhibits an optimal level of separation. Now let $t\in (0,1)\cap \mathbb{Q}$ and consider the IFS  $\{\phi_1(x)=\frac{x}{2},\phi_2(x)=\frac{x+t}{2}\}.$ For this IFS the attractor is $[0,t].$ We denote the pushforward of $\m$ for this IFS by $\mu''$. It is easy to see that for $\mu''$ we also have the optimal level of separation described by \eqref{optimall}. Consider the measure $\mu=\mu'\ast \mu''.$ This measure is simply the pushforward of the $(1/4,1/4,1/4,1/4)$ Bernoulli measure with respect to the IFS $$\Big\{\phi_1(x)=\frac{x}{2},\phi_2(x)=\frac{x+1}{2},\phi_3(x)=\frac{x+t}{2},\phi_{4}(x)=\frac{x+1+t}{2}\Big\},$$ i.e. the IFS studied in Theorem \ref{precise result}. Examining the proof of Proposition \ref{overlap prop}, we see that for any $t\in(0,1)\cap\mathbb{Q},$ there exists $c>0$ such that for any $z\in[0,1+t]$ and $s>0$ we have \begin{equation}
\label{optimal fail}
T\left(\{\phi_{\a}(z)\}_{\a\in \{1,2,3,4\}^n},\frac{s}{4^n}\right)=\mathcal{O}((4-c)^n).
\end{equation} Equation \eqref{optimal fail} demonstrates that we no longer have the strong separation properties that we saw 
earlier for our two measures $\mu'$ and $\mu''$. We have in fact seen that after convolving $\mu'$ and $\mu''$ there is a drop in how evenly distributed the resulting measure is within $\mathbb{R}$. One could view this failure to improve under convolution as a consequence of how sensitive our method of measurement is to exact overlaps.

\medskip\noindent {\bf Acknowledgments.} The author would like to thank the anonymous referee for their valuable comments. The author would like to thank Tom Kempton and Boris Solomyak for providing some useful feedback on an initial draft, and Ariel Rapaport for pointing out the reference \cite{Shm3}. This research was supported by the EPSRC grant EP/M001903/1.


\begin{thebibliography}{100}
\bibitem{AleYor} J. C. Alexander, J. A. Yorke,  \textit{Fat baker's transformations,}
Ergodic Theory Dynam. Systems 4 (1984), no. 1, 1--23. 
\bibitem{Bak2} S. Baker, \textit{An analogue of Khintchine's theorem for self-conformal sets,} Math. Proc. Cambridge Philos. Soc. 167(3), 567--597.
\bibitem{Bak} S. Baker, \textit{Approximation properties of $\beta$-expansions,} Acta Arith. 168 (2015), 269--287
\bibitem{Bakerapprox2} S. Baker, \textit{Approximation properties of $\beta$-expansion II,} Ergodic Theory Dynam. Systems, 38(5), 1627--1641.
\bibitem{BaHoRa} B. B\'{a}r\'{a}ny, M. Hochman, A. Rapaport, \textit{Hausdorff dimension of planar self-affine sets and measures,} 	Invent. Math.,  216, 601--659 (2019).
\bibitem{BenSol} I. Benjamini, B. Solomyak, \textit{Spacings and pair correlations for finite Bernoulli convolutions,} Nonlinearity 22 (2009), no. 2, 381--393.
\bibitem{BDV} V. Beresnevich, D. Dickinson, S. Velani, \emph{Measure theoretic laws for lim sup sets,} Mem. Amer. Math. Soc. 179 (2006), no. 846, x+91 pp.
\bibitem{BerVel} V. Beresnevich, S. Velani, \textit{A mass transference principle and the Duffin-Schaeffer conjecture for Hausdorff measures,} Ann. of Math. (2) 164 (2006), no. 3, 971--992.
\bibitem{BerVel2} V. Beresnevich, S. Velani, \textit{A note on zero-one laws in metrical Diophantine approximation,} Acta Arith. 133 (2008), no. 4, 363--374. 
\bibitem{Bow} R. Bowen,\textit{ Equilibrium states and the ergodic theory of Anosov diffeomorphisms,} Second edition. Lecture Notes in Mathematics, 470. Springer-Verlag, Berlin, 2008. viii+75 pp. ISBN: 978-3-540-77605-5 
\bibitem{Bug} Y. Bugeaud, \textit{Approximation by algebraic numbers,} 
Cambridge Tracts in Mathematics, 160. Cambridge University Press, Cambridge, 2004. xvi+274 pp. 
\bibitem{Cas} J. W. S. Cassels, \textit{An introduction to Diophantine approximation,} Cambridge Tracts in Mathematics and Mathematical Physics, No. 45. Cambridge University Press, New York, 1957. x+166 pp. 
\bibitem{DufSch} R. J. Duffin, A. C. Schaeffer, \textit{Khintchine's problem in metric Diophantine approximation,} Duke Math. J., 8 (1941). 243--255.
\bibitem{Edg} G. A. Edgar, \textit{Fractal dimension of self-affine sets: some examples,} Rend. Circ. Mat. Palermo (2) Suppl. No. 28 (1992), 341--358. 
\bibitem{Erdos1} P. Erd\H{o}s, \textit{On a family of symmetric Bernoulli convolutions,} Amer. J. Math. 61, (1939). 974--976. 
\bibitem{Erdos2} P. Erd\H{o}s, \textit{On the smoothness properties of a family of Bernoulli convolutions,} Amer. J. Math. 62, (1940). 180--186. 
\bibitem{Fal1} K. Falconer, \textit{Fractal geometry. 
Mathematical foundations and applications,} Third edition. John Wiley \& Sons, Ltd., Chichester, 2014. xxx+368 pp. ISBN: 978-1-119-94239-9. 
\bibitem{Fal2} K. Falconer, \textit{Techniques in fractal geometry,} John Wiley \& Sons, Ltd., Chichester, 1997. xviii+256 pp. ISBN: 0-471-95724-0 
\bibitem{Fal} K. Falconer, \textit{The Hausdorff dimension of self-affine fractals,} Math. Proc. Cambridge Philos. Soc. 103 (1988), no. 2, 339–-350.
\bibitem{Gar} A. Garsia, \textit{Arithmetic properties of Bernoulli convolutions.} 
Trans. Amer. Math. Soc. 102 1962 409--432. 
\bibitem{Gar2} A. Garsia, \textit{Entropy and singularity of infinite convolutions,} Pacific J. Math. 13 1963 1159--1169. 
\bibitem{Guz} M. de Guzm\'an. Differentiation of integrals in $\mathbb{R}^n$. Springer Verlag,
Berlin, 1975. With appendices by Antonio C\'ordoba, and
Robert Fefferman, and two by Roberto Moriy\'{o}n, Lecture Notes
in Mathematics, Vol. 481.
\bibitem{Har} G. Harman, \textit{Metric Number Theory,} LMS Monographs 18, Clarendon Press, Oxford, (1998).

\bibitem{Hochman} M. Hochman, \textit{On self-similar sets with overlaps and inverse theorems for entropy,} Ann. Math.,  180, (2014), 773--822.
\bibitem{Hochman2} M. Hochman, \textit{On self-similar sets with overlaps and inverse theorems for entropy in $\R^d$,} Mem. Amer. Math. Soc. (To appear).
\bibitem{Hut}  J. Hutchinson, \textit{Fractals and self-similarity}, Indiana Univ. Math. J. 30 (1981), no. 5, 713--747.
\bibitem{JesWin} B. Jessen, A. Wintner, \textit{Distribution functions and the Riemann zeta function,} Trans. Amer. Math. Soc. 38 (1935), no. 1, 48--88. 
\bibitem{JoPoSi} T. Jordan, M. Pollicott, K. Simon, \textit{Hausdorff dimension for randomly perturbed self affine attractors,} Comm. Math. Phys. 270 (2007), no. 2, 519--544. 
\bibitem{KeSmSo} M. Keane, M. Smorodinsky, B. Solomyak, \textit{On the morphology of $\gamma$-expansions with deleted digits,} Trans. Amer. Math. Soc. 347 (1995), no. 3, 955--966.
\bibitem{Kem} T. Kempton, \textit{Counting $\beta$-expansions and the absolute continuity of Bernoulli convolutions,} Monatsh. Math. 171 (2013), no. 2, 189--203.
\bibitem{Khit} A. Khintchine, \textit{Einige S\"atze \"uber Kettenbruche, mit Anwendungen auf die Theorie der Diophantischen Approximationen,} Math. Ann. 92
(1924), 115--125.
\bibitem{KocSto} S. Kochen, C. Stone, \textit{A note on the Borel-Cantelli lemma.}
Illinois J. Math. 8 (1964), 248--251.
\bibitem{KouMay} D. Koukoulopoulos, J. Maynard, \textit{On the Duffin-Schaeffer conjecture,} Ann. of Math. 192 (2020), 251--307.
\bibitem{LSV} J. Levesley, C. Salp, S. Velani, \textit{On a problem of K. Mahler: Diophantine approximation and Cantor sets,} Math. Ann. 338 (2007), no. 1, 97--118.
\bibitem{Mat} P. Mattila, \textit{Geometry of sets and measures in Euclidean spaces,} Fractals and rectifiability. Cambridge Studies in Advanced Mathematics, 44. Cambridge University Press, Cambridge, 1995. xii+343 pp. ISBN: 0-521-46576-1; 0-521-65595-1.
\bibitem{MauSim} D. Mauldin, K. Simon, \textit{The equivalence of some Bernoulli convolutions to Lebesgue measure,} Proc. Amer. Math. Soc. 126 (1998), no. 9, 2733--2736. 
\bibitem{NSW} A. Nagel, E. Stein, S. Waigner, Differentiation in lacunary directions. Proc. Nat. Acad. Sci. U.S.A. 75 (1978), no. 3, 1060--1062. 
\bibitem{PRSS} Y. Peres, M. Rams, B. Solomyak, K. Simon, \textit{Equivalence of positive Hausdorff measure and the open set condition for self-conformal sets,} Proc. of Amer. Math. Soc., 129 (2001), 2689--2699.
\bibitem{PS} Y. Peres, W. Schlag, \textit{Smoothness of projections, Bernoulli convolutions, and the dimension of exceptions,} Duke Math. J., Volume 102, Number 2 (2000), 193--251.
\bibitem{PeScSo} Y. Peres, W. Schlag, B. Solomyak, \textit{Sixty years of Bernoulli convolutions,}  Fractal geometry and stochastics, II (Greifswald/Koserow, 1998), 39-–65, Progr. Probab., 46, Birkhäuser, Basel, 2000. 
\bibitem{PerSol} Y. Peres, B. Solomyak, \textit{Absolute continuity of Bernoulli convolutions, a simple proof,} Math. Res. Lett. 3 (1996), no. 2, 231--239. 
\bibitem{PerSol2} Y. Peres, B. Solomyak, \textit{Self-similar measures and intersections of Cantor sets,} Trans. Amer. Math. Soc. 350 (1998), no. 10, 4065--4087. 
\bibitem{PerRev} T. Persson, H. Reeve, \textit{A Frostman type lemma for sets with large intersections, and an application to Diophantine approximation,}  Proceedings of the Edinburgh Mathematical Society, Volume 58, Issue 02, June 2015, 521--542.
\bibitem{PerRevA} T. Persson, H. Reeve, \textit{On the Diophantine properties of $\lambda$-expansions,}  Mathematika, volume 59 (2013), issue 1, 65--86.
\bibitem{PolSimon} M. Pollicott, K. Simon, \textit{The Hausdorff dimension of $\lambda$-expansions with deleted digits,} Trans. Amer. Math. Soc. 347 (1995), no. 3, 967--983.
\bibitem{Rams} M. Rams, \textit{Generic behavior of iterated function systems with overlaps,} Pacific J. Math. 218 (2005), no. 1, 173--186. 
\bibitem{Rig} S. Rigot, \textit{Differentiation of measures in metric spaces,} CIME-CIRM Course on New Trends on Analysis and Geometry in Metric Spaces, Lecture Notes in Mathematics, Fondazione CIME Foundation Subseries (to appear).
\bibitem{Rue} D. Ruelle, \textit{Repellers for real analytic maps,} Ergodic Theory Dynamical Systems 2 (1982), no. 1, 99--107.
\bibitem{SSS} S. Sagletti, P. Shmerkin, B. Solomyak,\textit{ Absolute continuity of non-homogeneous self-similar measures,} Adv. Math. 335 (2018), 60--110.
\bibitem{Shm2} P. Shmerkin, \textit{On Furstenberg's intersection conjecture, self-similar measures, and the $L^q$ norms of convolutions,} Ann. of Math. (2) 189 (2019), no. 2, 319--391.
\bibitem{Shm} P. Shmerkin, \textit{On the exceptional set for absolute continuity of Bernoulli convolutions,} Geom. Funct. Anal. 24 (2014), no. 3, 946--958. 
\bibitem{Shm3} P. Shmerkin, \textit{Overlapping self-affine sets,} Indiana Univ. Math. J. 55 (2006), no. 4, 1291--1331.
\bibitem{ShmSol} P. Shmerkin, B. Solomyak, \textit{Absolute continuity of self-similar measures, their projections and convolutions,} Trans. Amer. Math. Soc. 368 (2016), no. 7, 5125--5151.
\bibitem{ShmSol2} P. Shmerkin, B. Solomyak, \textit{Zeros of $\{-1,0,1\}$ power series and connectedness loci for self-affine sets,}
Experiment. Math. 15 (2006), no. 4, 499--511.  
\bibitem{SiSo} K. Simon, B. Solomyak, \textit{On the dimension of self-similar sets,} Fractals 10 (2002), no. 1, 59--65. 
\bibitem{SimVag} K. Simon, I. V\'{a}g\'{o}, \textit{Singularity versus exact overlaps for self-similar measures,} Proc. Amer. Math. Soc. 147 (2019), 1971--1986
\bibitem{Sol2} B. Solomyak,\textit{ Measure and dimension for some fractal families,} Math. Proc. Cambridge Philos. Soc. 124 (1998), no. 3, 531--546.
\bibitem{Sol} B. Solomyak, \textit{On the random series $\sum\pm \lambda^{-n}$ (an Erd\H{o}s problem),} Ann. of Math. (2) 142 (1995), no. 3, 611--625. 
\bibitem{Spr} V. G. Sprind\v{z}uk, \textit{ Metric theory of Diophantine approximation } V. H. Winston \& Sons, 1979.
\bibitem{Var} P. Varju, \textit{Absolute continuity of Bernoulli convolutions for algebraic parameters,} J. Amer. Math. Soc. 32 (2019), 351--397
\bibitem{Varju2} P. Varju, \textit{On the dimension of Bernoulli convolutions for all transcendental parameters,} Ann. of Math. (2) 189 (2019), no. 3, 1001-–1011.
\end{thebibliography}
\end{document}